\documentclass[11pt]{amsart}
\usepackage{amsfonts,amsmath,latexsym,amssymb,verbatim,amsbsy,amsthm}
\usepackage{graphicx}
\usepackage{caption}
\usepackage{float}
\usepackage{wrapfig}
\usepackage[shortlabels]{enumitem}


\usepackage[top=1in, bottom=1in, left=1in, right=1in]{geometry}

\usepackage[dvipsnames]{xcolor}

\usepackage[colorlinks=true, pdfstartview=FitV, linkcolor=RoyalBlue,citecolor=ForestGreen, urlcolor=blue]{hyperref}



\theoremstyle{plain}
\newtheorem{THEOREM}{Theorem}[section]

\newtheorem{theorem}[THEOREM]{Theorem}
\newtheorem{corollary}[THEOREM]{Corollary}
\newtheorem{lemma}[THEOREM]{Lemma}
\newtheorem{proposition}[THEOREM]{Proposition}

\theoremstyle{definition}

\theoremstyle{remark}

\newtheorem{remark}[THEOREM]{Remark}


\newcommand{\ep}{\varepsilon}


 %
\DeclareMathOperator{\supp}{supp} %
 %
 %
 %
 %
 %
 %
 %
 %
 %
 %
 %
 %
 %


\def \b {\beta}
\def \g {\gamma}
\def \d {\delta}
\def \k {\kappa}
\def \e {\varepsilon}

\def \l {\lambda}
\def \n {\nabla}
\def \s {\sigma}

\def \w {\omega}

\def \D {\Delta}


\def \bu {{\bf u}}
\def \bv {{\bf v}}

\def \bx {{\bf x}}
\def \by {{\bf y}}

\def \bx {{\bf x}}

\def \by {{\bf y}}


\def \cK {\mathcal{K}}
\def \cL {\mathcal{L}}

\def \cN {\mathcal{N}}

\def \cR {\mathcal{R}}

\def \ff {\mathsf{f}}
\def \hh {h}
\def \HH {H}

\def \II {\mathsf{I}}
\def \JJ {\mathsf{J}}
\usepackage{amsmath}



\newcommand{\N}{\ensuremath{\mathbb{N}}}   
\newcommand{\R}{\ensuremath{\mathbb{R}}}   
\newcommand{\T}{\ensuremath{\mathbb{T}}}   


\newcommand{\pa}{\partial}
\newcommand{\ldosv}{L^2_{\varphi'_{\kappa}}}

\def \sign {\mathrm{sgn}}


\def \p {\partial}


\def \dx  {\, \mbox{d}x}

\def \dy  {\, \mbox{d}y}
\def \dz  {\, \mbox{d}z}

\def \ddt  {\frac{\mbox{d\,\,}}{\mbox{d}t}}

\begin{document}

\title[2D EULER: TRAVELING WAVES NEAR COUETTE]{Traveling waves  near Couette Flow \\for the 2D Euler equation}

\author{\'Angel Castro} \author{Daniel Lear}

\address{Instituto de Ciencias Matematicas, Consejo Superior de Investigaciones Cient\'ificas, Madrid.}
\email{angel\_castro@icmat.es}

\address{Department of Mathematics, Statistics, and Computer Science, University of Illinois, Chicago.}
\email{lear@uic.edu}

\date{\today}

\subjclass{76E05,76B03,35Q31,35Q35}

\keywords{2D Euler, hydrodynamic stability, Couette flow, traveling-waves, bifurcation.}


\begin{abstract}
In this paper we reveal the existence of  a large family of new, nontrivial and smooth traveling waves for the 2D Euler equation at an arbitrarily small distance  from the Couette flow  in $H^s$, with $s<3/2$, at the level of the vorticity. The speed of these waves is of order 1 with respect to this distance. This result strongly contrasts  with the setting of very high regularity in Gevrey spaces (see \cite{BM}), where the  problem exhibits an inviscid damping mechanism that leads to relaxation of perturbations  back to nearby shear flows. It also complements the fact that there not exist nontrivial  traveling waves in the $H^{\frac{3}{2}+}$  neighborhoods of Couette flow (see \cite{LZ}).
\end{abstract}

\maketitle

\tableofcontents

\section{Introduction and main result}
In this paper we deal with the incompressible Euler equations in the two-dimensional
space $D:=\T\times\R$. In the vorticity formulation 2D Euler is given by the transport equation
\begin{equation}\label{e:Euler2D}
(\bx,t)\in D\times\mathbb{R}^{+},\qquad \left\{
\begin{split}
&\pa_t w +\bv \cdot \n w =  0, \\
&\bv=\n^{\perp}\D^{-1}w,\\
& w|_{t=0}=  w_0,
\end{split}\right.
\end{equation}
where $\bv=(v_1,v_2)$ denotes the velocity field and $w =\pa_x v_2 -\pa_y v_1$ its vorticity. The starting point of this paper is the fact that the Couette flow, given by $\bv=(y,0)$ and $w = -1$,  is a steady state for \eqref{e:Euler2D}. We are interested in perturbations of the form $\bv(\bx,t):=(y,0)+ \bu(\bx,t)$ with total vorticity $w(\bx,t):=-1+\omega(\bx,t),$ where $\omega=\text{curl} (\bu).$ The equations for a perturbation around the Couette flow are given by
\begin{equation}\label{e:E2Dperturbado}
\left\{
\begin{split}
&\pa_t \omega +y \pa_x \omega +\bu \cdot \n \omega =  0, \\
&\bu=\n^{\perp}\D^{-1} \omega,\\
&\omega|_{t=0}=  \omega_0.
\end{split}\right.
\end{equation}
Here, the operator $\D^{-1}$ is given by expression \eqref{e:streamfunction}.\medskip

Stability of \textit{shear flows}, to which the Couette flow  belongs, is a widely investigated problem in the  context of hydrodynamic stability, a field where many  questions still remain unanswered. We refer to \cite{BGMsurvey,G} for an overview of this topic and a detailed list of references.

The Couette flow is one of the simplest shear flows  if not the simplest one, however, it poses several long-standing puzzles in hydrodynamics.
The issue that Couette flow is known to be spectrally stable for all Reynolds
numbers in contradiction with instabilities observed in experiments is now often referred  as the
\textit{Sommerfeld paradox} or \textit{turbulence paradox}. There have been many attempts in the literature to find an explanation to this paradox starting in the nineteenth century with Stokes, Helmholtz, Reynolds, Rayleigh, Kelvin, Orr, Sommerfeld and many others.\medskip

The linear analysis for the Couette flow has been obtained via an eigenvalue analysis, however, the classical stability analysis in general does not agree with the numerical and physical observations.  A key observation was made by Trefethen et al. \cite{Trefethen-Trefethen-Reddy-Driscoll}, where it has been shown that a pure eigenvalues analysis could hide several problems. These problems are due to the fact that the operators involved in the linearization around a shear flow are in general non-normal. Nowadays, the idea that the interaction between nonlinear effects and non-normal transient growth can lead to instabilities is classical in fluid mechanics, see \cite{Trefethen,Trefethen-Trefethen-Reddy-Driscoll}. We refer to \cite{LL} and the references therein for a modern approach and a mathematical treatment to the problem.

There were many attempts to find an explanation to the Sommerfeld paradox. One of the first attempt might be due to  Orr, who studied the linear
stability directly by considering the initial value problem. In this case, the problem reduces to
a simple transport equation for the vorticity. By recovering the velocity via the Biot-Savart law, Orr observed that  the velocity may experience a transient growth, suggesting that this phenomenon may be a possible source of instabilities in the nonlinear problem. In fact, one can see assuming enough regularity on the initial perturbation that the vertical velocity tends to zero when times goes to infinity. This convergence back to equilibrium, despite time reversibility and the lack of dissipative mechanisms, is now referred as \textit{inviscid damping}.
This phenomenon share analogies with the \textit{Landau damping} in the kinetic theory of plasma physics, see the paper of Mohout--Villani \cite{Mouhot-Villani} and references therein.\medskip

Despite the understanding of the linear setting, the nonlinear problem is substantially harder and remained unresolved until the breakthrough of Bedrossian--Masmoudi \cite{BM}. They established that sufficiently small perturbations in the Gevrey spaces $\mathcal{G}^s$, with $s>1/2$, converge strongly in $L^2$ (of velocity) to a  shear flow near Couette as times goes to infinity in $\T\times\R$.

As it was shown in \cite{DM} by Deng--Masmoudi, the Gevrey regularity requirement $s>1/2$   is  crucial. Indeed, Deng and Masmoudi prove long time instabilities in $\mathcal{G}^{1/2^{-}}$.

 In very recent works Ionescu--Jia \cite{Ionescu-Jia,Ionescu-Jia-monotonic} and Masmoudi--Zhao \cite{MZ} proved that nonlinear asymptotic stability holds true also for perturbations around the Couette flow and more general monotonic shear flows, with compactly supported vorticity, in the periodic channel $\T\times[-1,1].$\medskip

Motivated by these results, the linearized equations around more  general shear flows solutions were
investigated intensely in the last few years, and linear inviscid damping and decay was proven
in many cases of physical interest, see for example \cite{BZV,BMlinear,CZ,DZ1,DZ2,GNRS,Jia1,Jia2,Wei-Zhang-Zhao,Wei-Zhang-Zhao_1,Wei-Zhang-Zhao_2,Zillinger1,Zillinger2}.
For the analogous problem in the viscous setting or even in the compressible case, we refer to \cite{BZ,BGM,BMV,ZEW2,L,ZEW,ADM} for a comprehensive but not exhaustive list of references.  Similar analysis have been also carried out recently for the Boussinesq system, see for example \cite{bia1,bia2,deng,masmoudiboussinesq,Zillinger3,Zillinger4}.

For the most up-to-date overview of this subject, see  \cite{BGMsurvey} and references therein.\medskip

By all the above, an important direction to explore further is the role of nonlinear instabilities, and moreover, how the regularity of initial data may or may not affect the dynamics.

Nonlinear long time dynamics near Couette flow is related to the existence of nontrivial invariant structures as steady-states, traveling waves, etc. Non existence of nontrivial invariant structures near Couette flow is necessary for nonlinear inviscid damping. Conversely, their existence means nonlinear inviscid damping is not true, and long time dynamics near  Couette flow may be richer.\medskip

In this direction we refer to the work of Lin--Zeng \cite{LZ}, where the authors construct nontrivial steady flows arbitrarily close to Couette flow in  $H^s$, with $s<3/2$, in a finite channel  $\T\times[-1,1]$. In addition, the non existence of nontrivial traveling waves close to the Couette in $H^s$, with $s>3/2$, is proved in this paper. Here and in the rest of the paper ``trivial'' means independent of  $x$.

It is important to emphasize that the steady cat's eyes structure near
Couette flow, constructed in the aforementioned  paper \cite{LZ}, can be used to obtain  traveling waves using the symmetries of the system \eqref{e:Euler2D}.
Note that, by Galilean invariance, if $\bv(\bx,t)$ solves 2D Euler equation then $\overline{\bv}(\bx,t):=\bv(x+\lambda t,y,t)-(\lambda,0)$ also solves it. Therefore, if  $\bv(\bx)$ is a steady solution from \cite{LZ}, we get that $\overline{\bv}(\bx,t):=\bv(x+\lambda t,y)-(\lambda,0)$ is a traveling wave solution of 2D Euler.

Since steady solutions in \cite{LZ} satisfy $v_1(\bx)=y+O(\epsilon)$,  where $\epsilon$ measure the distance to the Couette flow, the size of $\overline{v}_1-y$ will  be $\lambda$. So, in order to obtain a traveling wave $\epsilon-$close to the Couette flow, the speed of the wave must be of the same order, i.e., $\lambda=O(\epsilon).$

In the present paper we are concerned with the existence of nontrivial and smooth traveling waves close to Couette flow in the $H^{3/2^{-}}$ topology with speed $\lambda$ of order 1 and with $v_1=y+O(\epsilon)$. Then our solutions are of a different nature that those of \cite{LZ}.

It is necessary to mention here also the result of Li--Lin \cite{LL}, where the authors prove the existence of traveling waves bifurcating from the sequence, $
v_n(y):= y +\frac{A}{n}\sin(4n\pi y)$ for $ \frac{1}{8\pi}< A < \frac{1}{4\pi}.$
As $n\to \infty$, the oscillatory shears approach the Couette flow, i.e., $v_n(y) \to y$ in $L^2$
and $L^{\infty}$. On the other hand, in the vorticity variable, the oscillatory shears do not
approach the Couette  since $\pa_y v_n(y) = 1+4 A \pi \cos(4n \pi  y) \not\to 1$ in any Lebesgue norm.
\medskip

Recently, there has been a growing interest in the study of existence or not of invariant structures as steady-states and traveling waves for the incompressible 2D Euler equations near other shear flows and for related equations. For example, we refer to \cite{LYZ,LWZZ} for the 2D Euler equation with Coriolis force or \cite{ZEW2} for the case of Kolmogorov and Poiseuille flows.\medskip

For the sake of clarity we shall now give an elementary statement of our main result.

\begin{theorem}\label{thmbasic} For  any $0\leq s<3/2$ and $\epsilon>0$,  the perturbed 2D Euler system \eqref{e:E2Dperturbado} admits a nontrivial  traveling wave solution that satisfies the smallness condition $$\|w+1\|_{H^s(\T\times\R)}\equiv\|\omega\|_{H^s(\T\times\R)}< \epsilon,$$
with $\omega\in C^\infty(\T\times\R)$ compactly supported, and whose speed is $O(1)$ with respect to $\epsilon$.
\end{theorem}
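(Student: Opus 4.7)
The plan is to pass to a frame moving with speed $\lambda=O(1)$ and to look for the wave as a steady perturbation of the translated Couette $(y-\lambda,0)$. Writing $w(x,y,t)=-1+\omega(x-\lambda t, y)$, the profile $\omega$ must satisfy
$$(y-\lambda)\pa_{x}\omega+\bu\cdot\n\omega=0,\qquad \bu=\n^{\perp}\Delta^{-1}\omega,$$
so that $\omega$ is constant along the streamlines of the moving-frame total stream function $\Psi(x,y)=-\tfrac{y^{2}}{2}+\lambda y+\psi(x,y)$, where $\Delta\psi=\omega$. I would look for cat's-eye solutions of the form $\omega=F(\Psi)$ with $F$ vanishing outside the range of $\Psi$ on a bounded closed-streamline region, so that $\omega$ is automatically smooth and compactly supported. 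This reduces the problem to the semilinear elliptic equation
$$\Delta\psi=F\bigl(-\tfrac{y^{2}}{2}+\lambda y+\psi\bigr)\qquad\text{on }\T\times\R.$$

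The approach is then bifurcation. I would pick a one-parameter family of profiles $F_{\mu}$ together with an $x$-independent base solution $\psi_{0}(y;\mu)$ whose vorticity $\Delta\psi_{0}$ is compactly supported in a narrow slab about $y=\lambda$ and small in $H^{s}$ with $s<3/2$, realizing a shear that is itself $\epsilon$-close to Couette. Setting $\psi=\psi_{0}+\eta$ and Fourier-decomposing the $x$-dependent part $\eta$, the linearization at wavenumber $k$ becomes the 1D Schr\"odinger-type eigenvalue problem
$$h''(y)-\bigl(k^{2}+F_{\mu}'(\Psi_{0}(y))\bigr)h(y)=0,\qquad h\in H^{1}(\R),$$
with $\Psi_{0}(y)=-\tfrac{y^{2}}{2}+\lambda y+\psi_{0}(y)$. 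The analytic core is to design $(F_{\mu},\psi_{0})$ so that this operator has a simple $L^{2}$ bound state at some integer $k=k_{\ast}$ and some $\mu=\mu_{\ast}$, with transversality of $F_{\mu}'$ in $\mu$. An application of the Crandall\textendash Rabinowitz bifurcation theorem to the map $(\psi,\mu)\mapsto\Delta\psi-F_{\mu}(\Psi)$ at $(\psi_{0}(\cdot;\mu_{\ast}),\mu_{\ast})$ then produces a local branch of solutions genuinely depending on $x$ through $\cos(k_{\ast}x)h_{k_{\ast}}(y)$ at leading order, with the bifurcation amplitude as free parameter. Elliptic regularity and smoothness of $F_{\mu}$ give $\omega\in C^{\infty}$, and shrinking the amplitude yields $\|\omega\|_{H^{s}}<\epsilon$ while keeping $\lambda$ of order one by construction.

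The main obstacle is the critical layer at $y=\lambda$, where Rayleigh's operator $\pa_{y}^{2}-k^{2}-U''/(U-\lambda)$ associated with a Couette-like shear is singular with a $1/(y-\lambda)$ coefficient; this is precisely the mechanism behind the Lin\textendash Zeng non-existence in $H^{3/2+}$. The cat's-eye ansatz $\omega=F_{\mu}(\Psi)$ with smooth $F_{\mu}$ formally desingularizes the linearization because $F_{\mu}'\circ\Psi_{0}$ is smooth, but the price is that $(F_{\mu},\psi_{0})$ must be chosen so that simultaneously (i)~the 1D Schr\"odinger operator carries an $L^{2}$ bound state at an \emph{integer} wavenumber $k_{\ast}$, as forced by periodicity in $x$; (ii)~the full nonlinear branch stays $\epsilon$-small in $H^{s}$ for $s<3/2$, even though by Lin\textendash Zeng it cannot remain small up to $H^{3/2}$; and (iii)~the speed $\lambda$ is prescribed of order one independently of $\epsilon$. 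Reconciling (i)\textendash (iii), that is, producing an $O(1)$-speed cat's eye with an $\epsilon$-small $H^{3/2-}$ perturbation, is the technical heart of the argument, and is exactly what the Lin\textendash Zeng threshold permits only below regularity~$3/2$.
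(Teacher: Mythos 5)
Your proposal takes a genuinely different route from the paper, and in its present form it has a real gap: the step you yourself label ``the technical heart'' --- designing $(F_{\mu},\psi_{0})$ so that $-\pa_y^2+F_\mu'(\Psi_0)$ has a simple eigenvalue at exactly $-k_*^2$ for an integer $k_*\geq 1$ while $\omega_0$ stays small in $H^{s}$, $s<3/2$ --- is precisely the content that constitutes the proof, and it is asserted rather than carried out. Some concrete obstacles you would have to resolve: (i) for $F_\mu$ to be a single-valued smooth function across the interior maximum of $\Psi_0$ at the critical layer $y_c\approx\lambda$, the base vorticity must satisfy $\omega_0(y_+)=\omega_0(y_-)$ on $\Psi_0$-level pairs and $\omega_0'(y_c)=0$, so the potential $F_\mu'(\Psi_0)=\omega_0'/\Psi_0'$ is bounded of size $a/\delta^2$ on a slab of width $\delta$ (amplitude $a$); the shallow-well criterion for an eigenvalue at $-k_*^2$ then forces $\int|F_\mu'(\Psi_0)|\sim a/\delta\gtrsim k_*$, i.e.\ $a\gtrsim\delta$, and hence $\|\omega_0\|_{\dot H^{s}}\sim a\,\delta^{1/2-s}\gtrsim\delta^{3/2-s}$ --- the $H^{3/2^-}$ smallness is only marginally compatible and must be verified quantitatively, and then propagated along the whole bifurcated branch; (ii) on $\T\times\R$ the linearization $\Delta-F_\mu'(\Psi_0)$ is not Fredholm on unweighted Sobolev spaces because at the $x$-mean mode the 1D operator has essential spectrum reaching $0$, so the functional setting (restriction to $x$-mean-zero, decay/weights in $y$) has to be set up before Crandall--Rabinowitz applies, and one must also rule out resonances at the other integer wavenumbers; (iii) the transversality in $\mu$ is not checked. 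Finally, note that in your formulation the requirement ``speed $O(1)$'' is vacuous: since the Biot--Savart law on $\T\times\R$ is translation-equivariant in $y$, a traveling wave of speed $\lambda$ with vorticity centered at $y\approx\lambda$ is just a vertical translate of a steady state centered at $y\approx 0$, so your whole scheme reduces to a Lin--Zeng-type steady-state construction on the unbounded strip; that is a legitimate reduction, but it means item (i)--(iii) is where all the work lives.

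For comparison, the paper proceeds quite differently. It never writes $\omega=F(\Psi)$; instead it parametrizes the level sets of the vorticity as graphs $(x,f(\bx,t))$ with a prescribed profile $\varpi_{\ep,\kappa}$ (a plateau of height $\ep$ on $|y|<1-\ep$ whose gradient is confined to two $\ep$-thin layers at $y=\pm 1$), reduces to the nonlocal equation \eqref{e:goal} for $\ff$, and bifurcates in the \emph{speed} $\lambda$ rather than in a profile parameter. The resonance mechanism is also different from yours: the moving-frame critical layer sits at distance $O(\ep)$ from the lower gradient layer but \emph{outside} $\supp(\varpi_{\ep,\kappa}')$, so $\lambda+y-\Omega_{\ep,\kappa}(y)$ never vanishes where $\nabla\omega\neq 0$; the near-singular factor $1/(1+\lambda_1-\bar z)$ in \eqref{lambda1eq} is what makes the kernel condition solvable with $\lambda_1>0$ and $\|\varpi_{\ep,\kappa}\|_{L^\infty}=\ep$ small. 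In particular the paper's solutions are not cat's eyes: all vorticity level sets are graphs winding around $\T$, and any closed streamlines of the total stream function occur only where $\omega$ is constant. Your ansatz, by contrast, places nontrivial vorticity inside closed streamlines around the critical layer, so even if completed it would produce a different family of solutions. Both strategies are reasonable; the paper's buys a compact domain $D_\ep$ and clean Fredholm theory at the cost of the level-curve reformulation, while yours buys a standard semilinear elliptic equation at the cost of the critical-layer and non-compactness issues listed above.
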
\medskip

This result will be the consequence of Theorems \ref{thm5}, \ref{thm6} and \ref{thm7} where one can get a deeper insight on the properties of these solutions.\medskip

An important tool behind the proof will be the bifucartion theory, which has been very useful to prove the existence of solutions with a global structure in several equations arising in the field of fluid mechanics.  Following the approach of Burbea \cite{B}, there has been several works concerning the existence of  single or multiple patches
moving without changing shape, not only for 2D Euler equation but also for the generalized Surface Quasi-Geostrophic equations (SQG$)_{\beta}$ where $0<\beta<2$. We refer to \cite{CCG,CCG1,HH,HMV} for single rotating patches, \cite{DHH,DHMV} for doubly connected
V-states, \cite{HM,has} for corotating and counter-rotating vortex pairs and \cite{javi1} for steady states. See \cite{javi2} for further properties of rotating solutions and \cite{javi3,javi4} for the case of the vortex-sheet problem. See \cite{cor1,cor2,cor3} for related constructions.

The existence of smooth rotating vortices is  more intricate due to the higher dimension of the space on which the linear part of the equation acts. In spite of this, in \cite{CCG2,CCG3} smooth rotating solutions were constructed for SGQ equation and for 2D Euler equation, respectively. See also \cite{delpino} for  the construction of a different type of smooth rotating solutions for SQG and \cite{gravejat} for the existence of traveling waves  also for SQG.  The existence of non-smooth rotating vortices with non-uniform densities can be found in  \cite{GHS} for 2D Euler.\medskip

In a broad sense, these results connect with that developed  in \cite{BZV}, where the authors analyze the incompressible 2D Euler equation linearized around a radially symmetric, strictly monotone decreasing vorticity distribution. For sufficiently regular data some interesting phenomena appear, such as \textit{vortex axysimmetrization} (the vorticity weakly converges back to radial symmetry) and \textit{vorticity depletion} (faster inviscid damping rates than those possible with passive scalar evolution). Finally, for perturbations around coherent vortex structures we also refer to \cite{CZ,GW,Ionescu-Jia-vortex1,Ionescu-Jia-vortex2,Wei-Zhang-Zhao} and references therein.

\subsection{Sketch of the proof}
We will assume that the level sets of the vorticity are given by the family of graphs $(x,f(\bx,t))$, i.e.,
\[
\omega(x,f(\bx,t),t)=\varpi(y).
\]
Then, all the information of the problem is encoded in the time-evolution of level curves $f(\bx,t)$ and in the profile function $\varpi.$ After some algebraic manipulations the problem reduces to the equation:
\begin{equation}\label{e:sketchlevelcurveseq}
\ddt f(\bx,t)+f(\bx,t)\pa_xf(\bx,t) =\mathbf{U}_{\varpi}[f](\bx,t), \qquad \text{for } \bx\in \T\times \supp(\varpi'),
\end{equation}
where $\mathbf{U}_{\varpi}[\cdot]$ is a nonlocal and nonlinear functional defined below in \eqref{operator:u(f)}.

In order to look for traveling wave solutions we will take
\begin{equation*}\label{e:sketchansatz}
f(\bx,t):=y+\mathsf{f}(x+\l t,y)
\end{equation*}
which will lead to an equation for $(\lambda,\ff(\bx))$ of the form,
\begin{equation}\label{e:sketchfunctionaleq}
F_{\varpi}[\l,\ff](\bx)=0,  \qquad \text{for } \bx\in \T\times \supp(\varpi').
\end{equation}
with $F_\varpi[\lambda,\ff]$ given in \eqref{Fdefi}.  This equation we will be solved by using the Crandall-Rabinowitz theorem (C-R). Note that $F_{\varpi}[\lambda,0]=0$  for all $\lambda\in\R$ and one can expect to bifurcate from zero for some values of $\lambda$. We also be able to get enough information on $\ff(\bx)\equiv \ff(x,y)$ to guarantee that it depend on the horizontal variable $x$ in a nontrivial way.

The size of our solution $\omega$ in $H^s(\T\times \R)$, $0<s<\frac{3}{2}$, which coincide with the distance of our solutions $w=-1+\omega$ to the Couette flow, will be given by the size of the profile $\varpi$ in $H^s(\R)$. Thus, we have to found a small enough profile $\varpi$ for which we can solve \eqref{e:sketchfunctionaleq}. A full description of $\varpi$  will be given in Section \ref{profile}. At this point let us says that it will be a $\kappa-$regularization of the profile of size $\ep$ we see in  Figure \ref{fig:trapeciokappa}:

\begin{figure}[h]
  \centering
    \includegraphics[scale=1]{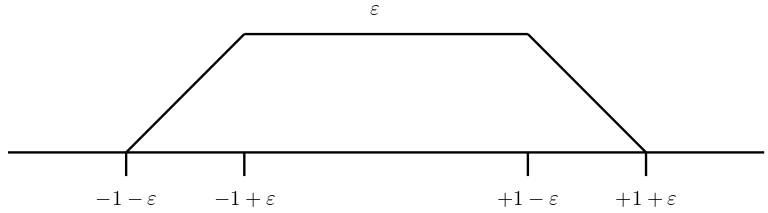}
    \caption{Profile $\varpi_{\ep,\, 0}$.}
  \label{fig:trapeciokappa}
\end{figure}

The core of the paper will be to study the spectral properties of the operator $$\mathcal{L}[\lambda]:=D_\ff F_{\varpi_{\ep,\kappa}}[\lambda,0].$$

This analysis will be made by an asymptotic analysis on $\ep$.

\subsection{Organization }
The remainder of the paper is organized as follows. In  Section \ref{s:formulation}, we shall write the equation for the level curves of the vorticity through the  Biot-Savart law. In Section \ref{s:bifurcation}, we shall
introduce and review some background material on the bifurcation theory and  Crandall-Rabinowitz theorem.
In Section \ref{s:regularity}, we will define the spaces we will work with in order to apply the C-R theorem and study the regularity of the nonlinear functional. In Section \ref{s:analysislinear}, we conduct the spectral study and check that our equation satisfies the hypotheses of the C-R theorem. In Section \ref{s:distance}, we obtain quantitative bounds for the $H^{s}$-norm in terms of all the parameters involved in the problem. We address the full regularity of the traveling wave solution in Section \ref{s:fullregularity}. Finally, in the last section, in order to facilitate the presentation, we collect in the Appendix the proofs of all technical lemmas used in the paper.

\section{Formulation of the problem}\label{s:formulation}

In this section  it will be obtained  an equation whose solutions yield traveling waves  of \eqref{e:E2Dperturbado}.

\subsection{The 2D Euler as an equation for the level curves of the vorticity}

 We will obtain the equation for the level curves of the vorticity of a solution of the 2D Euler equation \eqref{e:E2Dperturbado}.  We will use the stream function $\psi$, which satisfies
\begin{equation}\label{e:streamformulation}
\bu=\n^{\perp}\psi, \qquad \D\psi=\omega.
\end{equation}
The Green function for the Laplacian in the strip $\T\times\R$  is given (see \cite{CGO}) by the convolution with the kernel
\begin{equation*}
K(\bar{\bx})=\frac{1}{4\pi}\log[\cosh(\bar{y})-\cos(\bar{x})].
\end{equation*}
Consequently, the stream function $\psi=K\ast \omega$ is given by the expression
\begin{equation}\label{e:streamfunction}
\psi(\bx,t)=\frac{1}{4\pi}\int_{\T\times\R} \log\left(\cosh\left(y-\bar{y}\right)-\cos(x-\bar{x})\right)\omega(\bar{\bx},t)d\bar{\bx}.
\end{equation}

We will find solutions of \eqref{e:E2Dperturbado} by looking to the level curves of $\omega$. Assuming that  these level curves can be parameterized by the family of  graphs  $(x,\,f(\bx,t))$, with $\pa_y f(\bx,t)>0$, we have that
\begin{align}\label{levels}\omega(x, f(\bx,t),t)=\varpi(y),\end{align}
for some smooth and even profile function $\varpi$. We remark that  equation \eqref{e:E2Dperturbado} is a transport type equation, then the profile $\varpi$ can be taken  independent on $t$ without any loss of generality.

By straightforward computations, we get
\begin{equation}\label{e:gradw}
\n\omega(x,f(\bx,t),t)=\frac{\varpi'(y)}{\pa_y f(\bx,t)}(-\pa_x f(\bx,t),1).
\end{equation}
It is easily seen that taking a time derivative on \eqref{levels} and using \eqref{e:E2Dperturbado}  yields
\begin{equation}\label{inter1}
\pa_t f(\bx,t) \pa_y \omega (x,f(\bx,t),t)=f(\bx,t)\pa_x \omega(x,f(\bx,t),t)+ \bu(x,f(\bx,t),t)\cdot \n\omega (x,f(\bx,t),t).
\end{equation}
Plugging \eqref{e:gradw} into \eqref{inter1} we obtain
\begin{equation}\label{inter2}
\left[\p_t f (\bx,t)+f(\bx,t)\pa_x f(\bx,t)-\mathbf{U}[f](\bx,t)\right]\frac{\varpi'(y)}{\pa_y f(\bx,t)}=0,
\end{equation}
where
\begin{equation}\label{e:u(f)}
\mathbf{U}[f](\bx,t):=\bu(x,f(\bx,t),t)\cdot(-\pa_x f(\bx,t),1).
\end{equation}
Next we obtain an expression for $\mathbf{U}[f](\bx,t)$. Taking $\nabla^\perp$ on \eqref{e:streamfunction} we have that
\begin{align*}
\bu(\bx,t)=&\frac{1}{4\pi}\int_{\T\times\R}\log\left(\cosh(y-\bar{y})-\cos(x-\bar{x})\right)\nabla^\perp\omega(\bar{\bx},t)\mbox{d}\bar{\bx}\\
=&\frac{1}{4\pi}\int_{\supp(\nabla^{\perp}\omega)}\log\left(\cosh(y-\bar{y})-\cos(x-\bar{x})\right)\nabla^\perp\omega(\bar{\bx},t)\mbox{d}\bar{\bx}.
\end{align*}
Making the change of variable $\bar{\bx}=(\tilde{x},f(\tilde{\bx},t))$ and using expression \eqref{e:gradw} we have that
\begin{align}\label{u}
\bu(x,y,t)=-\frac{1}{4\pi}\int_{\T\times \supp(\varpi')}\log\left(\cosh(y-f(\tilde{\bx},t))-\cos(x-\tilde{x})\right)(1,\p_x f(\tilde{\bx},t))\varpi'(\tilde{y})\mbox{d}\tilde{\bx},
\end{align}
and
\begin{align*}
\bu(x,f(\bx,t),t)=-\frac{1}{4\pi}\int_{\T\times \supp(\varpi')}\log\left(\cosh(f(\bx,t)-f(\tilde{\bx},t))-\cos(x-\tilde{x})\right)(1,\p_x f(\tilde{\bx},t))\varpi'(\tilde{y})\mbox{d}\tilde{\bx}.
\end{align*}
Thus
\begin{align}\label{operator:u(f)}
&\mathbf{U}[f](\bx,t)\\
&=\frac{1}{4\pi}\int_{\T\times \supp(\varpi')}\log\left(\cosh(f(\bx,t)-f(\bar{\bx},t))-\cos(x-\bar{x})\right)(\pa_x f(\bx,t)-\partial_1f(\bar{\bx},t))\varpi'(\bar{y})\mbox{d}\bar{\bx}\nonumber\\
&=\frac{1}{4\pi}\int_{\T\times \supp(\varpi'(y-\cdot))} \varpi'(y-\bar{y})\log\left(\cosh(\D_{\bar{\bx}}[f](\bx,t))-\cos(\bar{x})\right) \D_{\bar{\bx}}[\pa_x f](\bx,t)\mbox{d}\bar{\bx}\nonumber,
\end{align}
where in the sequel we shall use the notation
\[
\D_{\bar{\bx}}[f](\bx,t):=f(\bx,t)-f(\bx-\bar{\bx},t).
\]

Since equation \eqref{inter2} is multiplied by $\varpi'(y)$ and in order to compute $\mathbf{U}[f](\bx,t)$ in $\T\times \supp(\varpi')$ one just needs to know $f(\bx,t)$ in $\T\times\supp(\varpi'),$ we have to solve
\begin{equation}\label{mainequation}
\p_t f(\bx,t)+f(\bx,t)\pa_x f(\bx,t)=\mathbf{U}[f](\bx,t),\qquad \bx=(x,y)\in \T\times \stackrel{\circ}\supp(\varpi').
\end{equation}
Conversely, if $f(\cdot,t)\in C^\infty(\T\times \supp(\varpi'))$, with $\pa_yf>0$, satisfies the previous equation \eqref{mainequation} on $\T\times \stackrel\circ\supp(\varpi')$ for some $\varpi\in C^\infty_c(\R)$, we can prove that the function $\omega(\cdot,t)$ defined by
\begin{align*}
\omega(x,f(\bx,t),t)=\varpi'(y) \quad \text{on} \quad  \T\times \supp(\varpi'),
\end{align*}
and extended by suitable constants to the complementary of $$\{ (x,y)\in \T\times \R\,:\, (x,y)=(\bar{x},f(\bar{\bx},t))\quad\text{with}\quad (\bar{x},\bar{y})\in \T\times \supp(\varpi')\}$$
 is a smooth solution of the original problem \eqref{e:E2Dperturbado} on $\T\times\R$.

\subsection{The equation for the traveling wave}

We will  look for solutions of the form
\begin{equation}\label{e:ansatz}
f(\bx,t)=y+\mathsf{f}(x+\l t,y).
\end{equation}
Putting the above ansatz  \eqref{e:ansatz} into  \eqref{mainequation} our problem reduces to solve the time independent equation
\[
\l \pa_x \ff(\bx)+(y+\ff(\bx))\pa_x\ff(\bx)=\mathbf{U}[y+\ff](\bx),
\]
or equivalently
\begin{equation}\label{e:goal}
F[\l,\ff](\bx)=0, \qquad \bx\in \T\times \stackrel{\circ}\supp(\varpi'),
\end{equation}
where
\begin{multline}\label{Fdefi}
F[\l,\ff](\bx):=\l \pa_x \ff(\bx)+(y +\ff(\bx))\pa_x\ff(\bx)\\
-\frac{1}{4\pi}\int_{\T\times\supp (\varpi')} \varpi'(\bar{y})\log\left[\cosh(y-\bar{y} + \ff(\bx)-\ff(\bar{\bx}))-\cos(x-\bar{x})\right] (\pa_x\ff(\bx)-\pa_x\ff(\bar{\bx}))\mbox{d}\bar{\bx},
\end{multline}

It is easy to check that  $\ff(\bx)=0$ is a trivial solution of \eqref{e:goal} whose velocity and vorticiy are given by $\bu(\bx,t)=(u_1(y),0)$ and $\omega(\bx,t)=-\pa_y u_1(y)$ where $-\pa_y u_1(y)=\varpi(y)$. Consequently, $f(\bx,t)=y$ is a solution of \eqref{mainequation} whose velocity is a degenerate shear flow. To prove our goal, we will need to show the existence of nontrivial solutions of the equation \eqref{mainequation}.
More specifically, we will prove the existence of traveling wave solutions of \eqref{mainequation} bifurcating from the trivial one $f(\bx,t)=y$.

\begin{remark} The solutions given  by $\omega(x,y+\ff(x+\lambda t,t)=\varpi(y)$  give rise, by expression \eqref{u} and an appropriate change of variables, to solutions of 2D Euler
\begin{align*}
\pa_t \bu +y \pa_x \bu +(\bu\cdot\nabla)\bu+\nabla p=0,\quad \text{in $\T\times \R$,}
\end{align*}
of the form $$\bu(\bx,t)=(u_1(x,y,t), u_2(x,y,t))=(\bar{u}_1(x+\lambda t,y), \bar{u}_2(x+\lambda t,y)).$$
\end{remark}

\subsection{The profile function $\varpi_{\ep,\kappa}$.}\label{profile}

To continue, we define the profile function $\varpi\in C_c^\infty(\R)$ that will be used in Section \ref{s:analysislinear} to solve the functional equation \eqref{e:goal}.

To start with let us construct first a profile $\varpi\in W^{1,\infty}(\R)\cap H^{\frac{3}{2}^-}(\R)$. This profile will be even and compactly supported. Let us consider the auxiliary function
\begin{align*}
\varphi(z)=\frac{1-z}{2},\quad  z\in [-1,1].
\end{align*}
Let $\e>0,$ we define $\varpi_\e(y)$ in the following way
\begin{align*}
\varpi_\e(y)=\left\{\begin{array}{cr} 0 & y>1+\e, \\
\e \varphi\left(\frac{y-1}{\e}\right) & 1-\e \leq y \leq 1+\e,\\
\e & 0<y<1-\e.
\end{array}\right.
\end{align*}
Notice that  $\varpi_\e(y)$ is extended as an even function for $y<0$.

Since we are interested in smooth solutions, we need to consider a smooth profile function $\varpi$. We will use a $\kappa$-regularization of $\varphi$, note that $\varphi'=-\frac{1}{2}$. The function $\varphi_\kappa$ will be defined by
$$\varphi_\kappa'(z)=-\frac{\psi'_\kappa(z)}{\int_{-1}^1\psi_{\kappa}'(\bar{z})\mbox{d}\bar{z}}, \quad  \text{for } |z|<1,$$
with
$$\psi'_\kappa(z)=\int_{-1}^1 \chi_{[-1+\kappa,1-\kappa]}(\bar{z})\Theta_\kappa(z-\bar{z})\mbox{d}\bar{z}=\int_{-1+\kappa}^{1-\kappa}\Theta_{\kappa}(z-\bar{z})\mbox{d}\bar{z},$$
where $\Theta$ is a mollifier, i.e a smooth, positive function, with $\supp(\Theta)\subset (-1,1)$ and $\int_{-1}^1\Theta(\bar{z})\mbox{d}\bar{z}=1$. Then, for each $\k>0$ we define $$\Theta_{\kappa}(z)=\frac{1}{\kappa}\Theta\left(\frac{z}{\kappa}\right).$$
Notice that, since $\Theta$ is smooth then $\psi'_{\kappa}$ is also smooth. Moreover, since $\supp(\Theta_{\kappa})\subset (-\kappa,\kappa)$ we have that $\psi'_{\kappa}( z )=0$ for $|z|>1$ and  it is easy to check that
$\psi_{\kappa}'(z)=1$ for $-1+2\kappa<z<1-2\kappa$.  Then, $\psi'_{\kappa}\to 1$ as $\kappa\to 0$. This convergence holds in $L^1([-1,1])$ and actually $\psi'_{\kappa}\to \chi_{[-1,1]}$ in $H^{\frac{3}{2}-}(\R)$. Furthermore, one can check that
\begin{align*}
\int_{-1}^1\left|\psi'_{\kappa}(\bar{z})-1\right|\mbox{d}\bar{z}=\int_{-1}^{-1+2\k}\left|\psi'_\kappa(\bar{z})-1\right|\mbox{d}\bar{z} + \int_{1-2\kappa}^1\left|\psi'_\kappa(\bar{z})-1\right|\mbox{d}\bar{z}\leq C \kappa,\\
\left|\int_{-1}^1\psi'_\kappa(\bar{z})\mbox{d}\bar{z}-2\right|=\left|\int_{-1}^1\left(\psi'_\kappa(\bar{z})-1\right)\mbox{d}\bar{z}\right|\leq \int_{-1}^1\left|\psi'_{\kappa}(\bar{z})-1\right|\mbox{d}\bar{z} \leq C\kappa,
\end{align*}
and then
\begin{align*}
\int_{-1}^1\left|\frac{\psi'_{\kappa}(\bar{z})}{\int_{-1}^1\psi'_\kappa(\bar{z})\mbox{d}\bar{z}}-\frac{1}{2}\right|\mbox{d}\bar{z}=\frac{1}{\left|2\int_{-1}^1\psi'_\kappa(\bar{z})\mbox{d}\bar{z} \right|}\int_{-1}^1\left|2\left(\psi'_{\kappa}(\bar{z})-1\right)+\left(2-\int_{-1}^1\psi'_\kappa(\bar{z})\mbox{d}\bar{z}\right)\right|\mbox{d}\bar{z}\leq C\kappa.
\end{align*}
We will take
\begin{align}\label{varphikappa}
\varphi_\kappa(z)=1+\int_{-1}^z\varphi'_\kappa(\bar{z})\mbox{d}\bar{z}=1-\frac{\int_{-1}^z \psi'_{\kappa}(\bar{z})\mbox{d}\bar{z}}{\int_{-1}^1\psi_{\kappa}'(\bar{z})\mbox{d}\bar{z}}.
\end{align}
We summarize the properties of this function in the following lemma.
\begin{lemma}\label{propiedadesvarphi}The function $\varphi_{\kappa}$ given by \eqref{varphikappa} satisfies
\begin{enumerate}
\item $\varphi_{\kappa}(-1)=1$, $\varphi_{\kappa}(1)=0$, $(\pa_z^{n}\varphi_{\kappa})(\pm 1)=0$, for $n=1,2,\ldots$
\item $\varphi_{\kappa}'(z)<0$, for $|z|<1$.
\item $\|\varphi'_{\kappa}+\frac{1}{2}\|_{L^1([-1,1])}\leq C\kappa$.
\end{enumerate}
\end{lemma}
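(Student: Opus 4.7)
The plan is to read everything off the explicit formula for $\varphi_\kappa$ via the auxiliary $\psi'_\kappa$, since each of the three claims reduces to a property of $\psi'_\kappa$. Thus I would first record the useful facts about $\psi'_\kappa$ already established in the discussion preceding the lemma: it is smooth (as a convolution against the mollifier $\Theta_\kappa$), it is supported in $[-1,1]$, it equals $1$ on $[-1+2\kappa, 1-2\kappa]$, and its total mass over $[-1,1]$ differs from $2$ by $O(\kappa)$. All three claims will then fall out with no additional computation beyond these.

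For property (1), I would observe that $\varphi_\kappa(-1)=1$ is immediate from the formula
$\varphi_\kappa(z)=1+\int_{-1}^z \varphi'_\kappa$, while $\varphi_\kappa(1)=0$ is just the statement
$\int_{-1}^{1}\varphi'_\kappa(\bar z)\,d\bar z=-1$,
which is forced by the normalization in the definition of $\varphi'_\kappa$. For the vanishing of all higher-order derivatives at $\pm 1$, note that for $n\geq 1$,
\[
\partial_z^{n}\varphi_\kappa(z)=\partial_z^{n-1}\varphi'_\kappa(z)=-\frac{1}{\int_{-1}^{1}\psi'_\kappa}\int_{-1+\kappa}^{1-\kappa}\partial_z^{n-1}\Theta_\kappa(z-\bar z)\,d\bar z.
\]
At $z=\pm 1$ the integrand vanishes identically on $[-1+\kappa,1-\kappa]$, because then $|z-\bar z|\geq\kappa$ lies outside $\supp(\Theta_\kappa)\subset(-\kappa,\kappa)$; so all derivatives of $\varphi_\kappa$ vanish at the endpoints.

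For property (2), I would use that $\Theta$ (and hence $\Theta_\kappa$) is nonnegative with $\int\Theta=1$, so $\Theta_\kappa$ is strictly positive on a neighborhood of $0$ inside $(-\kappa,\kappa)$. Given $z\in(-1,1)$, the interval $(z-\kappa,z+\kappa)$ meets $(-1+\kappa,1-\kappa)$ in a set of positive measure on which the integrand is positive (this requires checking the two endpoint regimes $|z|\in (1-2\kappa,1)$ separately, but both are routine). Hence $\psi'_\kappa(z)>0$ for $|z|<1$, and so $\varphi'_\kappa(z)<0$ there. Property (3) is essentially already written out in the text immediately preceding the lemma: the chain of inequalities for $\int_{-1}^{1}|\psi'_\kappa/\!\int\psi'_\kappa-1/2|\,d\bar z\leq C\kappa$ is exactly the assertion $\|\varphi'_\kappa+1/2\|_{L^1([-1,1])}\leq C\kappa$, since $\varphi'_\kappa+1/2=-\bigl(\psi'_\kappa/\!\int\psi'_\kappa-1/2\bigr)$. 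I would just re-assemble those two displays.

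I do not expect any genuine obstacle; the lemma is bookkeeping for a mollified ramp. The only point that deserves a line of care is the strict negativity in property (2) near the endpoints $|z|=1$, where one must check that the convolution does not accidentally vanish — this is where the assumption that $\Theta$ is a true (positive) mollifier, not merely nonnegative at the origin, is used.
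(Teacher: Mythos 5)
Your proof is correct and is exactly the argument the paper intends: the lemma carries no separate proof in the text, being a summary of the construction of $\varphi_\kappa$, with property (3) literally the displayed estimate for $\int_{-1}^{1}\bigl|\psi'_{\kappa}/\!\int\psi'_{\kappa}-\tfrac12\bigr|$ immediately preceding the statement. Your one point of care — that strict negativity of $\varphi'_\kappa$ up to the endpoints needs $\Theta$ to be positive throughout the interior of its support, so that the nonempty overlap of $(z-1+\kappa,z+1-\kappa)$ with $(-\kappa,\kappa)$ carries positive mass — is the right one, and is how the paper's phrase ``smooth, positive function'' must be read.
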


The profile $\varpi$ that will be used in this manuscript is given by
\begin{align}\label{varpiepkappa}
\varpi_{\ep,\kappa}(y):=
\left\{\begin{array}{cr} 0 & y>1+\e, \\
\ep \varphi_{\kappa}\left(\frac{y-1}{\e}\right)\qquad & 1-\e \leq y \leq 1+\e, \\
\e & 0<y<1-\e.\end{array}\right.
\end{align}
Notice that  $\varpi_\e(y)$ is extended as an even function, i.e. for any $y<0$ we take $\varpi_{\ep,\kappa}(y)=\varpi_{\ep,\kappa}(-y)$.
Let us point out that the support of $\omega'_{\ep,\kappa}$ will be the domain
\begin{align*}
I_\ep=[-1-\ep,-1+\ep]\cup [1-\ep, 1+\ep].
\end{align*}

The rest of the paper consists in finding a nontrivial solution of \eqref{e:goal} with $\varpi\equiv\varpi_{\ep,\kappa}$ in \eqref{Fdefi} for parameters $\ep$ and $\kappa$ small enough. It will be done using the bifurcation theory through Crandall-Rabinowitz theorem \cite{CR}. For the completeness of the paper we recall this basic theorem and it will referred to as sometimes by C-R theorem.

\section{Bifurcation theory and Crandall-Rabinowitz}\label{s:bifurcation}
Before going into details, we shall first fix some notations that we will used later. For a linear mapping $\cL$ we will denote by $\cN(\cL)$ and $\cR(\cL)$  the kernel and range of $\cL$ respectively.   If $Y$ is a vector space and $S$ is a subspace, then $Y/ S$ denotes the quotient space.\medskip

Now, we intend to  give some formal explanations and general picture of the bifurcation theory. This brief discussion will be closed by stating the details of the C-R theorem.  Roughly speaking, the main objective  of this theory is to look for the solutions of the functional equation
$$
F[\l,\ff]=0,
$$
where $F:\R\times X\to Y$ is a smooth continuous function between Banach spaces  $X$ and $Y$. We assume in addition \mbox{that $\ff=0$} is a  trivial solution for any $\lambda\in\R$, that is, $F[\lambda, 0]=0$. Whether close to the  trivial solution  $(\l_\star, 0)$ one  can find a branch of nontrivial ones is the main aim of this theory. If this is the case  we say that there is a bifurcation at the point $(\l_\star, 0)$.
As the Implicit Function Theorem tells us, the first idea is to study  the linear operator  $\mathcal{L}_\l:= D_{\ff}F[\l, 0]:X\to Y$.
In principle, the involved Banach spaces $X$ and $Y$ are infinite-dimensional  and thus the bifurcation analysis  is in general complex. However, if the linearized operator around this point $(\l_\star, 0)$ generates a Fredholm type operator, then one can use the so-called Lyapunov-Schmidt reduction  in order to reduce the infinite-dimensional problem to a finite-dimensional one. Finally, for this last problem we just need some concrete transversal conditions so that the Implicit Function Theorem can be applied.

To sum up, this is the classical result proved by  Crandall and Rabinowitz which is a basic tool in the bifurcation theory and that will be used in this manuscript. Now, we recall here the statement of this theorem from \cite{CR} for expository purposes.

\begin{theorem}\label{th:CR} Let $X, Y$ be two Banach spaces, $V$ a neighborhood of $0$ in $X$ and let
$
F : \R \times V \to Y
$
with the following  properties:
\begin{enumerate}
\item $F [\lambda, 0] = 0$ for any $\lambda\in \R$.
\item The partial derivatives $D_\l F$, $D_{\ff}F$ and $D^2_{\l,\ff}F$ exist and are continuous.
\item There exists $\lambda_\star$ such that if $\mathcal{L}_\star= D_{\ff} F[\l_\star,0]$ then  $\cN(\mathcal{L}_\star)$ and $Y/\cR(\mathcal{L}_\star)$ are one-dimensional.
\item {\it Transversality assumption}: $D^2_{\l,\ff}F[\l_\star, 0]\hh_\star \not\in \cR(\mathcal{L}_\star)$, where
$$
\cN(\mathcal{L}_\star) = \text{span}\{\hh_\star\}.
$$
\end{enumerate}
If $Z$ is any complement of $\cN(\mathcal{L}_\star)$ in $X$, then there is a neighborhood $U$ of $(\l_\star,0)$ in $\R \times X$, an interval $(-\sigma_0,\sigma_0)$, and continuous functions $\varphi: (-\sigma_0,\sigma_0) \to \R$, $\psi: (-\sigma_0,\sigma_0) \to Z$ such that $\varphi(0) = 0$, $\psi(0) = 0$ and
\begin{align*}
F^{-1}(0)\cap U=&\Big\{\big(\lambda_\star+\varphi(\sigma), \sigma h_\star+\sigma \psi(\sigma)\big)\,;\,\vert \sigma\vert<\sigma_0\Big\} \cup \Big\{(\lambda,0)\,;\, (\lambda,0)\in U\Big\}.
\end{align*}

\end{theorem}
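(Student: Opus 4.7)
The plan is to carry out the classical Lyapunov--Schmidt reduction: trade the infinite-dimensional equation $F[\lambda,\ff]=0$ for a finite-dimensional equation in the kernel direction, which can then be solved by the Implicit Function Theorem thanks to the transversality hypothesis.

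First I would fix the decompositions dictated by assumption (3). Write $X=\mathcal{N}(\mathcal{L}_\star)\oplus Z=\mathrm{span}\{\hh_\star\}\oplus Z$, and choose a one-dimensional complement $Y_1$ of $\mathcal{R}(\mathcal{L}_\star)$ in $Y$, so $Y=Y_1\oplus \mathcal{R}(\mathcal{L}_\star)$. Let $P:Y\to \mathcal{R}(\mathcal{L}_\star)$ and $Q=I-P:Y\to Y_1$ denote the associated continuous projections. Every $\ff\in V$ near $0$ can then be written uniquely as $\ff=\sigma \hh_\star+w$ with $\sigma\in\R$ and $w\in Z$, and the equation $F[\lambda,\ff]=0$ splits into the pair
\begin{equation*}
P\,F[\lambda,\sigma \hh_\star+w]=0,\qquad Q\,F[\lambda,\sigma \hh_\star+w]=0.
\end{equation*}

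The key step is to exploit the trivial branch $F[\lambda,0]\equiv 0$ to factor a $\sigma$ out of the equation, so that the surviving object still makes sense (and is smooth) at $\sigma=0$. To that end I would look for solutions in the parametrized form $\ff=\sigma(\hh_\star+z)$ with $z\in Z$, and introduce
\begin{equation*}
G[\lambda,\sigma,z]:=\int_0^1 D_\ff F[\lambda,t\sigma(\hh_\star+z)]\,(\hh_\star+z)\,\dt,
\end{equation*}
so that $F[\lambda,\sigma(\hh_\star+z)]=\sigma\,G[\lambda,\sigma,z]$. Because $F[\lambda,0]\equiv 0$, this identity allows one to trade $F=0$ (away from the trivial branch) for $G=0$, and the regularity assumption (2) on $F$ makes $G$ continuously differentiable in $(\lambda,\sigma,z)$ near the base point. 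Moreover $G[\lambda_\star,0,0]=\mathcal{L}_\star \hh_\star=0$, so $(\lambda_\star,0,0)$ is a candidate for the Implicit Function Theorem.

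The IFT is applied to solve $G[\lambda,\sigma,z]=0$ for $(\lambda,z)$ as a function of $\sigma$. The linearization at $(\lambda_\star,0,0)$ in the $(\lambda,z)$ directions is the map
\begin{equation*}
(\mu,\zeta)\in\R\times Z\;\longmapsto\;\mu\,D^2_{\lambda,\ff}F[\lambda_\star,0]\,\hh_\star+\mathcal{L}_\star\zeta\in Y.
\end{equation*}
Its invertibility is exactly where the hypotheses interact: the restriction $\mathcal{L}_\star|_Z:Z\to \mathcal{R}(\mathcal{L}_\star)$ is a linear homeomorphism (injective because $Z$ is a complement of $\mathcal{N}(\mathcal{L}_\star)$, surjective by the definition of the range, and bicontinuous by the open mapping theorem), while the transversality condition (4) ensures that $D^2_{\lambda,\ff}F[\lambda_\star,0]\hh_\star$ has a nonzero $Y_1$-component. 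Hence the above map is a bijection $\R\oplus Z\to Y_1\oplus \mathcal{R}(\mathcal{L}_\star)=Y$, bicontinuous by the open mapping theorem. The IFT then produces continuous (indeed as regular as $F$ allows) functions $\varphi:(-\sigma_0,\sigma_0)\to\R$ and $\psi:(-\sigma_0,\sigma_0)\to Z$ with $\varphi(0)=0$, $\psi(0)=0$ such that in a neighborhood of $(\lambda_\star,0,0)$ the zeros of $G$ are exactly $\{(\lambda_\star+\varphi(\sigma),\sigma,\psi(\sigma))\}$. Translating back via $\ff=\sigma \hh_\star+\sigma\psi(\sigma)$, and appending the trivial branch $\{(\lambda,0)\}$ that was lost when dividing by $\sigma$, one obtains the full description of $F^{-1}(0)\cap U$ stated in the theorem.

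The main obstacle, and the reason for introducing $G$ rather than working with $F$ directly, is precisely the factoring step: one must show that the quotient $F[\lambda,\sigma(\hh_\star+z)]/\sigma$ extends smoothly across $\sigma=0$. The integral representation of $G$ handles this cleanly, but the underlying bookkeeping (ensuring that one uses the right amount of regularity of $F$, and that the projections $P,Q$ are continuous so that $G$ inherits the regularity of $F$) is the technical heart of the proof; everything else is a direct application of the IFT and of the linear-algebraic decomposition coming from assumption (3).
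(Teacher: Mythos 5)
The paper itself does not prove Theorem~\ref{th:CR}; it is quoted from Crandall--Rabinowitz \cite{CR} and used as a black box, so your proposal can only be measured against the classical argument --- which is indeed the Lyapunov--Schmidt route you follow. The skeleton is right: the splittings $X=\cN(\mathcal{L}_\star)\oplus Z$ and $Y=Y_1\oplus\cR(\mathcal{L}_\star)$, the factorization $F[\l,\sigma(\hh_\star+z)]=\sigma G[\l,\sigma,z]$ via the fundamental theorem of calculus, and the inversion of $(\mu,\zeta)\mapsto \mu D^2_{\l,\ff}F[\l_\star,0]\hh_\star+\mathcal{L}_\star\zeta$ using transversality are exactly the steps of \cite{CR}. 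But two points are not yet proofs. First, your claim that assumption (2) makes $G$ continuously differentiable in all of $(\l,\sigma,z)$ overreaches: differentiating $G$ in $\sigma$, or differentiating under the integral sign in $z$, produces $D^2_{\ff\ff}F$, which is \emph{not} among the hypotheses --- this is precisely the delicacy of the Crandall--Rabinowitz assumptions. What is true is that $G$ is continuous and that $D_\l G$ and $D_z G$ exist and are continuous (compute $D_zG[\l,\sigma,z]=D_\ff F[\l,\sigma(\hh_\star+z)]$ from the identity $\sigma G=F$ for $\sigma\neq 0$ and check the continuous extension to $\sigma=0$); one must then invoke the version of the implicit function theorem that requires differentiability only in the variables being solved for, $(\l,z)$, and mere continuity in the parameter $\sigma$. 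This is also why the conclusion yields only \emph{continuous} $\varphi,\psi$.

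Second, and more substantively, the closing sentence ``one obtains the full description of $F^{-1}(0)\cap U$'' hides a genuine gap in the \emph{completeness} direction. The IFT characterizes zeros of $G$ only for $\|z\|$ small, i.e.\ it captures only those solutions $\ff=\sigma\hh_\star+w$ with $\|w\|\le \rho|\sigma|$; a small nontrivial solution could a priori have $\|w\|\gg|\sigma|$ and would then escape your parametrization. One needs the auxiliary lemma that nontrivial zeros accumulate tangentially to the kernel: if $(\l_n,\ff_n)\to(\l_\star,0)$ with $F[\l_n,\ff_n]=0$ and $\ff_n\neq 0$, then $0=\int_0^1 D_\ff F[\l_n,t\ff_n]\ff_n\,\dt$ together with the continuity of $D_\ff F$ gives $\mathcal{L}_\star(\ff_n/\|\ff_n\|)\to 0$, and since $\mathcal{L}_\star|_Z$ is a homeomorphism onto its (closed) range, the normalized $Z$-component tends to $0$, i.e.\ $\|w_n\|=o(|\sigma_n|)$. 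Only with this lemma can one shrink $U$ so that every nontrivial solution in $U$ lands in the IFT neighborhood. (Both this step and your appeal to the open mapping theorem and to continuous projections tacitly require $\cR(\mathcal{L}_\star)$ and $Z$ to be closed subspaces; this is implicit in \cite{CR} but should be said.)
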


The bulk of the paper  consists in checking all the assumptions of Theorem \ref{th:CR}. This will be done in details in
the next sections.

\section{Functional setting and regularity}\label{s:regularity}

Note that our functional can be written as
\begin{equation}\label{e:functional}
F[\l,\ff](\bx)=  \lambda \pa_x \ff(\bx) +(y+\ff(\bx))\pa_x \ff (\bx) -\frac{1}{4\pi}\int_{D_{\ep}(y)}\varpi'(y-\bar{y})K[\ff](\bx,\bar{\bx})\D_{\bar{\bx}}[\pa_x \ff](\bx) \mbox{d}\bar{\bx},
\end{equation}
where the kernel is given by the following expression
\[
K[g](\bx,\bar{\bx}):=\log\left[\cosh(\bar{y}+\D_{\bar{\bx}}[g](\bx))-\cos(\bar{x})\right].
\]
Recall that the finite difference is given by
\[
\D_{\bar{\bx}}[g](\bx)=g(\bx)-g(\bx-\bar{\bx}),
\]
and the domain of integration is just
\[
D_{\ep}(y)=\T\times I_{\ep}(y),\quad \text{with} \quad I_\ep(y)= y+I_\ep.
\]
For simplicity, we will use the notation $D_\ep=D_{\ep}(0)=\T\times I_\ep.$

\begin{remark}
In all this section we will assume that $\varpi\equiv\varpi_{\e,\k}\in C^\infty(\R)$ as in Section \ref{profile}.
\end{remark}

\subsection{The functional setting}

In order to apply C-R theorem we need first to fix the function spaces.  We should look for Banach spaces $X$ and $Y$ such that $F : \R \times X \rightarrow Y$ is well-defined and satisfies the required assumptions.

Our first step is to define the spaces we will work with in order to apply the Crandall-Rabinowitz theorem.
The spaces $X$ and $Y$ will be given by
\begin{align}\label{spaceX}
X(D_{\ep}):=\left\{g\in H^{4,3}(D_{\ep}): g \text{ is even in $x$  with $\int_{-\pi}^{\pi} g(x,y)\dx=0$}\right\},
\end{align}
and
\begin{align}\label{sapceY}
Y(D_{\ep}):=\left\{g\in H^3(D_{\ep}): g  \text{ is odd in $x$}\right\}.
\end{align}
Here $H^{4,3}(D_{\ep})$ is the Sobolev-Leibnitz space of $2\pi$-periocic functions in the $x$-variable with norm
\begin{align*}
\|g\|_{H^{4,3}(D_{\ep})}:= \sum_{i=0}^4 \sum_{j=0}^{3-i}\|\pa_x^i \pa_y^j g\|_{L^2(D_{\ep})}.
\end{align*}
After that the main purpose will be to prove next lemma.
\begin{lemma} For all $0<\ep<1,$ there exist $0<\d(\ep)\ll 1$ small enough such that
\begin{align*}
F\,:\, \R\times \mathbb{B}_{\d}(X(D_{\ep})) & \to Y(D_{\ep}),\\
\quad (\l,\ff) & \to F[\l,\ff]
\end{align*}
where
\begin{align*}
\mathbb{B}_{\d}(X(D_{\ep})) :=\{ g\in X(D_{\ep})\,:\, \|g\|_{H^{4,3}(D_{\ep})}< \d\}.
\end{align*}
\end{lemma}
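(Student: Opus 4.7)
Two properties of $Y(D_\epsilon)$ need to be verified: that $F[\lambda,\ff]$ is odd in $x$ and that it lies in $H^3(D_\epsilon)$. The oddness is immediate for the local piece $\lambda\partial_x\ff+(y+\ff)\partial_x\ff$, since $\ff\in X$ is even in $x$, hence $\partial_x\ff$ is odd and the product of even and odd is odd. For the nonlocal integral, I would apply $x\mapsto -x$ and then the change of variable $\bar x\mapsto -\bar x$ inside the integral: using the evenness of $\cos$ and of $\ff(\cdot,y)$, both $K[\ff](\bx,\bar{\bx})$ and $d\bar{\bx}$ remain invariant, whereas $\Delta_{\bar{\bx}}[\partial_x\ff](\bx)$ flips sign, giving the desired parity of the integral.

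For the $H^3$ bound, split $F$ into local and nonlocal parts. The local piece is routine: $H^3$ is an algebra on the bounded two-dimensional set $D_\epsilon$, $X\hookrightarrow H^3$ follows from the definition, and $y$ is smooth and bounded on $I_\epsilon$. The main work is the integral
\[
I[\ff](\bx) := \int_{D_\epsilon(y)} \varpi'(y-\bar y)\,K[\ff](\bx,\bar{\bx})\,\Delta_{\bar{\bx}}[\partial_x\ff](\bx)\,d\bar{\bx}.
\]
My first step here is to establish the coercivity estimate
\[
\cosh(\bar y+\Delta_{\bar{\bx}}[\ff](\bx))-\cos(\bar x)\;\gtrsim\;|\bar{\bx}|^2,
\]
uniformly for $\bx\in D_\epsilon$ and $\bar{\bx}$ in the integration domain, provided $\delta$ is small. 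This follows from $\cosh a-\cos b\gtrsim a^2+b^2$ on bounded arguments combined with the Sobolev embedding $X\hookrightarrow W^{1,\infty}(D_\epsilon)$, which yields $|\Delta_{\bar{\bx}}[\ff](\bx)|\le C\delta|\bar{\bx}|$ and lets one absorb $\Delta_{\bar{\bx}}[\ff]$ into $\bar y$ up to a constant loss.

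To bound $\partial_\bx^\alpha I[\ff]$ in $L^2_\bx$ for $|\alpha|\le 3$, I would apply Faà di Bruno to $K[\ff]$. Each $\partial_\bx$ acting on $K$ produces a factor $\sinh(\bar y+\Delta_{\bar{\bx}}[\ff])/(\cosh(\bar y+\Delta_{\bar{\bx}}[\ff])-\cos\bar x)$, of size $|\bar{\bx}|^{-1}$ by the coercivity, times first differences $\Delta_{\bar{\bx}}[\partial_\bx^\beta\ff]$. My plan is to group terms so that every singular factor $\sim|\bar{\bx}|^{-1}$ is matched either with $\Delta_{\bar{\bx}}[\partial_x\ff]$ or with one of the differences $\Delta_{\bar{\bx}}[\partial_\bx^\beta\ff]=O(|\bar{\bx}|)$, keeping the $\bar{\bx}$-integrand integrable; within each group all but one derivative of $\ff$ is controlled in $L^\infty$ via Sobolev, while the remaining top-order derivative is kept in $L^2$. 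Minkowski's inequality in $\bar{\bx}$, together with the smoothness and compact support of $\varpi'$, then produces the final $L^2_\bx$ bound. The main obstacle will be the top-order contributions, where all three derivatives fall on the external factor $\Delta_{\bar{\bx}}[\partial_x\ff]$, requiring control of $\partial_x^{1+i}\partial_y^{3-i}\ff$ for $0\le i\le 3$; this is precisely why the anisotropic space $H^{4,3}$, with the extra $x$-derivatives beyond $H^3$, is the correct domain for $F$.
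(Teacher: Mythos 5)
Your proposal is correct and follows essentially the same route as the paper: parity by the change of variables $\bar x\mapsto-\bar x$, the algebra property of $H^3(D_\ep)$ for the local part, the coercivity bound $\cosh(\bar y+\D_{\bar{\bx}}[\ff](\bx))-\cos(\bar x)\gtrsim|\bar{\bx}|^2$ giving $|\Psi_i[\ff]|\lesssim|\bar{\bx}|^{-i}$, and a Leibniz distribution of the three derivatives between the kernel and the external difference, with one top-order factor in $L^2$ and the rest in $L^\infty$ by Sobolev embedding. The only cosmetic difference is that for the intermediate term (two derivatives on the kernel) the paper matches the singular factor with $\D_{\bar{\bx}}[\p_{\bx}\pa_x\ff]=O(|\bar{\bx}|^\gamma)$ via $H^2\subset C^\gamma$ rather than a full $O(|\bar{\bx}|)$ gain, relying on the two-dimensional integrability of $|\bar{\bx}|^{-2+2\gamma}$ — which is exactly the ``keep the integrand integrable'' criterion you state.
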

\begin{proof}
Let $\ff\in X(D_{\ep}),$ the evenness in the $x$-direction translates into the oddness of $F[\l,\ff]$ just by definition of the functional \eqref{e:functional}. Moreover, the non-integral part of the RHS of \eqref{e:functional} maps $H^{4,3}(D_\ep)$ into $H^{3}(D_\ep)$ trivially as a direct consequence  of the algebraic property of the Sobolev space $H^3(D_\ep)$. Therefore,  we will focus our attention into the integral part of \eqref{e:functional}.
In addition, as the profile function $\varpi$ is smooth, our functional is well-defined  if  each of the next terms are bounded as follow:
\begin{align}\label{resumen1lema}
\left\Vert \int_{D_{\ep}(y)}\p_{\bx}^i K[\ff](\bx,\bar{\bx})\D_{\bar{\bx}}[\p_{\bx}^{3-i}\pa_x \ff](\bx) \mbox{d}\bar{\bx}\right\Vert_{L^2(D_{\ep})}^2&\leq C(\ep,\|\ff\|_{H^{4,3}(D_{\ep})}) \qquad (0\leq i\leq 3).
\end{align}
Note that by definition of $\D_{\bar{\bx}}[g](\bx)$ we have
\begin{equation}\label{L2difference}
\int_{D_{\ep}}\left(\int_{D_{\ep}(y)}|\D_{\bar{\bx}}[g](\bx)|^2 \mbox{d}\bar{\bx}\right)\mbox{d}\bx\leq 4|D_{\ep}| \|g\|_{L^{2}(D_{\ep})}^2.
\end{equation}
For the case $i=0$ or $i=1$, as $\ff\in H^{4,3}(D_{\ep})$ implies $\p_{\bx}^2\pa_x\ff,\p_{\bx}^3\pa_x\ff \in L^2(D_{\ep}),$ we  obtain
\begin{align*}
\left\Vert \int_{D_{\ep}(y)}K[\ff](\bx,\bar{\bx})\D_{\bar{\bx}}[\p_{\bx}^3\pa_x \ff](\bx)\mbox{d}\bar{\bx} \right\Vert_{L^2(D_{\ep})}^2 \leq 4|D_{\ep}|^2 \|\ff\|_{H^{4,3}(D_{\ep})}^2 \sup_{\bx\in D_{\ep}}\left( \sup_{\bar{\bx}\in D_{\ep}(y)}\left|K[\ff](\bx,\bar{\bx})\right|^2 \right),
\end{align*}
and
\begin{multline*}
\left\Vert \int_{D_{\ep}(y)}\p_{\bx} K[\ff](\bx,\bar{\bx})\D_{\bar{\bx}}[\p_{\bx}^2\pa_x \ff](\bx)\mbox{d}\bar{\bx} \right\Vert_{L^2(D_{\ep})}^2 \\
  \leq 4|D_{\ep}| \|\ff\|_{H^{4,3}(D_{\ep})}^2  \sup_{\bx\in D_{\ep}}\left( \int_{D_{\ep}(y)}\left|\p_{\bx} K[\ff](\bx,\bar{\bx})\right|^2\mbox{d}\bar{\bx} \right).
\end{multline*}
For the case $i=2$, as $\ff\in H^{4,3}(D_{\ep})$ we get $\p_{\bx}\pa_x\ff \in H^2(D_{\ep})\subset C^{\g}(D_{\ep})$ for $0<\g<1,$ which give us
\begin{multline*}
\left\Vert \int_{D_{\ep}(y)}\p_{\bx}^2 K[\ff](\bx,\bar{\bx}) \D_{\bar{\bx}}[\p_{\bx}\pa_x \ff](\bx)\mbox{d}\bar{\bx} \right\Vert_{L^2(D_{\ep})}^2 \\
\hspace*{-0.5 cm} \leq |D_{\ep}| \|\ff\|_{H^{4,3}(D_{\ep})}^2 \int_{D_{\ep}} \left(\int_{D_\ep(y)}\left|\p_{\bx}^2 K[\ff](\bx,\bar{\bx}) \right|^2 |\bar{\bx}|^{2\g}\mbox{d}\bar{\bx} \right)\mbox{d}\bx.
\end{multline*}
For the last case $i=3$, as $\pa_x\ff\in H^3(D_{\ep})\subset C^1(D_{\ep})$, we obtain
\begin{multline*}
\left\Vert \int_{D_{\ep}(y)}\p_{\bx}^3  K[\ff](\bx,\bar{\bx}) \D_{\bar{\bx}}[\pa_x \ff](\bx)\mbox{d}\bar{\bx} \right\Vert_{L^2(D_{\ep})}^2 \\
\leq |D_{\ep}|  \|\ff\|_{H^{4,3}(D_{\ep})}^2 \int_{D_{\ep}}\left(\int_{D_\ep(y)}\left|\p_{\bx}^3 K[\ff](\bx,\bar{\bx}) \right|^2 |\bar{\bx}|^{2}\mbox{d}\bar{\bx} \right)\mbox{d}\bx.
\end{multline*}
Finally, for the sake of brevity and clarity we will refer to Lemma \ref{l:DK[f]}, where the last term of each of the above expressions is bounded.
\end{proof}

\subsection{Hypothesis 1 and 2 }

On one hand, the hypothesis 1 in the C-R theorem is trivial to check. On the other hand, the hypothesis 2 has to do with the regularity of the functional $F[\lambda,\ff]$ with respect to $\ff$ and $\lambda$.

We present the regularity of the functional in the following proposition.
\begin{proposition} For all $0<\ep<1,$ there exist $0<\d(\e)\ll 1$ small enough such that the following holds true:
\begin{enumerate}
	 \item The functional $F:\R \times \mathbb{B}_{\d}(X(D_{\ep}))\rightarrow Y(D_{\ep})$ is of class $C^1.$
	 \item The partial derivative $D^2_{\l,\ff} F:\R \times \mathbb{B}_{\d}(X(D_{\ep}))\rightarrow Y(D_{\ep})$ is continuous.
\end{enumerate}
\end{proposition}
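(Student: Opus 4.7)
The plan is to reduce both claims to estimates of the same flavour as those just carried out for well-posedness of $F$, together with the crucial observation that the $\lambda$-dependence in $F$ is \emph{affine}, which makes the mixed second derivative essentially trivial.

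First I would split $F[\lambda,\ff]=L[\lambda,\ff]+N[\ff]$, where $L[\lambda,\ff](\bx):=\lambda\pa_x\ff(\bx)+(y+\ff(\bx))\pa_x\ff(\bx)$ is polynomial and $N[\ff]$ is the integral term in \eqref{e:functional}. Because $H^{4,3}(D_\ep)$ is a Banach algebra that embeds continuously into $H^3(D_\ep)\cdot H^3(D_\ep)$-multiplication, $L$ is smooth from $\R\times X(D_\ep)$ to $Y(D_\ep)$; in particular $D_\lambda L=\pa_x\ff$ and $D_\ff L[\lambda,\ff]\hh=\lambda\pa_x\hh+\hh\pa_x\ff+(y+\ff)\pa_x\hh$ are visibly continuous. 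This already gives $D^2_{\lambda,\ff}F[\lambda,\ff]\hh=\pa_x\hh$, which is independent of $(\lambda,\ff)$, linear and bounded $X(D_\ep)\to Y(D_\ep)$ (losing one $x$-derivative), so assertion (2) reduces to assertion (1).

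Next I would compute the Gateaux derivative of $N$. Differentiating under the integral using the chain rule on $K[\ff]$ yields
\begin{multline*}
D_\ff N[\ff]\hh(\bx)=-\frac{1}{4\pi}\int_{D_\ep(y)}\varpi'(y-\bar y)\Bigl[\,\cK[\ff](\bx,\bar\bx)\,\D_{\bar\bx}[\hh](\bx)\,\D_{\bar\bx}[\pa_x\ff](\bx)\\
+K[\ff](\bx,\bar\bx)\,\D_{\bar\bx}[\pa_x\hh](\bx)\Bigr]\,d\bar\bx,
\end{multline*}
where $\cK[\ff](\bx,\bar\bx):=\frac{\sinh(\bar y+\D_{\bar\bx}[\ff](\bx))}{\cosh(\bar y+\D_{\bar\bx}[\ff](\bx))-\cos(\bar x)}$. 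To prove that this is genuinely the Fr\'echet derivative I would use a second-order Taylor expansion of the map $t\mapsto\log[\cosh(\bar y+\D_{\bar\bx}[\ff+t\hh])-\cos\bar x]\,\D_{\bar\bx}[\pa_x(\ff+t\hh)]$ and bound the resulting remainder in $H^3(D_\ep)$ by distributing derivatives (up to three in $\bx$) onto the factors, exactly as in \eqref{resumen1lema}. For each of the four cases $0\le i\le 3$ one controls either $\sup|K|$, $\int|\pa^j K|^2|\bar\bx|^{2\gamma}$ or $\int|\pa^j K|^2|\bar\bx|^2$ uniformly for $\ff\in\mathbb{B}_\d$, by invoking the kernel bounds in Lemma~\ref{l:DK[f]} (and analogous bounds for $\cK$ and its derivatives, which have exactly the same singular structure as $\pa_{\bx}K$ since $\cK=\tfrac12\pa_y K$ up to smooth factors). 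The finite-difference factors $\D_{\bar\bx}[\cdot]$ absorb one or two powers of $|\bar\bx|$ via \eqref{L2difference} and the Morrey/Sobolev embeddings $H^2(D_\ep)\hookrightarrow C^\gamma$, $H^3(D_\ep)\hookrightarrow C^1$, which is what makes the remainder $o(\|\hh\|_{H^{4,3}})$.

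Finally I would establish continuity of $D_\ff F$ in $(\lambda,\ff)$. Continuity in $\lambda$ is immediate from the affine structure. For continuity in $\ff$, I would write, for $\ff_1,\ff_2\in\mathbb{B}_\d$,
\begin{equation*}
D_\ff N[\ff_1]\hh-D_\ff N[\ff_2]\hh = \text{(difference of $\cK$)}\cdot \D[\hh]\D[\pa_x\ff_1]+\cK[\ff_2]\D[\hh]\D[\pa_x(\ff_1-\ff_2)]+(K[\ff_1]-K[\ff_2])\D[\pa_x\hh],
\end{equation*}
and estimate each piece in $H^3$ by the mean-value theorem applied to the smooth functions $z\mapsto\log(\cosh z-\cos\bar x)$ and $z\mapsto\sinh z/(\cosh z-\cos\bar x)$, reducing everything to a Lipschitz-type bound on the kernels controlled again by Lemma~\ref{l:DK[f]}. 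This yields $\|D_\ff F[\lambda,\ff_1]-D_\ff F[\lambda,\ff_2]\|_{L(X,Y)}\le C(\ep,\d)\|\ff_1-\ff_2\|_{H^{4,3}}$, which closes assertion (1). The main technical obstacle throughout is the singular denominator $\cosh(\bar y+\D_{\bar\bx}[\ff])-\cos\bar x$: its derivatives develop stronger and stronger singularities at $\bar\bx=0$, and controlling them demands precisely the weighted estimates packaged in Lemma~\ref{l:DK[f]}; everything else is a careful but routine bookkeeping of Sobolev embeddings and algebra properties.
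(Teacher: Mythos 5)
Your proposal is correct and follows essentially the same route as the paper: the local part is handled by the algebra property of $H^3(D_\ep)$, the mixed derivative $D^2_{\l,\ff}F[\l,\ff]\hh=\pa_x\hh$ is constant hence trivially continuous, the G\^ateaux derivative of the integral term is computed by the same product/chain rule (your $\cK[\ff]$ is the paper's $\Psi_1[\ff]$ — note the paper reserves $\cK[\ff',\ff'']$ for the difference $K[\ff']-K[\ff'']$), the remainder is controlled in $H^3$ by distributing up to three $\bx$-derivatives and invoking the weighted kernel bounds of the appendix (Lemmas \ref{l:DK[f]}, \ref{l:DK[f',f'']}, \ref{l:K/TAU-PSI}), and Fr\'echet differentiability is concluded from the Lipschitz continuity of $\ff\mapsto D_\ff F[\l,\ff]$ exactly as in \eqref{continuity}.
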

\begin{proof}
The proof of this proposition is rather standard. We include the main details here for sake of completeness.

The continuity of the functional is trivial for any derivative involving $D_\l$. Consequently, the result reduces to check that
$D_{\ff} F:\R\times \mathbb{B}_{\d}(X(D_{\ep}))\rightarrow Y(D_{\ep})$ is continuous at the origin.
This will be done by showing first the existence of the G\^ateaux derivative and second its continuity in the strong topology.
A refined analysis concerning its connection with Fr\'{e}chet derivative will be developed in the next section.
The G\^ateaux differential of $F[\l,\cdot]$ at $\ff\in \mathbb{B}_{\d}(X(D_{\ep}))$ in the direction $\hh\in X(D_{\ep})$
is defined as
\[
D_\ff F[\l,\ff]\hh:=\frac{\mbox{d}}{{\mbox{d}\tau}}F[\l,\ff+\tau \hh]|_{\tau=0}=\lim_{\tau\to0}\frac{F[\l,\ff+\tau\hh]-F[\l,\ff]}{\tau}.
\]
A formal straightforward computation show that this derivative is given by
\begin{align}\label{def:dF}
D_\ff F[\l,\ff]\hh(\bx)=&\l \pa_x \hh(\bx)+(y +\ff(\bx))\pa_x\hh(\bx)+ \hh(\bx)\pa_x \ff(\bx)\\
&-\frac{1}{4\pi}\int_{D_{\ep}(y)}\varpi'(y-\bar{y})K[\ff](\bx,\bar{\bx})\D_{\bar{\bx}}[\pa_x \hh](\bx)\mbox{d}\bar{\bx} \nonumber \\
&-\frac{1}{4\pi}\int_{D_{\ep}(y)}\varpi'(y-\bar{y})(\pa_x K)[\ff](\bx,\bar{\bx})\D_{\bar{\bx}}[\hh](\bx)\mbox{d}\bar{\bx}. \nonumber
\end{align}
To prove this rigorously, we need to get
\[
\lim_{\tau\to 0} \left\Vert \frac{F[\l,\ff+\tau\hh]-F[\l,\ff]}{\tau} -D_\ff F[\l,\ff]\hh\right\Vert_{H^3(D_{\ep})}=0.
\]
By virtue of \eqref{def:dF} it is enough to prove that
\begin{align}
\lim_{\tau\to 0} \left\Vert \int_{D_{\ep}(y)} \varpi'(y-\bar{y})\cK[\ff+\tau \hh,\ff](\bx,\bar{\bx})\D_{\bar{\bx}}[\pa_x \hh](\bx)\mbox{d}\bar{\bx} \right\Vert_{H^3(D_{\ep})}&=0,\label{limit2} \\
\lim_{\tau\to 0} \left\Vert \int_{D_{\ep}(y)} \varpi'(y-\bar{y})\left(\frac{\cK[\ff+\tau \hh,\ff](\bx,\bar{\bx})}{\tau}-\Psi_1[\ff](\bx,\bar{\bx})\D_{\bar{\bx}}[\hh](\bx)\right)\D_{\bar{\bx}}[\pa_x \ff](\bx)\mbox{d}\bar{\bx} \right\Vert_{H^3(D_{\ep})}&=0,\label{limit1}
\end{align}
where the new kernel is
\begin{equation}\label{bikernel}
\cK[\ff',\ff''](\bx,\bar{\bx}):=K[\ff'](\bx,\bar{\bx})-K[\ff''](\bx,\bar{\bx})=\log\left[\frac{\cosh\left(\bar{y}+\D_{\bar{\bx}}[\ff'](\bx)\right)-\cos(\bar{x})}{\cosh\left(\bar{y}+\D_{\bar{\bx}}[\ff''](\bx)\right)-\cos(\bar{x})}\right],
\end{equation}
and the auxiliary functions $\Psi_1,\Psi_2$ are given respectively by
\begin{align*}
\Psi_1[g](\bx,\bar{\bx}):=\frac{\sinh(\bar{y}+\D_{\bar{\bx}}[g](\bx))}{\cosh(\bar{y}+\D_{\bar{\bx}}[g](\bx))-\cos(\bar{x})},\\
\Psi_2[g](\bx,\bar{\bx}):=\frac{\cosh(\bar{y}+\D_{\bar{\bx}}[g](\bx))}{\cosh(\bar{y}+\D_{\bar{\bx}}[g](\bx))-\cos(\bar{x})}.
\end{align*}

In order to do the manuscript more readable, we redirect the reader to the next subsection for precise proofs of each of the above limits \eqref{limit2}, \eqref{limit1}. As we can imagine the computations are very long and tedious but share lot of similarities.

\subsubsection{Computation of \eqref{limit2}}
As the profile $\varpi$ is a smooth and compactly supported function, condition $\eqref{limit2}$ reduces to check
\[
\lim_{\tau\to 0} \left\Vert \int_{D_{\ep}(y)}\cK[\ff+\tau \hh,\ff](\bx,\bar{\bx})\D_{\bar{\bx}}[\pa_x \hh](\bx)\mbox{d}\bar{\bx} \right\Vert_{H^3(D_{\ep})}=0.
\]
\textbf{Remark:} There are not boundary terms to handle due to the  support of $\varpi$ and definition of $D_{\e}(y).$
Consequently, the above reduces to prove that each of the terms of $H^3(D_{\ep})-$norm tends to zero.
In the same spirit of \eqref{resumen1lema}, the proof of \eqref{limit2} reduces to check  that
\begin{equation}\label{resumen2lema}
\left\Vert \int_{D_{\ep}(y)}\p_{\bx}^i \cK[\ff+\tau \hh,\ff](\bx,\bar{\bx})\D_{\bar{\bx}}[\p_{\bx}^{3-i}\pa_x \hh](\bx)\mbox{d}\bar{\bx} \right\Vert_{L^2(D_{\ep})}^2 \leq C(\ep,\|\ff\|_{H^{4,3}(D_{\ep})})\tau^2 \qquad (0\leq i \leq 3).
\end{equation}
To do that, we will use repeatedly inequality \eqref{L2difference} and the fact that, without lost of generality, the direction $\hh\in H^{4,3}(D_\ep)$ can be taken with norm $\|\hh\|_{H^{4,3}(D_{\ep})}=1$.

For the case $i=0$ or $i=1$, as $\hh\in H^{4,3}(D_{\ep})$ implies $\p_{\bx}^3\pa_x\hh,\p_{\bx}^2\pa_x\hh \in L^2(D_{\ep})$, we  obtain
\begin{align*}
\left\Vert \int_{D_{\ep}(y)}\cK[\ff+\tau \hh,\ff](\bx,\bar{\bx})\D_{\bar{\bx}}[\p_{\bx}^3\pa_x \hh](\bx)\mbox{d}\bar{\bx} \right\Vert_{L^2(D_{\ep})}^2 \leq 4|D_{\ep}|^2 \sup_{\bx\in D_{\ep}}\left( \sup_{\bar{\bx}\in D_{\ep}(y)}\left|\cK[\ff+\tau \hh,\ff](\bx,\bar{\bx})\right|^2 \right),
\end{align*}
and
\begin{multline*}
\left\Vert \int_{D_{\ep}(y)}\p_{\bx} \cK[\ff+\tau \hh,\ff](\bx,\bar{\bx})\D_{\bar{\bx}}[\p_{\bx}^2\pa_x \hh](\bx)\mbox{d}\bar{\bx} \right\Vert_{L^2(D_{\ep})}^2\\
  \leq 4|D_{\ep}|  \sup_{\bx\in D_{\ep}}\left( \int_{D_{\ep}(y)}\left|\p_{\bx} \cK[\ff+\tau \hh,\ff](\bx,\bar{\bx})\right|^2\mbox{d}\bar{\bx} \right).
\end{multline*}
For the case $i=2$, as $\hh\in H^{4,3}(D_{\ep})$ we get $\p_{\bx}\pa_x\hh \in H^2(D_{\ep})\subset C^{\g}(D_{\ep})$ for $0<\g<1,$ which give us
\begin{multline*}
\left\Vert \int_{D_{\ep}(y)}\p_{\bx}^2 \cK[\ff+\tau \hh,\ff](\bx,\bar{\bx}) \D_{\bar{\bx}}[\p_{\bx}\pa_x \hh](\bx)\mbox{d}\bar{\bx} \right\Vert_{L^2(D_{\ep})}^2\\
 \leq |D_{\ep}| \int_{D_{\ep}}\left(\int_{D_\ep(y)}\left|\p_{\bx}^2 \cK[\ff+\tau \hh,\ff](\bx,\bar{\bx}) \right|^2 |\bar{\bx}|^{2\g}\mbox{d}\bar{\bx} \right)\mbox{d}\bx.
\end{multline*}
For the last case $i=3$, as $\pa_x\hh\in H^3(D_{\ep})\subset C^1(D_{\ep})$, we obtain
\begin{multline*}
\left\Vert \int_{D_{\ep}(y)}\p_{\bx}^3  \cK[\ff+\tau \hh,\ff](\bx,\bar{\bx}) \D_{\bar{\bx}}[\pa_x \hh](\bx)\mbox{d}\bar{\bx} \right\Vert_{L^2(D_{\ep})}^2\\
\leq |D_{\ep}| \int_{D_{\ep}}\left(\int_{D_\ep(y)}\left|\p_{\bx}^3 \cK[\ff+\tau \hh,\ff](\bx,\bar{\bx}) \right|^2 |\bar{\bx}|^{2}\mbox{d}\bar{\bx} \right)\mbox{d}\bx.
\end{multline*}
As $\ff\in \mathbb{B}_{\d}(X(D_{\ep}))$ and our goal is to compute the limit as $\tau\to 0,$ we can assume without loss of generality that $0<\tau\ll 1$ is small enough such that $\ff+\tau\hh\in \mathbb{B}_{\d}(X(D_{\ep}))$. Consequently, the last term of each of the above expressions can be handle applying auxiliary Lemma \ref{l:DK[f',f'']} with
\begin{align*}
\ff'&:=\ff+\tau\hh,\\
\ff''&:=\ff.
\end{align*}
Finally, as by hypothesis $\|\hh\|_{H^3(D_\ep)}=1$,  we  get
\[
\left\Vert \int_{D_{\ep}(y)}\cK[\ff+\tau \hh,\ff](\bx,\bar{\bx})\D_{\bar{\bx}}[\pa_x \hh](\bx)\mbox{d}\bar{\bx} \right\Vert_{H^3(D_{\ep})} \leq C(\e,\|\ff\|_{H^3(D_{\ep})})\tau^2,
\]
and taking the limit as $\tau\to 0$ we have proved \eqref{limit2} .

\subsubsection{Computation of \eqref{limit1}}
Similarly, since the profile $\varpi$ is a smooth and compactly supported function, condition $\eqref{limit1}$ reduces to check
\[
\lim_{\tau\to 0} \left\Vert \int_{D_{\ep}(y)}\left(\frac{\cK[\ff+\tau \hh,\ff](\bx,\bar{\bx})}{\tau}-\Psi_1[\ff](\bx,\bar{\bx})\D_{\bar{\bx}}[\hh](\bx)\right)\D_{\bar{\bx}}[\pa_x \ff](\bx)\mbox{d}\bar{\bx} \right\Vert_{H^3(D_{\ep})}=0.
\]
As before, there are not boundary terms to handle due to the  support of $\varpi$ and definition $D_{\e}(y).$
Consequently, the above reduces to prove that each of the terms of $H^3(D_{\ep})-$norm tends to zero.
In the same spirit of \eqref{resumen1lema} or \eqref{limit2}, the proof of \eqref{limit1} reduces to check  that
\begin{equation}\label{resumen3lema}
\left\Vert \int_{D_{\ep}(y)}\p_{\bx}^i\left\lbrace \frac{\cK[\ff+\tau \hh,\ff](\bx,\bar{\bx})}{\tau}-\Psi_1[\ff](\bx,\bar{\bx})\D_{\bar{\bx}}[\hh](\bx)\right\rbrace\D_{\bar{\bx}}[\p_{\bx}^{3-i}\pa_x \ff](\bx)\mbox{d}\bar{\bx} \right\Vert_{L^2(D_{\ep})}^2 \hspace*{-0.5 cm}\leq C(\e,\|\ff\|_{H^{4,3}(D_{\ep})})\tau^2,
\end{equation}
for $0\leq i \leq 3$ and $\ff \in\mathbb{B}_{\d}(H^{4,3}(D_{\ep}))$ with $\d(\e)$ small enough and $\hh\in H^{4,3}(D_{\ep})$ with $\|\hh\|_{H^{4,3}(D_{\ep})}=1.$
Now proceeding as before, we use repeatedly inequality \eqref{L2difference} and the fact that $\|\hh\|_{H^{4,3}(D_{\ep})}=1$.

For the case $i=0$ or $i=1$, as $\ff\in H^{4,3}(D_{\ep})$ implies $\pa_x\ff,\p_{\bx}^2\pa_x\ff \in L^2(D_{\ep}),$ we  obtain
\begin{align*}
&\left\Vert \int_{D_{\ep}(y)}\left(\frac{\cK[\ff+\tau \hh,\ff](\bx,\bar{\bx})}{\tau}-\Psi_1[\ff](\bx,\bar{\bx})\D_{\bar{\bx}}[\hh](\bx)\right)\D_{\bar{\bx}}[\pa_x \ff](\bx)\mbox{d}\bar{\bx} \right\Vert_{L^2(D_{\ep})}^2\\
&\qquad \qquad \leq 4|D_{\ep}|^2 \|\ff\|_{H^{4,3}(D_{\ep})}^2 \sup_{\bx\in D_{\ep}}\left( \sup_{\bar{\bx}\in D_{\ep}(y)}\left|\frac{\cK[\ff+\tau \hh,\ff](\bx,\bar{\bx})}{\tau}-\Psi_1[\ff](\bx,\bar{\bx})\D_{\bar{\bx}}[\hh](\bx)\right|^2 \right),
\end{align*}
and
\begin{align*}
&\left\Vert \int_{D_{\ep}(y)}\p_{\bx}\left\lbrace \frac{\cK[\ff+\tau \hh,\ff](\bx,\bar{\bx})}{\tau}-\Psi_1[\ff](\bx,\bar{\bx})\D_{\bar{\bx}}[\hh](\bx)\right\rbrace\D_{\bar{\bx}}[\p_{\bx}^2\pa_x \ff](\bx)\mbox{d}\bar{\bx} \right\Vert_{L^2(D_{\ep})}^2\\
&\qquad \qquad  \leq 4|D_{\ep}| \|\ff\|_{H^{4,3}(D_{\ep})}^2 \sup_{\bx\in D_{\ep}}\left( \int_{D_{\ep}(y)}\left|\p_{\bx}\left\lbrace\frac{\cK[\ff+\tau \hh,\ff](\bx,\bar{\bx})}{\tau}-\Psi_1[\ff](\bx,\bar{\bx})\D_{\bar{\bx}}[\hh](\bx)\right\rbrace\right|^2\mbox{d}\bar{\bx} \right).
\end{align*}
For the case $i=2$, as $\ff\in H^{4,3}(D_{\ep})$ we get $\p_{\bx}\pa_x\ff \in H^2(D_{\ep})\subset C^{\g}(D_{\ep})$ for $0<\g<1,$ which give us
\begin{align*}
&\left\Vert \int_{D_{\ep}(y)}\p_{\bx}^2\left\lbrace \frac{\cK[\ff+\tau \hh,\ff](\bx,\bar{\bx})}{\tau}-\Psi_1[\ff](\bx,\bar{\bx})\D_{\bar{\bx}}[\hh](\bx)\right\rbrace\D_{\bar{\bx}}[\p_{\bx}\pa_x \ff](\bx)\mbox{d}\bar{\bx} \right\Vert_{L^2(D_{\ep})}^2\\
&\qquad  \leq |D_{\ep}|\|\ff\|^2_{H^{4,3}(D_{\ep})} \int_{D_{\ep}}\left(\int_{D_\ep(y)}\left|\p_{\bx}^2\left\lbrace \frac{\cK[\ff+\tau \hh,\ff](\bx,\bar{\bx})}{\tau}-\Psi_1[\ff](\bx,\bar{\bx})\D_{\bar{\bx}}[\hh](\bx)\right\rbrace\right|^2 |\bar{\bx}|^{2\g}\mbox{d}\bar{\bx} \right)\mbox{d}\bx.
\end{align*}
For the last case $i=3$, as $\pa_x\ff\in H^3(D_{\ep})\subset C^1(D_{\ep})$, we obtain
\begin{align*}
&\left\Vert \int_{D_{\ep}(y)}\p_{\bx}^3\left\lbrace \frac{\cK[\ff+\tau \hh,\ff](\bx,\bar{\bx})}{\tau}-\Psi_1[\ff](\bx,\bar{\bx})\D_{\bar{\bx}}[\hh](\bx)\right\rbrace\D_{\bar{\bx}}[\pa_x \ff](\bx)\mbox{d}\bar{\bx} \right\Vert_{L^2(D_{\ep})}^2\\
&\qquad  \leq |D_{\ep}|\|\ff\|^2_{H^{4,3}(D_{\ep})} \int_{D_{\ep}}\left(\int_{D_\ep(y)}\left|\p_{\bx}^3\left\lbrace \frac{\cK[\ff+\tau \hh,\ff](\bx,\bar{\bx})}{\tau}-\Psi_1[\ff](\bx,\bar{\bx})\D_{\bar{\bx}}[\hh](\bx)\right\rbrace\right|^2 |\bar{\bx}|^{2}\mbox{d}\bar{\bx} \right)\mbox{d}\bx.
\end{align*}
For the sake of clarity, the last term of each of the above expressions can be handle using auxiliary Lemma \ref{l:K/TAU-PSI}.
After that, collecting all we get
\[
 \left\Vert \int_{D_{\ep}(y)}\left(\frac{\cK[\ff+\tau \hh,\ff](\bx,\bar{\bx})}{\tau}-\Psi_1[\ff](\bx,\bar{\bx})\D_{\bar{\bx}}[\hh](\bx)\right)\D_{\bar{\bx}}[\pa_x \ff](\bx)\mbox{d}\bar{\bx} \right\Vert_{H^3(D_{\ep})}^2\leq C(\e,\|\ff\|_{H^{4,3}(D_{\ep})})\tau^2,
\]
and taking the limit as $\tau\to 0$ we have proved \eqref{limit1}.\\

This shows the existence of G\^ateaux derivative and now we intend to prove the continuity of the map  $\ff\to D_\ff F[\l,\ff]$
from $X(D_{\ep})$ to the space $\mathsf{L}(X(D_{\ep}),Y(D_{\ep}))$ of all bounded linear operators from $X(D_{\ep})$ to $Y(D_{\ep})$.  This is a consequence of the following estimate:
\begin{equation}\label{continuity}
\|D_\ff F[\l,\ff']\hh-D_\ff  F[\l,\ff'']\hh\|_{H^{3}(D_{\ep})}\lesssim \|\ff'-\ff''\|_{H^{4,3}(D_{\ep})},
\end{equation}
for any pair $\ff',\ff'' \in \mathbb{B}_{\d}(X(D_{\ep}))$ and $h\in X(D_{\ep}).$
As the profile function $\varpi$ is smooth it is easy to check that condition \eqref{continuity} holds if and only if the following bounds are satisfied
\begin{align}
\left\Vert \int_{D_{\ep}(y)} \cK[\ff',\ff''](\bx,\bar{\bx})\D_{\bar{\bx}}[\pa_x \hh](\bx) \mbox{d}\bar{\bx} \right\Vert_{H^3(D_{\ep})} &\lesssim \|\ff'-\ff''\|_{H^{4,3}(D_{\ep})}, \label{Frechet1}\\
\left\Vert \int_{D_{\ep}(y)} (\pa_xK[\ff']-\pa_xK[\ff''])(\bx,\bar{\bx})\D_{\bar{\bx}}[ \hh](\bx) \mbox{d}\bar{\bx} \right\Vert_{H^3(D_{\ep})} &\lesssim \|\ff'-\ff''\|_{H^{4,3}(D_{\ep})}. \label{Frechet2}
\end{align}
Note that \eqref{Frechet1} follows by direct application of Lemma \ref{l:DK[f',f'']}. In order to obtain \eqref{Frechet2}, we just need to note that adding and subtracting appropriate terms we have
\[
(\pa_xK[\ff']-\pa_xK[\ff''])(\bx,\bar{\bx})=\Psi_1[\ff'](\bx,\bar{\bx}) \D_{\bar{\bx}}[\pa_x(\ff'-\ff'')](\bx) +(\Psi_1[\ff']-\Psi_1[\ff''])(\bx,\bar{\bx})\D_{\bar{\bx}}[\pa_x\ff''](\bx).
\]
The first term of the above expression is trivially bounded and for the other one we just need to use auxiliary Lemma \ref{l:Psi_1[f']-Psi_1[f'']}. Consequently, we have obtained that the G\^ateaux derivatives are continuous
with respect to the strong topology and hence they are in fact Fr\'echet derivatives. Therefore, we can conclude that the Fr\'{e}chet derivative exists and coincides with the G\^ateaux derivative. See \cite{Gateaux-Frechet} for more details.
\end{proof}

\section{Analysis of the linear part}\label{s:analysislinear}

Hypothesis 3 and 4 in the C-R theorem have to do with the linear part of equation \eqref{e:goal}.

Recall that the linearization of \eqref{e:functional} around $\ff=0$, thanks to the expression \eqref{def:dF}, is given by
\[
\cL[\l]\hh:=D_\ff F[\l,0]\hh.
\]
That is,
\begin{align}\label{e:Linear}
\cL[\l]\hh(\bx)= \, &(\l+y) \pa_x \hh(\bx)\\
&-\frac{1}{4\pi}\int_{\T\times I_\ep}\varpi_{\e,\kappa}'(\bar{y})\log\left[\cosh(\bar{y}-\bar{y})-\cos(\bar{x}-\bar{x})\right](\pa_x\hh(\bx)-\pa_x\hh(\bar{\bx}))\mbox{d}\bar{\bx}.\nonumber
\end{align}

In order to study both the kernel and image of $\cL[\lambda]$ we will introduce some modification which allows us to realize an asymptotic analysis on $\ep$.

\subsection{Decomposition of the linear operator}\label{s:simplifications}

In first place, we define a primitive function $\Omega_{\e,\kappa}$ of the profile function $\varpi_{\e,\kappa}$
by
\begin{align*}
\Omega_{\ep,\kappa}(y):=\int_{0}^{y} \varpi_{\ep,\kappa}(\bar{y})\mbox{d}\bar{y}.
\end{align*}
Then
\begin{equation}\label{e:simplification0}
\frac{1}{4\pi}\int_{\T\times I_\ep}\varpi_{\e,\k}'(\bar{y})\log\left[\cosh(y-\bar{y})-\cos(x-\bar{x})\right]\mbox{d}\bar{\bx}=\Omega_{\e,\k}(y).
\end{equation}
\begin{proof}[Proof of \eqref{e:simplification0}]
Since
\[
\frac{1}{4\pi}\int_{\T}\log\left[\cosh(\bar{y})-\cos(\bar{x})\right]\mbox{d}\bar{x}=\frac{1}{2}\left(|\bar{y}|-\log 2 \right),
\]
we have that
\begin{align}\label{last}
\frac{1}{4\pi}&\int_{\T\times I_\ep }\varpi_{\e,\k}'(\bar{y})\log\left[\cosh(y-\bar{y})-\cos(x-\bar{x})\right]\mbox{d}\bar{\bx}=\frac{1}{2}\int_{\R}|y-\bar{y}|\varpi_{\e,\k}'(\bar{y}) \mbox{d}\bar{y}\\
&=\frac{1}{2}\int_{\R}\sign(y-\bar{y})\varpi_{\ep,\kappa}(\bar{y})d\bar{y}=\frac{1}{2}\int_{\R}\sign(y-\bar{y})\Omega_{\ep,\kappa}'(\bar{y})\mbox{d}\bar{y}\nonumber\\
&=\Omega_{\ep,\kappa}(y)
+\lim_{R\to 0}\left( \sign(y-R)\Omega_{\ep,\kappa}(R)-\sign(y+R)\Omega_{\ep,\kappa}(-R)\right). \nonumber
\end{align}
Since $\varpi_{\ep,\kappa}$ is even, their primitive $\Omega_{\ep,\kappa}$ is odd. Thus the limit in the last line of \eqref{last} is zero.
\end{proof}

Using \eqref{e:simplification0} one learns that

\begin{align}\label{e:Linear2}
\cL[\l]\hh(\bx)=\,&(\l+y-\Omega_{\ep,\kappa}(y)) \pa_x \hh(\bx)\\
&+\frac{1}{4\pi}\int_{\T\times I_\ep}\varpi_{\e,\kappa}'(\bar{y})\log\left[\cosh(\bar{y}-\bar{y})-\cos(\bar{x}-\bar{x})\right]\pa_x\hh(\bar{\bx}))\mbox{d}\bar{\bx}. \nonumber
\end{align}

Now, recalling that $\hh\in X(D_\ep)$, we can use the expansion
\begin{equation}\label{e:hserie}
\hh(\bx)=\sum_{n=1}^{\infty}\hh_n(y)\cos(n x),
\end{equation}
with
\begin{align}
\sum_{n=1}^\infty\sum_{j=0}^3\sum_{k=0}^{4-j}n^{2k}\|\pa^j_y\hh_n\|_{L^2(\R)}^2<\infty\label{littlenorm},
\end{align}
which give us that
\begin{multline}\label{e:coefficientequation}
\cL[\l]\left(\sum_{n=1}^\infty \hh_n(y)\cos(nx)\right)\\
=\sum_{n=1}^\infty (-1)n\sin(nx)\left( (\lambda+y-\Omega_{\ep,\kappa}(y))h_n(y)-\frac{1}{2}\int_{I_\ep}\varpi'_{\ep,\kappa}(\bar{y})\hh_n(\bar{y})e^{-n|y-\bar{y}|}\mbox{d}\bar{y}\right).
\end{multline}

\begin{proof}[Proof of \eqref{e:coefficientequation}]
We  compute $\cL[\lambda]$ acting on the mode $\hh_n(y)\cos(nx)$:
\begin{align*}
\cL[\lambda](\hh_n(y)\cos(nx))=&-n \sin(n x)(\l+y-\Omega_{\ep,\kappa}(y)) \hh_n(y)\\
&-\frac{n}{4\pi}\int_{\T\times I_\ep}\varpi_{\e,\kappa}'(\bar{y})\log\left(\cosh(y-\bar{y})-\cos(x-\bar{x})\right)\hh_n(\bar{y})\sin(n \bar{x})\mbox{d}\bar{\bx} .
\end{align*}
Now, we integrate in the $\bar{x}-$variable on the above double integral, which give us
\begin{align*}
\frac{1}{4\pi}\int_{\T}\log\left[\cosh(y-\bar{y})-\cos(x-\bar{x})\right]\sin(n \bar{x})\mbox{d}\bar{x} &= \frac{1}{4\pi}\int_{\T}\log\left[\cosh(y-\bar{y})-\cos(\bar{x})\right]\sin(n (x-\bar{x}))\mbox{d}\bar{x}\\
&=\frac{\sin(n x)}{4\pi}\int_{\T}\log\left[\cosh(y-\bar{y})-\cos(\bar{x})\right]\cos(n \bar{x})\mbox{d}\bar{x},
\end{align*}
where the last step is due to trigonometric identities and the parity of the integrands.
In addition, last integral can be written as
\begin{multline*}
\frac{\sin(n x)}{n} \frac{1}{4\pi}\int_{\T}\log\left[\cosh(y-\bar{y})-\cos(\bar{x})\right](\sin(n \bar{x}))' \mbox{d}\bar{x}\\
=-\frac{\sin(n x)}{n} \frac{1}{4\pi}\int_{\T}\frac{ \sin(\bar{x}) \sin(n \bar{x})}{\cosh(y-\bar{y})-\cos(\bar{x})}\mbox{d}\bar{x}=-\frac{\sin(n x)}{n} \frac{1}{2}e^{-n|y-\bar{y}|}.
\end{multline*}
This last integral can be computed, for example, by using residues. These last computations yield
\[
\cL[\lambda]\left(\hh_n(y)\cos(n x)\right)=-n\sin(n x)\left((\lambda+y-\Omega_{\ep,\kappa}(y))h_n(y)-\frac{1}{2n}\int_{I_\ep}\omega_{\ep,\kappa}'(\bar{y})\hh(\bar{y})e^{-n|y-\bar{y}|}\mbox{d}\bar{y}\right).
\]
Combining everything we obtain that $\cL[\l]\hh(\bx)$ admits an expansion given by
\[
\sum_{n=1}^{\infty}(-1)n \sin(n x)\left[(\l+y) \hh_n(y)-\Omega_{\e,\kappa}(y)\hh_n(y)-\frac{1}{2n}\int_{I_\ep} \omega_{\e,\kappa}'(\bar{y}) \hh_n(\bar{y}) e^{-n|y-\bar{y}|}\mbox{d}\bar{y}\right].
\]
\end{proof}

To sum up, for any $h\in H^3(I_\ep)$ the operators $\cL_n[\lambda]$ will be define as
\begin{align*}
\cL_n[\lambda]h(y):=-n(\lambda+y-\Omega_{\ep,\kappa}(y))h(y)+\frac{1}{2}\int_{I_\ep}\varpi'_{\ep,\kappa}(\bar{y})h(\bar{y})e^{-n|y-\bar{y}|}\mbox{d}\bar{y},
\end{align*}
in such a way that, in $X(D_\ep)$, the full operator is
\begin{align*}
\cL[\lambda] =\sum_{n=1}^\infty  \sin(nx)\cL_n[\lambda]\Pi_n,
\end{align*}
where $\Pi_n$ is just the projector onto $\cos(nx)$.
\subsection{Rescaling of the decomposition} Since $I_\ep=[-1-\e,-1+\e]\cup[1-\e,1+\e],$ we will make a reflection to consider just the domain $y\in [1-\ep, 1+\ep].$ Recall that $\varpi'_{\ep,\kappa}$ and $\Omega_{\ep,\kappa}$ are odd functions, then we have that
\[
\cL_n[\lambda]h_n (+y)\\=-n(\lambda+y-\Omega_{\ep,\kappa}(y))h_n(+y)+\frac{1}{2}\int_{\R} \varpi'_{\ep,\kappa}(\bar{y})h_n(\bar{y})e^{-n|y-\bar{y}|}\mbox{d}\bar{y},\quad \text{on }  y\in \left(1-\ep,1+\ep\right),
\]
and
\[
\cL_n[\lambda]h_n (-y)=-n(\lambda-y+\Omega_{\ep,\kappa}(y))h_n(-y)+\frac{1}{2}\int_{\R} \varpi'_{\ep,\kappa}(\bar{y})h_n(\bar{y})e^{-n|y+\bar{y}|}\mbox{d}\bar{y},\quad \text{on } y\in \left(1-\ep,1+\ep\right).
\]
In addition, the integral part can be write as
\begin{align*}
\int_{\R} \varpi'_{\ep,\kappa}(\bar{y})h_n(\bar{y})e^{-n|y-\bar{y}|}\mbox{d}\bar{y}=\int_{1-\ep}^{1+\ep}\varpi'_{\ep,\kappa}(\bar{y})\left(h_n(y)e^{-n|y-\bar{y}|}-h_n(-y)e^{-n|y+\bar{y}|}\right)\mbox{d}\bar{y},
\end{align*}
and
\begin{align*}
\int_{\R} \varpi'_{\ep,\kappa}(\bar{y})h_n(\bar{y})e^{-n|y+\bar{y}|}\mbox{d}\bar{y}=\int_{1-\ep}^{1+\ep}\varpi'_{\ep,\kappa}(\bar{y})\left(h_n(y)e^{-n|y+\bar{y}|}-h_n(-y)e^{-n|y-\bar{y}|}\right)\mbox{d}\bar{y}.
\end{align*}
Thus, if we define on $y\in \left(1-\ep, 1+\ep\right)$ the auxiliary functions
\[
\begin{minipage}{.4\linewidth}
  \centering
  $\begin{array}{r@{{}\mathrel{}{}}l}
    h^+_n(y)&:=h_n(+y),\\
    \cL_n^+[\lambda]\left(\begin{array}{cc}h_n^+ \\ h_n^-\end{array}\right)(y)&:=\cL_n[\lambda] h_n(+y),
  \end{array}$
\end{minipage}
\hspace{2 cm}
\begin{minipage}{.4\linewidth}
  \centering
  $\begin{array}{r@{{}\mathrel{}{}}l}
 h^-_n(y)&:=h_n(-y),\\
    \cL_n^-[\lambda]\left(\begin{array}{cc}h_n^+ \\ h_n^-\end{array}\right)(y)&:=\cL_n[\lambda] h_n(-y),
  \end{array}$
\end{minipage}
\]
we find on $y\in \left(1-\ep, 1+\ep\right)$ that
\[
\cL_n^+[\lambda]\left(\begin{array}{cc}h_n^+ \\ h_n^-\end{array}\right)(y)\\
=-n(\lambda+y-\Omega_{\ep,\kappa}(y))h_n^+(y)+\frac{1}{2}\int_{1-\ep}^{1+\ep}\varpi'_{\ep,\kappa}(\bar{y})\left(h^+_n(\bar{y})e^{-n|y-\bar{y}|}-h^-_n(\bar{y})e^{-n|y+\bar{y}|}\right)\mbox{d}\bar{y},
\]
and
\[
\cL_n^-[\lambda]\left(\begin{array}{cc}h_n^+ \\h_n^-\end{array}\right)(y)\\=-n(\lambda-y+\Omega_{\ep,\kappa}(y))h_n^-(y)+\frac{1}{2}\int_{1-\ep}^{1+\ep}\varpi'_{\ep,\kappa}(\bar{y})\left(h^+_n(\bar{y})e^{-n|y+\bar{y}|}-h^-_n(\bar{y})e^{-n|y-\bar{y}|}\right)\mbox{d}\bar{y}.
\]

Now, we make the change of variables $z=\frac{\bar{y}-1}{\ep}$ or $\bar{y}=1+\ep z$  inside of the integrals to get
\[
\frac{1}{2}\int_{-1}^{1}\ep\varpi'_{\ep,\kappa}\left(1+\ep z\right)\left(h^+_n\left(1+\ep z\right)e^{-n|y-\left(1+\ep z \right)|}-h^-_n\left(1+\ep z\right)e^{-n|y+\left(1+\ep z\right)|}\right)\dz,
\]
and
\[
\frac{1}{2}\int_{-1}^{1}\ep\varpi'_{\ep,\kappa}\left(1+\ep z\right)\left(h^+_n\left(1+\ep z\right)e^{-n|y+\left(1+\ep z\right)|}-h^-_n\left(1+\ep z\right)e^{-n|y-\left(1+\ep z\right)|}\right)\dz
\]
We define on $z\in(-1,1)$ the auxiliary functions
\[
\begin{minipage}{.5\linewidth}
  \centering
  $\begin{array}{r@{{}\mathrel{}{}}l}
    w_n^+(z)&:=h_n^+\left(1+\ep z\right),\\
    L^+_n[\lambda]\left(\begin{array}{cc} w_n^+ \\w_n^-\end{array}\right)(z)&:=\cL_n^+[\lambda]\left(\begin{array}{cc}h_n^+ \\h_n^-\end{array}\right)(1+\ep z),
  \end{array}$
\end{minipage}
\begin{minipage}{.5\linewidth}
  \centering
  $\begin{array}{r@{{}\mathrel{}{}}l}
 w_n^-(z)&:=h_n^-\left(1+\ep z\right),\\
     L^-_n[\lambda]\left(\begin{array}{cc} w_n^+ \\w_n^-\end{array}\right)(z)&:=\cL_n^-[\lambda]\left(\begin{array}{cc}h_n^+ \\h_n^-\end{array}\right)(1+\ep z).
  \end{array}$
\end{minipage}
\]
Now, recalling that $\varpi_{\ep,\kappa}(y)=\ep\varphi_\kappa((y-1)/\ep)$, we find that
\begin{align*}
L^+_n[\lambda]\left(\begin{array}{cc}w_n^+ \\w_n^-\end{array}\right)=&-n\left(\lambda+1+\ep z-\Omega_{\ep,\kappa}\left(1+\ep z\right)\right)w_n^+(z)\\
&+\frac{1}{2}\int_{-1}^{1}\ep\varphi'_\kappa(\bar{z})\left(w^+_n(\bar{z})e^{-n\ep|z-\bar{z}|}-w^-_n\left(\bar{z}\right)e^{-n\left|2+\ep(z+\bar{z})\right|}\right)\mbox{d}\bar{z},
\end{align*}
and
\begin{align*}
L^-_n[\lambda]\left(\begin{array}{cc}w_n^+ \\w_n^-\end{array}\right)=&-n\left(\lambda-1-\ep z+\Omega_{\ep,\kappa}\left(1+\ep z\right)\right)w_n^-(z)\\
&+\frac{1}{2}\int_{-1}^{1}\ep\varphi_\kappa'(\bar{z})\left(w^+_n\left(\bar{z}\right)e^{-n|2+\ep(z+\bar{z})|}-w^-_n\left(\bar{z}\right)e^{-n\ep|z-\bar{z}|}\right)\mbox{d}\bar{z}.
\end{align*}
In addition, one can compute that
\begin{align*}
&\Omega_{\ep,\kappa}(y)=\int_{0}^y\varpi_{\ep,\kappa}(\bar{y})\mbox{d}\bar{y}=\int_{0}^{1-\ep}\varpi_{\ep,\kappa}(\bar{y})\mbox{d}\bar{y}+\int_{1-\ep}^y\varpi_{\ep,\kappa}(\bar{y})\mbox{d}\bar{y}=\ep(1-\ep)+\int_{1-\ep}^y\varpi_{\ep,\kappa}(\bar{y})\mbox{d}\bar{y}.
\end{align*}
Thus, the primitive can be write in a more convenient way as
\begin{align*}
\Omega_{\ep,\kappa}(1+\ep z)&=\ep(1-\ep)+\int_{-1}^{1+\ep z}\ep\varphi_{\kappa}\left(\frac{\bar{y}-1}{\ep}\right)\mbox{d}\bar{y}\\
&=\ep(1-\ep)+\ep^2\int_{-1}^z\varphi_\kappa(\bar{z})\mbox{d}\bar{z}=\ep +\ep^2\left(-1+\int_{-1}^z\varphi_{\kappa}(\bar{z})\mbox{d}\bar{z}\right).
\end{align*}
Therefore, we have
\begin{align*}
&\Omega_{\ep,\kappa}\left(1+\ep z\right)= \ep+\ep^2\Phi_{\kappa}(z),
\end{align*}
where
\begin{align*}
\Phi_\kappa(z):=-1+\int_{-1}^z\varphi_{\kappa}(\bar{z})\mbox{d}\bar{z}.
\end{align*}

Combining all we have proved that
\begin{align}\label{l+}
L^+_n[\lambda]\left(\begin{array}{cc}w_n^+ \\w_n^-\end{array}\right)=&-n\left(\lambda+1-\ep+\ep z-\ep^2\Phi_\k(z)\right)w_n^+(z)\\
&+\frac{\ep}{2}\int_{-1}^{1}\varphi'_\kappa(\bar{z})\left(w^+_n(\bar{z})e^{-n\ep|z-\bar{z}|}-w^-_n\left(\bar{z}\right)e^{-n\left|2+\ep(z+\bar{z})\right|}\right)\mbox{d}\bar{z},\nonumber
\end{align}
\begin{align}\label{l-}
L^-_n[\lambda]\left(\begin{array}{cc}w_n^+ \\w_n^-\end{array}\right)=&-n\left(\lambda-1+\ep-\ep z+\ep^2\Phi_\k(z)\right)w_n^-(z)\\
&+\frac{\ep}{2}\int_{-1}^{1}\varphi'_\kappa(\bar{z})\left(w^+_n\left(\bar{z}\right)e^{-n|2+\ep(z+\bar{z})|}-w^-_n\left(\bar{z}\right)e^{-n\ep|z-\bar{z}|}\right)\mbox{d}\bar{z}.\nonumber
\end{align}

\subsection{One dimensionality of the kernel of the linear operator}
The following section consists on two well-differentiated parts. On one hand,  we will prove that there exists an element in the kernel of the linear operator. On the other hand, we will prove that the kernel is the span of this element. To sum up, the main result of this section is to prove the following result:
\begin{theorem}\label{eyufijadom}
For any $M>1,$ there exist positive $\kappa_0=\kappa_0(M)$ and positive $\ep_0=\ep_0(M)$ such that for all $0<\ep<\ep_0$, $0\leq\kappa<\kappa_0$ and  $m\in \N$ such that $m<M$, we can find $\lambda_{\ep,\kappa,m}\in \R$ and a $\frac{2\pi}{m}-$periodic and non-identically zero function $h_{\ep,\kappa,m}\in X(D_\ep)$  solving
\begin{align*}
\cL_{\ep,\kappa}[\lambda_{\ep,\kappa,m}] h_{\ep,\kappa,m}=0,
\end{align*}
where the functional $\cL_{\ep,\kappa}[\lambda]$ is given in \eqref{e:Linear}.
In addition, the kernel $\cL_{\ep,\kappa}[\lambda_{\ep,\kappa,m}]$ on $X(D_\ep)$ is the span of $h_{\ep,\kappa,m}$ and the regularity of the solution is in fact $h_{\ep,\kappa,m}\in C^\infty(D_{\ep}$).

 Importantly, $h_{\ep,\kappa,m}(x,y)$, with $x\in\T$ and $y\in I_{\ep}$, depends non trivially on $x$.
\end{theorem}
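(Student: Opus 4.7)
The strategy is to exploit the Fourier decomposition of $\cL[\lambda]$ already carried out above: any $h\in X(D_\e)$ expands as $h(\bx)=\sum_{n\geq 1}h_n(y)\cos(nx)$, and $\cL[\lambda]h=0$ reduces to solving $L_n^{\pm}[\lambda](w_n^+,w_n^-)=0$ on $(-1,1)$ for every $n\geq 1$. The plan is, for each fixed integer $m$ with $1\leq m<M$, to locate a bifurcation value $\lambda=\lambda_{\e,\kappa,m}$ via an asymptotic analysis in $\e$, to verify that the kernel is one-dimensional inside mode $n=m$, and finally to check that for $n\neq m$ the operator $L_n[\lambda_{\e,\kappa,m}]$ is invertible. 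The candidate eigenfunction then takes the form $h_{\e,\kappa,m}(x,y)=w_m^+\!\bigl(\tfrac{y-1}{\e}\bigr)\cos(mx)$ on $y>0$, reflected evenly to $y<0$.

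For the asymptotic analysis I would set $\lambda=-1+\e\mu$ and inspect \eqref{l+}--\eqref{l-}. In $L_m^-$ the multiplicative coefficient in front of $w_m^-$ equals $-m(-2+O(\e))$, which is $O(1)$, while the integral term carries a prefactor $\e$; hence the equation $L_m^-=0$ forces $w_m^-=O(\e)$ and can be solved for $w_m^-$ in terms of $w_m^+$ by a Neumann series. Substituting back into $L_m^+=0$, dividing by $\e$, and sending $\e\to 0$, the leading-order equation becomes
\begin{equation*}
-m(\mu-1+z)\,w(z)+\tfrac{1}{2}\int_{-1}^{1}\varphi_\kappa'(\bar z)\,w(\bar z)\,\dd\bar z=0,
\end{equation*}
so necessarily $w(z)=C/\bigl(m(\mu-1+z)\bigr)$. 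Compatibility with the integral constraint yields the dispersion relation
\begin{equation*}
\int_{-1}^{1}\frac{\varphi_\kappa'(\bar z)}{\mu-1+\bar z}\,\dd\bar z=2m,
\end{equation*}
which at $\kappa=0$ (where $\varphi_\kappa'\equiv-\tfrac{1}{2}$) becomes $-\tfrac{1}{2}\log\bigl(\mu/(\mu-2)\bigr)=2m$, giving the explicit root $\mu_m=-2/(e^{4m}-1)\in(-\infty,0)$. By Lemma \ref{propiedadesvarphi} the map $(\mu,\kappa)\mapsto \int \varphi_\kappa'/(\mu-1+\bar z)\,\dd\bar z$ is smooth in a neighborhood of $(\mu_m,0)$ with non-vanishing $\mu$-derivative, so the implicit function theorem produces a family $\mu_m(\kappa)$ of roots for $\kappa\geq 0$ small.

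To promote this leading-order kernel to a true kernel at $\e>0$, I would apply the implicit function theorem to the map $(\e,\lambda,w^+,w^-)\mapsto (L_m^+,L_m^-)$ after a convenient normalization (say $\|w^+\|_{L^2}=1$), using $\mu_m(\kappa)$ as the starting value for $\lambda$. The Fréchet derivative at the leading-order solution amounts to a compact perturbation of a Fredholm operator of index zero with one-dimensional kernel, which is the key spectral fact that must be established in appropriate weighted Sobolev spaces on $(-1,1)$; this is the main analytic obstacle, since the integral kernel $e^{-m\e|z-\bar z|}$ degenerates to a constant and the multiplicative factor $\mu-1+z$ may vanish near $z=1-\mu$ for unfavorable $\mu$ (which our choice $\mu_m<0$ precisely avoids).

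For the non-resonance of other modes, observe that the same asymptotic analysis applied to $L_n$ with $n\neq m$ and $\lambda=\lambda_{\e,\kappa,m}=-1+\e\mu_m+o(\e)$ produces a leading-order operator whose multiplicative coefficient $-n(\mu_m-1+z)$ has a definite sign on $[-1,1]$ (since $\mu_m<0$), and whose integral dispersion condition $\int\varphi_\kappa'/(\mu-1+\bar z)=2n$ fails at $\mu=\mu_m$ whenever $n\neq m$. A quantitative bound, uniform in $1\leq n\leq M$ and handled separately for the $n\geq M$ tail by direct coercivity of the dominant multiplicative term, gives invertibility of $L_n[\lambda_{\e,\kappa,m}]$ and hence the one-dimensionality of the kernel of $\cL_{\e,\kappa}[\lambda_{\e,\kappa,m}]$ inside $X(D_\e)$. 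Finally, smoothness $h_{\e,\kappa,m}\in C^\infty(D_\e)$ is obtained by bootstrapping: since $\varpi_{\e,\kappa}\in C_c^\infty$ and all exponential kernels appearing in $L_m^\pm$ are $C^\infty$ in $z$, differentiating the identity $\cL_{\e,\kappa}[\lambda_{\e,\kappa,m}]h_{\e,\kappa,m}=0$ promotes regularity one derivative at a time. Non-trivial dependence on $x$ is automatic because every element of $X(D_\e)$ has zero mean in $x$ and $h_{\e,\kappa,m}$ carries the pure mode $\cos(mx)$ with $m\geq 1$.
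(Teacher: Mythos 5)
Your plan is correct and follows essentially the same route as the paper: Fourier decomposition into modes $\cos(nx)$, rescaling to $z=(\bar y-1)/\e$, an asymptotic analysis in $\e$ leading to the explicit leading-order kernel $C/(\mu-1+z)$ and the dispersion relation whose $\kappa=0$ root is $\mu_m=-2/(e^{4m}-1)$, invertibility of the off-resonant modes $n\neq m$ from the failure of the dispersion relation, and a bootstrap for smoothness. The only differences are cosmetic: you bifurcate from $\lambda\approx-1$ (the mirror image, under $y\mapsto-y$, of the paper's choice $\lambda=1+\lambda_1\e+\dots$ with $\lambda_1=2/(e^{4m}-1)=-\mu_m$, cf.\ \eqref{lambda1formula}), and you package the passage from the leading order to $\e>0$ as an implicit function theorem after eliminating the small component by a Neumann series, where the paper instead writes an explicit two-term expansion and runs a contraction in $\e$ (Lemmas \ref{primero}, \ref{segundo} playing the role of your Fredholm/transversality step).
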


\begin{remark} In the statement of the theorem it has been made explicit the dependence of $\cL[\lambda]$ on the parameters $\ep$ and $\kappa$ although it has not been  in \eqref{e:Linear}. It is easy to check that $\cL[\lambda]$ in \eqref{e:Linear} depends on $\ep$ and $\kappa$ through $\Omega_{\ep,\kappa}$ and $\varpi_{\ep,\kappa}$.
\end{remark}

\begin{remark} It is important to emphasize that parameter $\ep$ does not depend on $\kappa$. \end{remark}

\begin{remark}\label{remark}The speed of the traveling wave $\lambda_{\ep,\kappa,m}$ satisfies the expansion in terms of $\ep$-parameter:
\begin{align}\label{speedsuperindex}
\lambda_{\ep,\kappa,m}=1+\lambda^1_{\kappa,m}\ep+\lambda^2_{\ep,\kappa,m}\ep^2,
\end{align}
where
\begin{align*}
\lambda^1_{\k,m}=O(1),
\end{align*}
and
\begin{align*}
|\lambda^2_{\ep,\kappa,m}|\leq C(M).
\end{align*}

\end{remark}

\begin{remark}
In the rest of the section \ref{s:analysislinear} we avoid to use subscripts $\ep,\k,m$ in expression \eqref{speedsuperindex}, which we will ignore in favor of readability. We write with abuse of notation $\lambda=1+\lambda_1\ep+\lambda_2^\ep \ep^2.$
\end{remark}
\begin{remark}
For the degenerate case $(\k=0)$, the same result works line by line for all $m\in \N.$ That is, all the computations can be taken independent of $M.$
\end{remark}

\subsubsection{Proof of existence }\label{s:existence}

In order to show the existence of $(\lambda_{\ep,\kappa,m},h_{\ep,\kappa,m})$ solving
$$\mathcal{L}[\lambda_{\ep,\kappa,m}]h_{\ep,\kappa,m}=0,$$
we fix  $m\in \N$ with $m<M$, and take
\begin{align}
w^+_n(z)=\left\{\begin{array}{cc} 0 & n\neq m,\\ a(z) & n=m, \end{array}\right.\label{w+solucion}\\
w^-_n(z)=\left\{\begin{array}{cc} 0 & n\neq m,\\ b(z) & n=m, \end{array}\right.\label{w-solucion}
\end{align}
where $a$ and $b$ depend on $\ep,\kappa$ and $m$ but we do not make this dependence explicit for sake of simplicity. Thus, we have to find $(\lambda,a,b)\in \R\times H^3([-1,1]\times H^3([-1,1])$ solving
\begin{align}\label{l+cero}
&L^+_m[\lambda]\left(\begin{array}{cc}a \\b\end{array}\right)=0,\\
\label{l-cero}&L^-_m[\lambda]\left(\begin{array}{cc}a \\ b\end{array}\right)=0,\end{align}
where $L^{\pm}_m$ are given in \eqref{l+} and \eqref{l-}. After that, the solution $(\lambda_{\ep,\kappa,m},\,h_{\ep,\kappa,m})$ will be given by
\begin{align*}
&\lambda_{\ep,\kappa,m}=\lambda,\\
&h_{\ep,\kappa,m}(x,y)=a\left(\frac{y-1}{\ep}\right)\cos(mx)\quad \text{for $y\in[+1-\ep,+1+\ep]$},\\
&h_{\ep,\kappa,m}(x,y)=b\left(\frac{-y-1}{\ep}\right)\cos(mx)\quad \text{for $y\in [-1-\ep, -1+\ep]$}.
\end{align*}

In order to solve \eqref{l+cero} and \eqref{l-cero} we introduce the ansatzs
\begin{align*}
a(z)&= \hspace{1.25cm} a_1(z)\ep+a_2^\ep(z) \ep^2,\\
b(z)&=b_0(z)+b^\ep_1(z)\ep,\\
\end{align*}
and
\[
\lambda=1+\lambda_1\ep+\lambda_2^\ep \ep^2,
\]
where $a_1$, $b_0$ and $\lambda_1$ will depend on $\kappa$ and $m$ but they will not depend on $\ep$. In addition, the remaining terms $a^\ep_2 $, $b^\ep_1)$ and $\l^\ep_2$ depend on $\ep$, $\kappa$ and $m$.
Imposing the equations
\begin{align}\label{eqb}
&(1+\lambda_1-z)b_0(z)=-\frac{1}{2m}\int_{-1}^1\varphi_{\kappa}'(\bar{z})b_0(\bar{z})\mbox{d}\bar{z},\\
&2a_1=-\frac{1}{2m}e^{-2m}\int_{-1}^1\varphi_{\kappa}'(\bar{z})b_0(\bar{z})\mbox{d}\bar{z},\label{eqa}
\end{align}
we get for $(\lambda_2^\ep, a^\ep_2, b^\ep_1)$ the system
\begin{align}\label{eqaep}
&2a^\ep_{2}(z)+\frac{e^{-2m}}{2m}\int_{-1}^1 \varphi'_\kappa(\bar{z})b^\ep_1(\bar{z})e^{-m\ep(z+\bar{z})}\mbox{d}\bar{z}=A_0[a_1,  b_0](z)+\ep A_1[a^\ep_2,\lambda_2^\ep; a_1,b_0,\lambda_1](z),\\
&(1+\lambda_1-z)b_1^\ep(z)+\frac{1}{2m}\int_{-1}^1\varphi'_{\kappa}(\bar{z})b_1^\ep(\bar{z})\mbox{d}\bar{z}+\lambda^\ep_2 b_0(z)=B_0[a_1, b_0](z)+\ep B_1[a_2^\ep, b_1^\ep,\lambda^\ep_2](z),\label{eqbep}
\end{align}
where
\begin{align*}
A_0[a_1,\, b_0](z)&=\frac{1}{2m}\int_{-1}^1 \varphi'_\kappa(\bar{z})a_1(\bar{z})e^{-m\ep|z-\bar{z}|}\mbox{d}\bar{z}
-\frac{e^{-2m}}{2m}\int_{-1}^1\varphi'_{\kappa}(\bar{z})b_0(\bar{z})\frac{e^{-m\ep(z+\bar{z})}-1}{\ep}\mbox{d}\bar{z}
+\ep a_1 \Phi_{\kappa}(z),\\
B_0[a_1,\, b_0](z)&=-\Phi_{\kappa}(z)b_0(z)+\frac{e^{-2m}}{2m}\int_{-1}^1 \varphi'_{\kappa}(\bar{z})a_1(\bar{z})e^{-m\ep(z+\bar{z})}\mbox{d}\bar{z},\\
\end{align*}
and
\begin{align*}
A_1[a^\ep_2,\, \lambda_2^\ep;\, a_1,\, \lambda_1](z)=-&(-1+\lambda_1+z)a^\ep_2(z)-a_1\lambda_2^\ep-\ep(\lambda^\ep_2-\Phi_{\kappa}(z))a^\ep_2(z)\\
+&\frac{1}{2m}\int_{-1}^1\varphi'_{\kappa}(\bar{z})a^\ep_2(\bar{z})e^{-m\ep|z-\bar{z}|}\mbox{d}\bar{z},\\
B_1[a^\ep_2,\,b_1^\ep,\, \lambda_2^\ep](z)=-& (\Phi_{\kappa}(z)+\lambda_2^\ep)b^\ep_1(z)+\frac{e^{-2m}}{2m}\int_{-1}^1\varphi'_\kappa(\bar{z})a_2^\ep(\bar{z})e^{-m\ep(z+\bar{z})}\mbox{d}\bar{z}\\
-&\frac{1}{2m}\int_{-1}^1\varphi'_{\kappa}(\bar{z})b_1^\ep(\bar{z})\frac{e^{-m\ep|z-\bar{z}|}-1}{\ep}\mbox{d}\bar{z}.
\end{align*}
Here, we note that
\begin{align*}
\|A_0\|_{L^2}\leq &C\left(|a_1|+\|b_0\|_{L^2}\right),\\
\|A_1\|_{L^2}\leq &C\left(1+|\lambda_1|\right)\|a_2^\ep\|_{L^2}+ C|a_1||\lambda^\ep_2|+C |\lambda^\ep_2| \|a_2^\ep\|_{L^2}+C(1+\ep)\|a_2^\ep\|_{L^2},\\
\|B_0\|_{L^2}\leq & C\left(|a_1|+\|b_0\|_{L^2}\right),\\
\|B_1\|_{L^2}\leq & C \|b_1^\ep\|_{L^2}+ C \|b_1^\ep \|_{L^2}|\lambda_2^\ep|+ C\left(\|a_2^\ep\|_{L^2}+\|b_1^\ep\|_{L^2}\right),
\end{align*}
where the constant $C$ does not depend on neither $\ep$, $\kappa$ or $m$, and $\|\cdot\|_{L^2}=\|\cdot\|_{L^2([-1,1])}$.

So first we will solve \eqref{eqb} on $b_0$ by choosing $\lambda_1$ in a suitable way. This will give $a_1$ in \eqref{eqa}. Then, equations \eqref{eqaep} and \eqref{eqbep} will be written to be able to determine $a_2^\ep$, $b_1^\ep$ and $\lambda_2^\ep$ by a contraction argument in $\ep$.\\

We deal with equation \eqref{eqb} in the following proposition.
\begin{proposition}\label{propo1} For any $M>1,$ there exists $\kappa_0=\kappa_0(M)$ such that for all $1\leq m<M$ and $0\leq \kappa<\kappa_0$, we can find a solution $(\lambda_1,\, b_0)\in \R\times C^\infty([-1,1])$  of equation \eqref{eqb}. In addition,  fixed $m$ and $\kappa$, this solution is unique (modulo multiplication by constant) and satisfies
\begin{align}
b_0(z)=\frac{1}{1+\lambda_1-z},\label{b0formula} \\
\lambda_1=\frac{2}{e^{4m}-1}+\emph{error}\label{lambda1formula},
\end{align}
where
\begin{align*}
|\emph{error}|\leq C(M)\kappa.
\end{align*}
Moreover, we get
\begin{align*}
\|b_0\|_{L^2([-1,1])}\leq C(M), && \|\pa_z^{k)}b_0\|_{L^\infty([-1,\,1])}\leq C(k,M),\quad k=0,1,2,... && a_1=\frac{1}{2}e^{-2m}.
\end{align*}
\end{proposition}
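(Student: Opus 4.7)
The strategy is to exploit the fact that \eqref{eqb} is a separable equation: the right hand side is a constant in $z$. Calling this constant $C$, any solution must have the closed form
\[
b_0(z)=\frac{C}{1+\lambda_1-z},
\]
which is well defined and smooth on $[-1,1]$ provided $\lambda_1\in\R\setminus[-2,0]$. Substituting back into \eqref{eqb} and requiring $C\neq 0$ (otherwise $b_0\equiv 0$) yields the scalar compatibility condition
\[
G(\lambda_1,\kappa):=1+\frac{1}{2m}\int_{-1}^{1}\frac{\varphi_\kappa'(\bar z)}{1+\lambda_1-\bar z}\,\mbox{d}\bar z=0.
\]
Thus the proof reduces to finding a root $\lambda_1=\lambda_1(\kappa,m)$ of $G$; the freedom in $C$ is exactly the ``modulo multiplication by a constant'' ambiguity in the statement, and we fix the normalization $C=1$.

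First I would handle the unperturbed case $\kappa=0$, where $\varphi_0'\equiv-\tfrac12$ on $(-1,1)$. A direct computation gives
\[
G(\lambda_1,0)=1-\frac{1}{4m}\log\frac{2+\lambda_1}{\lambda_1},
\]
whose unique root in $(0,\infty)$ is $\lambda_1^\star(m):=\dfrac{2}{e^{4m}-1}$, yielding the leading term in \eqref{lambda1formula}. Because $m<M$, this root stays bounded below by $\lambda_1^\star(M)>0$ and above by $\lambda_1^\star(1)$, i.e.\ uniformly away from the singular set $[-2,0]$.

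Next I would promote this root to the $\kappa>0$ setting by the Implicit Function Theorem. The two ingredients are:
\begin{enumerate}
\item Non-degeneracy: $\partial_{\lambda_1}G(\lambda_1^\star(m),0)=\frac{1}{4m}\bigl(\frac{1}{\lambda_1^\star}-\frac{1}{2+\lambda_1^\star}\bigr)\neq 0$.
\item Smallness of the defect: using Lemma \ref{propiedadesvarphi}(3), namely $\|\varphi_\kappa'+\tfrac12\|_{L^1([-1,1])}\leq C\kappa$, together with the uniform bound $|1+\lambda_1^\star-\bar z|\geq \lambda_1^\star(M)$ for $\bar z\in[-1,1]$, one gets
\[
|G(\lambda_1^\star(m),\kappa)-G(\lambda_1^\star(m),0)|\leq \frac{1}{2m\,\lambda_1^\star(M)}\,\|\varphi_\kappa'+\tfrac12\|_{L^1}\leq C(M)\kappa.
\]
\end{enumerate}
Combining these, for $\kappa_0=\kappa_0(M)$ small enough, the IFT produces a unique root $\lambda_1(\kappa,m)$ in a neighborhood of $\lambda_1^\star(m)$ with $|\lambda_1(\kappa,m)-\lambda_1^\star(m)|\leq C(M)\kappa$, which is \eqref{lambda1formula}.

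With $\lambda_1$ in hand and $\lambda_1>\tfrac12\lambda_1^\star(M)>0$, the normalized profile $b_0(z)=1/(1+\lambda_1-z)$ is smooth on $[-1,1]$, and derivative bounds $\|\partial_z^k b_0\|_{L^\infty}\leq C(k,M)$ follow from differentiating the geometric-type expression and using the uniform lower bound on $1+\lambda_1-z$. The value of $a_1$ is then read off directly from \eqref{eqa}: since our normalization forces $-\frac{1}{2m}\int_{-1}^1\varphi_\kappa'(\bar z)b_0(\bar z)\,\mbox{d}\bar z=1$ (that is precisely $G(\lambda_1,\kappa)=0$), we obtain $2a_1=e^{-2m}$, hence $a_1=\tfrac12 e^{-2m}$.

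The main technical subtlety will be controlling the dependence of $\kappa_0$ on $M$: since $\lambda_1^\star(m)\sim 2e^{-4m}$ decays exponentially in $m$, the denominator in the defect bound forces $\kappa_0(M)$ to be exponentially small in $M$, and one must keep careful track of this to verify that the IFT hypothesis $C(M)\kappa\ll\partial_{\lambda_1}G$-size is genuinely satisfied for $m<M$. Everything else is a clean computation. The degenerate case $\kappa=0$ is handled by the explicit formula above without reference to $M$, which explains the concluding remark of the proposition's statement.
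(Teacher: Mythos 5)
Your proposal is correct in substance but follows a genuinely different route from the paper's. You solve the compatibility condition by computing the $\kappa=0$ root $\lambda_1^\star(m)=2/(e^{4m}-1)$ explicitly and then perturbing via the Implicit Function Theorem, tracking the defect $\|\varphi_\kappa'+\tfrac12\|_{L^1}\leq C\kappa$ against the non-degeneracy $\partial_{\lambda_1}G\sim(4m\lambda_1^\star)^{-1}$. The paper instead (Lemma \ref{lambda1}) proves existence directly for each $\kappa\geq 0$ by showing $I(\lambda):=\frac{1}{2m}\int_{-1}^1\frac{-\varphi_\kappa'(\bar z)}{1+\lambda-\bar z}\,\mbox{d}\bar z$ is continuous, strictly decreasing, tends to $0$ at infinity, and satisfies $I(\lambda^*)\geq 1$ for a $\lambda^*=\lambda^*(M)$ independent of $\kappa$ and $m$; this yields existence, \emph{global} uniqueness on $\lambda\geq 0$, and the uniform lower bound $\lambda_1\geq\lambda^*(M)$ in one stroke. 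For the error estimate the paper then inverts the relation exactly, writing $\lambda_1=2/(e^{4m-\Lambda_1}-1)$ with $|\Lambda_1|\leq C\kappa/\lambda^*(M)$, which makes the bound $|\lambda_1-\tfrac{2}{e^{4m}-1}|\leq C(M)\kappa$ transparent without any fixed-point bookkeeping. Your approach buys an explicit leading-order formula from the start and correctly identifies the delicate point (the exponentially small $\kappa_0(M)$ forced by $\lambda_1^\star(m)\sim 2e^{-4m}$); as you note, the defect and the derivative both carry a factor $1/\lambda_1^\star$, so the IFT correction is genuinely $O(\kappa)$ and the scheme closes.

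Two small points you should patch. First, the IFT only gives uniqueness of $\lambda_1$ in a neighborhood of $\lambda_1^\star(m)$, whereas the proposition asserts uniqueness of the solution for fixed $m,\kappa$; this is recovered immediately by observing that $\partial_\lambda G(\lambda,\kappa)=\frac{1}{2m}\int_{-1}^1\frac{-\varphi_\kappa'(\bar z)}{(1+\lambda-\bar z)^2}\,\mbox{d}\bar z>0$ for \emph{all} admissible $\lambda$ and $\kappa$ (not just at the base point), so the root is globally unique — this is exactly the monotonicity the paper exploits. Second, the later sections of the paper lean on the uniform lower bound $\lambda_1\geq\lambda^*(M)>0$; in your scheme this follows from $\lambda_1\geq\lambda_1^\star(m)-C\kappa\geq\tfrac12\lambda_1^\star(M)$ once $\kappa_0(M)$ is chosen small enough, but it is worth stating explicitly since it is the quantity actually used downstream (e.g.\ in Lemma \ref{segundo} and Lemma \ref{goticocoercivo}).
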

\begin{remark}
It is important to emphasize that constant $C(M)$ depend on $M$  but it does not depend on neither $m$ or $\kappa$, for $0\leq \kappa<\kappa_0$ and $1\leq m<M$.
\end{remark}

Before prove the previous result, we pick $\l_1$ appropriately. Notice that from \eqref{eqb} we see that $b_0$ must be of the form
\begin{align}\label{ansatzb}
b_0(z)=\frac{C}{1+\lambda_1-z},
\end{align}
for some free constant $C$. Plugging \eqref{ansatzb} into \eqref{eqb} one finds that
\begin{align*}
1=\frac{1}{2m}\int_{-1}^1\frac{-\varphi_\kappa'(\bar{z})}{1+\lambda_1-\bar{z}}\mbox{d}\bar{z},
\end{align*}
which is an equation for $\lambda_1$.
\begin{lemma}\label{lambda1} For any $M>1,$ there exist $\kappa'_0=\kappa'_0(M)$ such that for all $0\leq \kappa \leq \kappa'_0$ and $1\leq m<M$, we can find $\lambda_1>0$ (depending on $\kappa$ and $m$) such that
\begin{align}\label{lambda1eq}
1=\frac{1}{2m}\int_{-1}^1\frac{-\varphi_{\kappa}'(\bar{z})}{1+\lambda_1-\bar{z}}\mbox{d}\bar{z}.
\end{align}
In addition, fixed $m$ and $\kappa$ this solution is unique and satisfies
$$\lambda_1\geq\lambda^*(M)>0,$$
where $\lambda^*(M)$ just depends on $M$ (it does not depend on either $\kappa$ or $m$).
\end{lemma}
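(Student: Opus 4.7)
The plan is to view equation \eqref{lambda1eq} as the equation $G(\lambda,\kappa,m)=1$ for the scalar function
\[
G(\lambda,\kappa,m):=\frac{1}{2m}\int_{-1}^{1}\frac{-\varphi'_{\kappa}(\bar z)}{1+\lambda-\bar z}\,\dz,
\]
and to analyze it by treating $\kappa$ as a small perturbation of the explicitly solvable $\kappa=0$ baseline. Lemma~\ref{propiedadesvarphi}(2) ensures $-\varphi'_\kappa>0$ on $(-1,1)$, so for every $\lambda>0$ the integrand is positive and the $\lambda\mapsto(1+\lambda-\bar z)^{-1}$ factor is strictly decreasing. Hence $G$ is strictly positive and strictly decreasing in $\lambda$ on $(0,\infty)$, which will immediately give uniqueness of any positive solution.

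At $\kappa=0$ one has $\varphi'_0\equiv-\frac12$ on $(-1,1)$, so a direct computation yields
\[
G(\lambda,0,m)=\frac{1}{4m}\log\!\left(\frac{2+\lambda}{\lambda}\right),
\]
which decreases monotonically from $+\infty$ at $\lambda=0^+$ to $0$ at $\lambda=+\infty$. Solving $G(\lambda,0,m)=1$ gives the explicit root $\lambda_1^{(0)}(m)=\frac{2}{e^{4m}-1}$. Since this is decreasing in $m$, it is uniformly bounded below by $\frac{2}{e^{4M}-1}>0$ for $1\le m<M$.

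For $\kappa>0$, I would use Lemma~\ref{propiedadesvarphi}(3) to produce the quantitative perturbation estimate
\[
|G(\lambda,\kappa,m)-G(\lambda,0,m)|\le\frac{1}{2m}\int_{-1}^{1}\frac{|\varphi'_\kappa(\bar z)+\tfrac12|}{1+\lambda-\bar z}\,\dz\le\frac{C\kappa}{2m\,\lambda},
\]
using that $1+\lambda-\bar z\ge\lambda$ on $[-1,1]$. Fix a small $\delta\in(0,\tfrac12\lambda_1^{(0)}(M))$. The explicit formula for $G(\cdot,0,m)$ shows that on the interval $[\lambda_1^{(0)}(m)-\delta,\lambda_1^{(0)}(m)+\delta]$ the function $G(\cdot,0,m)-1$ takes values $\ge\eta$ at the left endpoint and $\le-\eta$ at the right endpoint, with some $\eta=\eta(\delta,M)>0$ uniform in $m\in[1,M)$. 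Choosing $\kappa_0'(M)$ small enough so that $\frac{C\kappa_0'}{2m\delta}<\eta$ for all $1\le m<M$, the perturbation bound preserves the sign change with $\kappa$ in place of $0$, and the intermediate value theorem (applied to $\lambda\mapsto G(\lambda,\kappa,m)$, which is clearly continuous) yields a root $\lambda_1\in(\lambda_1^{(0)}(m)-\delta,\lambda_1^{(0)}(m)+\delta)$. Uniqueness follows from the monotonicity already noted, and the uniform lower bound $\lambda_1\ge\lambda^\star(M):=\frac{1}{e^{4M}-1}$ follows from $\lambda_1>\lambda_1^{(0)}(m)-\delta\ge\lambda_1^{(0)}(M)-\tfrac12\lambda_1^{(0)}(M)=\lambda^\star(M)$.

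The only delicate point is making every constant uniform in $m\in[1,M)$; this is where the restriction $m<M$ enters. Since the error bound $\frac{C\kappa}{2m\delta}$ is monotone in $m$ and $\lambda_1^{(0)}(m)$ is bounded below uniformly in $m\in[1,M)$ by $\lambda_1^{(0)}(M)$, a single choice of $\kappa_0'(M)$ and $\delta(M)$ works for the whole range, so no real obstacle arises beyond this bookkeeping.
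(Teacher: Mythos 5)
Your argument is correct, and it reaches the conclusion by a slightly different route than the paper. The paper's proof also exploits the two pillars you use --- strict monotonicity of $\lambda\mapsto I(\lambda)$ for uniqueness, and the comparison $\|\varphi'_\kappa+\tfrac12\|_{L^1}\le C\kappa$ from Lemma~\ref{propiedadesvarphi} --- but it applies them one-sidedly: it derives the lower bound $I(\lambda)\ge -\tfrac{C}{2M}-\tfrac{1}{4M}\log(\lambda+\kappa)$, picks $\gamma$ with $-\tfrac{C}{2M}-\tfrac{1}{4M}\log\gamma\ge 1$, sets $\lambda^*=\kappa_0'=\gamma/2$ so that $I(\lambda^*)\ge 1$, and then invokes the decay $I(\lambda)\to 0$ as $\lambda\to\infty$ together with the intermediate value theorem on $[\lambda^*,\infty)$. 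You instead solve the $\kappa=0$ problem exactly, obtaining the root $\tfrac{2}{e^{4m}-1}$, and run a two-sided perturbative sign-change argument on a fixed $\delta$-window around it. What your version buys is a localization of $\lambda_1$ near $\tfrac{2}{e^{4m}-1}$, which anticipates the expansion \eqref{lambda1formula} that the paper only establishes afterwards in the proof of Proposition~\ref{propo1}; what the paper's version buys is brevity, since it never needs the uniform-in-$m$ slope estimate for $G(\cdot,0,m)$ near the root that your $\eta=\eta(\delta,M)$ tacitly relies on. That estimate does hold --- on your window one has $\left|\partial_\lambda G(\lambda,0,m)\right|=\tfrac{1}{2m}\tfrac{1}{\lambda(2+\lambda)}\ge c(M)>0$ because $\lambda$ stays in a compact subinterval of $(0,\infty)$ determined by $M$ and $\tfrac1m\ge\tfrac1M$ --- but you should record it explicitly, since it is the only place where the uniformity in $m$ is not completely automatic.
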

\begin{proof}
We will use the properties of $\varphi_\kappa$ stated in Lemma \ref{propiedadesvarphi}. Firstly, we define the function
\begin{align*}
I(\lambda):=\frac{1}{2m}\int_{-1}^1\frac{-\varphi'_\kappa(\bar{z})}{1+\lambda-\bar{z}}\mbox{d}\bar{z},
\end{align*}
for $\lambda\geq 0$. We have that $I(\lambda)$ is  positive, continuous, decreasing with $I'(\lambda)<0$ and satisfying
\begin{equation}\label{limlambdainf}
\lim_{\lambda\to +\infty}I(\lambda)=0.
\end{equation}
In addition, using Lemma \ref{propiedadesvarphi} we obtain the following lower bound:
\begin{align*}
I(\lambda)&=\frac{1}{2m}\int_{-1}^1\frac{-\varphi_\kappa'(\bar{z})}{1+\lambda-\bar{z}}\mbox{d}\bar{z}\geq \frac{1}{2M}\int_{-1}^1\frac{-\varphi'_{\kappa}(\bar{z})}{1+\lambda-\bar{z}}\mbox{d}\bar{z}\geq \frac{1}{2M}\int_{-1}^{1-\kappa}\frac{-\varphi'_{\kappa}(\bar{z})}{1+\lambda-\bar{z}}\mbox{d}\bar{z}\\
&=\frac{1}{2M}\int_{-1}^{1-\k}\frac{-\varphi'_{\kappa}(\bar{z})-\tfrac{1}{2}}{1+\lambda-\bar{z}}\mbox{d}\bar{z}+\frac{1}{4M}\int_{-1}^{1-\kappa}\frac{1}{1+\lambda-\bar{z}}\mbox{d}\bar{z}
\\ &\geq -\frac{1}{2M}\frac{1}{\lambda+\kappa}\|\varphi'_{\kappa}+\tfrac{1}{2}\|_{L^1([-1,1-\k])}+\frac{1}{4M}\left(\log(2+\lambda)-\log(\lambda+\kappa)\right)\\
&\geq -\frac{C}{2M}\frac{\kappa}{\lambda+\kappa}+\frac{1}{4M}\log(2+\lambda)-\frac{1}{4M}\log(\lambda+\kappa),\\
&\geq -\frac{C}{2M}-\frac{1}{4M}\log(\lambda+\kappa).
\end{align*}
Let $\gamma$ be small enough  such that
$$1\leq -\frac{C}{2M}-\frac{1}{4M}\log(\gamma).$$
Notice that $\gamma$ only depends on $M$. Then, taking $\kappa'_0=\gamma/2$ and $\lambda^*=\gamma/2$, we have proved that
\begin{align*}
I(\lambda^*)\geq -\frac{C}{2M}-\frac{1}{4M}\log(\gamma/2+\kappa)\geq -\frac{C}{2M}-\frac{1}{4M}\log(\gamma/2+\kappa'_0)\geq 1.
\end{align*}
for any $0\leq \kappa \leq \kappa'_0=\g/2$ and $1\leq m\leq M$.

Combining the lower bound with \eqref{limlambdainf} and the fact that $I(\l)$ is strictly decreasing, we can conclude that there exists a unique $\lambda_1\geq \lambda^*$ solving \eqref{lambda1eq} for all $0\leq \kappa<\kappa'_0$ and $1\leq m\leq M$.
\end{proof}

After choose a suitable $\lambda_1>\lambda^*(M)$ by Lemma \ref{lambda1}, we obtain $b_0$ through equation \eqref{ansatzb}. Now, we have all the ingredients to check the conclusions of Proposition \ref{propo1}.
\begin{proof}[Proof of Proposition \ref{propo1}]
Notice that
$$b_0(z)=\frac{1}{1+\lambda_1-z},$$
solves \eqref{eqb} and
$\|b_0\|_{L^2([-1,1])}=\sqrt{\frac{2}{(2+\lambda_1)\lambda_1}}<\sqrt{\frac{1}{\lambda_1}}\leq \sqrt{\frac{1}{\lambda^*(M)}}$, with $\lambda^*(M)$ given by Lemma \ref{lambda1}. We also have that $\|\pa_z^{k)}b_0\|_{L^\infty([-1,1])}\leq C(k,M)$. From \eqref{eqa} and \eqref{lambda1eq} we have that $a_1=\frac{1}{2}e^{-2m}$.

In order to get \eqref{lambda1formula} we proceed as follow. From the identity \eqref{lambda1eq} we get
\begin{align}\label{despejando}
1=\frac{1}{2m}\int_{-1}^1\frac{-\varphi'_\kappa(\bar{z})-1/2}{1+\lambda_1-\bar{z}}\mbox{d}\bar{z}+\frac{1}{4m}\int_{-1}^1\frac{1}{1+\lambda_1-\bar{z}}\mbox{d}\bar{z}=\frac{1}{4m}\left(\Lambda_1+\log\left(\frac{2+\l_1}{\l_1}\right)\right),
\end{align}
where
\[
\Lambda_1:=2\int_{-1}^1\frac{-\varphi'_\kappa(\bar{z})-1/2}{1+\lambda_1-\bar{z}}\mbox{d}\bar{z}.
\]
Solving \eqref{despejando} we obtain that
\[
\l_1=\frac{2}{e^{4m-\Lambda_1}-1},
\]
and consequently
\begin{align*}
\left|\lambda_1-\frac{2}{e^{4m}-1}\right|\leq 2e^{4m}\frac{\left|1-e^{-\Lambda_1}\right|}{\left|e^{4m-\Lambda_1}-1\right|\left(e^{4m}-1\right)}.
\end{align*}
One can estimate each of the terms of the last factor. Notice that numerator can be written as
\begin{align*}
|1-e^{-\Lambda_1}|=\left|\int_{-1}^0\left(\frac{d}{ds}e^{s\Lambda_1}\right) \mbox{d}s\right|= \left|\Lambda_1\int_{0}^1 e^{-s\Lambda_1}\mbox{d}s\right|\leq |\Lambda_1|e^{|\Lambda_1|}.
\end{align*}
In addition
\begin{align*}
|\Lambda_1|\leq 2\int_{-1}^1\frac{|\varphi'_\kappa(\bar{z})+\tfrac{1}{2}|}{1+\lambda_1-\bar{z}}\mbox{d}\bar{z}\leq \frac{C\kappa}{\lambda_1}\leq \frac{C\kappa}{\lambda^*(M)},
\end{align*}
where $C$ is a universal constant coming from Lemma \ref{propiedadesvarphi}. For the denominator we just note that
\begin{align*}
e^{4m-\Lambda_1}-1\geq e^{4-\frac{C\kappa}{\lambda^*(M)}}-1\geq e^2-1>1 \qquad \text{if} \quad \tfrac{C\kappa}{\lambda^*(M)}<2.
\end{align*}
Then, taking $\kappa<\kappa_0\equiv \min(\kappa'_0, \kappa''_0)$, where $\kappa'_0$ is given by Lemma \ref{lambda1} and $\kappa''_0=\frac{2\lambda^*(M)}{C}$, we find that
\begin{align*}
\left|\lambda_1-\frac{2}{e^{4m}-1}\right|\leq C(M)\kappa.
\end{align*}
for all $0\leq \kappa <\kappa_0$ and $1\leq m <M$.
\end{proof}

To sum up, Proposition \ref{propo1} shows that
\begin{align*}
\|A_0\|_{L^2}\leq &C(M),\\
\|A_1\|_{L^2}\leq &C(M)\left(\|a_2^\ep\|_{L^2}+ |\lambda_2^\ep|+ |\lambda^\ep_2| \|a_2^\ep\|_{L^2}\right),\\
\|B_0\|_{L^2}\leq & C(M),\\
\|B_1\|_{L^2}\leq & C(M)\left( \|b_1^\ep\|_{L^2}+ \|b_1^\ep\|_{L^2}|\lambda_2^\ep|+ \|a_2^\ep\|_{L^2}\right).
\end{align*}
In the following two lemmas we are going to learn how to invert the left hand side of \eqref{eqbep}.

\begin{lemma}\label{primero}Let $\lambda_1$ be as in Lemma \ref{lambda1} and let $F\in L^2([-1,1])$ satisfying  $(F, \varphi'_{\kappa}b_0)_{L^2}=0.$ Then
$$f(z):=F(z)b_0(z)=\frac{F(z)}{1+\lambda_1-z},$$
solves
$$(1+\lambda_1-z)f(z)+\frac{1}{2m}\int_{-1}^1\varphi'_{\kappa}(\bar{z})f(\bar{z})\mbox{d}\bar{z}=F(z).$$
\end{lemma}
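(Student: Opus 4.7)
The verification is essentially a direct substitution, so my plan is to simply plug the ansatz into the equation and check that the integral term vanishes thanks to the orthogonality hypothesis. Specifically, with $f(z) = F(z)/(1+\lambda_1-z)$ one has
\[
(1+\lambda_1-z)f(z) = F(z),
\]
so the equation reduces to requiring
\[
\frac{1}{2m}\int_{-1}^1 \varphi'_\kappa(\bar z)\, f(\bar z)\, \mbox{d}\bar z = 0.
\]
But $f(\bar z) = F(\bar z) b_0(\bar z)$, so the integral equals $\frac{1}{2m}(F,\varphi'_\kappa b_0)_{L^2}$, which vanishes by assumption. No further computation is needed.

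Conceptually, the statement is a concrete instance of the Fredholm alternative for the operator $A f(z) := (1+\lambda_1-z)f(z) + \frac{1}{2m}\int_{-1}^1 \varphi'_\kappa(\bar z) f(\bar z)\,\mbox{d}\bar z$, which is the sum of a multiplication operator by the function $1+\lambda_1-z$ (bounded away from zero on $[-1,1]$ since $\lambda_1 \geq \lambda^\ast(M)>0$ by \lem{lambda1}) plus a rank-one perturbation. Equation \eqref{eqb} and the definition \eqref{lambda1eq} of $\lambda_1$ together say precisely that $b_0$ spans $\ker A$. The natural candidate for a solution operator is then $F \mapsto F\cdot b_0$, because applying $A$ to $F b_0$ produces $F$ plus a constant (in $z$) that depends linearly on $F$. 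That constant is exactly $\frac{1}{2m}(F,\varphi'_\kappa b_0)_{L^2}$, so the solvability condition $(F,\varphi'_\kappa b_0)_{L^2}=0$ is the obstruction that one must (and, by hypothesis, does) impose.

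There is no real obstacle here; the only subtlety I would flag is that the orthogonality is taken against $\varphi'_\kappa b_0$ rather than $b_0$, which is consistent with identifying $\ker A^\ast = \mathrm{span}(\varphi'_\kappa b_0)$ (as can be checked by a short computation using \eqref{lambda1eq}). The real work comes later, when this lemma will be used to invert the left-hand side of \eqref{eqbep} and iterate to produce $(a_2^\ep, b_1^\ep, \lambda_2^\ep)$ by a contraction argument.
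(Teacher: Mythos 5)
Your verification is correct and coincides with the paper's proof, which is exactly the direct substitution you describe: $(1+\lambda_1-z)f(z)=F(z)$ and the integral term equals $\tfrac{1}{2m}(F,\varphi'_{\kappa}b_0)_{L^2}=0$ by hypothesis. The Fredholm-alternative framing is a nice conceptual gloss but adds nothing needed for the statement itself.
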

\begin{proof}Direct computation.
\end{proof}

\begin{lemma} \label{segundo}Let $\lambda_1$ be as in Lemma \ref{lambda1} and let $G\in L^2([-1,1])$. Then, if
\begin{align*}
\lambda:=\frac{\int_{-1}^1G(\bar{z})b_0(\bar{z})\varphi'_{\kappa}(\bar{z})\mbox{d}\bar{z}}{\int_{-1}^1 b_0(\bar{z}) b_0(\bar{z})\varphi'_\kappa(\bar{z})\mbox{d}\bar{z}},
\end{align*}
the function
\begin{align*}
f(z):=(G(z)-\lambda b_0(z))b_0(z)=\frac{G(z)-\lambda b_0(z)}{1+\lambda_1-z},
\end{align*}
solves
\begin{align}\label{eqsegundo}
(1+\lambda_1-z)f(z)+\frac{1}{2m}\int_{-1}^1\varphi_{\kappa}'(\bar{z})f(\bar{z})\mbox{d}\bar{z}+\lambda b_0(z)=G(z).
\end{align}
In addition,
\begin{align}\label{lambdasegundo}
|\lambda|\leq C(M) \|G\|_{L^2([-1,1])}.
\end{align}
\end{lemma}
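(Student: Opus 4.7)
The plan is a direct verification in two steps: check that the proposed $f$ satisfies \eqref{eqsegundo}, and then control $\lambda$ by Cauchy--Schwarz.

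First, I would substitute the formula $f(z) = (G(z)-\lambda b_0(z))\, b_0(z)$ into the left-hand side of \eqref{eqsegundo}. Since $b_0(z) = 1/(1+\lambda_1-z)$ by Proposition \ref{propo1}, we get immediately $(1+\lambda_1-z)f(z) = G(z)-\lambda b_0(z)$, so the equation reduces to the identity
$$\frac{1}{2m}\int_{-1}^1 \varphi'_\kappa(\bar z)f(\bar z)\,\mbox{d}\bar z = 0.$$
Expanding $f$ inside this integral splits it into two pieces: $\int \varphi'_\kappa G\, b_0\,\mbox{d}\bar z$ and $-\lambda \int \varphi'_\kappa b_0^2\,\mbox{d}\bar z$. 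Setting their sum to zero and solving for $\lambda$ yields exactly the definition given in the statement. Thus the equation is an algebraic tautology once $\lambda$ is chosen as prescribed, provided the denominator in the definition of $\lambda$ is nonzero.

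The one point that requires a little care is showing that the denominator is not merely nonzero but bounded away from zero uniformly in $\kappa$ and $m$, which is also what yields the bound \eqref{lambdasegundo}. Since Lemma \ref{propiedadesvarphi} gives $\varphi'_\kappa < 0$ on $(-1,1)$, the denominator $\int_{-1}^1 b_0^2 \varphi'_\kappa\,\mbox{d}\bar z$ is strictly negative; more quantitatively, using $b_0(z)\ge 1/(2+\lambda_1)$ together with $\int_{-1}^1(-\varphi'_\kappa)\,\mbox{d}\bar z = \varphi_\kappa(-1)-\varphi_\kappa(1) = 1$, I can bound
$$\left|\int_{-1}^1 b_0^2(\bar z)\varphi'_\kappa(\bar z)\,\mbox{d}\bar z\right| \ge \frac{1}{(2+\lambda_1)^2} \ge \frac{1}{(2+C(M))^2},$$
using that Proposition \ref{propo1} controls $\lambda_1$ from above in terms of $M$ alone.

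For the numerator, Cauchy--Schwarz gives $|\int G\, b_0\, \varphi'_\kappa\,\mbox{d}\bar z| \le \|G\|_{L^2}\|b_0\|_{L^\infty}\|\varphi'_\kappa\|_{L^2}$, and both $\|b_0\|_{L^\infty}\le 1/\lambda^*(M)$ and $\|\varphi'_\kappa\|_{L^2([-1,1])}$ (uniform in $\kappa$ by the construction of the mollification, since $\psi'_\kappa \le 1$) are under control. Combining the two estimates yields $|\lambda|\le C(M)\|G\|_{L^2}$. The only ``obstacle'' here is making the denominator bound quantitative, but once Proposition \ref{propo1} is in hand this is immediate, so the lemma is essentially a short calculation.
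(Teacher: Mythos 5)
Your proof is correct and follows essentially the same route as the paper: the chosen $\lambda$ is precisely what makes the integral term $\frac{1}{2m}\int_{-1}^1\varphi'_\kappa f$ vanish (this is the content of Lemma~\ref{primero}, applied to $F=G-\lambda b_0$), and the bound \eqref{lambdasegundo} follows from Cauchy--Schwarz on the numerator plus a uniform lower bound on the denominator. The only cosmetic difference is in that last lower bound: you use $b_0\geq (2+\lambda_1)^{-1}$ twice together with $\int_{-1}^1(-\varphi'_\kappa)=1$, whereas the paper uses $b_0\geq(2+\lambda_1)^{-1}$ once and the eigenvalue relation \eqref{lambda1eq} to evaluate $\int_{-1}^1(-\varphi'_\kappa)b_0=2m$; both reductions then need the universal upper bound $\lambda_1\leq C$, which you correctly supply.
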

\begin{proof}It is easy to check that $F(z):=G(z)-\lambda b_0$ satisfies $(F,\varphi'_{\kappa}b_0)_{L^2}=0$ by  the definition of $\lambda$. Then, we can apply directly Lemma \ref{primero} to obtain \eqref{eqsegundo}. To conclude \eqref{lambdasegundo}, we note that
\[
\left| \int_{-1}^1G(\bar{z})b_0(\bar{z})\varphi'_{\kappa}(\bar{z})\mbox{d}\bar{z}\right|\leq \|G\|_{L^2([-1,1])}\|b_0\|_{L^2([-1,1])}\|\varphi'_\k\|_{L^{\infty}([-1,1])}\leq C(M)\|G\|_{L^2([-1,1])},
\]
and since $b_0(z)>(2+\lambda_1)^{-1}$ we get
\begin{align*}
\left|\int_{-1}^1 b_0(\bar{z})b_0(\bar{z}) \varphi'_{\kappa}(\bar{z})\mbox{d}z\right|\geq \frac{1}{2+\lambda_1}\int_{-1}^1-\varphi'_{\kappa}(z)b_0(z)dz=\frac{2m}{2+\lambda_1},
\end{align*}
where in the last step we have used \eqref{lambda1eq}. Combining both we have
\[
|\lambda|\leq \frac{2+\lambda_1}{2m} C(M)\|G\|_{L^2([-1,1])}\leq C(M)\|G\|_{L^2([-1,1])}.
\]
Here we have used that $\lambda_1<C$ where $C$ is a universal constant. This is easy to check from
\begin{align*}
I(\mu)=\frac{1}{2m}\int_{-1}^1\frac{-\varphi'_\kappa(\bar{z})}{1+\mu-\bar{z}}\mbox{d}\bar{z}\leq C \log \left(1+\frac{2}{\mu}\right),
\end{align*}
since for $\mu$ big enough we get that $I(\mu)<1$. Recall that $\l_1$ solves the equation $I(\mu)=1$.
\end{proof}

Lemma \ref{segundo} implies that if we solve
\begin{align}\label{eqaep2}
&2a^\ep_{2}(z)+\frac{e^{-2m}}{2m}\int_{-1}^1 \varphi'_\kappa(\bar{z})b^\ep_1(\bar{z})e^{-m\ep(z+\bar{z})}\mbox{d}\bar{z}=A_0[a_1,b_0](z)+\ep A_1[a^\ep_2,\lambda_2^\ep; a_1,b_0,\lambda_1](z),\\
&b_1^\ep(z)= \left(B_0[a_1,b_0](z)+\ep B_1[a_2^\ep,b_1^\ep,\lambda_2^\ep](z)-\lambda_2^\ep b_0(z)\right)b_0(z),
\label{eqbep2}\\&\lambda_{2}^\ep= \frac{\int_{-1}^1(B_0[a_1,b_0](\bar{z})+\ep B_1[a_2^\ep,b_1^\ep,\lambda_2^\ep](\bar{z})) b_0(\bar{z}) \varphi'_{\kappa}(\bar{z}) \mbox{d}\bar{z}}{\int_{-1}^1 b_0(\bar{z}) b_0(\bar{z}) \varphi'_\kappa(\bar{z}) \mbox{d}\bar{z}},\label{eqlambda2}
\end{align}
we find a solution of \eqref{eqaep} and \eqref{eqbep}. Now, to obtain a solution of \eqref{eqaep2}, \eqref{eqbep2} and \eqref{eqlambda2} we firstly introduce \eqref{eqbep2} and \eqref{eqlambda2} in the second term of the left hand side of \eqref{eqaep2} and \eqref{eqlambda2} in the right hand side of \eqref{eqbep2}. In this way, a solution of

\begin{align}\label{eqaep3}
&2a^\ep_{2}(z)=\tilde{A}_0[a_1,b_0](z)+\ep \tilde{A}_1[a^\ep_2,b^\ep_1,\lambda_2^\ep; a_1,b_0,\lambda_1](z),\\
&b_1^\ep(z)=\tilde{B}_0[a_1,b_0](z)+\ep\tilde{B}_1[a^\ep_2,b_1^\ep,\lambda_2^\ep](z),
\label{eqbep3}\\&\lambda_{2}^\ep= \frac{\int_{-1}^1(B_0[a_1,b_0](\bar{z})+\ep B_1[a_2^\ep,b_1^\ep,\lambda_2^\ep](\bar{z})) b_0(\bar{z}) \varphi'_{\kappa}(\bar{z}) \mbox{d}\bar{z}}{\int_{-1}^1 b_0(\bar{z}) b_0(\bar{z}) \varphi'_\kappa(\bar{z}) \mbox{d}\bar{z}}\label{eqlambda3},
\end{align}
where
\begin{align*}
\tilde{A}_0[a_1,\, b_0](z)&=A_0[a_1,b_0](z)-\frac{e^{-2m}}{2m}\int_{-1}^1\varphi'_\kappa(\bar{z})b_0(\bar{z})B_0[a_1,b_0](\bar{z})e^{-m\ep(z+\bar{z})}\mbox{d}\bar{z}\\
&+\frac{e^{-2m}}{2m}\frac{\int_{-1}^1 B_0[a_1,b_0](\bar{z})b_0(\bar{z})\varphi'_{\kappa}(\bar{z})\mbox{d}\bar{z}}{\int_{-1}^1 b_0(\bar{z}) b_0(\bar{z}) \varphi'_\kappa(\bar{z}) \mbox{d}\bar{z}}\left(\int_{-1}^1b_0(\bar{z}) b_0(\bar{z}) \varphi'_\kappa(\bar{z})e^{-m\ep(z+\bar{z})} \mbox{d}\bar{z}\right), \\
\tilde{A}_1[a_2^\ep,\,b_1^\ep, \lambda_2^\ep; a_1, b_0](z)&=A_1[a_2^\ep,\,b_1^\ep,\lambda_2^\ep; a_1, b_0](z)-\frac{e^{-2m}}{2m}
\int_{-1}^1\varphi'_z(\bar{z})b_0(\bar{z})B_1[a_2^\ep,\,b_1^\ep,\lambda_2^\ep](\bar{z})e^{-\ep m(z+\bar{z})}\mbox{d}\bar{z}\\
&+\frac{e^{-2m}}{2m}\frac{\int_{-1}^1B_1[a_2^\ep,b_1^\ep,\lambda_2^\ep](\bar{z})  b_0(\bar{z}) \varphi'_{\kappa}(\bar{z})\mbox{d}\bar{z} }
{\int_{-1}^1 b_0(\bar{z}) b_0(\bar{z}) \varphi'_\kappa(\bar{z}) \mbox{d}\bar{z}}\left(\int_{-1}^1b_0(\bar{z}) b_0(\bar{z}) \varphi'_\kappa(\bar{z})e^{-m\ep(z+\bar{z})} \mbox{d}\bar{z}\right),
\end{align*}
and
\begin{align*}
\tilde{B}_0[a_1,b_0](z)&=b_0(z)\left( B_0[a_1,b_0](z)-b_0(z)\frac{\int_{-1}^1 B_0[a_1,b_0](\bar{z})b_0(\bar{z}) \varphi'_{\kappa}(\bar{z}) \mbox{d}\bar{z}}{\int_{-1}^1 b_0(\bar{z}) b_0(\bar{z})\varphi'_{\kappa}(\bar{z}) \mbox{d}\bar{z}}\right)\\
\tilde{B}_1[a_2^\ep,b_1^\ep,\lambda_2^\ep](z)&=b_0(z)\left( B_1[a_2^\ep,b_1^\ep,\lambda_2^\ep](z)-b_0(z) \frac{\int_{-1}^1 B_1[a_2^\ep,b_1^\ep,\lambda_2^\ep](\bar{z}) b_0(\bar{z}) \varphi'_{\kappa}(\bar{z}) \mbox{d}\bar{z}}{\int_{-1}^1 b_0(\bar{z}) b_0(\bar{z})\varphi'_{\kappa}(\bar{z}) \mbox{d}\bar{z}}\right),
\end{align*}
will give us a solution of \eqref{eqaep2}, \eqref{eqbep2} and \eqref{eqlambda2}.\\

Finally, in order to solve \eqref{eqaep3}, \eqref{eqbep3} and \eqref{eqlambda3} we can use a contraction argument on parameter $\ep$. Indeed, let us call $F_A[a_2^\ep,b_1^\ep,\lambda_2^\ep;a_1,b_0](z)$, $F_B[a_2^\ep,b_1^\ep,\lambda_2^\ep;a_1,b_0](z)$, $F_\l[a_2^\ep,b_1^\ep,\lambda_2^\ep;a_1,b_0](z)$ to the right hand side of \eqref{eqaep3}, \eqref{eqbep3} and \eqref{eqlambda3} respectively. We define the constants values
$$C_A:= \|\tilde{A}_0[a_1,b_0]\|_{L^2([-1,1])}, \quad C_{B}:=\|\tilde{B}_0[a_1,b_0]\|_{L^2([-1,1])},$$
and \begin{align*}
C_{\lambda}:=\left|\frac{\int_{-1}^1 B_{0}[a_1,b_0](\bar{z}) b_0(\bar{z})\varphi'_{\kappa}(\bar{z})\mbox{d}\bar{z} }{\int_{-1}^1  b_0(\bar{z}) b_0(\bar{z})\varphi'_{\kappa}(\bar{z}) \mbox{d}\bar{z}}\right| \leq C(M).
\end{align*}

Note that the system \eqref{eqaep3}, \eqref{eqbep3} and \eqref{eqlambda3} is quadratic because of the products $\lambda_{2}^\ep a_2^\ep$ and $\lambda_2^\ep b_1^\ep$. Thus, we first check that, if  $\|2 a_2^\ep \|_{L^2([-1,1])}< 2 C_A$, $\|b_1^\ep\|_{L^2([-1,1])}< 2C_{B}$ and $|\lambda_{2}^\ep|<2C_{\lambda}$ for $\ep$ small enough (with respect to a constant that just depends on $M$), then
\begin{align*}
\|F_A[a_2^\ep,b_1^\ep,\lambda_2^\ep;a_1,b_0]\|_{L^2([-1,1])}<2 C_A,\\
\|F_B[a_2^\ep,b_1^\ep,\lambda_2^\ep;a_1,b_0]\|_{L^2([-1,1])}<2 C_B,\\
|F_{\lambda}[a_2^\ep,b_1^\ep,\lambda_2^\ep;a_1,b_0]|<2C_{\lambda}.
\end{align*}
In addition, for any pair $u=(2a_2^\ep, b_1^\ep, \lambda_2^\ep)$ and $\tilde{u}=(2\tilde{a}_2^\ep, \tilde{b}_1^\ep, \tilde{\lambda}_2^\ep)$ satisfying $\|2a_2^\ep, 2\tilde{a}^\ep\|_{L^2([-1,1])}< 2 C_A$, $\|b_1^\ep, \tilde{b}_1^\ep \|_{L^2([-1,1])}< 2C_{B}$ and $|\lambda_{2}^\ep, \tilde{\lambda}^\ep_2|<2C_{\lambda}$ we have that
\begin{align*}
\|F_A[u;a_1,b_0]-F_A[\tilde{u};a_1,b_0]\|_{L^2([-1,1])}\leq \ep C(M)\|u-\tilde{u}\|_{L^2([-1,1])\times L^2([-1,1])\times \R},\\
\|F_B[u;a_1,b_0]-F_B[\tilde{u};a_1,b_0]\|_{L^2([-1,1])}\leq \ep C(M)\|u-\tilde{u}\|_{L^2([-1,1])\times L^2([-1,1])\times \R},\\
|F_\lambda[u;a_1,b_0]-F_\lambda[\tilde{u};a_1,b_0]|\leq \ep C(M)\|u-\tilde{u}\|_{L^2([-1,1])\times L^2([-1,1])\times \R}.
\end{align*}
Taking $\e_0:=C(M)^{-1},$ we have that right hand side of \eqref{eqaep2}, \eqref{eqbep2}, \eqref{eqlambda2} is a contraction mapping for all $0<\ep<\e_0(M).$ This implies that there exist $(a_2^\ep, b_1^\ep, \lambda_{2}^\ep)\in L^2([-1,1])\times L^2([-1,1])\times \R$, with $\|2a_2^\ep\|_{L^2([-1,1])}<2C_A$, $\|b_1^\ep \|_{L^2([-1,1])}<{2C_B}$ and $|\lambda_{2}^\ep|<2C_{\lambda}$ solving \eqref{eqaep3}, \eqref{eqbep3} and \eqref{eqlambda3}, for all $0<\ep<\ep_0(M)$.

\begin{remark}
Let us emphasize that $C_A$, $C_B$ and $C_\lambda$ are uniformly bounded by some $C(M).$
\end{remark}

Therefore, we have shown the existence of a solution $(\lambda,a,b)\in \R\times L^2([-1,1])\times L^2([-1,1])$ solving \eqref{l+cero} and \eqref{l-cero}, with

\begin{align}
a(z)=& \,  a_1 \ep +a_2^\ep(z) \ep^2,\label{asolucion}\\
b(z)=& \, b_0(z)+b_1^\ep(z) \ep,\label{bsolucion}\\
\lambda  = & \, 1+\lambda_1\ep +\lambda_2^\ep \ep^2\label{lambdasolucion},
\end{align}
and
\begin{align}
\|a_2^\ep\|_{L^2([-1,1])}\leq C(M),\quad \|b_1^\ep\|_{L^2([-1,1])}\leq C(M),\quad |\lambda_2^\ep|\leq C(M)\label{cotas}.
\end{align}
Next we shall show that this solution is smooth.

\subsubsection{Regularity}\label{regularity}
In order to determine the precise regularity of the constructed solution $(a,b)$ we proceed as follows. In first place, we recall that the system \eqref{l+cero}, \eqref{l-cero} can be written  as
\begin{align}
(\l+1-\e +\e z -\e^2 \Phi_{\kappa}(z))a(z)&=\frac{\e}{2m}\int_{-1}^{1}\varphi_{\kappa}'(\bar{z}) \left( a(\bar{z}) e^{-m\e|z-\bar{z}|}-b(\bar{z}) e^{-m|2+\e(z+\bar{z})|}\right)\mbox{d}\bar{z}, \label{eqboostra}\\
(\l-1+\e-\e z +\e^2 \Phi_{\kappa}(z))b(z)&=
\frac{\e}{2m}\int_{-1}^{1}\varphi_{\kappa}'(\bar{z}) \left( a(\bar{z}) e^{-m|2+\e(z+\bar{z})|}-b(\bar{z}) e^{-m\e|z-\bar{z}|}\right)\mbox{d}\bar{z}. \label{eqboostrb}
\end{align}
Notice that neither $\l+1-\e +\e z -\e^2 \Phi_{\kappa}(z)$ nor $\l-1+\e-\e z +\e^2 \Phi_{\kappa}(z)$ have zeros on the unit interval. More specifically, for the computed eigenvalue $\l=1+\l_1\e+O(\e^2)$ we have that
\begin{align*}
\l+1-\e +\e z -\e^2 \Phi_{\kappa}(z)&=2+O(\e)\geq 1,\\
\l-1+\e-\e z +\e^2 \Phi_{\kappa}(z)&=(1+\l_1-z)\e+O(\e^2)\geq \e(\l_1+O(\e)).
\end{align*}
As we have proved before that $\l_1$ is strictly positive, see Lemma \ref{lambda1}, taking $\e$ small enough these two quantities are not zero and we are allow to divide both sides of \eqref{eqboostra}, \eqref{eqboostrb} by these factors:
\begin{align*}
a(z)&=\frac{\e}{2m(\l+1-\e +\e z -\e^2 \Phi_{\kappa}(z))}\int_{-1}^{1}\varphi_{\kappa}'(\bar{z}) \left( a(\bar{z}) e^{-m\e|z-\bar{z}|}-b(\bar{z}) e^{-m(2+\e(z+\bar{z}))}\right)\mbox{d}\bar{z},\\
b(z)&=\frac{\e}{2m(\l-1+\e-\e z +\e^2 \Phi_{\kappa}(z))}\int_{-1}^{1}\varphi_{\kappa}'(\bar{z}) \left( a(\bar{z}) e^{-m(2+\e(z+\bar{z}))}-b(\bar{z}) e^{-m\e|z-\bar{z}|}\right)\mbox{d}\bar{z}.
\end{align*}
Since these terms are the factors that appear multiplying the solution $(a,b)$ in the left hand side of \eqref{eqboostra}, \eqref{eqboostrb} we have that their inverses are $C^\infty-$functions on the unit interval $[-1,1]$. In addition, the integral terms
\begin{align*}
e^{-m\ep z}\int_{-1}^1\varphi_{\kappa}'(\bar{z})a(\bar{z})e^{-m \ep \bar{z}}\mbox{d}\bar{z}\quad \text{and}\quad e^{-m \ep z}\int_{-1}^1\varphi_{\kappa}'(\bar{z})b(\bar{z})e^{-m \ep \bar{z}}\mbox{d}\bar{z},
\end{align*}
are also $C^\infty-$functions. Finally, the remaining integral term $f(z)=\int_{-1}^1\varphi_{\kappa}'(\bar{z})a(\bar{z})e^{-m\ep |z-\bar{z}|}\mbox{d}\bar{z}$ is in $H^2([-1,1])$ for $a\in L^2([-1,1])$. To see this one just has to take two derivatives on $f(z)$ to get that
\begin{align*}
f''(z)=-2m\ep \varphi_\kappa'(z)a(z)+(m\ep)^2 f(z).
\end{align*}
In fact, we have that $f(z)\in H^{k+2}([-1,1])$ if $a\in H^k([-1,1])$. The same hods for
$$\int_{-1}^1 \varphi_{\kappa}(\bar{z})b(\bar{z})e^{-m\ep |z-\bar{z}|}\mbox{d}\bar{z}.$$
All this yields that if $(a,b)\in H^k([-1,1])\times H^k([-1,1])$ then $(a,b)\in H^{k+2}([-1,1])\times H^{k+2}([-1,1])$ for all $k\geq 0.$ Then we have proved that $(a,b)\in C^\infty([-1,1])$.

\subsubsection{Proof of uniqueness}
Until now we have shown that there exist $\lambda$ and  $(w_n^+,w_n^-)$ given by
\begin{align*}
w^+_n(z)=\left\{\begin{array}{cc} 0 & n\neq m,\\ a(z) & n=m, \end{array}\right. \quad \text{and} \quad  w^-_n(z)=\left\{\begin{array}{cc} 0 & n\neq m,\\ b(z) & n=m, \end{array}\right.
\end{align*}
with $(\lambda,a,b)\in \R\times C^\infty([-1,1])\times C^\infty([-1,1])$  solving \eqref{l+}, \eqref{l-} for all $n\geq 1$ and satisfying \eqref{asolucion}, \eqref{bsolucion}, \eqref{lambdasolucion} and \eqref{cotas}.

To finish the proof of Theorem \ref{eyufijadom} we need to check that the kernel of $\cL[\lambda]$, fixed that $\lambda$, is one dimensional. Therefore we have to prove that the solution  of system \eqref{l+}, \eqref{l-}, given by \eqref{w+solucion}, \eqref{w-solucion}, \eqref{asolucion}, \eqref{bsolucion}, for $\lambda$ given by \eqref{lambdasolucion}, is unique modulo multiplication by a constant.
In order to prove our goal, we will distinguish between two cases:
\begin{itemize}
	\item Case $n\neq m:$ Let us postpone this case to next section \ref{s:inverseoperator} (take $W^\pm_n=0$ on that section).
	
	\item Case $n=m:$  Let $(u,v)\in L^2([-1,1])\times L^2([-1,1])$ be a solution of \eqref{l+cero}, \eqref{l-cero}, with $\lambda$ given by \eqref{lambdasolucion}. Note that $(u,v)$ depends on $\ep$ although we do not make explicit this dependence.  Then, let us see that for some constant $C$ we have
\[
\begin{cases}
u(z)=C a(z),\\
v(z)=C b(z).
\end{cases}	
\]
\end{itemize}
The system \eqref{l+cero}, \eqref{l-cero} is linear in $(u,v)$ and we can assume without loss of generality that
$$\|u\|_{L^2([-1,1])}^2+\|v\|_{L^2([-1,1])}^2=1.$$
If it is not the case we only need to normalized the solution and enter that value into the final constant $C$.  Now, using \eqref{l+cero}, we get that $u(z)=\e u_1(z)$ with $\|u_1\|_{L^2([-1,1])}=O(1)$ in terms of $\e$. This information yields from \eqref{l-cero} that
\begin{align}\label{aqui}
(1+\lambda_1-z)v(z)+\frac{1}{2m}\int_{-1}^1\varphi_{\kappa}'(\bar{z}) v(\bar{z})\mbox{d}\bar{z}=F(z),
\end{align}
where $\|F\|_{L^2([-1,1])}=O(\e)$. 
Just dividing \eqref{aqui} by $(1+\lambda_1-z)$ we have
\begin{align}\label{general}
v(z)=\frac{C}{1+\lambda_1-z}+\frac{F(z)}{1+\lambda_1-z}.
\end{align}
In addition, recalling the precise form of $b_0(z)$ we obtain that
\begin{align}\label{vep}
v(z)=C b_0(z)+\e v_1(z),
\end{align}
where $\|v_1\|_{L^2([-1,1])}=O(1)$ in terms of $\e$ and $C$ is a constant.

Looking again \eqref{l+cero} we have that
\begin{align*}
2u_1(z)=C\frac{e^{-2m}}{2m}\int_{-1}^1 (-\varphi_{\kappa}'(\bar{z}))b_0(\bar{z})\mbox{d}\bar{z}+G(z),
\end{align*}
where $\|G\|_{L^2([-1,1])}=O(\e)$. Recalling the precise form of $a_1$, see \eqref{eqa}, forces to
\begin{align}\label{cu1}
u_1(z)=C a_1+\e u_2(z),
\end{align}
with $\|u_2\|_{L^2([-1,1])}=O(1)$ in terms of $\e$ and the same constant $C$ than in \eqref{vep}.

Finally, we get a coupled system for $(u_2,v_1)$ which is exactly the same as \eqref{eqaep2}, \eqref{eqbep2} for $( a_2^\ep,b_1^\ep)$ up to the  multiplicative constant $C$. Indeed, we have, from \eqref{vep} together with \eqref{b0formula} and \eqref{cu1} together with \eqref{eqa}, that
\begin{align*}
&2u_2(z)+\frac{e^{-2m}}{2m}\int_{-1}^1\varphi_\kappa(\bar{z})v_1(\bar{z})e^{-m\ep (z+\bar{z})}\mbox{d}\bar{z}=CA_0[a_1,b_0](z)+\ep A_1[u_2;\l_2^{\ep}, Ca_1, Cb_0](z),\\
&v_1(z)=b_0(z) (C B_0[a_1,b_0](z)+\ep B_1[u_2,v_1;\lambda_2^\ep](z)-C\lambda_2^\ep b_0(z)).
\end{align*}

 This system is linear in $(u_2,v_1)$ (now $\lambda$ is fixed). One can check that $C(a_2^\ep,b_1^\e)$ is a solution and by a fixed point argument must be unique. Then $(u_2, v_1)=C(a_2^\ep, b_1^\ep)$ (with the same constant $C$ as in \eqref{cu1} and \eqref{vep}) and consequently we have proved our goal.

\subsection{Codimension of the image of the linear operator}
Let $\l_{\e,\k,m}$ given by  Theorem \ref{eyufijadom}. In order to determine the codimension in  $Y$ of the linear operator $
\cL[\l_{\e,\k,m}] \equiv D_\ff F[\l_{\e,\k,m},0]$ on $X$, from expression \eqref{e:Linear} we see that we have to study the equation:
\begin{equation}\label{e:fullcodimension}
\cL[\l_{\ep,\kappa,m}]h=H,
\end{equation}
with $\hh\in X$ and $\HH\in Y.$  As we did before, we will use the expansions
\[
\hh(\bx)=\sum_{n=1}^\infty \hh_n(y)\cos(nx), \qquad \HH(\bx)=\sum_{n=1}^\infty \HH_n(y)\sin(nx),
\]
and, similarly to what we made in section \ref{s:simplifications}, we define the auxiliary functions
\begin{align*}\HH_n^{\pm}(y)&:=\HH_n(\pm y) \qquad \text{for} \quad  y\in[1-\e,1+\e],\\
h_n^\pm(y)& :=h_n(\pm y) \qquad\,\, \text{for} \quad  y\in[1-\e,1+\e],
\end{align*}
and
\begin{align*}
W_n^{\pm}(z)&:=\HH_n^{\pm}(1+\e z) \qquad \text{for} \quad  z\in[-1,1],\\
w_n^{\pm}(z)&:=h_n^{\pm}(1+\e z) \qquad\, \, \text{for} \quad  z\in[-1,1].
\end{align*}

Then, for any $\ep>0$, the functional equation \eqref{e:fullcodimension} is equivalent to solve the system
\begin{multline}\label{w+W}
-n(\lambda_m+1-\ep+\ep z-\ep^2\Phi_{\kappa}(z))w_n^+(z)\\
+\frac{\ep}{2}\int_{-1}^1\varphi'_{\kappa}(\bar{z})w_n^+(\bar{z})e^{- \ep n |z-\bar{z}|}\mbox{d}\bar{z}
-\frac{\ep e^{-2n}}{2}\int_{-1}^1\varphi'_{\kappa}(\bar{z})w_n^-(\bar{z})e^{-\ep n(z+\bar{z})}\mbox{d}\bar{z}= W^+_n(z),
\end{multline}
\begin{multline}\label{w-W}
-n(\lambda_m-1+\ep-\ep z+\ep^2 \Phi_{\kappa}(z))w_n^-(z)\\
+\frac{\ep e^{-2n}}{2}\int_{1}^1\varphi_{\kappa}'(\bar{z})w_n^+(\bar{z})e^{-\ep n(z+\bar{z})}\mbox{d}\bar{z}-\frac{\ep}{2}\int_{-1}^1\varphi'_{\kappa}(\bar{z})w_n^-(\bar{z})e^{-\ep n|z-\bar{z}|}\mbox{d}\bar{z}= W^-_n(z),
\end{multline}
for all $n\geq 1.$ Here $\lambda_m\equiv\lambda_{\ep,\kappa,m}$. In what follow we will omit the dependence on both $\ep$ and $\kappa$ of $\lambda_{\ep,\kappa,m}$ and keep that one on $m$.
Let us define the function
\begin{align*}
\Lambda^+_m(z):=&\lambda_m+1+\ep(-1+z)-\ep^2\Phi_{\kappa}(z),\\
\Lambda^-_m(z):=&\lambda_m-1+\ep(+1-z)+\ep^2\Phi_{\kappa}(z),
\end{align*}
and the operators
\begin{align}
\label{t+}&T_n^+[u,v](z):=&\Lambda_m^+(z)u(z)-\frac{\ep}{2n}\int_{-1}^1\varphi_{\kappa}'(\bar{z})u(\bar{z})e^{-\ep n|z-\bar{z}|}\mbox{d}\bar{z}+\frac{\ep e^{-2n}}{2n}\int_{-1}^{1}\varphi_{\kappa}'(\bar{z})v(\bar{z})e^{-n\ep(z+\bar{z})}\mbox{d}\bar{z},
\\&\label{t-}T_n^-[u,v](z):=&\Lambda_m^-(z)v(z)+\frac{\ep}{2n}\int_{-1}^1\varphi_{\kappa}'(\bar{z})v(\bar{z})e^{-\ep n|z-\bar{z}|}\mbox{d}\bar{z}-\frac{\ep e^{-2n}}{2n}\int_{-1}^{1}\varphi_{\kappa}'(\bar{z})u(\bar{z})e^{-n\ep(z+\bar{z})}\mbox{d}\bar{z}.
\end{align}
Note that $T_n^+$ and $T_n^-$ are nothing but $\tfrac{1}{n}L^+_n[\lambda_m]$ and $\tfrac{1}{n}L^-_n[\lambda_m]$ respectively, with $L^+_n$ and $L^-_n$ given by \eqref{l+} and \eqref{l-}. Therefore, the system \eqref{w+W}, \eqref{w-W} is equivalent to
\begin{align}
\label{t+0}T_n^+[w^+_n,w^-_n](z)=&-\frac{1}{n}W_n^+(z),\\
\label{t-0}T_n^-[w^+_n,w^-_n](z)=&-\frac{1}{n}W_n^-(z),
\end{align}
for all $n\geq 1.$

\begin{lemma} \label{adjointness}Let $f,g,u,v\in L^2([-1,1])$ then
\begin{align*}
-(T_n^+[u,v],\varphi'_{\kappa} f)_{L^2}+(T^-_n[u,v],\varphi_{\kappa}'g)_{L^2}=-(\varphi_{\kappa}'u, T^+_n[f,g])_{L^2}+(\varphi'_{\kappa}v,T^-_n[f,g])_{L^2}.
\end{align*}
\end{lemma}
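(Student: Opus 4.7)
The plan is to prove the identity by direct computation: expand both sides using the explicit formulas \eqref{t+}, \eqref{t-} for $T_n^{\pm}$, then match the resulting terms pairwise. Each $T_n^{\pm}[u,v]$ has three pieces: a multiplication operator by $\Lambda_m^{\pm}(z)$, a self-interaction integral with kernel $e^{-\ep n|z-\bar z|}$, and a cross-interaction integral with kernel $e^{-n\ep(z+\bar z)}$ (with a global factor $e^{-2n}$). So each side of the claimed identity expands into six scalar terms, and I will verify they coincide one by one.

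First I would deal with the multiplication pieces: the contribution of $\Lambda_m^+(z)u(z)$ to $-(T_n^+[u,v],\varphi'_\kappa f)_{L^2}$ is $-\int_{-1}^1 \Lambda_m^+(z)u(z)\varphi'_\kappa(z)f(z)\dz$, which is manifestly equal to the $\Lambda_m^+$-contribution to $-(\varphi'_\kappa u, T_n^+[f,g])_{L^2}$; the same works for $\Lambda_m^-$ with $v,g$. Next I would handle the self-interaction integrals. For instance, the relevant term from $-(T_n^+[u,v],\varphi'_\kappa f)_{L^2}$ is
\[
\frac{\ep}{2n}\iint \varphi'_\kappa(\bar z)u(\bar z)\,\varphi'_\kappa(z)f(z)\,e^{-\ep n|z-\bar z|}\,\mathrm{d}\bar z\,\dz,
\]
and, after applying Fubini and relabeling $z\leftrightarrow \bar z$ (which is allowed because $e^{-\ep n|z-\bar z|}$ is symmetric in its arguments), this becomes the corresponding term from $-(\varphi'_\kappa u, T_n^+[f,g])_{L^2}$. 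The analogous pair for the $(v,g)$ self-interaction in $T_n^-$ matches by the same swap.

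Finally I would do the cross-interaction terms. These are the only ones where the $u,v$ and $f,g$ get mixed in a nontrivial way: the cross term from $-(T_n^+[u,v],\varphi'_\kappa f)_{L^2}$ reads
\[
-\frac{\ep e^{-2n}}{2n}\iint \varphi'_\kappa(\bar z)v(\bar z)\,\varphi'_\kappa(z)f(z)\,e^{-n\ep(z+\bar z)}\,\mathrm{d}\bar z\,\dz,
\]
while the cross term from $(\varphi'_\kappa v, T_n^-[f,g])_{L^2}$ is the same expression after $z\leftrightarrow \bar z$, which is legitimate because $e^{-n\ep(z+\bar z)}$ is symmetric in $(z,\bar z)$. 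The remaining pair $(u$-cross in $T_n^-$, $g$-cross in $T_n^+$$)$ cancels in exactly the same way.

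There is no real obstacle here: the statement is essentially the observation that $T_n^+$ and $T_n^-$ together form a symmetric operator on $L^2([-1,1])\oplus L^2([-1,1])$ with respect to the weighted inner product with weight $\mathrm{diag}(-\varphi'_\kappa,\varphi'_\kappa)$. The only things one uses are Fubini's theorem (justified since $\varphi'_\kappa\in L^\infty$ and $u,v,f,g\in L^2$) and the symmetry of the two kernels $e^{-\ep n|z-\bar z|}$ and $e^{-n\ep(z+\bar z)}$ under $z\leftrightarrow \bar z$. The off-diagonal self-interactions appear with opposite signs in $T_n^+$ and $T_n^-$, which is exactly why the minus sign in front of $(T_n^+[u,v],\varphi'_\kappa f)_{L^2}$ on the left-hand side is matched by the minus sign in front of $(\varphi'_\kappa u, T_n^+[f,g])_{L^2}$ on the right-hand side, and conversely for the $T_n^-$ terms.
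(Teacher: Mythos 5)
Your proof is correct and is exactly the "direct computation" the paper invokes without detail: expanding both sides into the three pieces of each $T_n^{\pm}$, the multiplication terms match trivially, and the four integral terms pair up via Fubini and the symmetry of the kernels $e^{-\ep n|z-\bar z|}$ and $e^{-n\ep(z+\bar z)}$ under $z\leftrightarrow\bar z$, with the sign pattern working out as you describe. Your closing remark that this expresses symmetry of $(T_n^+,T_n^-)$ with respect to the weight $\diag(-\varphi'_\kappa,\varphi'_\kappa)$ is a nice way to see why the identity must hold.
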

\begin{proof}
Direct computation.
\end{proof}
Lemma \ref{adjointness} implies the following necessary condition  for  \eqref{t+0}, \eqref{t-0} in order to be solvable:
\begin{lemma}\label{necesario}
Let $(a,b)\in L^2([-1,1])$ given by \eqref{asolucion} and \eqref{bsolucion}. That is, the pair $(a,b)$ solves
\begin{align*}
T^+_m[a,b]=0, \qquad T^-_m[a,b]=0.
\end{align*}
Then
\begin{align*}
-(W^+_m,\varphi_{\kappa}' a)_{L^2}+(W^-_m, \varphi_{\kappa}' b)_{L^2}=0
\end{align*}
\end{lemma}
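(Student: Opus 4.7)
The proof is a direct application of Lemma \ref{adjointness} combined with the defining equations \eqref{t+0}, \eqref{t-0} for $(w_m^+, w_m^-)$. The plan is essentially a one-line Fredholm-type orthogonality argument.

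First, I would invoke Lemma \ref{adjointness} at the index $n = m$, taking the two arguments to be $(u,v) = (w_m^+, w_m^-)$ (the putative solution whose existence we are probing) and $(f,g) = (a,b)$ (the kernel element constructed in the existence proof). This yields
\begin{equation*}
-(T_m^+[w_m^+, w_m^-], \varphi_\kappa' a)_{L^2} + (T_m^-[w_m^+, w_m^-], \varphi_\kappa' b)_{L^2} = -(\varphi_\kappa' w_m^+, T_m^+[a,b])_{L^2} + (\varphi_\kappa' w_m^-, T_m^-[a,b])_{L^2}.
\end{equation*}

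Next, I would use the two hypotheses simultaneously. On the right-hand side, by assumption $T_m^+[a,b] = T_m^-[a,b] = 0$, so the right-hand side vanishes. On the left-hand side, by \eqref{t+0} and \eqref{t-0} with $n = m$, we may substitute $T_m^+[w_m^+, w_m^-] = -\tfrac{1}{m} W_m^+$ and $T_m^-[w_m^+, w_m^-] = -\tfrac{1}{m} W_m^-$. This gives
\begin{equation*}
\tfrac{1}{m}(W_m^+, \varphi_\kappa' a)_{L^2} - \tfrac{1}{m}(W_m^-, \varphi_\kappa' b)_{L^2} = 0,
\end{equation*}
and multiplying through by $-m$ produces the claimed identity. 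There is no real obstacle here; the only thing to double-check is the sign bookkeeping in Lemma \ref{adjointness} when pairing with $(a,b)$ versus $(w_m^+,w_m^-)$, and the fact that both pairings are well-defined in $L^2$ since $a,b \in C^\infty([-1,1])$ by the regularity step in Section \ref{regularity} and $W_m^\pm \in L^2([-1,1])$ because $H \in Y(D_\ep) \subset H^3(D_\ep)$.
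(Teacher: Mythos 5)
Your proposal is correct and is exactly the argument the paper intends: the text introduces Lemma \ref{necesario} as the consequence of Lemma \ref{adjointness} applied to a solution of \eqref{t+0}, \eqref{t-0} paired against the kernel element $(a,b)$, and the paper's own proof is simply the "direct computation" you spell out. The sign bookkeeping and the substitution $T_m^\pm[w_m^+,w_m^-]=-\tfrac{1}{m}W_m^\pm$ are handled correctly.
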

\begin{proof}
Direct computation.
\end{proof}

Next  we state and prove the main theorem of this section:

\begin{theorem}\label{codimension} Let $M>1$. There exist $\kappa_0=\kappa_0(M)$ and $\ep_0=\ep_0(M)$ such that for all $0\leq \kappa<\kappa_0$, $0<\ep<\ep_0$ and $1\leq m < M$, the following statement hold:\\
\noindent
Let $(\lambda_m,a,b)\in \R \times L^2([-1,1])\times L^2([-1,1])$ given by \eqref{asolucion}, \eqref{bsolucion} and \eqref{lambdasolucion} and $\{W_n^\pm\}_{n=1}^\infty$  such that
\begin{align*}
\sum_{j=0}^3\sum_{k=0}^{3-j}\sum_{n=1}^\infty n^{2k}\|\pa^j_z W_{n}^\pm \|^2_{L^2([-1,1])}<\infty.
\end{align*}
and
\begin{align*}
-(W^+_m,\varphi_{\kappa}' a)_{L^2}+(W^-_m, \varphi_{\kappa}' b)_{L^2}=0.
\end{align*}
Then,  there exist $\{w^\pm_n\}_{n=1}^\infty$, with
\begin{align*}
\sum_{j=0}^3\sum_{k=0}^{4-j}\sum_{n=1}^\infty n^{2k}\|\pa^j_z w_{n}^\pm \|^2_{L^2([-1,1])}<\infty,
\end{align*}
satisfying \eqref{t+0} and \eqref{t-0} for all $n\geq 1.$
\end{theorem}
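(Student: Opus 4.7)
The plan is to split the system \eqref{t+0}--\eqref{t-0} mode-by-mode and handle the resonant case $n=m$ separately from the non-resonant cases $n\neq m$, then assemble the regularity/summability at the end.

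\textbf{Non-resonant modes $(n\neq m)$.} Here I would show that $(T_n^+,T_n^-)$ is invertible on $L^2([-1,1])^2$ for $\e$ small enough, with operator-norm bounds \emph{uniform in $n$}. Since $\Lambda_m^+(z)=2+O(\e)$ is uniformly bounded below, \eqref{t+0} can be inverted for $w_n^+$ in terms of $w_n^-$ by a Neumann series, the integral part having size $O(\e/n)$ and the $v$-coupling carrying the tiny factor $e^{-2n}/n$. For \eqref{t-0}, note that $\Lambda_m^-(z)=\e(1+\lambda_1-z)+O(\e^2)$, so after dividing by $\e$ the leading operator is
\[
v\longmapsto (1+\lambda_1-z)v(z)+\tfrac{1}{2n}\int_{-1}^{1}\varphi'_\kappa(\bar z)v(\bar z)\,\mbox{d}\bar z.
\]
The associated kernel equation would force $v(z)=C/(1+\lambda_1-z)$ together with $I_n(\lambda_1):=\tfrac{1}{2n}\int \frac{-\varphi'_\kappa}{1+\lambda_1-\bar z}\mbox{d}\bar z=1$. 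By Lemma~\ref{lambda1} the choice of $\lambda_1$ enforces $I_m(\lambda_1)=1$, hence $I_n(\lambda_1)=m/n\neq 1$ for $n\neq m$. Since $n$ is an integer we have $|n-m|\geq 1$, which gives a resonance gap bounded away from $0$ and yields an explicit pseudoinverse (via the formula of Lemma~\ref{segundo} with $\lambda$ replaced by an $(m/n)$-dependent quantity) with bounds uniform in $n$. A contraction argument in $\e$ then produces $(w_n^+,w_n^-)$ for all $n\neq m$.

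\textbf{Resonant mode $(n=m)$.} Lemma~\ref{necesario} shows that the hypothesis $-(W_m^+,\varphi'_\kappa a)_{L^2}+(W_m^-,\varphi'_\kappa b)_{L^2}=0$ is exactly the compatibility condition coming from the one-dimensional cokernel (which, by Lemma~\ref{adjointness}, coincides with $\mathrm{span}\{(a,b)\}$). To construct $(w_m^+,w_m^-)$ I would mimic the $\e$-expansion used in Section~\ref{s:existence}: write $w_m^+=\e\,\alpha_1(z)+\e^2\alpha_2^\e(z)$ and $w_m^-=\beta_0(z)+\e\beta_1^\e(z)$, expand $T_m^\pm$ in powers of $\e$, and solve the leading-order system using Lemmas~\ref{primero}--\ref{segundo}: these provide the pseudoinverse of the leading operator modulo $\mathrm{span}\{b_0\}$, and the solvability condition from Lemma~\ref{necesario} is precisely what lets one select $\beta_0$ (and then $\alpha_1$). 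The higher-order correction $(\alpha_2^\e,\beta_1^\e)$ is obtained by the same contraction-in-$\e$ argument already used to build $(a_2^\e,b_1^\e,\lambda_2^\e)$, the only difference being that here $\lambda$ is fixed rather than a free parameter. Uniqueness of the solution modulo $\mathrm{span}\{(a,b)\}$ is automatic.

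\textbf{Regularity and summability.} The $z$-regularity of each mode is obtained exactly as in Section~\ref{regularity}: the integral kernels in $T_n^\pm$ gain two $z$-derivatives, so a bootstrap propagates $H^k$ regularity from the data to $w_n^\pm$. The summability $\sum_n\sum_{j,k} n^{2k}\|\partial_z^j w_n^\pm\|_{L^2}^2<\infty$ follows because the operator-norm bounds for the inverses of $T_n^\pm$ are uniform in $n$ (thanks to the resonance gap $|n-m|\geq 1$) and the factor $1/n$ on the right-hand side of \eqref{t+0}--\eqref{t-0} gains one power of $n$ beyond what is assumed on $W_n^\pm$.

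\textbf{Main obstacle.} The delicate point is obtaining $n$-uniform control on the inverse of $(T_n^+,T_n^-)$ in the non-resonant case: the factor $\Lambda_m^-=O(\e)$ forces one to rescale the second equation, and one must verify that the single threshold $\e_0=\e_0(M)$ produced by the contraction works simultaneously for every $n\geq 1$, while also giving enough decay in $n$ to close the weighted summability above. The resonance gap $|n-m|\geq 1$ together with the exponential smallness of the cross-term $e^{-2n}$ for large $n$ is what ultimately makes this possible, but the bookkeeping in $M$, $\kappa$ and $\e$ must be done carefully.
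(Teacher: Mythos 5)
Your treatment of the non-resonant modes $n\neq m$ matches the paper's: divide by $\Lambda_m^+=2+O(\ep)$, rescale the second equation by $\ep$, and invert the leading operator $(1+\lambda_1-z)v+\frac{1}{2n}\int\varphi'_\kappa v$ explicitly using that $\frac{1}{2n}\int\frac{-\varphi'_\kappa(\bar z)}{1+\lambda_1-\bar z}\,\mbox{d}\bar z=\frac{m}{n}\neq 1$ (this is exactly Lemma \ref{inversion}), followed by a contraction in $\ep$; the extra factor $1/n$ you identify is indeed what closes the weighted summability.

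For the resonant mode $n=m$, however, your plan has a genuine gap. You propose to repeat the $\ep$-expansion of Section \ref{s:existence} using Lemmas \ref{primero}--\ref{segundo}, remarking only that ``$\lambda$ is fixed rather than a free parameter.'' But that free parameter is precisely what made the hierarchy solvable there: Lemma \ref{segundo} inverts the leading operator only after adding the term $\lambda b_0$ to the left-hand side, i.e.\ after projecting the forcing off $\varphi'_\kappa b_0$, and $\lambda_2^\ep$ was available to absorb that projection. With $\lambda$ fixed you must instead invoke Lemma \ref{primero}, which requires the forcing of the $w^-$-equation to be \emph{exactly} orthogonal to $\varphi'_\kappa b_0$ at each order. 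The hypothesis only gives $-(W_m^+,\varphi'_\kappa a)+(W_m^-,\varphi'_\kappa b)=0$ for the full pair $(a,b)=(\ep a_1+\ep^2 a_2^\ep,\,b_0+\ep b_1^\ep)$; expanding, $(W_m^-,\varphi'_\kappa b_0)=O(\ep)$ rather than $0$, and after the $1/\ep$ rescaling forced by $\Lambda_m^-=O(\ep)$ this leaves an $O(1)$ compatibility defect at leading order, so the hierarchy does not close as written. The repair is a Lyapunov--Schmidt step: solve the coupled system up to a scalar multiple $\gamma$ of the cokernel element and then show $\gamma=0$ using Lemmas \ref{adjointness} and \ref{necesario}. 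The paper implements exactly this, but by a different device: it introduces the weighted space $\ldosv$ and the bilinear form $B$, proves coercivity of $B$ on $(-a,b)^\perp$ (Lemmas \ref{goticocoercivo} and \ref{coercivo}), solves the weak formulation \eqref{funcional} by Lax--Milgram, and only afterwards kills the residual multiple of $(-a,b)$ via the adjointness and compatibility lemmas. The coercivity estimate $B[(u,v),(u,v)]\geq c(M)\bigl(\|u\|^2_{\ldosv}+\ep\|v\|^2_{\ldosv}\bigr)$ also supplies the quantitative bounds (with the correct $1/\ep$ scaling of $v_1$) that launch the regularity bootstrap; your expansion would have to reproduce these separately.
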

\begin{proof} The proof will be split into two cases. The case $n=m$ and the case $n\neq m$.
\subsubsection{The case $n= m$.}
To deal with this case we introduce the space $L^2([-1,1])$ with weight $-\varphi_{\kappa}'.$ That is, $L^2_{\varphi'_{\kappa}}([-1,1])$ with inner-product and  norm given by
$$(u,v)_{L^2_{\varphi'_{\kappa}}}:=\int_{-1}^{1}(-\varphi'_{\kappa}(\bar{z})) u(\bar{z}) v(\bar{z}) \mbox{d}\bar{z},$$
and
$$\|v\|^2_{L^2_{\varphi'_{\kappa}}}:=(v,v)_{L^2_{\varphi'_{\kappa}}}=\int_{-1}^{1}(-\varphi'_{\kappa}(\bar{z}))|v(\bar{z})|^2 \mbox{d}\bar{z}.$$
In addition, we define the bilinear forms
\begin{align*}
B[(u_1,v_1),(u_2,v_2)]&:= (T^+_m[u_1,v_1],u_2)_{\ldosv}+(T^-_m[u_1,v_1],v_2)_{\ldosv},\\
\mathcal{B}[v_1,v_2]&:= \int_{-1}^1(1+\lambda_1-\bar{z})v_1(\bar{z})v_2(\bar{z})(-\varphi'_{\kappa}(\bar{z}))\mbox{d}\bar{z}\\
&\quad -\frac{1}{2m}\left(\int_{-1}^1(-\varphi'_{\kappa}(\bar{z}))v_1(\bar{z})\mbox{d}\bar{z}\right)\left(\int_{-1}^{1}(-\varphi'_{\kappa}(\bar{z}))v_2(\bar{z})\mbox{d}\bar{z}\right).
\end{align*}
We start with the properties of the bilinear form $\mathcal{B}$. Notice that by definition \eqref{b0formula} of $b_0$ we have
\[
\mathcal{B}[b_0,b_0]=0.
\]
We also define the orthogonal complement of $b_0$ in $\ldosv([-1,1]):$
\[
b_0^{\perp}:=\lbrace v\in \ldosv([-1,1]) \mid (b_0,v)_{\ldosv}=0 \rbrace.
\]
\begin{lemma}\label{goticocoercivo} Let $v_1, v_2\in L^2_{\varphi'_{\kappa}}([-1,1])$. Then, the following estimate holds
\begin{align*}
|\mathcal{B}[v_1,v_2]|&\leq C \|v_1\|_{\ldosv}\|v_2\|_{\ldosv},
\end{align*}
where the constant $C$ is independent of $\ep, \k, m.$ Moreover, if $v\in b_0^\perp$ we have
\begin{align*}
\mathcal{B}[v,v]\geq c(M)||v||_{\ldosv}^2.
\end{align*}
\end{lemma}
\begin{proof}
The first conclusion is trivial. For the other one we need to proceed as follow. In first place, we fix an arbitrary $v\in b_0^{\perp}$, i.e. $v\in \ldosv([-1,1])$ such that $$\int_{-1}^1\varphi'_{\kappa}(z)v(z)b_0(z)\dz=0.$$
Notice that by definition
\begin{align}\label{Bvv}
\mathcal{B}[v,v]=\int_{-1}^1(-\varphi'_{\kappa}(z))\frac{|v(z)|^2}{b_0(z)}dz-\frac{1}{2m}\left(\int_{-1}^1 (-\varphi_{\kappa}'(z))v(z) \dz\right)^2.
\end{align}
In addition, for all $\mu\in \R$, we have that
\begin{align*}
\int_{-1}^1(-\varphi_\kappa'(z))v(z)\dz&=\int_{-1}^1(-\varphi_{\kappa}'(z))(1-\mu b_0(z))v(z)\dz
=\int_{-1}^1\frac{(-\varphi_{\kappa}'(z))}{\sqrt{b_0(z)}}\sqrt{b_0(z)}\left(1-\mu b_0(z)\right)v(z)\dz\\
&\leq \left(\int_{-1}^1 (-\varphi_{\kappa}'(z)) \frac{|v(z)|^2}{b_0(z)}\dz\right)^\frac{1}{2}\left(\int_{-1}^1(-\varphi_{\kappa}'(z))b_0(z)(1-\mu b_0(z))^2 \dz\right)^\frac{1}{2}.
\end{align*}
Thus
\begin{align*}
\frac{1}{2m}\left(\int_{-1}^1(-\varphi_\kappa'(z))v(z)\dz\right)^2\leq \frac{1}{2m}\left(\int_{-1}^1(-\varphi_{\kappa}'(z))b_0(z)(1-\mu b_0(z))^2 \dz\right) \left(\int_{-1}^1 (-\varphi_{\kappa}'(z)) \frac{|v(z)|^2}{b_0(z)} \dz\right)
\end{align*}
and consequently
\begin{align*}
\mathcal{B}[v,v]\geq \left(1-\frac{1}{2m}\int_{-1}^1(-\varphi'_{\kappa}(z)) b_0(z)(1-\mu b_0(z))^2 \dz\right)\int_{-1}^1(-\varphi'_{\kappa}(z))(1+\lambda_1-z)|v(z)|^2 \dz.
\end{align*}
Expanding the square  we have
\begin{align*}
\int_{-1}^1(-\varphi'_{\kappa}(z))b_0(z)(1-\mu b_0(z))^2 \dz =2m-2\mu \int_{-1}^1(-\varphi_{\kappa}'(z))|b_0(z)|^2 \dz+\mu^2\int_{-1}^1(-\varphi'_{\kappa}(z))(b_0(z))^3 \dz,
\end{align*}
and therefore
\begin{multline*}
1-\frac{1}{2m}\int_{-1}^1(-\varphi'_{\kappa}(z))b_0(z)(1-\mu b_0(z))^2 \dz\\
=\frac{\mu}{2m}\left(2\int_{-1}^1(-\varphi_{\kappa}'(z))|b_0(z)|^2 \dz-\mu\int_{-1}^1(-\varphi'_{\kappa}(z))(b_0(z))^3 \dz\right).
\end{multline*}
Choosing $$\mu=\frac{\int_{-1}^1 (-\varphi'_{\kappa}(z))(b_0(z))^2 \dz}{\int_{-1}^1(-\varphi'_{\kappa}(z))(b_0(z))^3 \dz},$$
we obtain the lower bound
\begin{align*}
1-\frac{1}{2m}\int_{-1}^1 (-\varphi'_{\kappa}(z)) b_0(z)(1-\mu b_0(z))^2 \dz=\frac{\mu}{2m}\int_{-1}^1(-\varphi'_{\kappa}(z))|b_0(z)|^2 \dz\geq c(M).
\end{align*}
Finally, as $\l_1\geq \l_1^{\ast}(M)$ we get
\begin{align*}
\mathcal{B}[v,v]\geq c(M)\int_{-1}^1 (-\varphi_{\kappa}'(z)) (1+\lambda_1-z)|v(z)|^2 \dz \geq c(M)\lambda_1 \|v\|_{\ldosv}^2\geq c(M)\|v\|_{\ldosv}^2.
\end{align*}
\end{proof}
Next we shall study the bilinear for $B[(u_1,v_1),(u_2,v_2)]$.
\begin{lemma} Let $u,v\in L^2_{\varphi'_{\kappa}}([-1,1])$. Then, the following expression holds
\begin{align}\label{breduccion}
B[(u,v),(u,v)]=& 2\int_{-1}^1(-\varphi'_{\kappa}(z))|u(z)|^2 \dz+\ep \mathcal{B}[v,v] +\ep B_1[u,u]+\ep^2 B_2[v,v],
\end{align}
with $B_1,B_2$ given by \eqref{B1uu}, \eqref{B2vv} and satisfying
\begin{align*}
|B_1[u,u]|\leq C \|u\|_{\ldosv}^2, \qquad |B_2[v,v]|\leq C \|v\|^2_{\ldosv},
\end{align*}
where the constant $C$ is independent of $\ep, \k, m.$
\end{lemma}
\begin{proof}
From $\eqref{t+}$ and \eqref{t-} we find that
\begin{align*}
B[(u_1,v_1), (u_2,v_2)]=&\int_{-1}^1(-\varphi_{\kappa}'(z))\Lambda^+_m(z)u_1(z)u_2(z) \dz\\
&+\frac{\ep}{2m}\int_{-1}^1\int_{-1}^1\varphi_{\kappa}'(z)\varphi_{\kappa}'(\bar{z})u_1(z)u_2(\bar{z})e^{-\ep n|z-\bar{z}|}\dz \mbox{d}\bar{z}\\
&-\frac{\ep e^{-2m}}{2m}\int_{-1}^1\int_{-1}^1\varphi_{\kappa}'(z)\varphi_{\kappa}'(\bar{z})v_1(z)u_2(\bar{z})e^{-\ep m(z+\bar{z})}\dz \mbox{d}\bar{z}\\
&+\int_{-1}^1(-\varphi_{\kappa}'(z))\Lambda^-_m(z)v_1(z)v_2(z)\dz\\
&-\frac{\ep}{2m}\int_{-1}^1\int_{-1}^1 \varphi_{\kappa}'(z)\varphi_{\kappa}'(\bar{z})v_1(z)v_2(\bar{z})e^{-\ep m|z-\bar{z}|}\dz \mbox{d}\bar{z}\\
&+\frac{\ep e^{-2m}}{2m}\int_{-1}^1\int_{-1}^1 \varphi_{\kappa}'(z)\varphi_{\kappa}'(\bar{z})u_1(z)v_2(\bar{z})e^{-\ep m(z+\bar{z})}\dz \mbox{d}\bar{z}.
\end{align*}
From the above expression we can check that
\begin{align}\label{Bu1v1u2v2}
|B[(u_1,v_1),(u_2,v_2)]|\leq C \left(\|u_1\|_{\ldosv}+\|v_1\|_{\ldosv}\right)\left(\|u_2\|_{\ldosv}+\|v_2\|_{\ldosv}\right),
\end{align}
where the constant $C$ is independent of $\ep, \k, m.$ In addition, if $(u_1,v_1)=(u,v)=(u_2,v_2)$ the cross terms cancels and we obtain
\begin{align*}
B[(u,v),(u,v)]=&\int_{-1}^1(-\varphi_{\kappa}'(z))\Lambda^+_m(z)|u(z)|^2 \dz\\
&+\frac{\ep}{2m}\int_{-1}^1\int_{-1}^1\varphi_{\kappa}'(z)\varphi_{\kappa}'(\bar{z})u(z)u(\bar{z})e^{-\ep m|z-\bar{z}|}\dz \mbox{d}\bar{z}\\
&+\int_{-1}^1(-\varphi_{\kappa}'(z))\Lambda^-_m(z)|v(z)|^2 \dz\\
&-\frac{\ep}{2m}\int_{-1}^1\int_{-1}^1\varphi_{\kappa}'(z)\varphi_{\kappa}'(\bar{z})v(z)v(\bar{z})e^{-\ep m|z-\bar{z}|}\dz \mbox{d}\bar{z}\\
&:=I+II.
\end{align*}
where
\begin{align*}
I&=\int_{-1}^1(-\varphi_{\kappa}'(z))\Lambda^+_m(z)|u(z)|^2 \dz+\frac{\ep}{2m}\int_{-1}^1\int_{-1}^1\varphi_{\kappa}'(z)\varphi_{\kappa}'(\bar{z})u(z)u(\bar{z})e^{-\ep m|z-\bar{z}|}\dz \mbox{d}\bar{z},\\
II&=\int_{-1}^1(-\varphi_{\kappa}'(z))\Lambda^-_m(z)|v(z)|^2 \dz-\frac{\ep}{2m}\int_{-1}^1\int_{-1}^1\varphi_{\kappa}'(z)\varphi_{\kappa}'(\bar{z})v(z)v(\bar{z})e^{-\ep m|z-\bar{z}|}\dz \mbox{d}\bar{z}.
\end{align*}
We recall that
\begin{align}
\Lambda^+_m(z)&=2+(-1+\lambda_1+z)\ep+(\lambda_2^\ep-\Phi_{\kappa}(z))\ep^2,\nonumber\\
\Lambda^-_m(z)&=(1+\lambda_1-z)\ep +(\lambda_2^\ep+\Phi_{\kappa}(z))\ep^2. \label{Lm^-good}
\end{align}
Thus
\begin{align*}
I&=2 \int_{-1}^1(-\varphi_{\kappa}'(z))|u(z)|^2 \dz+ \ep \left(\int_{-1}^1(-\varphi_{\kappa}'(z))\left[(-1+\lambda_1+z)+\ep(\lambda_2^\ep-\Phi_{\kappa}(z))\right]|u(z)|^2 \dz \right)\\
&\quad +\frac{\ep}{2m}\int_{-1}^1\int_{-1}^1\varphi_{\kappa}'(z)\varphi_{\kappa}'(\bar{z})u(z)u(\bar{z})e^{-\ep m|z-\bar{z}|}\dz \mbox{d}\bar{z},
\end{align*}
and
\begin{align*}
II&=\ep\int_{-1}^1(-\varphi_{\kappa}'(z))\frac{|v(z)|^2}{b_0(z)} \dz+\ep^2\int_{-1}^1\varphi'_{\kappa}(z)(\lambda_2^\ep+\Phi_{\kappa}(z))|v(z)|^2\dz\pm\frac{\ep}{2m}\left(\int_{-1}^1\varphi'_{\kappa}(z)v(z)\dz\right)^2\\
&=\ep\mathcal{B}[v,v]+\ep^2\int_{-1}^1\varphi'_{\kappa}(z)(\lambda_2^\ep+\Phi_{\kappa}(z))|v(z)|^2\dz\\
&\quad -\frac{\ep^2}{2m}\int_{-1}^1\int_{-1}^1\varphi'_{\kappa}(z)\varphi'_{\kappa}(\bar{z})v(z)v(\bar{z})\frac{e^{-\ep m|z-\bar{z}|}-1}{\ep}\dz \mbox{d}\bar{z},
\end{align*}
where in the last step we have used the expression \eqref{Bvv}.

Putting all together, we have
\begin{align*}
B[(u,v),(u,v)]=& 2\int_{-1}^1(-\varphi'_{\kappa}(z))|u(z)|^2 \dz+\ep \mathcal{B}[v,v] +\ep B_1[u,u]+\ep^2 B_2[v,v],
\end{align*}
where
\begin{align}
B_1[u,u]:=&\int_{-1}^1\varphi'_{\kappa}(z)\left[(-1+\lambda_1+z)+\ep(\lambda^\ep_2-\Phi_{\kappa}(z))\right]|u(z)|^2 \dz\label{B1uu}\\
&+\frac{1}{2m}\int_{-1}^1\int_{-1}^1\varphi'_{\kappa}(z)\varphi'_{\kappa}(\bar{z})u(z)u(\bar{z})e^{-m\ep|z-\bar{z}|}\dz \mbox{d}\bar{z},\nonumber\\
B_2[v,v]:=&\int_{-1}^1\varphi'_{\kappa}(z)(\lambda_2^\ep+\Phi_{\kappa}(z))|v(z)|^2 \dz\label{B2vv}\\
&-\frac{\ep^2}{2m}\int_{-1}^1\int_{-1}^1\varphi'_{\kappa}(z)\varphi'_{\kappa}(\bar{z})v(z)v(\bar{z})\frac{e^{-\ep m|z-\bar{z}|}-1}{\ep} \dz \mbox{d}\bar{z}.\nonumber
\end{align}
Finally, one can see that
\begin{align*}
|B_1[u,u]|\leq C \|u\|_{\ldosv}^2, && |B_2[v,v]|\leq C \|v\|^2_{\ldosv},
\end{align*}
for some constant $C$, which is independent of $\ep, \k, m.$
\end{proof}

Combining all the previous results, we are in a good position to prove the following lemma. Before that, we recall the solution $(a,b)$ defined in previous section \ref{s:existence} and given by \eqref{asolucion}, \eqref{bsolucion}.
We also define the orthogonal complement of $(-a,b)$ in $\ldosv([-1,1])\times\ldosv([-1,1])$:
\[
(-a,b)^\perp:=\left\lbrace (u,v)\in \ldosv([-1,1])\times\ldosv([-1,1]) \mid -(u,a)_{\ldosv}+(v,b)_{\ldosv}=0 \right\rbrace.
\]
\begin{lemma}\label{coercivo} Let $M>1.$ There exist $\kappa_0=\kappa_0(M)$, $\ep_0=\ep_0(M)$ such that for all $0<\ep<\ep_0$, $0\leq \kappa<\kappa_0$ and $1\leq m<M$, the following estimates hold
\begin{align}
|B[(u_1,v_1),(u_2,v_2)]|&\leq C \left(\|u_1\|_{\ldosv}+\|v_1\|_{\ldosv}\right)\left(\|u_2\|_{\ldosv}+\|v_2\|_{\ldosv}\right),\nonumber\\
B[(u,v),(u,v)]&\geq c(M)\left(\|u\|_{\ldosv}^2+\ep \|v\|_{\ldosv}^2\right),\label{lowerboundBuv}
\end{align}
for all $(u_1,v_1), (u_2,v_2)\in \ldosv([-1,1])\times \ldosv([-1,1])$ and for all $(u,v)\in (-a,b)^\perp.$
\end{lemma}
\begin{proof} Notice that the first estimate had been proved in \eqref{Bu1v1u2v2}. To prove \eqref{lowerboundBuv} we start using the fact that $(u,v)\in(-a,b)^{\perp},$ which implies that $$(v,b)_{\ldosv}=(u,a)_{\ldosv}.$$
Recalling expressions \eqref{asolucion} and \eqref{bsolucion} we see that
\begin{align*}
(v,b_0)_{\ldosv}=\ep(u,a_1)_{\ldosv}+\ep^2(u,a_{2}^\ep)_{\ldosv}-\ep(v,b_1^\ep)_{\ldosv}.
\end{align*}
Thus we can assume that $v=v_0+\ep v_1$, where $v_0\in b_0^\perp$ and $v_1\in \text{span}\{b_0\}$. So, we have
\[
(v_1,b_0)_{\ldosv}=(u,a_1)_{\ldosv}+\ep(u,a_{2}^\ep)_{\ldosv}-(v,b_1^\ep)_{\ldosv}.
\]
By the explicit expressions of $b_0$ and $a_1$ together with the upper bounds \eqref{cotas}, we obtain
\begin{equation}\label{auxv1}
\|v_1\|_{\ldosv}\leq C(M)(\|u\|_{\ldosv}+\|v\|_{\ldosv}).
\end{equation}
By the definition of $v$ as the sum of two orthogonal functions we have
\begin{equation}\label{auxv0}
\|v\|_{\ldosv}^2=\|v_0\|_{\ldosv}^2+\ep^2 \|v_1\|_{\ldosv}^2.
\end{equation}
In addition, combining \eqref{auxv1}, \eqref{auxv0} we get
\begin{align}\label{r}
\|v_0\|_{\ldosv}^2&=\|v\|_{\ldosv}^2-\ep^2\|v_1\|_{\ldosv}^2\geq \|v\|^2_{\ldosv}-C(M)\ep^2(\|u\|^2_{\ldosv}+\|v\|_{\ldosv}^2)
\\&\geq c(M)\|v\|^2_{\ldosv}-C(M)\ep^2\|u\|^2_{\ldosv}\nonumber,
\end{align}
for $\ep$ small enough.

We then can estimate
\begin{align}\label{bcoercivocota}
\mathcal{B}[v,v]=&\mathcal{B}[v_0,v_0]+\ep\mathcal{B}[v_0,v_1]+\ep \mathcal{B}[v_1,v_0]+\ep^2\mathcal{B}[v_1,v_1]
\\ \geq & c(M)\|v_0\|^2_{\ldosv}-C\ep(\|u\|_{\ldosv}^2+\|v\|_{\ldosv}^2)\nonumber
\\ \geq & c(M)\|v\|^2_{\ldosv}-C(M)\ep \|u\|_{\ldosv}^2,\nonumber
\end{align}
where we have used Lemma \ref{goticocoercivo}, inequality \eqref{r} and taken $\ep$ small enough.

Finally, from \eqref{bcoercivocota} and \eqref{breduccion} we achieve the conclusion of the lemma. Recall that \eqref{breduccion} tell us that
\begin{align*}
B[(u,v),(u,v)]&= 2\|u\|_{\ldosv}^2+\ep \mathcal{B}[v,v] -C\ep \|u\|_{\ldosv}^2 -C\ep^2 \|v\|^2_{\ldosv}\\
&\geq 2\|u\|_{\ldosv}^2 +c(M)\ep\|v\|^2_{\ldosv}-C(M)\ep^2 \|u\|_{\ldosv}^2-C\ep \|u\|_{\ldosv}^2 -C\ep^2 \|v\|^2_{\ldosv}\\
&\geq c(M)\left(\|u\|_{\ldosv}^2+\ep \|v\|_{\ldosv}^2\right),
\end{align*}
where in the last step we are taking $\ep$ small enough.
\end{proof}

Then we have all the ingredients to prove the main result of this section. We just need to consider the functional equation
\begin{align}\label{funcional}
B[(u_1,v_1),(u,v)]=-\frac{1}{m}(W^+_m,u)_{\ldosv}-\frac{1}{m}(W^-_m,v)_{\ldosv},\quad \text{for all $(u,v)\in (-a,b)^\perp$.}
\end{align}
Combining Lemma \ref{coercivo} and Lax-Milgram theorem, there exists $(u_1,v_1)\in (-a,b)^\perp$ satisfying \eqref{funcional}. This implies that there exist $\gamma\in \R$ such that
\begin{align*}
(T^+_m[u_1,v_1], T^-_m[u_1,v_1])=-\frac{1}{m}(W^+_m,W^-_m)+\gamma (-a,b).
\end{align*}
But then, taking the inner product on $\ldosv([-1,1])\times\ldosv([-1,1])$ against $(-a,b)$, we get
\[
-(T^+_m[u_1,v_1],a)_{\ldosv}+(T^-_m[u_1,v_1],b)_{\ldosv}=-\frac{1}{m}(-(W^+_m,a)_{\ldosv}+(W^-_m,b)_{\ldosv})+\gamma(\|a\|_{\ldosv}^2+\|b\|_{\ldosv}^2).
\]
Since $T_m^{+}[a,b]=0$ and $T_m^{-}[a,b]=0$, we have that
\begin{align*}
-(T^+_m[u_1,v_1],a)_{\ldosv}+(T^-_m[u_1,v_1],b)_{\ldosv}&=0, \qquad \text{by Lemma } \ref{adjointness},\\
-\frac{1}{m}(-(W^+_m,a)_{\ldosv}+(W^-_m,b)_{\ldosv})&=0, \qquad \text{by Lemma } \ref{necesario},
\end{align*}
which implies that $\gamma=0$. Therefore, there exist $(u_1,v_1)\in \ldosv([-1,1])\times\ldosv([-1,1])$ satisfying
\begin{align}\label{TW}
(T^+_m[u_1,v_1], T^-_m[u_1,v_1])=-\frac{1}{m}(W^+_m,W^-_m).
\end{align}
Next we shall improve the regularity of $(u_1,v_1)$. By \eqref{lowerboundBuv} and \eqref{funcional}, we have that
\begin{align*}
\|u_1\|_{\ldosv}^2+\ep \|v_1\|^2_{\ldosv}&\leq C(M)B[(u_1,v_1),(u_1,v_1)]=-\frac{C(M)}{m}\left((W^+_m,u_1)_{\ldosv}+(W^-_m,v_1)_{\ldosv}\right)\\
&\leq C(M)\left(\|W^+_m \|_{\ldosv}\|u_1\|_{\ldosv}+\|W^-_m\|_{\ldosv}\|v_1\|_{\ldosv}\right)\\
&\leq C(M)\left(\|W^+_m\|^2_{\ldosv}+\frac{1}{\ep}\|W^-_m\|_{\ldosv}^2\right)+\frac{1}{2}\left(\|u_1\|_{\ldosv}^2+\ep\|v_1\|_{\ldosv}^2\right),
\end{align*}
where in the last step we have used the generalized Young inequality. Since $\varphi_{\k}'\in L^{\infty}([-1,1]),$ we get the right-hand side of the following expression in the unweighted $L^2([-1,1])-$norm.
\begin{align}\label{therefore}
\|u_1\|_{\ldosv}^2+\ep\|v_1\|_{\ldosv}^2\leq C(M)\left(\|W^+_m\|_{L^2}^2+\frac{1}{\ep}\|W^-_m\|_{L^2}^2\right).
\end{align}
Let us look to  \eqref{t-0}. We can write this equation in the following way
\begin{align}\label{auxv}
v_1(z)=&-\frac{\ep}{2m\Lambda_m^-(z)}\int_{-1}^1\varphi_{\kappa}'(\bar{z})v_1(\bar{z})e^{-\ep m|z-\bar{z}|}\mbox{d}\bar{z}\\
&+\frac{\ep}{2m\Lambda_m^-(z)}e^{-2m}\int_{-1}^1\varphi_{\kappa}'(\bar{z})u_1(\bar{z})e^{-\ep m (z+\bar{z})}\mbox{d}\bar{z} -\frac{1}{m\Lambda^-_m(z)}W^-_m(z).\nonumber
\end{align}
Recalling \eqref{Lm^-good} we have
\begin{equation}\label{bound1/Lambda}
\| 1/\Lambda_m^-\|_{L^\infty([-1,1])}\leq \frac{C(M)}{\e}, \qquad \text{and} \qquad  \|\ep/\Lambda_m^-\|_{L^\infty([-1,1])}\leq C(M).
\end{equation}
Then, taking the $L^2([-1,1])$ norm of \eqref{auxv} yields
\begin{align*}
\|v_1\|_{L^2}\leq C(M)\left(\|v_1\|_{\ldosv}+\|u_1\|_{\ldosv}+\frac{1}{\ep}\|W^-_m\|_{L^2}\right),
\end{align*}
and using inequality \eqref{therefore}  give us
\begin{align*}
\|v_1\|_{L^2}\leq &\frac{C(M)}{\sqrt{\ep}}\left(\|W^+_m \|_{L^2}+\frac{1}{\sqrt{\ep}}\|W^-_m \|_{L^2}\right)+C(M)\left(\|W^+_m \|_{L^2}
+\frac{1}{\sqrt{\ep}}\|W^-_m \|_{L^2}\right)+\frac{C(M)}{\ep}\|W^-_m \|_{L^2}\\
\leq & \frac{C(M)}{\ep}\left(\|W^+_m\|_{L^2}+\|W^-_m\|_{L^2}\right).
\end{align*}
Once we have proved that $v_1\in L^2([-1,1])$ we only have to take derivatives on expression \eqref{auxv}, but before that, we observe that
\begin{equation}\label{bound1/Lambdageneral}
\| 1/\Lambda_m^-\|_{\dot{W}^{k,\infty}([-1,1])}\leq \frac{C(M)}{\e}, \qquad \text{and} \qquad  \|\ep/\Lambda_m^-\|_{\dot{W}^{k,\infty}([-1,1])}\leq C(M), \quad \text{for } k=1,2.
\end{equation}
For $k=3$ we need to distinguish between degenerate ($\k=0$) and non-degenerate ($\k>0$) case:
\begin{equation}\label{boundk=3}
\| 1/\Lambda_m^-\|_{\dot{W}^{3,\infty}([-1,1])}\leq \begin{cases}
\frac{C(M)}{\e\k} \quad &\k>0,\\
\frac{C}{\e} \quad &\k=0,
\end{cases}
 \quad \text{and} \quad  \|\ep/\Lambda_m^-\|_{\dot{W}^{3,\infty}([-1,1])}\leq \begin{cases}
\frac{C(M)}{\k} \quad &\k>0,\\
C \quad &\k=0.
\end{cases}
\end{equation}
\begin{proof}[Proof of \eqref{bound1/Lambdageneral}]
Note that
\begin{align*}
\p_z^{1)}\left(\frac{1}{\Lambda_m^-(z)}\right)&=-\p_z  \Lambda_m^-(z)\left(\frac{1}{ \Lambda_m^-(z)}\right)^2,\\
\p_z^{2)}\left(\frac{1}{\Lambda_m^-(z)}\right)&=-\p_z^{2)} \Lambda_m^-(z)\left(\frac{1}{\Lambda_m^-(z)}\right)^2-2 \p_z^{1)} \Lambda_m^-(z) \left(\frac{1}{\Lambda_m^-(z)}\right) \p_z^{1)}\left(\frac{1}{\Lambda_m^-(z)}\right),
\end{align*}
with
\begin{align*}
\Lambda_m^-(z)&=(1+\l_1-z)\ep +(\l_2^{\ep}+\Phi_{\k}(z))\ep^2,\\
\p_z \Lambda_m^-(z) &=-\ep + \Phi_{\k}'(z)\ep^2,\\
\p_z^2 \Lambda_m^-(z) &= \Phi_{\k}''(z)\ep^2,
\end{align*}
give us \eqref{bound1/Lambdageneral} for $k=1$ by \eqref{bound1/Lambda}. Similarly, we get \eqref{bound1/Lambdageneral} for $k=2$ by \eqref{bound1/Lambda} and \eqref{bound1/Lambdageneral} for $k=1.$
\end{proof}

\begin{proof}[Proof of \eqref{boundk=3}] For $k=3$ we have the expression
\begin{align}\label{3derivadas1/L}
\p_z^{3)}\left(\frac{1}{\Lambda_m^-(z)}\right)=&-\p_z^{3)} \Lambda_m^-(z)\left(\frac{1}{\Lambda_m^-(z)}\right)^2-4\p_z^{2)} \Lambda_m^-(z)\left(\frac{1}{\Lambda_m^-(z)}\right)\p_z^{1)}\left(\frac{1}{\Lambda_m^-(z)}\right)\\
&-2 \p_z^{1)} \Lambda_m^-(z)  \left(\p_z^{1)}\left(\frac{1}{\Lambda_m^-(z)}\right)\right)^2 \nonumber \\
&-2 \p_z^{1)} \Lambda_m^-(z) \left(\frac{1}{\Lambda_m^-(z)}\right) \p_z^{2)}\left(\frac{1}{\Lambda_m^-(z)}\right).\nonumber
\end{align}
The difference between degenerate and non-degenerate case is due to the factor:
\[
\p_z^3 \Lambda_m^-(z) = \Phi_{\k}'''(z)\ep^2=\varphi_{\k}''(z)\ep^2.
\]
Note that for the degenerate case, we have that  $\varphi=\frac{1-z}{2}$ is a linear function and the above term vanishes. In contrast, for the non-degenerate case ($\k>0$) we obtain the bound
\[
\|\p_z^3 \Lambda_m^-\|_{L^{\infty}([-1,1])}= \|\varphi_{\k}''\|_{L^{\infty}([-1,1])} \ep^2 \leq  \frac{C}{\k}\ep^2.
\]
Finally, by taking the $L^{\infty}([-1,1])-$norm on both sides of \eqref{3derivadas1/L} and combining previous estimates \eqref{bound1/Lambda} and \eqref{bound1/Lambdageneral} we arrive to the required result.
\end{proof}
\begin{remark}
Notice that the bound obtained in \eqref{boundk=3} is completely natural due to the fact that
$$\varphi_{\k}'(z)\xrightarrow{\k\to 0} -\frac{1}{2}\chi_{[-1,1]}(z), \quad \text{in }  H^{\frac{3}{2}-}(\R),$$
but not in higher-order regularity spaces. Then, for $\k>0$ the Sobolev embedding just give us
$$\|\varphi_{\k}''\|_{L^{\infty}([-1,1])}\lesssim \|\varphi_{\k}'\|_{H^{\frac{3}{2}+}([-1,1])}\leq \frac{C}{\k}.$$
\end{remark}

Therefore, we have proved
\begin{align*}
\|v_1\|_{H^k}\leq c_{\k}\frac{C(M)}{\ep}\left(\|W^+_m\|_{H^k}+\|W^-_m\|_{H^k}\right),
\end{align*}
for $k=0,1,2$ and $3$ with
\begin{equation}\label{degenerateconstant}
c_{\k}:=\begin{cases}
1 \qquad &\text{if } \k=0,\\
\k^{-1} \qquad &\text{if } \k>0.
\end{cases}
\end{equation}

Proceeding similarly for $u_1$, we have the expression:
\begin{align}\label{auxu}
u_1(z)=&\frac{\ep}{2m\Lambda_m^+(z)}\int_{-1}^1\varphi_{\kappa}'(\bar{z})u_1(\bar{z})e^{-\ep m|z-\bar{z}|}\mbox{d}\bar{z}\\
&-\frac{\ep}{2m\Lambda_m^+(z)}e^{-2m}\int_{-1}^1\varphi_{\kappa}'(\bar{z})v_1(\bar{z})e^{-\ep m (z+\bar{z})}\mbox{d}\bar{z} -\frac{1}{m\Lambda^+_m(z)}W^+_m(z).\nonumber
\end{align}
Now, since
\begin{align*}
\Lambda_m^+(z)&=2+(-1+\l_1+z)\ep +(\l_2^{\ep}-\Phi_{\k}(z))\ep^2,\\
\p_z \Lambda_m^+(z) &=\ep - \Phi_{\k}'(z)\ep^2,\\
\p_z^2 \Lambda_m^+(z) &= -\Phi_{\k}''(z)\ep^2,\\
\p_z^3 \Lambda_m^+(z) &= -\Phi_{\k}'''(z)\ep^2,
\end{align*}
we observe that
\begin{equation*}
\| 1/\Lambda_m^+\|_{\dot{W}^{k,\infty}([-1,1])}\leq \e^k, \qquad \text{and} \qquad  \|\ep/\Lambda_m^+\|_{\dot{W}^{k,\infty}([-1,1])}\leq \e^{k+1}, \quad \text{for } k=0,1,2.
\end{equation*}
and
\begin{equation*}
\| 1/\Lambda_m^-\|_{\dot{W}^{3,\infty}([-1,1])}\leq c_{\k}\e^2
 \qquad \text{and} \qquad  \|\ep/\Lambda_m^-\|_{\dot{W}^{3,\infty}([-1,1])}\leq c_{\k} \e^3.
\end{equation*}
Therefore, we have proved that
\begin{align*}
\|u_1\|_{H^k}\leq c_{\k}  \left(\|W^+_m\|_{H^k}+\|W^-_m\|_{H^k}\right),
\end{align*}
for $k=0,1,2,3$ and $c_{\k}$ given by \eqref{degenerateconstant}.

\subsubsection{The case $n\neq m$.}\label{s:inverseoperator}

In this case we have to solve
\begin{align}\label{uno}
&\Lambda^+_m(z)u(z)-\frac{\ep}{2n}\int_{-1}^1\varphi_{\kappa}'(\bar{z})u(\bar{z})e^{-\ep n|z-\bar{z}|}\mbox{d}\bar{z}+\frac{\ep e^{-2n}}{2n}\int_{-1}^1\varphi'_{\kappa}(\bar{z})v(\bar{z})e^{-\ep n(z+\bar{z})}\mbox{d}\bar{z}=-\frac{1}{n}W^+_n(z),\\
&\frac{\Lambda^-_m(z)}{\ep}v(z)+\frac{1}{2n}\int_{-1}^1\varphi_{\kappa}'(\bar{z})v(\bar{z})e^{-\ep n|z-\bar{z}|}\mbox{d}\bar{z}-\frac{e^{-2n}}{2n}\int_{-1}^1\varphi'_{\kappa}(\bar{z})u(\bar{z})e^{-\ep n(z+\bar{z})}\mbox{d}\bar{z}=-\frac{1}{n\ep}W^-_n(z).\label{dos}
\end{align}
Since $\Lambda^+_m(z)=O(1)$ we can divide \eqref{uno} by $\Lambda^+_m(z)$ to get
\begin{align}\label{iu}
u(z)=-\frac{1}{n\Lambda^+_m(z)}W^+_n(z)+\ep U[u,v](z),
\end{align}
where
\begin{align*}
U[u,v](z):=\frac{1}{2n\Lambda^+_m(z)}\int_{-1}^1\varphi_{\kappa}'(\bar{z})u(\bar{z})e^{-\ep n|z-\bar{z}|}\mbox{d}\bar{z}-\frac{e^{-2n}}{2n\Lambda^+_m(z)}\int_{-1}^1\varphi'_{\kappa}(\bar{z})v(\bar{z})e^{-\ep n(z+\bar{z})}\mbox{d}\bar{z}.
\end{align*}
And then we can write \eqref{dos}, using \eqref{iu} as
\begin{align}\label{tres}
(1+\lambda_1-z)v(z)+\frac{1}{2n}\int_{-1}^1\varphi_\kappa'(\bar{z})v(\bar{z})\mbox{d}\bar{z}=&-\frac{1}{n\ep}W^-_n(z)\\ +
&\frac{e^{-2n}}{2n}\int_{-1}^1\varphi_{\kappa}'(\bar{z})\left(-\frac{1}{n\Lambda^+_m(\bar{z})}W^+_n(\bar{z})\right)e^{-\ep n(z+\bar{z})}\mbox{d}\bar{z} \nonumber \\
+&\ep V[u,v](z), \nonumber
\end{align}
where
\begin{align*}
V[u,v](z):=&- (\lambda_2^\ep+\Phi_{\kappa}(z))v(z)+\frac{1}{2n}e^{-2n}\int_{-1}^1\varphi_{\kappa}'(\bar{z})U[u,v](\bar{z})e^{-\ep n(z+\bar{z})}\mbox{d}\bar{z}\\
&-\frac{1}{2n}\int_{-1}^1\varphi'_{\kappa}(\bar{z})v(\bar{z})\frac{e^{-n\ep|z-\bar{z}|}-1}{\ep}\mbox{d}\bar{z}.
\end{align*}

In order to solve \eqref{tres}, we will use the following lemma.
\begin{lemma}\label{inversion}Let $F\in L^2([-1,1])$. Then, the unique solution to
\begin{align}\label{laecuacion}
(1+\lambda_1-z)f(z)+\frac{1}{2n}\int_{-1}^1
\varphi_\kappa'(\bar{z})f(\bar{z})\mbox{d}\bar{z}=F(z),
\end{align}
is given by
\begin{align*}
f(z)=\frac{1}{1+\lambda_1-z}\left(F(z)+\frac{1}{2(m-n)}\int_{-1}^1\varphi_\kappa'(\bar{z})\frac{F(\bar{z})}{1+\lambda_1-\bar{z}}\mbox{d}\bar{z}\right).
\end{align*}
\end{lemma}
\begin{proof}If $f(z)$ is a solution of \eqref{laecuacion} then
\begin{align}\label{vsolucion}
f(z)=\frac{C}{1+\lambda_1-z}+\frac{F(z)}{1+\lambda_1-z},
\end{align}
for some constant $C$. Plugging this expression of $f$ into \eqref{laecuacion} yields
\begin{align*}
C+\frac{C}{2n}\int_{-1}^1\frac{\varphi_{\kappa}'(\bar{z})}{1+\lambda_1-\bar{z}}\mbox{d}\bar{z}+\frac{1}{2n}\int_{-1}^1 \varphi_{\kappa}'(\bar{z}) \frac{F(\bar{z})}{1+\lambda_1-\bar{z}}\mbox{d}\bar{z}
=0,
\end{align*}
Then, using \eqref{lambda1eq} we get
\begin{align*}
C\left(1-\frac{2m}{2n}\right)=-\frac{1}{2n}\int_{-1}^1 \varphi_{\kappa}'(\bar{z}) \frac{F(\bar{z})}{1+\lambda_1-\bar{z}}\mbox{d}\bar{z},
\end{align*}
or
\begin{align}\label{lac}
C=\frac{1}{2(m-n)}\int_{-1}^1 \varphi_{\kappa}'(\bar{z}) \frac{F(\bar{z})}{1+\lambda_1-\bar{z}}\mbox{d}\bar{z}.
\end{align}
Conversely, it is easy to check that $f$ in \eqref{vsolucion}, with $C$ given by \eqref{lac}, solves the equation \eqref{laecuacion}.
\end{proof}
To finish, let us introduce
\begin{align*}
I[F](z):=\frac{1}{1+\lambda_1-z}\left(F(z)+\frac{1}{2(m-n)}\int_{-1}^1\varphi_\kappa'(\bar{z})\frac{F(\bar{z})}{1+\lambda_1-\bar{z}}\mbox{d}\bar{z}\right),
\end{align*}
and
\begin{align*}
W(z):=-\frac{1}{n\ep}W^-_n(z)+\frac{e^{-2n}}{2n}\int_{-1}^1\varphi_{\kappa}'(\bar{z})\left(-\frac{1}{n\Lambda^+_m(\bar{z})}W^+_n(\bar{z})\right)e^{-\ep n(z+\bar{z})}\mbox{d}\bar{z}.
\end{align*}
By using Lemma \ref{inversion}, equation \eqref{tres} can be written as
\begin{align}\label{iv}
v(z)=I[W](z)+\ep I[V[u,v]](z).
\end{align}

The coupled system given by \eqref{iu} and \eqref{iv} is a linear contraction on $L^2([-1,1])$ for $\ep$ small enough. Therefore, there exists a unique $(u,v)\in L^2([-1,1])\times L^2([-1,1])$ solving \eqref{iu}, \eqref{iv} with
\begin{align*}
\|u\|_{L^2} + \|v\|_{L^2}\leq \frac{C(M)}{\ep n}\left(\|W^+_n\|_{L^2}+ \|W^-_n\|_{L^2}\right)
\end{align*}

Therefore, this $(u,v)$ is the unique solution of \eqref{uno} and \eqref{dos}. Taking derivatives on \eqref{iu} and \eqref{iv} we find that actually
\begin{align*}
\|u\|_{H^k} + \|v\|_{H^k}\leq c_{\k}\frac{C(M)}{\ep n}\left(\|W^+_n\|_{H^k}+ \|W^-_n\|_{H^k}\right)
\end{align*}
for $k=0,1,2,3$ and $c_{\k}$ given by \eqref{degenerateconstant}.

Then we have achieved all the conclusions of Theorem \ref{codimension}.
\end{proof}

\subsection{The transversality property}
To check the complete assumptions of Crandall-Rabinowitz's theorem it remains to prove the transversality assumption. In order to do this is enough to show that
\[
D_{\l,\ff}^2 F[\l_{\ep,\kappa,m},0]\hh_{\ep,\kappa,m} \not\in\cR(\cL_{\ep,\kappa}[\l_{\ep,\kappa,m}]),  \qquad \text{where} \qquad
\cN( \cL_{\ep,\kappa}[\l_{\ep,\kappa,m}]) = \text{span }\{\hh_{\ep,\kappa,m}\}.
\]
This will be done in a straightforward way without any  difficulty. Recall that for any $m\in \N$ we have proved before that there exist an eigenvalue $\l_{\ep,\kappa,m}\in\R$ given by Theorem \ref{eyufijadom} such that $\cN( \cL_{\ep,\kappa}[\lambda_{\ep,\kappa,m}]) = \text{span }\{\hh_{\ep,\kappa,m}\},$ where $\hh_{\ep,\kappa,m}(\bx)=\hh(y)\cos( m x)$ and with $\hh$ given by
\begin{equation}\label{def:tranversality}
h(y)= \left\{
\begin{split}
a(\tfrac{+y-1}{\e}) \qquad &\text{if} \quad  y\in[+1-\e,+1+\e],\\
b(\tfrac{-y-1}{\e}) \qquad &\text{if} \quad y\in [-1-\e,-1+\e].
\end{split}\right.
\end{equation}
 To finish, let us proceed by reduction to absurd. Since  $D_{\l,\ff}^2 F[\lambda_{\ep,\kappa,m},0]\hh_{\ep,\kappa,m}=-\hh(y)m \sin(m x),$ using Lemma \ref{necesario}  we have that if  $D_{\l,\ff}^2 F[\l_{\ep,\kappa,m},0]\hh_{\ep,\kappa,m} \in\cR(\cL_{\kappa,\ep}[\l_{\ep,\kappa,m}]),$ then $$-\|a\|^2_{L^2_{\varphi_{\kappa}}} + \|b\|^2_{L^2_{\varphi_{\kappa}}}=0.$$ This give us a contradiction since $\|a\|_{L^2_{\varphi'_\kappa}}=O(\ep)$ and $\|b\|_{L^2_{\varphi'_\kappa}}=O(1)$ and concludes the last required condition of Crandall-Rabinowitz  theorem \ref{th:CR}.

\section{Main theorem}

After verifying  all the conditions for the application of the Crandall-Rabinowitz  theorem \ref{th:CR} and the discussion in Section \ref{s:formulation} we obtain the following theorem:

\begin{theorem}\label{thm5} Fixed $1< M<\infty$. There exist $\ep_0(M)$, $\kappa_0(M)$ such that, for every $0<\ep<\ep_0$, $0<\kappa<\kappa_0$ and $m\in \N$, $m<M$, there exist a branch of solutions, $\ff_{\ep,\kappa,m}^\sigma\in H^{4,3}(D_{\ep})$ parameterize by $\sigma$, of equation  \eqref{e:goal}, with $|\sigma|<\sigma_0$, for some small number $\sigma_0>0$, $\varpi_{\ep,\kappa}$ as in section \ref{profile} and $\lambda=\lambda^\sigma_{\ep,\kappa,m}.$ These solutions satisfy:
\begin{enumerate}
\item $\ff^\sigma_{\ep,\kappa,m}(x,y)$ is $\frac{2\pi}{m}-$periodic on $x$.
\item The branch $$\ff^\sigma_{\ep,\kappa,m}=\sigma h_{\ep,\kappa,m}+o(\sigma) \quad\text{ in $H^{4,3}(D_\ep)$},$$
and the speed
$$\lambda^\sigma=\lambda_{\ep,\,\kappa,m}+o(\sigma),$$ where $(\lambda_{\ep,\kappa,\sigma},\, h_{\ep,\kappa,m})$ are given in Theorem \ref{eyufijadom} and Remark \ref{remark}.
\item  $\ff^\sigma_{\ep,\kappa,m}(x,y)$ depends on $x$ in a nontrivial way.
\end{enumerate}
In addition, the vorticity $\omega^\sigma_{\kappa,\sigma,m}\in H^{4,3}(\T\times \R)$,  given implicitly by
\begin{align*}
&\omega^\sigma_{\ep,\kappa,m}(x_1,x_2)=\varpi_{\e,\k}(y),
\end{align*}
for $(x_1,x_2)=(x,y+f^\sigma_{\ep,\kappa,m}(x,y))$ with $x\in\T$ and $y\in [-1-\ep,-1+\ep]\cup [1-\ep,1+\ep]$,
\begin{align*}
\omega^\sigma_{\ep,\kappa,m}(x_1,x_2)=\ep,
\end{align*}
for $x_1\in\T$ and $-1+\ep+f^\sigma_{\ep,\kappa,m}(x_1,-1+\ep)<x_2<1-\ep+f^\sigma_{\ep,\kappa,m}(x_1,1-\ep)$, and
\begin{align*}
\omega^\sigma_{\ep,\kappa,m}(x_1,x_2)=0,
\end{align*}
for $x_1\in \T$ and either $x_2>1+\ep+f^\sigma_{\ep,\kappa,m}(x_1,1+\ep)$ or $x_2<-1-\ep +f^{\sigma}_{\ep,\kappa,m}(x_1,-1-\ep)$,\\
yields a traveling way solution for 2D Euler in the sense that $$\omega^\sigma_{\ep,\kappa,m}(x_1+\lambda^\sigma_{\ep,\kappa,m}t,x_2)$$
satisfies  the system \eqref{e:E2Dperturbado}. Importantly, $\omega^\sigma_{\ep,\kappa,m}(x_1,x_2)$ depends non trivially on $x_1$.
\end{theorem}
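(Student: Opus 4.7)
The plan is to apply the Crandall--Rabinowitz theorem (Theorem \ref{th:CR}) to the functional $F[\lambda, \ff]$ of \eqref{Fdefi} with $\varpi \equiv \varpi_{\ep,\kappa}$, between the Banach spaces $X(D_\ep)$ and $Y(D_\ep)$ introduced in \eqref{spaceX}--\eqref{sapceY}, at the bifurcation point $(\lambda_{\ep,\kappa,m}, 0)$. All four hypotheses have been prepared in the preceding sections and my job is to assemble them: hypothesis (1) is immediate from the definition of $F$; hypothesis (2) is the $C^1$ regularity proved in Section \ref{s:regularity}; hypothesis (3) is the combination of Theorem \ref{eyufijadom} (one-dimensional kernel $\mathrm{span}\{h_{\ep,\kappa,m}\}$) and Theorem \ref{codimension} (codimension-one range in $Y(D_\ep)$, cut out by the single scalar condition $-(W^+_m, \varphi'_\kappa a)_{L^2} + (W^-_m, \varphi'_\kappa b)_{L^2} = 0$); and hypothesis (4) was the transversality verification at the end of Section \ref{s:analysislinear}, where $D^2_{\lambda,\ff} F[\lambda_{\ep,\kappa,m}, 0] h_{\ep,\kappa,m} = -h(y) m \sin(mx)$ fails this orthogonality because $-\|a\|^2_{L^2_{\varphi'_\kappa}} + \|b\|^2_{L^2_{\varphi'_\kappa}} = O(1)$, using $a = O(\ep)$ against $b = O(1)$ from Section \ref{s:existence}.

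Theorem \ref{th:CR} then delivers, for some $\sigma_0 > 0$ and continuous $\varphi, \psi$ vanishing at $0$, a branch
\begin{equation*}
\ff^\sigma_{\ep,\kappa,m} = \sigma \, h_{\ep,\kappa,m} + \sigma\, \psi(\sigma), \qquad \lambda^\sigma_{\ep,\kappa,m} = \lambda_{\ep,\kappa,m} + \varphi(\sigma),
\end{equation*}
which is item (2). For item (1), I will take as the complement $Z$ in Theorem \ref{th:CR} a closed subspace of $X(D_\ep)$ that only contains Fourier modes $\cos(jm x)$ with $j \neq 1$; this is legitimate because the linearization $\cL[\lambda]$ decouples the Fourier modes in $x$, as exhibited by \eqref{e:coefficientequation}, so the full Lyapunov--Schmidt reduction may be performed inside the invariant subspace of $\frac{2\pi}{m}$-periodic functions. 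Item (3) then follows from the fact that $h_{\ep,\kappa,m}(x,y) \propto \cos(mx)$ depends nontrivially on $x$ together with the remainder estimate $\|\sigma\psi(\sigma)\|_{H^{4,3}} = o(\sigma)$, so $\ff^\sigma$ cannot be $x$-independent for small nonzero $\sigma$.

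The translation of the branch into a traveling-wave vorticity is carried out via the level-set correspondence of Section \ref{s:formulation}. Setting $f^\sigma(\bx, t) := y + \ff^\sigma_{\ep,\kappa,m}(x + \lambda^\sigma t, y)$, the identity $F[\lambda^\sigma, \ff^\sigma] \equiv 0$ on $\T \times \stackrel{\circ}\supp(\varpi'_{\ep,\kappa})$ is exactly equation \eqref{mainequation} for $f^\sigma$, by the ansatz \eqref{e:ansatz} and the derivation \eqref{inter1}--\eqref{mainequation}. Granting the smoothness bootstrap of Section \ref{s:fullregularity} to upgrade the $H^{4,3}$ regularity of $\ff^\sigma$ to $C^\infty$, and noting $\pa_y f^\sigma = 1 + O(\sigma) > 0$ for small $\sigma$, the converse part of Section \ref{s:formulation} constructs $\omega^\sigma_{\ep,\kappa,m}$ by imposing $\omega^\sigma(x, f^\sigma(\bx, t), t) = \varpi_{\ep,\kappa}(y)$ on the image of the two transported bands and by extending it as the constants $\ep$ (in the region between) and $0$ (outside) elsewhere. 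This gives a smooth solution of \eqref{e:E2Dperturbado} of the traveling form $\omega^\sigma(x_1 + \lambda^\sigma t, x_2)$, reproducing the piecewise description in the statement.

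The only nonroutine aspect is the Fourier-mode observation used for item (1): the $\frac{2\pi}{m}$-periodicity is not automatic from the abstract C-R statement, but it follows once one notes that the invariance of $F$ under the shift $x \mapsto x + 2\pi/m$ is inherited from the translation invariance of the profile and of the Green's function, so the closed subspace of $\frac{2\pi}{m}$-periodic functions in $X(D_\ep)$ is $F$-invariant and all the C-R hypotheses restrict to it. With this observation in place, the statement is a straightforward collation of Theorems \ref{eyufijadom}, \ref{codimension}, the regularity lemmas of Section \ref{s:regularity}, and the reconstruction formula of Section \ref{s:formulation}; the conceptual core of the argument has already been done in Sections \ref{s:existence}--\ref{s:inverseoperator}.
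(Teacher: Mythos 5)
Your proposal is correct and follows essentially the same route as the paper, which derives Theorem \ref{thm5} precisely by collating the Crandall--Rabinowitz hypotheses verified in Sections \ref{s:regularity}--\ref{s:analysislinear} with the level-curve reconstruction of Section \ref{s:formulation}. Your explicit remark that the Lyapunov--Schmidt reduction should be carried out inside the $F$-invariant subspace of $\frac{2\pi}{m}$-periodic functions (so that item (1) is not left to the abstract C-R statement) is a detail the paper leaves implicit, and it is consistent with the mode-by-mode decoupling in \eqref{e:coefficientequation} and with the case $n\neq m$ of Theorem \ref{codimension}.
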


Then, in order to prove Theorem \ref{thmbasic} it remains to prove that $H^{\frac{3}{2}-}(\T\times \R)$-norm of $\omega^\sigma_{\ep,\kappa,m}$ can be made as small as we want and that $\omega^\sigma_{\ep,\kappa,\sigma}\in C^\infty(\T\times\R)$. We do this in Theorems \ref{thm6} and \ref{thm7}.

\subsection{Distance of the traveling wave to the Couette flow}\label{s:distance}
The solution $\omega^\sigma_{\ep,\kappa,m}$ obtained in Theorem \ref{thm5} satisfies the following statement:
\begin{theorem}\label{thm6}
Fixed $M>1$, $0<\kappa<\kappa_0$ and $0<\gamma< \frac{3}{2}$, for all $\epsilon>0$ and $1\leq m<M$, there exist $\ep>0$ and $\sigma>0$ such that
\begin{align*}
\|\omega^\sigma_{\ep,\kappa,m}\|_{H^\gamma(\T\times \R)}<\epsilon.
\end{align*}
\end{theorem}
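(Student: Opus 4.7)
The plan is to decompose
\[
\omega^{\sigma}_{\ep,\k,m} = \omega^{0}_{\ep,\k} + \bigl(\omega^{\sigma}_{\ep,\k,m} - \omega^{0}_{\ep,\k}\bigr),
\]
where $\omega^{0}_{\ep,\k}(x_1,x_2) := \varpi_{\ep,\k}(x_2)$ is the vorticity of the shear solution obtained from the trivial branch $\ff \equiv 0$ at $\sigma = 0$. By the triangle inequality, it suffices to make each summand arbitrarily small in $H^\gamma(\T\times\R)$. The two-step strategy is: first choose $\ep$ small (to control the shear piece via the scaling of $\varpi_{\ep,\k}$), then choose $\sigma$ small (to control the branch-induced correction).

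For the shear piece, since $\omega^{0}_{\ep,\k}$ depends only on $x_2$, we have $\|\omega^{0}_{\ep,\k}\|_{H^\gamma(\T\times\R)} = \sqrt{2\pi}\,\|\varpi_{\ep,\k}\|_{H^\gamma(\R)}$. Elementary estimates yield $\|\varpi_{\ep,\k}\|_{L^2(\R)} \lesssim \ep$ (dominated by the plateau) and $\|\varpi_{\ep,\k}\|_{\dot H^1(\R)} \lesssim \sqrt{\ep}$ (derivative concentrated on the transition strips of width $\ep$). For $1/2 < \gamma < 3/2$ the dominant scaling is read off the transition profile $\ep\,\varphi_\k((y-1)/\ep)$: substituting $z=(y-1)/\ep$ gives
\[
\bigl\|\ep\,\varphi_\k\bigl((\,\cdot\,-1)/\ep\bigr)\bigr\|_{\dot H^\gamma(\R)} = \ep^{3/2-\gamma} \|\varphi_\k\|_{\dot H^\gamma(\R)},
\]
which is finite for any $\gamma < 3/2$ by the smoothness of $\varphi_\k'$ (Lemma \ref{propiedadesvarphi}). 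Combining with interpolation for $\gamma \le 1/2$ yields
\[
\|\varpi_{\ep,\k}\|_{H^\gamma(\R)} \le C(\k,\gamma)\, \ep^{\min(1,\,3/2-\gamma)} \xrightarrow[\ep\to 0]{} 0
\]
for every fixed $\gamma < 3/2$.

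For the correction, recall from Theorem \ref{thm5} that along the branch $\ff^{\sigma}_{\ep,\k,m} = \sigma\, h_{\ep,\k,m} + o(\sigma)$ in $H^{4,3}(D_\ep)$, and that $\omega^{\sigma}_{\ep,\k,m}(x_1,x_2) = \varpi_{\ep,\k}(Y^\sigma(x_1,x_2))$, where $Y^\sigma$ is the inverse of the near-identity map $y \mapsto y + \ff^\sigma(x_1,y)$, extended by the identity outside a neighborhood of $\supp(\varpi_{\ep,\k})$ in the $y$-variable. For $\sigma$ small this extension is a smooth diffeomorphism of $\T\times\R$ with $\|Y^\sigma - x_2\|_{H^{4,3}} = O(\sigma)$. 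From the pointwise identity
\[
\omega^{\sigma}_{\ep,\k,m} - \omega^{0}_{\ep,\k} = (Y^\sigma - x_2)\int_0^1 \varpi'_{\ep,\k}\bigl(x_2 + t(Y^\sigma-x_2)\bigr)\, dt,
\]
together with standard composition and product estimates in $H^\gamma(\T\times\R)$ for $\gamma < 3/2$ (well below the regularity threshold of $H^{4,3}$), one obtains
\[
\|\omega^{\sigma}_{\ep,\k,m} - \omega^{0}_{\ep,\k}\|_{H^\gamma(\T\times\R)} \le C(\ep,\k,\gamma)\, \sigma.
\]

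To conclude, given $\epsilon > 0$, first choose $\ep$ small enough that $\|\omega^{0}_{\ep,\k}\|_{H^\gamma} < \epsilon/2$; this fixes $\ep$ and hence the constant $C(\ep,\k,\gamma)$ above. Then choose $0 < \sigma < \sigma_0$ with $C(\ep,\k,\gamma)\,\sigma < \epsilon/2$, and the triangle inequality yields the claim. The only delicate technical point is the composition estimate for the correction, where one must justify the smooth extension of $\ff^\sigma$ outside $D_\ep$ and track Sobolev norms of the pullback; the resulting constant certainly deteriorates as $\ep \to 0$, but because $\ep$ is selected before $\sigma$, no quantitative control of the $\ep$-dependence is actually required.
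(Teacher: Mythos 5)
Your argument is correct, but it follows a genuinely different route from the paper's. The paper does not split off the shear part: it estimates the full vorticity $\omega^{\sigma}_{\ep,\k,m}$ directly, using the level-set representation to reduce $\|\omega\|_{\dot H^1}$ and $\|\omega\|_{\dot H^2}$ to $\|\varpi'_{\ep,\k}\|_{L^2}$ and $\|\varpi''_{\ep,\k}\|_{L^2}$ (the coefficients $A_i$ coming from the chain rule being bounded once $\sigma$ is small, since $H^{4,3}\subset C^2$), and then interpolates $\|\omega\|_{\dot H^{(3-\gamma)/2}}\lesssim \|\omega\|_{\dot H^1}^{(1+\gamma)/2}\|\omega\|_{\dot H^2}^{(1-\gamma)/2}\lesssim \ep^{\gamma/2}\k^{(\gamma-1)/4}$. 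That route only ever needs integer-order derivatives of $\omega$ and one interpolation inequality, so it avoids entirely the step you flag as delicate: extending $\ff^\sigma$ off $D_\ep$ to a global near-identity diffeomorphism and running composition/product estimates in fractional Sobolev spaces. Your decomposition, in exchange, makes the two smallness mechanisms transparent ($\ep$ controls the shear profile via the exact scaling $\ep^{3/2-\gamma}$ of the transition layer, $\sigma$ controls the $x$-dependent correction), and the order of choices ($\ep$ first, then $\sigma$) correctly dispenses with any need to track the $\ep$-dependence of the composition constants. Two small caveats: the scaling identity $\|\ep\varphi_\k((\cdot-1)/\ep)\|_{\dot H^\gamma}=\ep^{3/2-\gamma}\|\varphi_\k\|_{\dot H^\gamma}$ applies to the transition layers, not to $\varpi_{\ep,\k}$ as a whole (the plateau $\ep\chi$ is not in $H^\gamma$ for $\gamma\ge 1/2$ by itself), so one should phrase this as $\|\varpi_{\ep,\k}\|_{\dot H^\gamma}=\|\varpi'_{\ep,\k}\|_{\dot H^{\gamma-1}}$ with $\varpi'_{\ep,\k}$ exactly a rescaled copy of $\varphi'_\k$; and for $\gamma\le 1/2$ interpolation gives $\ep^{1-\gamma/2}$ rather than $\ep$, which is harmless. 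Both approaches lose uniformity in $\k$ near $\gamma=3/2$, consistent with $\varpi_{\ep,0}$ being only $H^{3/2^-}$.
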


\begin{proof}
Let us emphasis that we can make $\|\ff^\sigma_{\ep,\kappa,m}\|_{H^{4,3}}$ arbitrarily small fixed $\ep$, $\kappa$ and $m$, taking $\sigma$ small. We have all the ingredients to obtain a quantitative estimate of the distance between the Couette flow and the constructed traveling wave. We could compute the $H^{\frac{3}{2}-}$ norm of $w^\sigma_{\ep,\kappa,m}$ from the expression
\begin{align*}
\Lambda^{\gamma}w^\sigma_{\ep,\kappa,m}(\bx)=k_{\gamma}\int_{\R^2}\frac{w^\sigma_{\ep,\kappa,m}(\bx)-w^\sigma_{\ep,\kappa,m}(\by)}{|\bx-\by|^{2+\gamma}}\mbox{d}\by.
\end{align*}
We could show that this quantity is as small as we want by making $\sigma$ and $\ep$ small with independence of $\kappa$. Remember that $\varpi_{\ep,0}$ is in $H^{\frac{3}{2}-}$. However, to avoid tedious computations, we will take a shortcut using interpolation of Sobolev norms:

\[
\|g\|_{\dot{H}^s}\lesssim  \|g\|_{L^2}^{1-s} \|g\|_{\dot{H}^1}^{s}, \qquad (0<s<1),
\]
which (taking $0<\g\ll 1$) give us
\begin{equation}\label{boundgoal}
\|w+1\|_{\dot{H}^{\frac{3-\g}{2}}}\equiv \|\omega\|_{\dot{H}^{\frac{3-\g}{2}}}\lesssim \| \omega\|_{\dot{H}^1}^{\frac{1+\g}{2}} \|\omega\|_{\dot{H}^2}^{\frac{1-\g}{2}}.
\end{equation}
\begin{remark}
This way of proceeding will make us lose the independence on $\kappa$. But since we are actually interested on the case $\kappa>0$ it will be good enough.
\end{remark}

To alleviate the notation let us skip the subscripts $(\ep,\kappa,m)$ and the superscript $\sigma$ on $w^\sigma_{\ep,\kappa,m}$, $\omega^\sigma_{\ep,\kappa,m}$, and $\ff^\sigma_{\ep,\kappa,m}$ in the rest of the section. We will keep $\varpi_{\ep,\kappa}$ and $\varphi_{\kappa}$ as we did before.

In order to compute the right-hand side of \eqref{boundgoal}, we have that
\begin{equation}\label{1derivative}
\n\omega(x,y+\ff(\bx))=\frac{\varpi_{\e,\k}'(y)}{1+\ff_y(\bx)}(-\ff_x(\bx),1),\quad \text{on $\supp(\nabla \omega)$},
\end{equation}
thus, making the appropriate change of variables, we obtain
\begin{align*}
\|\omega\|_{\dot{H}^2}^2=&\int_{\T\times\R}\left|\n^2 \omega(\bx)\right|^2 \mbox{d}\bx=\int_{\supp(\n^2\omega)}\left|\p^2 \omega(\bx)\right|^2 \mbox{d}\bx\\
=&\int_{D_\ep}|\n^2\omega (x,y+\ff(\bx))|^2(1+\ff_y(\bx))\mbox{d}\bx,
\end{align*}
and computing second order derivatives we get
\begin{align*}
\pa_{x}^2\omega(x,y+\ff(\bx))=&A_1(\bx)\varpi_{\e,\k}'(y)+A_2(\bx)\varpi_{\e,\k}''(y),\\
\pa_{y}^2\omega(x,y+\ff(\bx))=&A_3(\bx)\varpi_{\e,\k}'(y)+A_4(\bx)\varpi_{\e,\k}''(y),\\
\pa_{xy}^2\omega(x,y+\ff(\bx))=&A_5(\bx)\varpi_{\e,\k}'(y)+A_6(\bx)\varpi_{\e,\k}''(y),\\
\end{align*}
where $A_{i}$ for $i=1,...,6$ are functions which depend on $\pa_x \ff, \pa_y \ff, \pa^2_{x}\ff, \pa^2_{y}\ff, \pa^2_{xy}\ff$.

The $H^{4,3}-$norm of $\ff$ could depend on $\e$ badly. However we always could choose $\s$ small enough in such a way that the $H^{4,3}$-norm of $\ff$ is small. Noticing that $H^{4,3} \subset C^2$ (see \cite[Lemma 4.1]{CCG2}) we have that $A_i$ for $i=1,...,6$ are bounded functions.
Then,  we have that
\begin{align*}
\|\omega\|_{\dot{H}^1} &\approx \|\varpi_{\e,\k}'\|_{L^2(I_\ep)},\\
\|\omega\|_{\dot{H}^2} &\approx \|\varpi_{\e,\k}'\|_{L^2(I_\ep)}+\|\varpi_{\e,\k}''\|_{L^2(I_\ep)}.
\end{align*}
Let us note that by  parity of the profile $\varpi_{\e,\k}$ on $I_\e=[-1-\e,-1+\ep]\cup[+1-\ep,+1+\ep]$, we can just reduced our problem to study the following norms:
\[
\|\varpi_{\e,\k}'\|_{L^2([1-\e,1+\e])}, \|\varpi_{\e,\k}''\|_{L^2([1-\e,1+\e])}.
\]
Recalling the definitions of $\varpi_{\ep,\kappa}$ and  $\varphi_{\kappa}$ in section \ref{profile}. We have that

$$\varpi_{\ep,\kappa}'(y+1)=-\frac{\psi'_{\kappa}\left(\frac{y}{\ep}\right)}{\int_{-1}^1\psi_{\ep,\kappa}'(\bar{z})\mbox{d}\bar{z}}, \qquad \text{for } |y| \leq \e.$$
At this point it does not matter to consider the normalization factor  $\int_{-1}^1\psi'_{\kappa}(\bar{z})\mbox{d}\bar{z}$. Then
\begin{equation}\label{auxH1}
\|\varpi_{\e,\k}'\|_{L^2([1-\e,1+\e])}=\int_{1-\ep}^{1+\ep}|\varpi_{\ep,\kappa}'(y)|^2 \dy=\int_{-\ep}^\ep \left|\psi_{\kappa}'\left(\frac{y}{\ep}\right)\right|^2  \dy\leq 2\ep \|\psi_{\kappa}'\|_{L^{\infty}([-1,1])}^2,
\end{equation}
and
\begin{align}\label{varpiH2}
\|\varpi_{\e,\k}''\|_{L^2([1-\e,1+\e])}=\int_{1-\ep}^{1+\ep}|\varpi_{\ep,\kappa}''(y)|^2 \dy=\frac{1}{\ep^2}\int_{-\ep}^\ep \left|\psi_{\kappa}''\left(\frac{y}{\ep}\right)\right|^2  \dy.
\end{align}
Since
\begin{align*}
\psi'_{\kappa}(z)=\int_{-1+\kappa}^{1+\kappa}\Theta_{\kappa}(z-\bar{z})\mbox{d}\bar{z},
\end{align*}
thus
\begin{align*}
\psi''_{\kappa}(z)=\Theta_\kappa (z-(1-\kappa))-\Theta_{\kappa}(z-(-1+\kappa)),
\end{align*}
and
\begin{align*}
\psi''_{\kappa}\left(\frac{y}{\ep}\right)=\frac{1}{\kappa}\left(\Theta\left(\frac{y-\ep(1-\kappa)}{\ep\kappa}\right)
-\Theta\left(\frac{y-\ep(-1+\kappa)}{\ep\kappa}\right)\right).
\end{align*}
Putting the above expression into \eqref{varpiH2} we get
\begin{align*}
\|\varpi_{\e,\k}''\|_{L^2([1-\e,1+\e])} \leq \frac{1}{\ep^2 \k^2}\left(\int_{-\ep}^\ep  \left|\Theta\left(\frac{y-\ep(1-\kappa)}{\ep\kappa}\right)\right|^2\dy+\int_{-\ep}^\ep  \left|\Theta\left(\frac{y-\ep(-1+\kappa)}{\ep\kappa}\right)\right|^2\dy\right).
\end{align*}
Now, we make the changes of variables $\tilde{y}=y-\ep(1-\kappa)$ in the first integral and $\tilde{y}=y-\ep(-1+\kappa)$ in the second one. Note that the limits of integration will be $(-\ep(2-\kappa),\ep\kappa)$ and $(-\ep\kappa, \ep(2-\kappa))$, respectively. But $\ep(2-\kappa)> \ep \kappa$ for   $\kappa <1$, and the support of $\Theta(\tilde{y} / \ep \kappa )$ is inside of $(-\ep\kappa,\ep\kappa)$.
Thus both integrals run from $-\ep\kappa$ to $\ep\kappa$:
\begin{equation}\label{auxH2}
\|\varpi_{\e,\k}''\|_{L^2([1-\e,1+\e])} \leq \frac{2}{\ep^2 \k^2}\int_{-\ep\k}^{\ep\k}  \left|\Theta\left(\frac{y}{\ep\kappa}\right)\right|^2 \dy \leq \frac{4}{\ep \k}\|\Theta\|_{L^{\infty}([-1,1])}^2.
\end{equation}

Therefore, combining \eqref{auxH1} and \eqref{auxH2}, there exists $C>0$ such that
\[
\|w+1\|_{\dot{H}^{\frac{3-\g}{2}}}\equiv \|\omega\|_{\dot{H}^{\frac{3-\g}{2}}}\lesssim \| \omega\|_{\dot{H}^1}^{\frac{1+\g}{2}} \|\omega\|_{\dot{H}^2}^{\frac{1-\g}{2}}\leq C \ep^\frac{1+\gamma}{4}\ep^\frac{-1+\gamma}{4}\kappa^\frac{-1+\gamma}{4}\leq C \ep^\frac{\gamma}{2}\kappa^\frac{-1+\gamma}{4}.
\]
In addition
\begin{align*}
\|w+1\|_{L^2}&\equiv \|\omega\|_{L^2}\lesssim \e.
\end{align*}
Consequently, for any  $\epsilon>0$ and for any $0<s<3/2$, taking $\e$ and $\sigma$ small enough, we find a traveling wave such that its vorticity satisfies
$\|w^\sigma_{\ep,\kappa, m}+1\|_{H^s(\T\times\R)}<\epsilon.$
\end{proof}

\subsection{Full regularity of the solution}\label{s:fullregularity}
This section is devote to proving the following result.

\begin{theorem}\label{thm7}
The solution $\omega^\sigma_{\ep,\kappa,m}$ in Theorem \ref{thm5} is actually $C^\infty(\T\times \R)$.
\end{theorem}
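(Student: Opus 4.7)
The statement $\omega^\sigma_{\ep,\kappa,m} \in C^\infty(\T\times\R)$ reduces to proving $\ff^\sigma_{\ep,\kappa,m} \in C^\infty(D_\ep)$. Indeed, $\varpi_{\ep,\kappa}\in C^\infty_c(\R)$ and property (1) of Lemma \ref{propiedadesvarphi} applied to $\varphi_\kappa$ guarantees that $\varpi_{\ep,\kappa}$ together with all its derivatives vanishes at the endpoints $\pm(1\pm\ep)$ of its support. Hence the piecewise construction of $\omega^\sigma_{\ep,\kappa,m}$ (equal to $\varpi_{\ep,\kappa}(y)$ on the level-set strips, to $\ep$ in the central band, to $0$ outside) matches smoothly across all four boundary curves $\{x_2 = y_0 + \ff^\sigma(x_1, y_0)\}$ with $y_0 \in \{\pm(1-\ep),\pm(1+\ep)\}$; and the inverse of the level-set map $(x_1,x_2)=(x,y+\ff^\sigma(\bx))$ is $C^\infty$ whenever $\ff^\sigma$ is, by the implicit function theorem. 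So the regularity of $\omega^\sigma_{\ep,\kappa,m}$ is inherited from that of $\ff^\sigma$.

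To upgrade $\ff^\sigma$ to $C^\infty$, the plan is to re-run the Crandall--Rabinowitz scheme in higher-regularity spaces. For each integer $k \geq 3$ introduce
\begin{align*}
X_k(D_\ep) &:= \{g \in H^{k+1,k}(D_\ep) : g \text{ even in } x,\ \textstyle\int_{-\pi}^\pi g(x,y)\dx=0\},\\
Y_k(D_\ep) &:= \{g \in H^k(D_\ep) : g \text{ odd in } x\},
\end{align*}
where $H^{k+1,k}$ denotes the obvious analogue of $H^{4,3}$. The proofs of Section \ref{s:regularity} go through verbatim in $(X_k, Y_k)$, with Lemma \ref{l:DK[f]} replaced by higher-order analogues (obtained by iterating the same kernel estimates one derivative at a time), to show that $F : \R \times \mathbb{B}_{\delta_k}(X_k) \to Y_k$ is well-defined and of class $C^1$ for some $\delta_k(\ep,\kappa,k) > 0$. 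The spectral analysis of Section \ref{s:analysislinear} also extends: the kernel element $h_{\ep,\kappa,m}$ is already $C^\infty$ by the bootstrap of Section \ref{regularity} and so belongs to every $X_k$; Theorem \ref{codimension} upgrades to $H^k$ by further differentiation of the representation formulas \eqref{auxv}--\eqref{auxu}, together with iterated bounds on $\|1/\Lambda_m^\pm\|_{\dot{W}^{k,\infty}}$ analogous to \eqref{bound1/Lambda}--\eqref{boundk=3}; and the transversality condition only involves $h_{\ep,\kappa,m}$, hence is unchanged.

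All hypotheses of Theorem \ref{th:CR} being verified in $(X_k, Y_k)$, one obtains a bifurcating branch $\sigma \mapsto (\lambda_k(\sigma), \ff^\sigma_k) \in \R \times X_k$ for $|\sigma| < \sigma_0^{(k)}$. Since $X_k \hookrightarrow X_3 = X$, this $X_k$-branch is in particular an $X$-branch of solutions of $F[\lambda,\ff]=0$ through $(\lambda_{\ep,\kappa,m},0)$, so the uniqueness statement of Theorem \ref{th:CR} in $(X,Y)$ forces it to coincide (up to reparametrization) with the branch of Theorem \ref{thm5}. Hence $\ff^\sigma \in X_k$ for all small $|\sigma|$. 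For the remaining $|\sigma| \in (0,\sigma_0)$, the linearization $D_\ff F[\lambda^\sigma, \ff^\sigma]$ remains Fredholm of index $0$ on $(X_k, Y_k)$ along the branch, so the implicit function theorem can be applied at the $X_k$-level to continue the $X_k$-branch past the bifurcation neighborhood; by uniqueness in $(X,Y)$ it must continue to agree with the $X$-branch. Taking $k$ arbitrary and invoking Sobolev embedding yields $\ff^\sigma \in \bigcap_k X_k \subset C^\infty(D_\ep)$.

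\textbf{The main technical obstacle} is the extension of Theorem \ref{codimension} to arbitrary order. One must iterate the $W^{k,\infty}$ bounds on $1/\Lambda_m^\pm$ of \eqref{bound1/Lambda}--\eqref{boundk=3} and track constants of the form $c_\kappa^{(k)} \lesssim \|\Phi_\kappa\|_{W^{k,\infty}([-1,1])}$; these blow up as $\kappa \to 0$ (reflecting that the singular limit $\varpi_{\ep,0}$ only lies in $H^{3/2^-}$), but for any fixed $\kappa > 0$ --- which is the regime of the present theorem --- every $c_\kappa^{(k)}$ is finite and the bootstrap closes at every order.
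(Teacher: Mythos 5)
Your reduction of the problem to the smoothness of $\ff^\sigma_{\ep,\kappa,m}$ on $D_\ep$ is fine (the gluing across the four level curves indeed works because all derivatives of $\varphi_\kappa$ vanish at $\pm1$ by Lemma \ref{propiedadesvarphi}), but the strategy you use to upgrade $\ff^\sigma$ itself has a genuine gap. Re-running Crandall--Rabinowitz in the scale $(X_k,Y_k)$ produces, for each $k$, a branch defined only for $|\sigma|<\sigma_0^{(k)}$, and nothing in your argument prevents $\sigma_0^{(k)}\to 0$ as $k\to\infty$. The local uniqueness in Theorem \ref{th:CR} then only tells you that $\ff^\sigma\in X_k$ for $|\sigma|<\min(\sigma_0,\sigma_0^{(k)})$, so the intersection over all $k$ may well be $\{\sigma=0\}$ and you would have proved smoothness of no nontrivial solution. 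Your proposed repair --- continuing the $X_k$-branch along the whole $X$-branch by the implicit function theorem because $D_\ff F[\lambda^\sigma,\ff^\sigma]$ ``remains Fredholm of index $0$'' --- does not work: the implicit function theorem needs invertibility of the linearization (or at least surjectivity after augmenting by the $\lambda$-direction), not Fredholmness of index $0$, and along a bifurcating branch the linearization is precisely the kind of operator that degenerates (it is non-invertible at $\sigma=0$ and there is no a priori lower bound on its inverse for $\sigma\neq 0$). No such invertibility is established anywhere, so the continuation step is unjustified and the proof does not close.

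The paper avoids bifurcation theory entirely at this stage and instead bootstraps directly on the equation satisfied by the already-constructed solution: writing \eqref{e:goal} as $(\lambda^\sigma+y+\ff-\mathfrak{u}_1[\ff])\,\pa_x\ff=\mathfrak{u}_2[\ff]$, it exploits that the logarithmic kernels give $\mathfrak{u}_i[\ff]\in H^{k+1}$ whenever $\pa_x\ff\in H^k$, that the coefficient $\lambda^\sigma+y+\ff-\mathfrak{u}_1[\ff]$ is bounded below for $\sigma$ small, and then recovers $y$-regularity by differentiating three times in $y$ and inverting a compact perturbation $T[\ff]$ of the identity. This gains one derivative per step for the \emph{fixed} solution $\ff^\sigma$, with no shrinking of the $\sigma$-interval, which is exactly what your scheme is missing. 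If you want to salvage your approach you would either need uniform-in-$k$ lower bounds on $\sigma_0^{(k)}$ (essentially as hard as the bootstrap) or replace the continuation step by such an elliptic-type regularity argument.
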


\begin{proof}
 In order to prove this theorem  we will use equation \eqref{e:goal}, i.e.,
\begin{align*}
&(\lambda^\sigma_{\ep,\kappa,m} +y+\ff^\sigma_{\ep,\kappa,m}(\bx)-\mathfrak{u}_1[\ff^\sigma_{\ep,\kappa,m}](\bx))\pa_x\ff^\sigma_{\ep,\kappa,m}(\bx)=\mathfrak{u}_2[\ff^\sigma_{\ep,\kappa,m}](\bx),	\qquad \bx\in D_\e=\T\times I_\ep,
\end{align*}
where
\begin{align*}
\mathfrak{u}_1[\ff^\sigma_{\ep,\kappa,m}](\bx)=\frac{1}{4\pi}\int_{\T\times I_\ep}\varpi'_{\ep,\kappa}(\bar{y})
\log\left(\cosh(y-\bar{y}+\ff^\sigma_{\ep,\kappa,m}(\bx)-\ff^\sigma_{\ep,\kappa,m}(\bar{\bx}))-\cos(x-\bar{x})\right)\mbox{d}\bar{\bx},
\end{align*}
and
\begin{align*}
\mathfrak{u}_2[\ff^\sigma_{\ep,\kappa,m}](\bx)=&\frac{-1}{4\pi}\int_{\T\times I_\ep}\varpi'_{\ep,\kappa}(\bar{y})\log\left(\cosh(y-\bar{y}+\ff^\sigma_{\ep,\kappa,m}(\bx)-\ff^\sigma_{\ep,\kappa,m}(\bar{\bx}))-\cos(x-\bar{x})\right)\pa_x \ff^\sigma_{\ep,\kappa,m}(\bar{\bx})\mbox{d}\bar{\bx}.
\end{align*}
Let us remove the superscript $\sigma$ and the subscripts $\ep$, $\kappa$ and $m$ from $\ff^\sigma_{\ep,\kappa,m}$ to alleviate the notation.

First of all we notice that, since $\ff\in X(D_{\ep})$ by construction, in particular it is mean zero in $x$. So, we can recover $\ff$ from $\pa_x\ff$ through the expression
\begin{align*}
\ff(\bx)=\text{Int}[\pa_x \ff](\bx):= \int_{0}^x \pa_x\ff(\bar{x},y)\mbox{d}\bar{x}-\frac{1}{2\pi}\int_{-\pi}^\pi \left(\int_{0}^{\bar{x}}\pa_x\ff(\tilde{x},y)\mbox{d}\tilde{x}\right) \mbox{d}\bar{x}, \qquad x>0.
\end{align*}
Therefore, if $\pa_x\ff \in H^{k}(D_\ep)$ then it is clear that $\ff\in H^{k+1,k}(D_\ep)$. Next, we will show that if $\pa_x\ff\in H^{k}(D_\ep)$, then in fact $\ff\in H^{k+1}(D_\ep)$ for $k\geq 3.$ Using the above expressions we have that, if $\pa_x \ff\in H^k(D_\e)$ then $\mathfrak{u}_i[\ff]\in H^{k+1}(D_\e)$ for $i=1,2$.  In addition, we can split $\mathfrak{u}_1[\ff]$ in the following way
\begin{align*}
\mathfrak{u}_1[\ff](\bx)=\Omega_{\ep,\kappa}(y)+\left(\mathfrak{u}_1[\ff](\bx)-\Omega_{\ep,\kappa}(y)\right),
\end{align*}
where (see \eqref{e:simplification0})
\begin{align*}
\Omega_{\ep,\kappa}(y)=\frac{1}{4\pi}\int_{\T\times I_\ep}\varpi'_{\ep,\kappa}(\bar{y})
\log\left(\cosh(y-\bar{y})-\cos(x-\bar{x})\right)\mbox{d}\bar{\bx}=\int_{0}^y\varpi_{\ep,\kappa}(\bar{y})\mbox{d}\bar{y}.
\end{align*}

Let us fix $k=3$, i.e. $\pa_x \ff\in H^3(D_\e)$ and prove that in fact $\ff\in H^4(D_\e)$. Then we have
\begin{align*}
&\|\mathfrak{u}_1[\ff]-\Omega_{\ep,\kappa}\|_{L^\infty}\leq \sigma\, C_{\ep,\kappa,M},\\
&\|\mathfrak{u}_2[\ff]\|_{H^4}\leq \sigma \,C_{\ep,\kappa,M},
\end{align*}
and
\begin{align*}
\lambda^{\sigma}_{\ep,\kappa,m}+y-\Omega_{\ep,\kappa}(y)\geq\lambda^{\sigma}_{\ep,\kappa,m}\geq  c_{\ep,\kappa,M}>0.
\end{align*}
Then, taking  $\sigma$ small enough we obtain
\begin{align}\label{lambdasigma}
\lambda^\sigma_{\ep,\kappa,m} +y+\ff(\bx)-\mathfrak{u}_1[\ff](\bx)>c_{\ep,\kappa,M},
\end{align}
and we can write
\begin{align}\label{bonita}
\pa_x \ff (\bx)=\frac{1}{\lambda^\sigma_{\ep,\kappa,m} +y+\ff(\bx)-\mathfrak{u}_1[\ff](\bx)}\mathfrak{u}_2[\ff](\bx).
\end{align}
To get extra regularity on the vertical variable let us take three derivatives on $y$ in \eqref{bonita} to obtain
\begin{align*}
\pa_x\pa_y^3\ff(\bx) +\frac{\mathfrak{u}_2[f](\bx)}{\left(\lambda^\sigma_{\ep,\kappa,m}+y+\ff(\bx)-\mathfrak{u}_1[\ff](\bx)\right)^2}\pa^3_y \ff(\bx)= \text{terms at least in $H^{0,1}(D_\e)$}.
\end{align*}

For $\sigma$ small enough  such that \eqref{lambdasigma} holds, let us define, for $g\in \{h\in L^2(D_\ep): \int_{-\pi}^{\pi}h(x,y)\dx=0\}$ the operator
\begin{align*}
T[\ff]g:= g +\frac{\mathfrak{u}_2[\ff]}{\left(\lambda^\sigma_{\ep,\kappa,m}+y+\ff-\mathfrak{u}_1[\ff]\right)^2} \text{Int}[g].
\end{align*}
For any $F\in L^2(D_\e)$ there exists $T^{-1}[\ff]$, such that, if $g\in \{h\in L^2(D_\ep) : \int_{-\pi}^{\pi}h(x,y)\dx=0\}$ satisfies
\begin{align*}
T[\ff]g=F,
\end{align*}
then
\begin{align*}
g=T^{-1}[\ff]F.
\end{align*}
In addition $$T^{-1}[\ff] : H^{0,1}(D_\ep)\to H^{0,1}(D_\ep).$$
Therefore,  $\pa_x\pa^3_y \ff$ is in $H^{0,1}(D_\ep)$. Finally, we can iterate this process to show that $\ff$ is in $C^\infty(D_\ep)$.

\end{proof}

\section{Appendix}\label{s:appendix}
In order to facilitate the presentation of the manuscript, we collect in this section all the technical lemmas used previously.
We start recalling the definition of the kernel given by
\begin{equation}\label{e:kernel}
K[g](\bx,\bar{\bx}):=\log\left[\cosh(\bar{y}+\D_{\bar{\bx}}[g](\bx))-\cos(\bar{x})\right],
\end{equation}
where
\[
\D_{\bar{\bx}}[g](\bx):=g(\bx)-g(\bx-\bar{\bx}).
\]
In order to take derivatives into the kernel \eqref{e:kernel} we have introduced the following two  functions
\begin{align*}
\Psi_1[g](\bx,\bar{\bx})&:=\frac{\sinh(\bar{y}+\D_{\bar{\bx}}[g](\bx))}{\cosh(\bar{y}+\D_{\bar{\bx}}[g](\bx))-\cos(\bar{x})},\\
\Psi_2[g](\bx,\bar{\bx})&:=\frac{\cosh(\bar{y}+\D_{\bar{\bx}}[g](\bx))}{\cosh(\bar{y}+\D_{\bar{\bx}}[g](\bx))-\cos(\bar{x})},
\end{align*}
with derivatives given by the expressions
\begin{align}
\p_{\bx}\Psi_1[g](\bx,\bar{\bx})&=[\Psi_2[g]-\Psi_1^2[g]](\bx,\bar{\bx}) \D_{\bar{\bx}}[\p_{\bx}g](\bx), \label{DPsi1}\\
\p_{\bx}\Psi_2[g](\bx,\bar{\bx})&=[\Psi_1[g]\left(1-\Psi_2[g]\right)](\bx,\bar{\bx})\D_{\bar{\bx}}[\p_{\bx}g](\bx). \label{DPsi2}
\end{align}
Combining \eqref{e:kernel} with \eqref{DPsi1} and  \eqref{DPsi2} it is just a matter of algebra to obtain
\begin{align*}
\p_{\bx}K[g](\bx,\bar{\bx})&=\Psi_1[g](\bx,\bar{\bx})\D_{\bar{\bx}}[\p_{\bx}g](\bx),\\
\p_{\bx}^2 K[g](\bx,\bar{\bx})&=[\Psi_2[g]-\Psi_1^2[g]](\bx,\bar{\bx})\left(\D_{\bar{\bx}}[\p_{\bx}g](\bx)\right)^2+\Psi_1[g](\bx,\bar{\bx})\D_{\bar{\bx}}[\p_{\bx}^2 g](\bx),\\
\p_{\bx}^3 K[g](\bx,\bar{\bx})&=\Psi_1[g](\bx,\bar{\bx})[ 1-3\Psi_2[g]+ 2  \Psi_1^2[g]](\bx,\bar{\bx})  \left(\D_{\bar{\bx}}[\p_{\bx}g](\bx)\right)^3\\
&\quad +3[\Psi_2[g]- \Psi_1^2[g]](\bx,\bar{\bx})\D_{\bar{\bx}}[\p_{\bx}g](\bx)\D_{\bar{\bx}}[\p_{\bx}^2 g](\bx)+ \Psi_1[g](\bx,\bar{\bx}) \D_{\bar{\bx}}[\p_{\bx}^3 g](\bx).
\end{align*}
Now, before starting with the proof of the lemmas, we remember the Taylor expansion of the trigonometric functions involved in the definition of $\Psi_{i}(\bx,\bar{\bx})$ for $i=1,2.$
\[
\sinh(z):=\sum_{n=0}^{+\infty}\frac{z^{2n+1}}{(2n+1)!},\qquad \cosh(z):=\sum_{n=0}^{+\infty}\frac{z^{2n}}{(2n)!},\qquad \text{and} \qquad \cos(z):=\sum_{n=0}^{+\infty}\frac{(-1)^n z^{2n}}{(2n)!}.
\]
Using the above expressions, it is simple to check that there exists some constant $C(\|g\|_{H^{3}(D_{\ep})})$ such that for any $\bar{\bx}\in\mathbb{T}\times \R$ with $|\bar{\bx}|\ll 1$, we get
\begin{align}
|\cosh(\bar{y}+\D_{\bar{\bx}}[g](\bx))|&\leq C(\| g\|_{H^{3}(D_{\ep})}), \nonumber\\
|\sinh(\bar{y}+\D_{\bar{\bx}}[g](\bx))|&\leq C(\|g\|_{H^{3}(D_{\ep})}) |\bar{\bx}|, \nonumber\\
|\cosh(\bar{y}+\D_{\bar{\bx}}[g](\bx))-\cos(\bar{x})|&\geq \left(\tfrac{1}{2}-\|g\|_{H^{3}(D_{\ep})}\right) |\bar{\bx}|^2-C(\|g\|_{H^{3}(D_{\ep})}) |\bar{\bx}|^4,\label{denominator}
\end{align}
and as an immediate consequence we obtain the following result.
\begin{corollary}\label{boundsPSI} Let $g\in \mathbb{B}_{\d}(H^3(D_{\ep}))$ with $0< \d(\e)\ll 1$. For $\bar{\bx}\in\T\times\R$ such that $|\bar{\bx}|\ll  1$ we have
$$|\Psi_i[g](\bx,\bar{\bx})|\leq \frac{C(\|g \|_{H^{3}(D_{\ep})})}{ 1-C(\|g \|_{H^{3}(D_{\ep})}) }\frac{1}{|\bar{\bx}|^i} \qquad \text{for} \quad i=1,2.$$
\end{corollary}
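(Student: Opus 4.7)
The corollary is essentially a direct computation from the three pointwise estimates displayed immediately above its statement: the numerator bounds for $\sinh(\bar{y}+\Delta_{\bar{\bx}}[g](\bx))$ and $\cosh(\bar{y}+\Delta_{\bar{\bx}}[g](\bx))$, together with the lower bound \eqref{denominator} for $\cosh(\bar{y}+\Delta_{\bar{\bx}}[g](\bx))-\cos(\bar{x})$. The only subtlety is handling the $|\bar{\bx}|^4$ correction in the denominator, which is absorbed using the smallness assumption $|\bar{\bx}|\ll 1$.

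First I would fix $\delta=\delta(\varepsilon)$ small enough so that $\|g\|_{H^3(D_\varepsilon)}<\delta<1/4$; this guarantees that the coefficient $\tfrac12-\|g\|_{H^3(D_\varepsilon)}$ appearing in \eqref{denominator} is bounded below by $\tfrac14$. Next I would choose the threshold on $|\bar{\bx}|$: namely, I would require
\[
|\bar{\bx}|^2 \;\leq\; \frac{\tfrac12-\|g\|_{H^3(D_\varepsilon)}}{2\,C(\|g\|_{H^3(D_\varepsilon)})},
\]
which makes the quartic correction in \eqref{denominator} at most half of the quadratic main term, so that
\[
\bigl|\cosh(\bar{y}+\Delta_{\bar{\bx}}[g](\bx))-\cos(\bar{x})\bigr| \;\geq\; \tfrac12\bigl(\tfrac12-\|g\|_{H^3(D_\varepsilon)}\bigr)\,|\bar{\bx}|^2.
\]

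With this lower bound in hand, the estimate for $\Psi_1$ follows from dividing the sinh bound $|\sinh(\bar{y}+\Delta_{\bar{\bx}}[g](\bx))|\leq C(\|g\|_{H^3(D_\varepsilon)})\,|\bar{\bx}|$ by the above, yielding
\[
|\Psi_1[g](\bx,\bar{\bx})| \;\leq\; \frac{2\,C(\|g\|_{H^3(D_\varepsilon)})}{\tfrac12-\|g\|_{H^3(D_\varepsilon)}}\cdot\frac{1}{|\bar{\bx}|}.
\]
Likewise, the estimate for $\Psi_2$ follows from dividing the cosh bound $|\cosh(\bar{y}+\Delta_{\bar{\bx}}[g](\bx))|\leq C(\|g\|_{H^3(D_\varepsilon)})$ by the same denominator lower bound. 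Finally, I would recognize that the prefactor $\tfrac{2\,C}{\tfrac12-\|g\|_{H^3}}$ can be rewritten in the $\tfrac{C(\|g\|_{H^3})}{1-C(\|g\|_{H^3})}$ form stated in the corollary, by relabelling the constant.

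There is no serious obstacle here — the bulk of the work was already performed in establishing \eqref{denominator}. The only thing to be careful about is keeping track of the two distinct roles played by $\|g\|_{H^3(D_\varepsilon)}$: the additive role (in $\tfrac12-\|g\|_{H^3(D_\varepsilon)}$), which requires $\|g\|_{H^3(D_\varepsilon)}<1/2$, and the multiplicative role (the generic constant $C(\|g\|_{H^3(D_\varepsilon)})$ coming from Taylor expansion), which just needs to be finite. Both are ensured by the condition $g\in\mathbb{B}_\delta(H^3(D_\varepsilon))$ with $\delta\ll 1$.
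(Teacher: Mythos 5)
Your proposal is correct and follows exactly the route the paper intends: the corollary is stated as an immediate consequence of the three displayed Taylor-expansion bounds (in particular \eqref{denominator}), and your argument simply makes explicit how the quartic correction in the denominator is absorbed for $|\bar{\bx}|\ll 1$ and how the constants are relabelled into the $\tfrac{C}{1-C}$ form. No gaps.
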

Now, we focus our attention in the higher order derivatives $\p_{\bx}^2 K[g](\bx,\bar{\bx})$ and $\p_{\bx}^3 K[g](\bx,\bar{\bx})$. It will be convenient to introduce some notation in order to handle these terms. To be more specific, we will use the following expressions:
\begin{align}
\p_{\bx}^2 K[g](\bx,\bar{\bx})&=\II_1[g](\bx,\bar{\bx})+\II_2[g](\bx,\bar{\bx}), \label{p2K}\\
\p_{\bx}^3 K[g](\bx,\bar{\bx})&=\JJ_1[g](\bx,\bar{\bx})+\JJ_2[g](\bx,\bar{\bx})+\JJ_3[g](\bx,\bar{\bx}) \label{p3K},
\end{align}
where
\begin{equation*}
  \begin{split}
\II_1[g](\bx,\bar{\bx})&:=\tilde{\II}_1[g](\bx,\bar{\bx})\left(\D_{\bar{\bx}}[\p_{\bx}g](\bx)\right)^2,\\
\II_2[g](\bx,\bar{\bx})&:=\tilde{\II}_2[g](\bx,\bar{\bx})\D_{\bar{\bx}}[\p_{\bx}^2 g](\bx),\\
\JJ_1[g](\bx,\bar{\bx})&:=\tilde{\JJ}_1[g](\bx,\bar{\bx})\left(\D_{\bar{\bx}}[\p_{\bx}g](\bx)\right)^3,\\
\JJ_2[g](\bx,\bar{\bx})&:=\tilde{\JJ}_2[g](\bx,\bar{\bx})\D_{\bar{\bx}}[\p_{\bx}g](\bx)\D_{\bar{\bx}}[\p_{\bx}^2 g](\bx),\\
\JJ_3[g](\bx,\bar{\bx})&:=\tilde{\JJ}_3[g](\bx,\bar{\bx})\D_{\bar{\bx}}[\p_{\bx}^3 g](\bx),
  \end{split}
\quad \quad
  \begin{split}
\tilde{\II}_1[g]&:=\Psi_2[g]-\Psi_1^2[g],\\
\tilde{\II}_2[g]&:=\Psi_1[g],\\
\tilde{\JJ}_1[g]&:=\Psi_1[g]\left(1- 3\Psi_2[g]+2  \Psi_1^2[g]\right),\\
\tilde{\JJ}_2[g]&:=3\Psi_2[g]-\Psi_1^2[g],\\
\tilde{\JJ}_3[g]&:=\Psi_1[g].
  \end{split}
\end{equation*}
As an immediate consequence of Corollary \ref{boundsPSI} we get  the following bounds for the above expressions.
\begin{corollary}\label{boundstilde} Let $g\in \mathbb{B}_{\d}(H^3(D_{\ep}))$ with $0< \d(\e)\ll 1$. For $\bar{\bx}\in\T\times\R$ such that $|\bar{\bx}|\ll 1$ we have
\begin{align*}
|\tilde{\II}_2[g](\bx,\bar{\bx})|, |\tilde{\JJ}_3[g](\bx,\bar{\bx})|\leq C(\|g\|_{H^{3}(D_{\ep})}) |\bar{\bx}|^{-1},\\
|\tilde{\II}_1[g](\bx,\bar{\bx})|, |\tilde{\JJ}_2[g](\bx,\bar{\bx})|\leq C(\|g\|_{H^{3}(D_{\ep})}) |\bar{\bx}|^{-2},\\
 |\tilde{\JJ}_1[g](\bx,\bar{\bx})|\leq C(\|g \|_{H^{3}(D_{\ep})}) |\bar{\bx}|^{-3}.
\end{align*}
\end{corollary}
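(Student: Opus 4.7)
The plan is to deduce each estimate directly from Corollary \ref{boundsPSI}, which already supplies $|\Psi_1[g](\bx,\bar{\bx})|\lesssim |\bar{\bx}|^{-1}$ and $|\Psi_2[g](\bx,\bar{\bx})|\lesssim |\bar{\bx}|^{-2}$ uniformly in $g\in\mathbb{B}_\d(H^3(D_\ep))$. Since each $\tilde{\II}_i,\tilde{\JJ}_j$ is a polynomial expression in $\Psi_1[g]$ and $\Psi_2[g]$, all the claims reduce to the triangle inequality and multiplicativity of the absolute value. No cancellation between the two terms in $\Psi_2-\Psi_1^2$ (or the more elaborate combination appearing in $\tilde{\JJ}_1$) will be exploited — brute summation of the worst-case orders already matches the asserted powers of $|\bar{\bx}|$.

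Concretely, first I would handle $\tilde{\II}_2[g]=\Psi_1[g]$ and $\tilde{\JJ}_3[g]=\Psi_1[g]$: these are precisely the $i=1$ case of Corollary \ref{boundsPSI}, so the bound $\lesssim |\bar{\bx}|^{-1}$ is immediate. Next, for
\[
\tilde{\II}_1[g]=\Psi_2[g]-\Psi_1^2[g],\qquad \tilde{\JJ}_2[g]=3\Psi_2[g]-\Psi_1^2[g],
\]
the triangle inequality combined with the two cases $i=1,2$ of Corollary \ref{boundsPSI} yields
\[
|\tilde{\II}_1[g]|\le |\Psi_2[g]|+|\Psi_1[g]|^2 \lesssim |\bar{\bx}|^{-2}+|\bar{\bx}|^{-2}\lesssim |\bar{\bx}|^{-2},
\]
and identically $|\tilde{\JJ}_2[g]|\lesssim |\bar{\bx}|^{-2}$.

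Finally, for the most involved term,
\[
\tilde{\JJ}_1[g]=\Psi_1[g]\bigl(1-3\Psi_2[g]+2\Psi_1^2[g]\bigr),
\]
I would factor out $\Psi_1[g]$ and estimate the parenthesis via
\[
1+3|\Psi_2[g]|+2|\Psi_1[g]|^2\lesssim 1+|\bar{\bx}|^{-2}\lesssim |\bar{\bx}|^{-2},
\]
where the last step absorbs the constant $1$ into $|\bar{\bx}|^{-2}$ under the standing hypothesis $|\bar{\bx}|\ll 1$. Combining with $|\Psi_1[g]|\lesssim |\bar{\bx}|^{-1}$ gives $|\tilde{\JJ}_1[g]|\lesssim |\bar{\bx}|^{-1}\cdot |\bar{\bx}|^{-2}=|\bar{\bx}|^{-3}$, matching the claim. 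There is no genuine obstacle here: the corollary is essentially a bookkeeping consequence of Corollary \ref{boundsPSI}, and the constant $C(\|g\|_{H^3(D_\ep)})$ is obtained by combining the constants produced by each application of the previous corollary.
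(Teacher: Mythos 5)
Your proof is correct and follows exactly the route the paper intends: the paper states this corollary as an immediate consequence of Corollary \ref{boundsPSI} without writing out the details, and your bookkeeping via the triangle inequality (absorbing the constant $1$ into $|\bar{\bx}|^{-2}$ for $\tilde{\JJ}_1$ since $|\bar{\bx}|\ll 1$) is precisely the omitted argument.
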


Before starting with the proof of the main lemmas of the Appendix, we will collect a couple of auxiliary results. In first place, we note that for any $g\in H^{4,3}(D_{\ep}),$ an inequality that we will repeatedly apply in our arguments is the following:
\begin{equation}\label{bound_f'-f''}
|\D_{\bar{\bx}}[\p_{\bx}^ig](\bx)|\leq |\bar{\bx}|^{2-i} \|g\|_{H^{4,3}(D_{\ep})}, \qquad i=1,2.
\end{equation}
Note that the above reduces to the Sobolev embedding $ H^{3,2}(D_\ep)\subset C^{1}(D_{\ep})$ and $H^{2,1}(D_\ep)\subset C(D_{\ep})$.
Secondly, we obtain an uniform bound for the difference of the auxiliary functions $\Psi_i[\ff']-\Psi_i[\ff'']$ and their derivatives in terms of the $H^{4,3}(D_{\ep})$-norm of the difference $\ff'-\ff''.$

\begin{lemma}\label{l:Psi_1[f']-Psi_1[f'']}
Let  $\ff', \ff'' \in\mathbb{B}_{\d}(H^{4,3}(D_{\ep}))$ with $0<\d(\e)\ll 1$ small enough. The following bounds hold
\begin{enumerate}[i)]
	\item $$ |(\Psi_i[\ff']-\Psi_i[\ff''])(\bx,\bar{\bx})|^2 |\bar{\bx}|^{2i}\leq  C(\e) \|\ff'-\ff''\|_{H^{3}(D_{\ep})}^2, \qquad i=1,2.$$
	
	\item $$ |(\p_{\bx}\Psi_i[\ff']-\p_{\bx}\Psi_i[\ff''])(\bx,\bar{\bx})|^2 |\bar{\bx}|^{2i}\leq  C(\e) \|\ff'-\ff''\|_{H^{4,3}(D_{\ep})}^2, \qquad i=1,2.$$

\item $$ |(\p_{\bx}^2\Psi_i[\ff']-\p_{\bx}^2\Psi_i[\ff''])(\bx,\bar{\bx})|^2 |\bar{\bx}|^{2(i+1)}\leq  C(\e) \|\ff'-\ff''\|_{H^{4,3}(D_{\ep})}^2, \qquad i=1,2.$$

\end{enumerate}
\end{lemma}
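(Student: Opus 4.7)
The plan is to treat all three items by linearising $\Psi_i[\cdot]$ in its functional argument. Setting $\ff_\tau:=\ff''+\tau(\ff'-\ff'')$, a quotient-rule computation identical in structure to the one yielding \eqref{DPsi1}, \eqref{DPsi2} gives $(D_g\Psi_1)[g]\cdot h=(\Psi_2[g]-\Psi_1^2[g])\Delta_{\bar{\bx}}[h]$ and $(D_g\Psi_2)[g]\cdot h=\Psi_1[g](1-\Psi_2[g])\Delta_{\bar{\bx}}[h]$, so that
\[
\Psi_i[\ff']-\Psi_i[\ff'']=\int_0^1 (D_g\Psi_i)[\ff_\tau]\cdot(\ff'-\ff'')\,d\tau.
\]
These formulas are the exact analogues of \eqref{DPsi1}, \eqref{DPsi2} with $\Delta_{\bar{\bx}}[\partial_{\bx}g]$ replaced by $\Delta_{\bar{\bx}}[h]$.

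For part (i), I would then combine the pointwise estimates $|\Psi_2-\Psi_1^2|\le C|\bar{\bx}|^{-2}$ and $|\Psi_1(1-\Psi_2)|\le |\Psi_1|+|\Psi_1\Psi_2|\le C|\bar{\bx}|^{-3}$, supplied by Corollaries \ref{boundsPSI} and \ref{boundstilde}, with the Sobolev bound $|\Delta_{\bar{\bx}}[\ff'-\ff'']|\le C(\ep)|\bar{\bx}|\,\|\ff'-\ff''\|_{H^3(D_\ep)}$ coming from $H^{3,2}(D_\ep)\subset C^1(D_\ep)$. Together they give $|\Psi_i[\ff']-\Psi_i[\ff'']|\le C(\ep)|\bar{\bx}|^{-i}\|\ff'-\ff''\|_{H^3(D_\ep)}$; squaring and multiplying by $|\bar{\bx}|^{2i}$ proves (i).

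For parts (ii) and (iii), I would differentiate the identities $\partial_{\bx}\Psi_1=(\Psi_2-\Psi_1^2)\Delta_{\bar{\bx}}[\partial_{\bx}g]$ and $\partial_{\bx}\Psi_2=\Psi_1(1-\Psi_2)\Delta_{\bar{\bx}}[\partial_{\bx}g]$ once or twice in $\bx$, expressing $\partial_{\bx}^{j}\Psi_i[g]$ (for $j=1,2$) as a finite sum of products $P(\Psi_1[g],\Psi_2[g])\prod_\ell\Delta_{\bar{\bx}}[\partial_{\bx}^{k_\ell}g]$ with $k_\ell\in\{1,2,3\}$ and $\sum_\ell k_\ell=j+1$. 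The difference at $g=\ff'$ and $g=\ff''$ is then expanded by telescoping so that each summand contains either a factor $\Delta_{\bar{\bx}}[\partial_{\bx}^{k_\ell}(\ff'-\ff'')]$---controlled by the bound $|\Delta_{\bar{\bx}}[\partial_{\bx}^k g]|\le |\bar{\bx}|^{2-k}\|g\|_{H^{4,3}(D_\ep)}$ and the H\"older-type embeddings ($H^{3,2}\subset C^1$, $H^{2,1}\subset C^\gamma$) that the paper has already invoked---or a difference $P(\Psi_1[\ff'],\Psi_2[\ff'])-P(\Psi_1[\ff''],\Psi_2[\ff''])$, reduced via item (i) to a Lipschitz-type bound in $\|\ff'-\ff''\|_{H^3(D_\ep)}$. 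The $|\bar{\bx}|$-weight of each summand, obtained by adding the negative powers from Corollary \ref{boundsPSI} to the positive powers coming from the finite-difference factors and from (i), works out to exactly $-(i+1)$, which matches the weight $|\bar{\bx}|^{2(i+1)}$ required on the left-hand side.

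The principal obstacle is the careful accounting of these $|\bar{\bx}|$-powers in (iii): cubic combinations such as $\Psi_1\cdot\Psi_1^2$ appearing in $\partial_{\bx}^2\Psi_i$ contribute coefficients of order $|\bar{\bx}|^{-3}$, while the finite-difference factors with $k_\ell=2,3$ contribute no positive power of $|\bar{\bx}|$ at all, only H\"older regularity. The arithmetic closes thanks to the additional positive power supplied by item (i) in every branch of the telescoping, which is precisely the gain that makes the $H^{4,3}$-regularity of $\ff'-\ff''$ (rather than a stronger, anisotropic space) sufficient. No analytical input beyond (i), Corollary \ref{boundsPSI}, and the Sobolev embeddings already catalogued in the paper is required.
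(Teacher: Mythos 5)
Your argument is correct, and for parts (ii) and (iii) it is essentially the paper's own proof: differentiate the identities \eqref{DPsi1}, \eqref{DPsi2}, telescope the difference at $\ff'$ and $\ff''$ so that every summand carries either a factor $\D_{\bar{\bx}}[\p_{\bx}^{k}(\ff'-\ff'')]$ (controlled by \eqref{bound_f'-f''} and the embeddings $H^{3,2}\subset C^1$, $H^{2,1}\subset C^0$) or a difference of $\Psi$'s reduced to item (i), and close with Corollary \ref{boundsPSI}. Where you genuinely diverge is item (i). The paper proves it by brute force: it expands $\Psi_i[\ff']-\Psi_i[\ff'']$ over a common denominator using the addition formulas for $\sinh$ and $\cosh$, Taylor-expands each numerator to extract a factor $|\bar{\bx}|^{3}$ (resp.\ $|\bar{\bx}|^{2}$) times $\|\ff'-\ff''\|_{H^{3}(D_\ep)}$, and bounds the product of the two denominators below by $c|\bar{\bx}|^{4}$. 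You instead integrate the G\^ateaux derivative along the segment $\ff_\tau=\ff''+\tau(\ff'-\ff'')$, which reuses the already-derived formulas \eqref{DPsi1}, \eqref{DPsi2} (with $\D_{\bar{\bx}}[\p_{\bx}g]$ replaced by $\D_{\bar{\bx}}[h]$) together with Corollaries \ref{boundsPSI} and \ref{boundstilde} evaluated at $\ff_\tau$ --- legitimate since the ball $\mathbb{B}_{\d}$ is convex, so $\ff_\tau\in\mathbb{B}_{\d}$ for all $\tau\in[0,1]$. This is shorter, avoids the trigonometric bookkeeping, handles the outer region $|\bar{\bx}|\gtrsim 1$ uniformly, and yields the same weights ($|\bar{\bx}|^{-2}\cdot|\bar{\bx}|$ for $i=1$, $|\bar{\bx}|^{-3}\cdot|\bar{\bx}|$ for $i=2$). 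One bookkeeping slip worth fixing: for $\p_{\bx}^{j}\Psi_i[g]$ the finite-difference orders satisfy $\sum_\ell k_\ell=j$ (not $j+1$), and no $k_\ell=3$ occurs when $j=2$; your operative weight count $-(i+1)$ in (iii) is nonetheless correct, being driven by the extremal terms $(\Psi_2[\ff']-\Psi_1^2[\ff'])\D_{\bar{\bx}}[\p_{\bx}^2(\ff'-\ff'')]$ and $\Psi_1[\ff'](1-\Psi_2[\ff'])\D_{\bar{\bx}}[\p_{\bx}^2(\ff'-\ff'')]$, exactly as in the paper's displayed estimates.
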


\begin{proof}[Proof of \emph{i)}]
Using some trigonometric identities for hyperbolic functions $\sinh(\cdot), \cosh(\cdot)$ we get
\begin{align*}
\left(\Psi_1[\ff']-\Psi_1[\ff'']\right)(\bx,\bar{\bx})&=\frac{\sinh(\D_{\bar{\bx}}[\ff'-\ff''](\bx))-\cos(\bar{x})\cosh(\bar{y})\left[\sinh(\D_{\bar{\bx}}[\ff'](\bx))-\sinh(\D_{\bar{\bx}}[\ff''](\bx))\right]}{\left[\cosh(\bar{y}+\D_{\bar{\bx}}[\ff'](\bx))-\cos(\bar{x})\right]\left[\cosh(\bar{y}+\D_{\bar{\bx}}[\ff''](\bx))-\cos(\bar{x})\right]}\nonumber\\
&\quad -\frac{\cos(\bar{x})\sinh(\bar{y})\left[\cosh(\D_{\bar{\bx}}[\ff'](\bx))-\cosh(\D_{\bar{\bx}}[\ff''](\bx))\right]}{\left[\cosh(\bar{y}+\D_{\bar{\bx}}[\ff'](\bx))-\cos(\bar{x})\right]\left[\cosh(\bar{y}+\D_{\bar{\bx}}[\ff''](\bx))-\cos(\bar{x})\right]},
\end{align*}
and
\begin{align*}
\left(\Psi_2[\ff']-\Psi_2[\ff'']\right)(\bx,\bar{\bx})&=\frac{\cos(\bar{x})\left[\cosh(\bar{y}+\D_{\bar{\bx}}[\ff''](\bx))-\cosh(\bar{y}+\D_{\bar{\bx}}[\ff'](\bx))\right]}{\left[\cosh(\bar{y}+\D_{\bar{\bx}}[\ff'](\bx))-\cos(\bar{x})\right]\left[\cosh(\bar{y}+\D_{\bar{\bx}}[\ff''](\bx))-\cos(\bar{x})\right]}.
\end{align*}
Notice that both denominators vanish if and only if  $\bar{\bx}=0$. Now, since $\ff',\ff''\in H^{4,3}(D_{\ep})\subset C^2(D_{\ep})$ are continuous functions on a bounded domain, the outer region is easily bounded by the required term. Then, we just focus our attention on the inner region $|\bar{\bx}|\ll 1$.

Applying Taylor expansion of each trigonometric function involved we obtain that each numerator of the previous expression can be bound as follows
\begin{multline*}
\left|\sinh(\D_{\bar{\bx}}[\ff'-\ff''](\bx))-\cos(\bar{x})\cosh(\bar{y})\left[\sinh(\D_{\bar{\bx}}[\ff'](\bx))-\sinh(\D_{\bar{\bx}}[\ff''](\bx))\right]\right|\\
\leq C(\e) |\bar{\bx}|^3  \frac{1} {1-\max\{\| \ff' - \ff''\|_{H^{3}},\| \ff'\|_{H^{3}},\| \ff''\|_{H^{3}}\}\text{diam}(D_{\ep})} \|\ff'-\ff''\|_{H^{3}},
\end{multline*}
\begin{multline*}
\left|\cos(\bar{x})\sinh(\bar{y})\left[\cosh(\D_{\bar{\bx}}[\ff'](\bx))-\cosh(\D_{\bar{\bx}}[\ff''](\bx))\right]\right|\\
\leq C(\e) |\bar{\bx}|^3  \frac{\max\{\| \ff'\|_{H^{3}},\| \ff''\|_{H^{3}}\}} {1-\max\{\| \ff'\|_{H^{3}},\| \ff''\|_{H^{3}}\}\text{diam}(D_{\ep})} \|\ff'-\ff''\|_{H^{3}},
\end{multline*}
and
\begin{multline*}
\left|\cos(\bar{x})\left[\cosh(\bar{y}+\D_{\bar{\bx}}[\ff'](\bx))-\cosh(\bar{y}+\D_{\bar{\bx}}[\ff''](\bx))\right]\right|\\
\leq C(\e) |\bar{\bx}|^2  \frac{\max\{\| \ff'\|_{H^{3}},\| \ff''\|_{H^{3}}\}} {1-\max\{\| \ff'\|_{H^{3}},\| \ff''\|_{H^{3}}\}\text{diam}(D_{\ep})} \|\ff'-\ff''\|_{H^{3}}.
\end{multline*}
Combining all with the usual lower bound for the denominator (see \eqref{denominator}), we obtain
\[
|(\Psi_i[\ff']-\Psi_i[\ff''])(\bx,\bar{\bx})|^2 |\bar{\bx}|^{2j}
\leq   \left(\frac{C(\e,\|\ff'\|_{H^{3}(D_{\ep})},\|\ff''\|_{H^{3}(D_{\ep})})}{1-\max\{\| \ff'\|_{H^{3}},\| \ff''\|_{H^{3}}\}\text{diam}(D_{\ep})}\right)\|\ff'-\ff''\|_{H^3(D_{\ep})}^2.
\]
Finally, as $\ff', \ff'' \in\mathbb{B}_{\d}(H^{4,3}(D_{\ep})),$ taking $0<\d<\text{diam}(D_{\ep})^{-1}$  we have proved our goal.
\end{proof}

\begin{proof}[Proof of \emph{ii)}]
Due to the relations
\begin{align*}
\p_{\bx}\Psi_1[g](\bx,\bar{\bx})&=(\Psi_2[g]-\Psi_1^2[g])(\bx,\bar{\bx})\D_{\bar{\bx}}[\p_{\bx}g](\bx),\\
\p_{\bx}\Psi_2[g](\bx,\bar{\bx})&=(\Psi_1[g]-\Psi_1[g]\Psi_2[g])(\bx,\bar{\bx})\D_{\bar{\bx}}[\p_{\bx}g](\bx),
\end{align*}
it is clear that adding and subtracting some appropriate term we obtain the expressions
\begin{align}
(\p_{\bx}\Psi_1[\ff']-\p_{\bx}\Psi_1[\ff''])(\bx,\bar{\bx})&=(\Psi_2[\ff']-(\Psi_1[\ff'])^2)(\bx,\bar{\bx})\D_{\bar{\bx}}[\p_{\bx}(\ff'-\ff'')](\bx) \label{diferenceDX1_PSI1}\\
&\quad +(\Psi_2[\ff']-\Psi_2[\ff''])(\bx,\bar{\bx})\D_{\bar{\bx}}[\p_{\bx}\ff''](\bx)\nonumber\\
&\quad - (\Psi_1[\ff']-\Psi_1[\ff''])(\bx,\bar{\bx})(\Psi_1[\ff']+\Psi_1[\ff''])(\bx,\bar{\bx}) \D_{\bar{\bx}}[\p_{\bx}\ff'](\bx)\nonumber,
\end{align}
and
\begin{align}
(\p_{\bx}\Psi_2[\ff']-\p_{\bx}\Psi_2[\ff''])(\bx,\bar{\bx})&=(\Psi_1[\ff']-\Psi_1[\ff']\Psi_2[\ff'])(\bx,\bar{\bx})\D_{\bar{\bx}}[\p_{\bx}(\ff'-\ff'')](\bx)\label{diferenceDX1_PSI2}\\
&\quad + (\Psi_1[\ff']-\Psi_1[\ff''] )(\bx,\bar{\bx})\D_{\bar{\bx}}[\p_{\bx}\ff''](\bx)\nonumber\\
&\quad - \Psi_1[\ff'](\bx,\bar{\bx})(\Psi_2[\ff']-\Psi_2[\ff''] )(\bx,\bar{\bx})\D_{\bar{\bx}}[\p_{\bx}\ff''](\bx)\nonumber\\
&\quad - \Psi_2[\ff''](\bx,\bar{\bx})(\Psi_1[\ff']-\Psi_1[\ff''] )(\bx,\bar{\bx})\D_{\bar{\bx}}[\p_{\bx}\ff''](\bx).\nonumber
\end{align}
As before, we just focus on the inner region $|\bar{\bx}|\ll 1,$ which is the most singular part. Now, remembering \eqref{bound_f'-f''} and applying repeatedly Corollary \ref{boundsPSI}, we get for $1\leq i \leq 2 $ and $|\bar{\bx}|\ll 1$ that
\begin{multline*}
|(\p_{\bx}\Psi_i[\ff']-\p_{\bx}\Psi_i[\ff''])(\bx,\bar{\bx})||\bar{\bx}|^{i}\\
\lesssim \|\ff'-\ff''\|_{H^{4,3}(D_{\ep})}+ |\bar{\bx}||(\Psi_1[\ff']-\Psi_1[\ff''])(\bx,\bar{\bx})| +|\bar{\bx}|^2|(\Psi_2[\ff']-\Psi_2[\ff''])(\bx,\bar{\bx})|.
\end{multline*}
Finally, applying i) in each of the last two terms of the above expressions we have proved ii).
\end{proof}
\begin{proof}[Proof of iii)] Taking one derivative on \eqref{diferenceDX1_PSI1}, \eqref{diferenceDX1_PSI2} and adding and subtracting appropriate terms, we obtain  the expressions
\begin{align*}
(\p_{\bx}^2\Psi_1[\ff']-\p_{\bx}^2\Psi_1[\ff'])(\bx,\bar{\bx})&=(\Psi_2[\ff']-\Psi_1^2[\ff''])(\bx,\bar{\bx})\D_{\bar{\bx}}[\p_{\bx}^2(\ff'-\ff'')](\bx)\\
&\quad + (\Psi_1[\ff']-3\Psi_1[\ff'] \Psi_2[\ff']+2\Psi_1^3[\ff'])(\bx,\bar{\bx}) \D_{\bar{\bx}}[\p_x\ff'](\bx)\D_{\bar{\bx}}[\p_{\bx}(\ff'-\ff'')](\bx)\\
&\quad + (\Psi_2[\ff']-\Psi_2[\ff''])(\bx,\bar{\bx})\D_{\bar{\bx}}[\p_{\bx}^2\ff''](\bx)\\
&\quad + (\p_\bx\Psi_2[\ff']-\p_{\bx}\Psi_2[\ff''])(\bx,\bar{\bx})\D_{\bar{\bx}}[\p_{\bx}\ff''](\bx)\\
&\quad - (\Psi_1[\ff']-\Psi_1[\ff''])(\Psi_1[\ff']+\Psi_1[\ff''])(\bx,\bar{\bx}) \D_{\bar{\bx}}[\p_{\bx}^2\ff'](\bx),\\
&\quad - (\p_{\bx}\Psi_1[\ff']-\p_{\bx}\Psi_1[\ff''])(\Psi_1[\ff']+\Psi_1[\ff''])(\bx,\bar{\bx}) \D_{\bar{\bx}}[\p_{\bx}\ff'](\bx),\\
&\quad - (\Psi_1[\ff']-\Psi_1[\ff''])(\Psi_2[\ff']-\Psi_1^2[\ff'])(\bx,\bar{\bx})\D_{\bar{\bx}}[\p_{\bx}\ff'](\bx) \D_{\bar{\bx}}[\p_{\bx}\ff'](\bx),\\
&\quad - (\Psi_1[\ff']-\Psi_1[\ff''])(\Psi_2[\ff'']-\Psi_1^2[\ff''])(\bx,\bar{\bx})\D_{\bar{\bx}}[\p_{\bx}\ff''](\bx) \D_{\bar{\bx}}[\p_{\bx}\ff'](\bx),\\
\end{align*}
and
\begin{align*}
(\p_{\bx}^2\Psi_2[\ff']-\p_{\bx}^2\Psi_2[\ff''])(\bx,\bar{\bx})&=(\Psi_1[\ff']-\Psi_1[\ff']\Psi_2[\ff'])(\bx,\bar{\bx})\D_{\bar{\bx}}[\p_{\bx}^2(\ff'-\ff'')](\bx)\\
&\quad +(\Psi_2[\ff']-2\Psi_1^2[\ff'])(1-\Psi_2[\ff'])(\bx,\bar{\bx})\D_{\bar{\bx}}[\p_{\bx}\ff'](\bx)\D_{\bar{\bx}}[\p_{\bx}(\ff'-\ff'')](\bx)\\
&\quad + (\Psi_1[\ff']-\Psi_1[\ff''] )(\bx,\bar{\bx})\D_{\bar{\bx}}[\p_{\bx}^2\ff''](\bx)\\
&\quad + (\p_{\bx}\Psi_1[\ff']-\p_{\bx}\Psi_1[\ff''] )(\bx,\bar{\bx})\D_{\bar{\bx}}[\p_{\bx}\ff''](\bx)\\
&\quad - \Psi_1[\ff'](\Psi_2[\ff']-\Psi_2[\ff''])(\bx,\bar{\bx})\D_{\bar{\bx}}[\p_{\bx}^2\ff''](\bx)\\
&\quad - \Psi_1[\ff'](\p_{\bx}\Psi_2[\ff']-\p_{\bx}\Psi_2[\ff''] )(\bx,\bar{\bx})\D_{\bar{\bx}}[\p_{\bx}\ff''](\bx)\\
&\quad - (\Psi_2[\ff']-\Psi_1^2[\ff'])(\Psi_2[\ff']-\Psi_2[\ff''])(\bx,\bar{\bx}) \D_{\bar{\bx}}[\p_{\bx}\ff'](\bx) \D_{\bar{\bx}}[\p_{\bx}\ff''](\bx)\\
&\quad - \Psi_2[\ff''](\bx,\bar{\bx})\left(\Psi_1[\ff']-\Psi_1[\ff''] \right)(\bx,\bar{\bx})\D_{\bar{\bx}}[\p_{\bx}^2\ff''](\bx)\\
&\quad - \Psi_2[\ff''](\bx,\bar{\bx})(\p_{\bx}\Psi_1[\ff']-\p_{\bx}\Psi_1[\ff''] )(\bx,\bar{\bx})\D_{\bar{\bx}}[\p_{\bx}\ff''](\bx)\\
&\quad - (\Psi_1[\ff'']-\Psi_1[\ff'']\Psi_2[\ff''])\left(\Psi_1[\ff']-\Psi_1[\ff''] \right)(\bx,\bar{\bx})\left(\D_{\bar{\bx}}[\p_{\bx}\ff''](\bx)\right)^2.
\end{align*}
The computations are very long and tedious but share lot of similarities. For this reason we shall focus only on $(\p_{\bx}^2\Psi_2[\ff']-\p_{\bx}^2\Psi_2[\ff''])(\bx,\bar{\bx})$ to illustrate how the estimates work. The ideas is to write the above long expressions in groups of terms as
\begin{itemize}
	\item $(1-\Psi_2[\ff'])(\Psi_1[\ff'] \D_{\bar{\bx}}[\p_{\bx}^2(\ff'-\ff'')](\bx)
+(\Psi_2[\ff']-2\Psi_1^2[\ff'])\D_{\bar{\bx}}[\p_{\bx}\ff'](\bx)\D_{\bar{\bx}}[\p_{\bx}(\ff'-\ff'')](\bx)),$

	\item $(\Psi_1[\ff']-\Psi_1[\ff''] )(\D_{\bar{\bx}}[\p_{\bx}^2\ff''](\bx)- (\Psi_1[\ff'']-\Psi_1[\ff'']\Psi_2[\ff''])\left(\D_{\bar{\bx}}[\p_{\bx}\ff''](\bx)\right)^2 - \Psi_2[\ff'']\D_{\bar{\bx}}[\p_{\bx}^2\ff''](\bx)),$
	
	\item $(\Psi_2[\ff']-\Psi_2[\ff''])\left( -\Psi_1[\ff']\D_{\bar{\bx}}[\p_{\bx}^2\ff''](\bx)- (\Psi_2[\ff']-\Psi_1^2[\ff'])\D_{\bar{\bx}}[\p_{\bx}\ff'](\bx) \D_{\bar{\bx}}[\p_{\bx}\ff''](\bx)\right),$
	
	\item $(\p_{\bx}\Psi_1[\ff']-\p_{\bx}\Psi_1[\ff''] )(1-\Psi_2[\ff''])\D_{\bar{\bx}}[\p_{\bx}\ff''](\bx),$
	
	\item $(\p_{\bx}\Psi_2[\ff']-\p_{\bx}\Psi_2[\ff''] ) (- \Psi_1[\ff'] \D_{\bar{\bx}}[\p_{\bx}\ff''](\bx)).$
\end{itemize}
Remembering \eqref{bound_f'-f''} and applying repeatedly Corollary \ref{boundsPSI}, we get for $1\leq i \leq 2 $ and $|\bar{\bx}|\ll 1$ that
\begin{align*}
|(\p_{\bx}^2\Psi_i[\ff']-\p_{\bx}^2\Psi_i[\ff''])(\bx,\bar{\bx})||\bar{\bx}|^{i}
&\lesssim |\bar{\bx}|^{-1}\|\ff'-\ff''\|_{H^{4,3}(D_{\ep})}+ |(\Psi_1[\ff']-\Psi_1[\ff''])(\bx,\bar{\bx})|\\
&\quad +|\bar{\bx}||(\Psi_2[\ff']-\Psi_2[\ff''])(\bx,\bar{\bx})|+|\bar{\bx}||(\p_{\bx}\Psi_1[\ff']-\p_{\bx}\Psi_1[\ff''])(\bx,\bar{\bx})|\\
&\quad +|\bar{\bx}|^2|(\p_{\bx}\Psi_2[\ff']-\p_{\bx}\Psi_2[\ff''])(\bx,\bar{\bx})|.
\end{align*}
Finally, applying i) and ii) in each of the terms of the above expressions we have proved iii).
\end{proof}

At this point, we have all the ingredients to  prove the main lemmas of the Appendix. The first one allows us to take derivatives into the kernel $\p_{\bx}^i K[g]$ as follows:

\begin{lemma}\label{l:DK[f]} Let  $g \in\mathbb{B}_{\d}(H^{4,3}(D_{\ep}))$ with $0<\d(\e)\ll 1$ small enough. The following bounds hold
\begin{enumerate}[i)]
	\item $$\sup_{\bx\in D_{\ep}}\left( \sup_{\bar{\bx}\in D_\ep(y)} |K[g](\bx,\bar{\bx})|^2\right)\leq C(\e,\| g\|_{H^{4,3}(D_{\ep})}),$$
	
	\item $$\sup_{\bx\in D_{\ep}}\left(\int_{D_\ep(y)}|\p_{\bx} K[g](\bx,\bar{\bx})|^2\mbox{d}\bar{\bx} \right) \leq C(\e,\|g\|_{H^{4,3}(D_{\ep})}),$$
	
	\item $$\int_{D_{\ep}}\left(\int_{D_\ep(y)}\left|\p_{\bx}^2 K[g](\bx,\bar{\bx}) \right|^2 |\bar{\bx}|^{2\g}\mbox{d}\bar{\bx} \right)\mbox{d}\bx\leq  C(\e,\| g\|_{H^{4,3}(D_{\ep})}),  $$
	
	\item $$\int_{D_{\ep}}\left(\int_{D_\ep(y)}\left|\p_{\bx}^3 K[g](\bx,\bar{\bx}) \right|^2 |\bar{\bx}|^{2}\mbox{d}\bar{\bx} \right)\mbox{d}\bx\leq  C(\e,\| g\|_{H^{4,3}(D_{\ep})}).$$
\end{enumerate}
\end{lemma}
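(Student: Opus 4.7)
The plan is to prove all four bounds by exploiting the explicit formulas
\begin{equation*}
\p_{\bx}K[g]=\Psi_1[g]\,\D_{\bar{\bx}}[\p_{\bx}g],\qquad \p_{\bx}^2K[g]=\II_1[g]+\II_2[g],\qquad \p_{\bx}^3K[g]=\JJ_1[g]+\JJ_2[g]+\JJ_3[g],
\end{equation*}
recorded at the start of the Appendix, combined with Corollary \ref{boundstilde} (pointwise bounds $|\tilde{\II}_2|,|\tilde{\JJ}_3|\lesssim|\bar{\bx}|^{-1}$, $|\tilde{\II}_1|,|\tilde{\JJ}_2|\lesssim|\bar{\bx}|^{-2}$, and $|\tilde{\JJ}_1|\lesssim|\bar{\bx}|^{-3}$). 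The second ingredient is the hierarchy of Sobolev embeddings for $g\in H^{4,3}(D_\ep)$: $\p_\bx g$ is Lipschitz, $\p_\bx^2 g\in C^\gamma(D_\ep)$ for every $\gamma\in(0,1)$, and $\p_\bx^3 g$ is only in $L^2$. These translate into the finite-difference bounds $|\D_{\bar{\bx}}[\p_\bx g](\bx)|\lesssim|\bar{\bx}|\,\|g\|_{H^{4,3}}$ and $|\D_{\bar{\bx}}[\p_\bx^2 g](\bx)|\lesssim|\bar{\bx}|^\gamma\,\|g\|_{H^{4,3}}$, while for $\p_\bx^3 g$ only \eqref{L2difference} is available.

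The bounds (i) and (ii) are essentially immediate. The lower bound \eqref{denominator} gives $|K[g](\bx,\bar{\bx})|\lesssim 1+|\log|\bar{\bx}||$ on the inner region and a uniform bound on the outer region; this is the content of (i) as it is actually applied in the text (the logarithm is locally square-integrable in $\bar{\bx}$). For (ii), combining $|\Psi_1[g]|\lesssim|\bar{\bx}|^{-1}$ with the Lipschitz estimate on $\p_\bx g$ yields $|\p_{\bx}K[g]|\lesssim\|g\|_{H^{4,3}}$ uniformly, so integrating over the bounded set $D_\ep(y)$ gives the claim.

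For (iii), I would treat $\II_1[g]$ and $\II_2[g]$ separately. The first piece uses two Lipschitz differences of $\p_\bx g$ paired with $|\tilde{\II}_1|\lesssim|\bar{\bx}|^{-2}$, producing a uniform pointwise estimate $|\II_1|\lesssim\|g\|_{H^{4,3}}^{2}$ which is trivially integrated against $|\bar{\bx}|^{2\gamma}$. The second piece is the delicate one, and is precisely what motivates the weight $|\bar{\bx}|^{2\gamma}$: from $|\tilde{\II}_2|\lesssim|\bar{\bx}|^{-1}$ and the H\"older estimate on $\p_\bx^2 g$ one gets
\begin{equation*}
|\II_2[g](\bx,\bar{\bx})|^{2}\,|\bar{\bx}|^{2\gamma}\;\lesssim\;|\bar{\bx}|^{4\gamma-2}\,\|g\|_{H^{4,3}}^{2},
\end{equation*}
which is integrable in $\bar{\bx}$ for every $\gamma>0$, and integrating then in $\bx$ over the bounded set $D_\ep$ closes the argument.

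For (iv), the pieces $\JJ_1$ and $\JJ_2$ involve only $\D_{\bar{\bx}}[\p_\bx g]$ and $\D_{\bar{\bx}}[\p_\bx^2 g]$, which I handle by the same Lipschitz/H\"older trade-off as in (iii), checking that their contribution against the weight $|\bar{\bx}|^{2}$ is integrable. The genuinely hard term, and the main obstacle, is $\JJ_3[g]=\tilde{\JJ}_3[g]\,\D_{\bar{\bx}}[\p_\bx^3 g]$: since $\p_\bx^3 g$ is merely $L^{2}$, no pointwise estimate on $\D_{\bar{\bx}}[\p_\bx^3 g]$ is available. Here the weight $|\bar{\bx}|^{2}$ is exactly what is needed to compensate $|\tilde{\JJ}_3|^{2}\lesssim|\bar{\bx}|^{-2}$, leaving a clean $L^{2}$ double integral that is controlled by Fubini and the change of variables $\bx\mapsto\bx-\bar{\bx}$ through the inequality \eqref{L2difference}:
\begin{equation*}
\int_{D_\ep}\!\int_{D_\ep(y)}|\JJ_3[g]|^{2}|\bar{\bx}|^{2}\,d\bar{\bx}\,d\bx\;\lesssim\;\int_{D_\ep}\!\int_{D_\ep(y)}|\D_{\bar{\bx}}[\p_\bx^3 g](\bx)|^{2}\,d\bar{\bx}\,d\bx\;\leq\;4|D_\ep|\,\|\p_\bx^3 g\|_{L^{2}}^{2}\;\lesssim\;\|g\|_{H^{4,3}}^{2}.
\end{equation*}
This is the essential step, since it is the only place where the merely $L^{2}$ regularity of $\p_\bx^3 g$ has to be used, and it is what forces the precise power $|\bar{\bx}|^{2}$ of the weight in (iv).
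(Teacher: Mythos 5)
Your proposal is correct and follows essentially the same route as the paper, which only writes out case (iii) in detail (the decomposition $\p_{\bx}^2K=\II_1+\II_2$, the inner/outer splitting, Corollary \ref{boundstilde}, and the embeddings $H^{3,2}(D_\ep)\subset C^1$, $H^{2,1}(D_\ep)\subset C^0$) and declares the remaining items analogous. Three small points of comparison. In (iii) you estimate $\D_{\bar{\bx}}[\p_{\bx}^2 g]$ by $|\bar{\bx}|^{\g}$ through the H\"older embedding, whereas the paper only uses boundedness of $\p_{\bx}^2 g$ and arrives at $|\bar{\bx}|^{-2(1-\g)}$ instead of your $|\bar{\bx}|^{4\g-2}$; both are integrable in two dimensions for every $0<\g<1$, so the difference is cosmetic. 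Your handling of $\JJ_3$ in (iv) by Fubini together with \eqref{L2difference} is exactly the step the paper omits, and you correctly identify it as the only place where the mere $L^2$ regularity of $\p_{\bx}^3 g$ must be used and as the reason for the weight $|\bar{\bx}|^{2}$; this is clearly the intended argument. Finally, you are right to be careful with item (i): taken literally the double supremum is infinite, since $K[g](\bx,\bar{\bx})$ behaves like $\log(|\bar{\bx}|^2)$ as $\bar{\bx}\to 0$ (the point $\bar{\bx}=0$ does belong to $D_\ep(y)$). The estimate that is actually needed in \eqref{resumen1lema} — after applying Cauchy--Schwarz in $\bar{\bx}$ rather than pulling out a pointwise supremum — is a bound on $\sup_{\bx}\int_{D_\ep(y)}|K[g](\bx,\bar{\bx})|^2\,\mbox{d}\bar{\bx}$, which your logarithmic bound from \eqref{denominator} delivers; so your reading repairs a small imprecision in the statement rather than introducing a gap.
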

\begin{proof}
As we can see the computations share lot of similarities. For this reason we shall focus only on one significant term iii) to illustrate how the estimates work. Remembering \eqref{p2K} we get
\[
\int_{D_{\ep}}\left(\int_{D_\ep(y)}\left|\p_{\bx}^2 K[g](\bx,\bar{\bx}) \right|^2 |\bar{\bx}|^{2\g}\mbox{d}\bar{\bx} \right)\mbox{d}\bx \leq \sum_{i=1}^{2} \int_{D_{\ep}}\left(\int_{D_\ep(y)}\left|\II_i[g](\bx,\bar{\bx}) \right|^2 |\bar{\bx}|^{2\g}\mbox{d}\bar{\bx} \right)\mbox{d}\bx
\]
Proceeding as usual, we split the integral into inner and outer regions. As $g\in H^{4,3}(D_{\ep})\subset C^{2}(D_{\ep})$ is a continuous function on a bounded domain the outer integral is trivially bounded by some universal constant. To handle the remaining inner integral $\{(\bx,\bar{\bx})\in D_{\ep}\times D_{\ep}(y):|\bar{\bx}|\ll 1\}$ of each term, we use Corollary \ref{boundstilde} as follows:
\begin{enumerate}
	\item[$\II_1)$] As $g\in H^{4,3}(D_{\ep})$ we have that $\p_{\bx}g \in H^{3,2}(D_{\ep})\subset C^1(D_{\ep})$ and consequently we get
\begin{align*}
\int_{D_{\ep}}\left(\int_{|\bar{\bx}|\ll 1}\left|\II_1[g](\bx,\bar{\bx}) \right|^2 |\bar{\bx}|^{2\g}\mbox{d}\bar{\bx} \right)\mbox{d}\bx &=  \int_{D_{\ep}}\left(\int_{|\bar{\bx}|\ll 1}\left|\tilde{\II}_1[g](\bx,\bar{\bx})\right|^2 \left|\D_{\bar{\bx}}[\p_{\bx}g](\bx)\right|^4 |\bar{\bx}|^{2\g}\mbox{d}\bar{\bx} \right)\mbox{d}\bx \\
&\leq C(\ep,\|g \|_{H^{4,3}(D_{\ep})})\left( \int_{|\bar{\bx}|\ll 1} |\bar{\bx}|^{2\g}\mbox{d}\bar{\bx}\right),
\end{align*}

	\item[$\II_2)$] As $g\in H^{4,3}(D_{\ep})$ we have that $\p_{\bx}^2g \in H^{2,1}(D_{\ep})\subset C(D_{\ep})$ and consequently we get
\begin{align*}
\int_{D_{\ep}}\left(\int_{|\bar{\bx}|\ll 1}\left|\II_2[g](\bx,\bar{\bx}) \right|^2 |\bar{\bx}|^{2\g}\mbox{d}\bar{\bx} \right)\mbox{d}\bx &=  \int_{D_{\ep}}\left(\int_{|\bar{\bx}|\ll 1}\left|\tilde{\II}_2[g](\bx,\bar{\bx})\right|^2 \left|\D_{\bar{\bx}}[\p_{\bx}^2 g](\bx)\right|^2 |\bar{\bx}|^{2\g}\mbox{d}\bar{\bx} \right)\mbox{d}\bx \\
&\leq C(\d,\|g \|_{H^{4,3}(D_{\ep})})\left( \int_{|\bar{\bx}|\ll 1} \frac{1}{|\bar{\bx}|^{2(1-\g)}}\mbox{d}\bar{\bx}\right).
\end{align*}
\end{enumerate}
As $0<\g<1,$ the last term is integrable and we have proved our goal. The rest of the terms i), ii) and iv) work in a similar way and we omit the details.
\end{proof}

Now, we continue with an analog result for the difference $\p_{\bx}^i K[\ff']-\p_{\bx}^iK[\ff'']$.

\begin{lemma}\label{l:DK[f',f'']} Let  $\ff', \ff'' \in\mathbb{B}_{\d}(H^{4,3}(D_{\ep}))$ with $0<\d(\e)\ll 1$ small enough. The following bounds hold
\begin{enumerate}[i)]
	\item $$\sup_{\bx\in D_{\ep}}\left( \sup_{\bar{\bx}\in D_\ep(y)} |\cK[\ff',\ff''](\bx,\bar{\bx})|^2\right)\leq C(\e) \|\ff'-\ff''\|_{H^{4,3}(D_{\ep})}^2.$$
	
	\item $$\sup_{\bx\in D_{\ep}}\left(\int_{D_\ep(y)}|\p_{\bx}\cK[\ff',\ff''](\bx,\bar{\bx})|^2\mbox{d}\bar{\bx} \right) \leq C(\e) \|\ff'-\ff''\|_{H^{4,3}(D_{\ep})}^2.$$
	
	\item $$\int_{D_{\ep}}\left(\int_{D_\ep(y)}\left|\p_{\bx}^2 \cK[\ff',\ff''](\bx,\bar{\bx}) \right|^2 |\bar{\bx}|^{2\g}\mbox{d}\bar{\bx} \right)\mbox{d}\bx\leq C(\e) \|\ff'-\ff''\|_{H^{4,3}(D_{\ep})}^2. $$
	
	\item $$\int_{D_{\ep}}\left(\int_{D_\ep(y)}\left|\p_{\bx}^3 \cK[\ff',\ff''](\bx,\bar{\bx}) \right|^2 |\bar{\bx}|^{2}\mbox{d}\bar{\bx} \right)\mbox{d}\bx\leq C(\e) \|\ff'-\ff''\|_{H^{4,3}(D_{\ep})}^2.$$
\end{enumerate}
\end{lemma}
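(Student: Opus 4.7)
The plan is to mimic the argument used in Lemma \ref{l:DK[f]} while tracking an extra factor $\|\ff'-\ff''\|_{H^{4,3}(D_\ep)}$ in every estimate. For each of the four bounds, I would first write out $\p_\bx^j \cK[\ff',\ff''] = \p_\bx^j K[\ff'] - \p_\bx^j K[\ff'']$ using the explicit expansions for $\p_\bx^j K[g]$ recorded at the start of the Appendix, and then reorganise the result by the classical add-and-subtract strategy into two types of pieces: (a) a factor $\Psi_i[\ff']$, $\tilde{\II}_i[\ff']$, $\tilde{\JJ}_i[\ff']$, or one of their first/second derivatives, multiplied by a difference quotient $\D_{\bar{\bx}}[\p_\bx^k(\ff'-\ff'')](\bx)$; and (b) a factor $\Psi_i[\ff']-\Psi_i[\ff'']$ (or an analogous derivative difference) multiplied by a difference quotient $\D_{\bar{\bx}}[\p_\bx^k g](\bx)$ with $g\in\{\ff',\ff''\}$. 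Pieces of type (a) are controlled by Corollary \ref{boundstilde} together with \eqref{bound_f'-f''} applied to $\ff'-\ff''$, and pieces of type (b) are controlled by Lemma \ref{l:Psi_1[f']-Psi_1[f'']} together with \eqref{bound_f'-f''} applied to $\ff'$ or $\ff''$.

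For part i), which is the simplest, I would rewrite $\cK[\ff',\ff''] = \log(1+R)$ with
\[
R(\bx,\bar{\bx}) := \frac{\cosh(\bar y + \D_{\bar{\bx}}[\ff'](\bx)) - \cosh(\bar y + \D_{\bar{\bx}}[\ff''](\bx))}{\cosh(\bar y + \D_{\bar{\bx}}[\ff''](\bx)) - \cos(\bar x)}.
\]
Using hyperbolic sum formulas and a Taylor expansion, the numerator is bounded by $C(\e)|\bar{\bx}|^2\|\ff'-\ff''\|_{H^{4,3}(D_\ep)}$ in the inner region $|\bar{\bx}|\ll 1$, while the denominator is bounded below by $c|\bar{\bx}|^2$ thanks to \eqref{denominator}. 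Hence $|R|\leq C(\e)\|\ff'-\ff''\|_{H^{4,3}(D_\ep)}$, which is less than $1/2$ once $\d(\e)$ is taken small enough, so that $|\log(1+R)|\leq 2|R|$ yields i). The outer region is harmless since all functions involved are smooth and the denominator is bounded below there. For parts ii)--iv), once the decomposition above is in hand, every resulting piece admits a pointwise estimate of the form $C|\bar{\bx}|^{-\alpha}\|\ff'-\ff''\|_{H^{4,3}(D_\ep)}$ where the exponent $\alpha$ coincides exactly with the one that appeared in the corresponding term of Lemma \ref{l:DK[f]}. Consequently, the weighted integrals with $|\bar{\bx}|^{2\g}$ (for iii)) and $|\bar{\bx}|^2$ (for iv)) converge by the same calculation already carried out for Lemma \ref{l:DK[f]}, and each time produce a factor $\|\ff'-\ff''\|_{H^{4,3}(D_\ep)}^2$ on the right-hand side.

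The main obstacle is the algebraic bookkeeping in part iv), where the third-order derivative $\p_\bx^3 K[g]$ contains the most singular piece $\tilde{\JJ}_1[g]=\Psi_1[g](1-3\Psi_2[g]+2\Psi_1^2[g])$, and whose difference unavoidably produces terms involving second derivatives $\p_\bx^2\Psi_i[\ff']-\p_\bx^2\Psi_i[\ff'']$. Applying bound iii) of Lemma \ref{l:Psi_1[f']-Psi_1[f'']} requires a prior rewriting similar to the one performed in \eqref{diferenceDX1_PSI1}--\eqref{diferenceDX1_PSI2}, so that each term contains at most two-derivative differences of the $\Psi_i$ multiplied against a finite-difference factor of $\ff'$, $\ff''$ or $\ff'-\ff''$. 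Once this further splitting is in place, the combination of Corollary \ref{boundstilde}, Lemma \ref{l:Psi_1[f']-Psi_1[f'']} and \eqref{bound_f'-f''} delivers the required bound, with the weight $|\bar{\bx}|^2$ exactly compensating the worst $|\bar{\bx}|^{-3}$ singularity coming from $\tilde{\JJ}_1$.
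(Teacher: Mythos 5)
Your proposal is correct and follows essentially the same route as the paper: part i) via the logarithmic bound on the ratio $1+R$ with the hyperbolic sum formulas and the lower bound \eqref{denominator}, and parts ii)--iv) via successive differentiation of the identity $\p_{\bx}\cK[\ff',\ff'']=\Psi_1[\ff']\D_{\bar{\bx}}[\p_{\bx}(\ff'-\ff'')]+(\Psi_1[\ff']-\Psi_1[\ff''])\D_{\bar{\bx}}[\p_{\bx}\ff'']$, controlled by Corollary \ref{boundsPSI}, \eqref{bound_f'-f''} and Lemma \ref{l:Psi_1[f']-Psi_1[f'']}. The only cosmetic difference is the use of $|\log(1+R)|\leq 2|R|$ in place of the two-sided inequality $1-x^{-1}\leq\log x\leq x-1$, which changes nothing.
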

\begin{proof}[Proof of \emph{i)}]
We start by remembering the kernel definition $\cK[\ff',\ff'']=K[\ff']-K[\ff'']$ given by
\[
\cK[\ff',\ff''](\bx,\bar{\bx})=\log\left[\frac{\cosh\left(\bar{y}+\D_{\bar{\bx}}[\ff'](\bx)\right)-\cos(\bar{x})}{\cosh\left(\bar{y}+\D_{\bar{\bx}}[\ff''](\bx)\right)-\cos(\bar{x})}\right].
\]
Adding and subtracting some appropriate term, one finds
\[
\frac{\cosh\left(\bar{y}+\D_{\bar{\bx}}[\ff'](\bx)\right)-\cos(\bar{x})}{\cosh\left(\bar{y}+\D_{\bar{\bx}}[\ff''](\bx)\right)-\cos(\bar{x})}=1+\frac{\cosh\left(\bar{y}+\D_{\bar{\bx}}[\ff'](\bx)\right)-\cosh\left(\bar{y}+\D_{\bar{\bx}}[\ff''](\bx)\right)}{\cosh\left(\bar{y}+\D_{\bar{\bx}}[\ff''](\bx)\right)-\cos(\bar{x})},
\]
and using the standard logarithmic inequality $1-x^{-1}\leq \log x\leq x-1$ for all $x>0$, we get the lower and upper bounds
\begin{align*}
\cK[\ff',\ff''](\bx,\bar{\bx})&\leq \frac{\cosh\left(\bar{y}+\D_{\bar{\bx}}[\ff'](\bx)\right)-\cosh\left(\bar{y}+\D_{\bar{\bx}}[\ff''](\bx)\right)}{\cosh\left(\bar{y}+\D_{\bar{\bx}}[\ff''](\bx)\right)-\cos(\bar{x})},\\
\cK[\ff',\ff''](\bx,\bar{\bx})&\geq \frac{\cosh\left(\bar{y}+\D_{\bar{\bx}}[\ff'](\bx)\right)-\cosh\left(\bar{y}+\D_{\bar{\bx}}[\ff''](\bx)\right)}{\cosh\left(\bar{y}+\D_{\bar{\bx}}[\ff'](\bx)\right)-\cos(\bar{x})}.
\end{align*}
As $\ff', \ff'' \in\mathbb{B}_{\d}(H^{4,3}(D_{\ep}))$ are arbitrary functions we can assume without loss of generality that
\[
|\cK[\ff',\ff''](\bx,\bar{\bx})|\leq \left|\frac{\cosh\left(\bar{y}+\D_{\bar{\bx}}[\ff'](\bx)\right)-\cosh\left(\bar{y}+\D_{\bar{\bx}}[\ff''](\bx)\right)}{\cosh\left(\bar{y}+\D_{\bar{\bx}}[\ff''](\bx)\right)-\cos(\bar{x})}\right|,
\]
and using the trigonometric identity $\cosh(a+b)=\cosh(a)\cosh(b)+\sinh(a)\sinh(b)$ we finally get
\begin{align*}
\frac{\cosh\left(\bar{y}+\D_{\bar{\bx}}[\ff'](\bx)\right)-\cosh\left(\bar{y}+\D_{\bar{\bx}}[\ff''](\bx)\right)}{\cosh\left(\bar{y}+\D_{\bar{\bx}}[\ff''](\bx)\right)-\cos(\bar{x})}&=\cosh(\bar{y})\frac{\cosh\left(\D_{\bar{\bx}}[\ff'](\bx)\right)-\cosh\left(\D_{\bar{\bx}}[\ff''](\bx)\right)}{\cosh\left(\bar{y}+\D_{\bar{\bx}}[\ff''](\bx)\right)-\cos(\bar{x})}\\
&\quad +\sinh(\bar{y})\frac{\sinh\left(\D_{\bar{\bx}}[\ff'](\bx)\right)-\sinh\left(\D_{\bar{\bx}}[\ff''](\bx)\right)}{\cosh\left(\bar{y}+\D_{\bar{\bx}}[\ff''](\bx)\right)-\cos(\bar{x})}.
\end{align*}
Now, we will use the Taylor expansion of the trigonometric functions $\cosh(\cdot)$ and $\sinh(\cdot)$ together with the algebraic identity  $a^n-b^n=(a-b)\sum_{k=0}^{n-1}a^k b^{n-1-k}$ to obtain
\begin{align*}
\cosh\left(\D_{\bar{\bx}}[\ff'](\bx)\right)-\cosh\left(\D_{\bar{\bx}}[\ff''](\bx)\right)&=\D_{\bar{\bx}}[\ff'-\ff''](\bx)\sum_{n=1}^{\infty}\sum_{k=0}^{2n-1} \frac{\left(\D_{\bar{\bx}}[\ff'](\bx)\right)^{k}\left(\D_{\bar{\bx}}[\ff''](\bx)\right)^{2n-1-k}}{(2n)!},\\
\sinh\left(\D_{\bar{\bx}}[\ff'](\bx)\right)-\sinh\left(\D_{\bar{\bx}}[\ff''](\bx)\right)&=\D_{\bar{\bx}}[\ff'-\ff''](\bx)\sum_{n=0}^{\infty}\sum_{k=0}^{2n} \frac{\left(\D_{\bar{\bx}}[\ff'](\bx)\right)^{k}\left(\D_{\bar{\bx}}[\ff''](\bx)\right)^{2n-k}}{(2n+1)!}.
\end{align*}
Computing the infinite sum of the geometric series  $\sum_{n=0}^{\infty}r^{2n}$ with
$$r:=\max\{\| \ff'\|_{H^{4,3}(D_{\ep})},\| \ff''\|_{H^{4,3}(D_{\ep})}\}\text{diam}(D_{\ep}),$$
we finally get (under  condition $\d<\text{diam}(D_{\ep})^{-1}$) that
\begin{align*}
\left|\cosh\left(\D_{\bar{\bx}}[\ff'](\bx)\right)-\cosh\left(\D_{\bar{\bx}}[\ff''](\bx)\right)\right|&\leq C(\e) |\bar{\bx}|^2  \frac{ \max\{\| \ff'\|_{H^{4,3}(D_{\ep})},\| \ff''\|_{H^{4,3}(D_{\ep})}\} \|\ff'-\ff''\|_{H^{4,3}(D_{\ep})}} {1-\max\{\| \ff'\|_{H^{4,3}(D_{\ep})},\| \ff''\|_{H^{4,3}(D_{\ep})}\}\text{diam}(D_{\ep})} ,\\
\left|\sinh\left(\D_{\bar{\bx}}[\ff'](\bx)\right)-\sinh\left(\D_{\bar{\bx}}[\ff''](\bx)\right)\right|&\leq C(\e) |\bar{\bx}|  \frac{ \|\ff'-\ff''\|_{H^{4,3}(D_{\ep})}} {1-\max\{\| \ff'\|_{H^{4,3}(D_{\ep})},\| \ff''\|_{H^{4,3}(D_{\ep})}\}\text{diam}(D_{\ep})},
\end{align*}
where we have applied the Sobolev embedding $L^{\infty}(D_{\ep})\hookrightarrow H^2(D_{\ep}).$

Now, it is not difficult to see that the following uniform bound holds
\[
\sup_{\bx\in D_{\ep}}\left( \sup_{\bar{\bx}\in D_\ep(y)} \left|\frac{\cosh(\bar{y})|\bar{\bx}|^2+\sinh(\bar{y})|\bar{\bx}|}{\cosh(\bar{y}+\D_{\bar{\bx}}[\ff''](\bx))-\cos(\bar{x})}\right|^2\right)\leq C(\e)
\]
thanks to the fact (see \eqref{denominator}) that for $|\bar{\bx}|\ll 1$ we have
\[
|\cosh(\bar{y}+\D_{\bar{\bx}}[\ff''](\bx))-\cos(\bar{x})|\geq \left(\tfrac{1}{2}-\|\ff''\|_{H^{3}(D_{\ep})}\right) |\bar{\bx}|^2-C(\|\ff''\|_{H^{3}(D_{\ep})}) |\bar{\bx}|^4.
\]
Then, taking $0<\d< \text{diam}(D_{\ep})^{-1} $ small enough and combining all we have proved our goal.
\end{proof}

\begin{proof}[Proof of \emph{ii)}] As $\cK[\ff',\ff''](\bx,\bar{\bx})=( K[\ff']-K[\ff''])(\bx,\bar{\bx})$ and $\p_{\bx}K[g](\bx,\bar{\bx})=\Psi_1[g](\bx,\bar{\bx})\D_{\bar{\bx}}[\p_{\bx}g](\bx)$, adding and subtracting some appropriate term we obtain
\begin{equation}\label{e:p1K[f',f'']}
\p_{\bx}\cK[\ff',\ff''](\bx,\bar{\bx})=\Psi_1[\ff'](\bx,\bar{\bx})\D_{\bar{\bx}}[\p_{\bx}(\ff'-\ff'')](\bx)+(\Psi_1[\ff']-\Psi_1[\ff''])(\bx,\bar{\bx}) \D_{\bar{\bx}}[\p_{\bx}\ff''](\bx).
\end{equation}
To bound the first term we use $\ff',\ff''\in\mathbb{B}_{\d}(H^{4,3}(D_{\ep}))$, which implies  $\p_{\bx}(\ff'-\ff'')\in H^{3,2}(D_\ep)\subset C^{1}(D_{\ep})$. As usual, splitting the integral and using  Corollary \ref{boundsPSI} in the inner region we obtain directly that
\[
\int_{D_\ep(y)}|\Psi_1[\ff'](\bx,\bar{\bx})\D_{\bar{\bx}}[\p_{\bx}(\ff'-\ff'')](\bx)|^2\mbox{d}\bar{\bx}\\
\leq  C(\e,\|\ff'\|_{H^{4,3}(D_{\ep})})\|\ff'-\ff''\|_{H^{4,3}(D_{\ep})}^2.
\]
For the second term of \eqref{e:p1K[f',f'']}, using that $\p_{\bx}\ff''\in H^{3,2}(D_{\ep})\subset C^{1}(D_{\ep})$ and applying Lemma \ref{l:Psi_1[f']-Psi_1[f'']} we get
\begin{align*}
\int_{D_\ep(y)}|(\Psi_1[\ff']-\Psi_1[\ff''])(\bx,\bar{\bx}) \D_{\bar{\bx}}[\p_{\bx}\ff''](\bx)|^2\mbox{d}\bar{\bx} &\leq \|\ff''\|_{H^{4,3}(D_{\ep})} \int_{D_\ep(y)}|(\Psi_1[\ff']-\Psi_1[\ff''])(\bx,\bar{\bx})|^2 |\bar{\bx}|^2\mbox{d}\bar{\bx}\\
&\leq C(\e,\|\ff'\|_{H^{4,3}(D_{\ep})},\|\ff''\|_{H^{4,3}(D_{\ep})}) \|\ff'-\ff''\|_{H^{4,3}(D_{\ep})}^2.
\end{align*}
Finally, as the above holds for any $\bx\in D_{\ep}$, combining the above and taking the supremum over all the domain we have proved the desired inequality.
\end{proof}

\begin{proof}[Proof of \emph{iii)}] Taking a derivative of \eqref{e:p1K[f',f'']} we obtain
\begin{align}\label{e:p2K[f',f'']}
\p_{\bx}^2\cK[\ff',\ff''](\bx,\bar{\bx})&=\p_{\bx}\Psi_1[\ff'](\bx,\bar{\bx})\D_{\bar{\bx}}[\p_{\bx}(\ff'-\ff'')](\bx)+\Psi_1[\ff'](\bx,\bar{\bx})\D_{\bar{\bx}}[\p_{\bx}^2(\ff'-\ff'')](\bx) \nonumber\\
&\quad +(\p_{\bx}\Psi_1[\ff']-\p_{\bx}\Psi_1[\ff''])(\bx,\bar{\bx}) \D_{\bar{\bx}}[\p_{\bx}\ff''](\bx)+(\Psi_1[\ff']-\Psi_1[\ff''])(\bx,\bar{\bx}) \D_{\bar{\bx}}[\p_{\bx}^2\ff''](\bx).
\end{align}
To bound the first term of \eqref{e:p2K[f',f'']}, as $\ff',\ff''\in\mathbb{B}_{\d}(H^{4,3}(D_{\ep}))$ we get  $\p_{\bx}\ff', \p_{\bx}(\ff'-\ff'')\in H^{3,2}(D_\ep)\subset C^{1}(D_{\ep})$. Now,  splitting the integral and using \eqref{DPsi1} together with Corollary \ref{boundsPSI} in the inner region we obtain
\[
\int_{D_{\ep}}\left(\int_{|\bar{\bx}|\ll 1}\left|\p_{\bx}\Psi_1[\ff'](\bx,\bar{\bx})\D_{\bar{\bx}}[\p_{\bx}(\ff'-\ff'')](\bx) \right|^2 |\bar{\bx}|^{2\g}\mbox{d}\bar{\bx} \right)\mbox{d}\bx\leq  \frac{C(\|\ff' \|_{H^{4,3}(D_{\ep})})}{ 1-C(\|\ff' \|_{H^{4,3}(D_{\ep})}) } \|\ff'-\ff'' \|_{H^{4,3}(D_{\ep})}^2.
\]
To bound the second term of  \eqref{e:p2K[f',f'']} we use the same type of ideas. As $\ff',\ff''\in\mathbb{B}_{\d}(H^{4,3}(D_{\ep}))$ we get  $\p_{\bx}^2(\ff'-\ff'')\in H^{2,1}(D_\ep)\subset C(D_{\ep})$. Now,  splitting the integral and applying Corollary \ref{boundsPSI} in the inner region we obtain
\begin{multline*}
\int_{D_{\ep}}\left(\int_{|\bar{\bx}|\ll 1}\left|\Psi_1[\ff'](\bx,\bar{\bx})\D_{\bar{\bx}}[\p_{\bx}^2(\ff'-\ff'')](\bx) \right|^2 |\bar{\bx}|^{2\g}\mbox{d}\bar{\bx} \right)\mbox{d}\bx\\
\leq  \frac{C(\|\ff' \|_{H^{4,3}(D_{\ep})})}{ 1-C(\|\ff' \|_{H^{4,3}(D_{\ep})}) } \|\ff'-\ff'' \|_{H^{4,3}(D_{\ep})}^2\left( \int_{|\bar{\bx}|\ll 1}\frac{1}{|\bar{\bx}|^{2(1-\g)}}\mbox{d}\bar{\bx}\right),
\end{multline*}
where the last integral is bounded thanks to the fact that $0<\g<1.$ Finally, to bound the last two terms of \eqref{e:p2K[f',f'']} we use the fact that $\p_{\bx}\ff'' \in H^{3,2}(D_{\ep})\subset C^1(D_{\ep})$ and $\p_{\bx}^2\ff'' \in H^{2,1}(D_{\ep})\subset C(D_{\ep})$ to get
\begin{multline*}
\int_{D_{\ep}}\left(\int_{D_\ep(y)}\left|(\p_{\bx}\Psi_1[\ff']-\p_{\bx}\Psi_1[\ff''])(\bx,\bar{\bx}) \D_{\bar{\bx}}[\p_{\bx}\ff''](\bx) \right|^2 |\bar{\bx}|^{2\g}\mbox{d}\bar{\bx} \right)\mbox{d}\bx\\
\leq \|\ff'' \|_{H^{4,3}(D_{\ep})} \int_{D_{\ep}}\left(\int_{D_\ep(y)}\left|(\p_{\bx}\Psi_1[\ff']-\p_{\bx}\Psi_1[\ff''])(\bx,\bar{\bx})  \right|^2 |\bar{\bx}|^{2(1+\g)}\mbox{d}\bar{\bx} \right)\mbox{d}\bx\,
\end{multline*}
and
\begin{multline*}
\int_{D_{\ep}}\left(\int_{D_\ep(y)}\left|(\Psi_1[\ff']-\Psi_1[\ff''])(\bx,\bar{\bx}) \D_{\bar{\bx}}[\p_{\bx}^2\ff''](\bx) \right|^2 |\bar{\bx}|^{2\g}\mbox{d}\bar{\bx} \right)\mbox{d}\bx\\
\leq \|\ff'' \|_{H^{4,3}(D_{\ep})}\int_{D_{\ep}}\left(\int_{D_\ep(y)}\left|(\Psi_1[\ff']-\Psi_1[\ff''])(\bx,\bar{\bx})  \right|^2 |\bar{\bx}|^{2\g}\mbox{d}\bar{\bx} \right)\mbox{d}\bx.
\end{multline*}
Using auxiliary Lemma \ref{l:Psi_1[f']-Psi_1[f'']} on the last terms of the above expressions we have proved our goal.
\end{proof}

\begin{proof}[Proof of \emph{iv)}] As before,  taking a derivative of \eqref{e:p2K[f',f'']} we obtain
\begin{align}\label{e:p3K[f',f'']}
\p_{\bx}^3\cK[\ff',\ff''](\bx,\bar{\bx})&=\p_{\bx}^2\Psi_1[\ff'](\bx,\bar{\bx})\D_{\bar{\bx}}[\p_{\bx}(\ff'-\ff'')](\bx)+2\p_{\bx}\Psi_1[\ff'](\bx,\bar{\bx})\D_{\bar{\bx}}[\p_{\bx}^2(\ff'-\ff'')](\bx) \nonumber\\
&\quad +\Psi_1[\ff'](\bx,\bar{\bx})\D_{\bar{\bx}}[\p_{\bx}^3(\ff'-\ff'')](\bx)+2\left(\p_{\bx}\Psi_1[\ff']-\p_{\bx}\Psi_1[\ff'']\right)(\bx,\bar{\bx}) \D_{\bar{\bx}}[\p_{\bx}^2\ff''](\bx) \nonumber\\
&\quad +\left(\Psi_1[\ff']-\Psi_1[\ff'']\right)(\bx,\bar{\bx}) \D_{\bar{\bx}}[\p_{\bx}^3\ff''](\bx)+\left(\p_{\bx}^2\Psi_1[\ff']-\p_{\bx}^2\Psi_1[\ff'']\right)(\bx,\bar{\bx}) \D_{\bar{\bx}}[\p_{\bx}\ff''](\bx).
\end{align}
In first place, due to the relations \eqref{DPsi1} and \eqref{DPsi2}, we have
\begin{align*}
\p_{\bx}^2\Psi_1[\ff']=[\Psi_1[\ff'](1-\Psi_2[\ff'])-2\Psi_1[\ff'](\Psi_2[\ff']-\Psi_1^2[\ff'])]\left(\D_{\bar{\bx}}[\p_{\bx}\ff'](\bx)\right)^2 +(\Psi_2[\ff']-\Psi_1^2[\ff'])\D_{\bar{\bx}}[\p_{\bx}^2\ff'].
\end{align*}
Remembering \eqref{bound_f'-f''} and applying repeatedly Corollary \ref{boundsPSI}, we get that the first three terms of \eqref{e:p3K[f',f'']} can be handle in the following way:
\begin{align*}
\sum_{i=1}^{3}\int_{D_{\ep}}\left(\int_{D_\ep(y)}\left|\p_{\bx}^{3-i}\Psi_1[\ff'](\bx,\bar{\bx})\D_{\bar{\bx}}[\p_{\bx}^{i}(\ff'-\ff'')](\bx) \right|^2 |\bar{\bx}|^{2}\mbox{d}\bar{\bx} \right)\mbox{d}\bx \leq  C(\e) \|\ff'-\ff''\|_{H^{4,3}(D_{\ep})}^2.
\end{align*}
The remaining three terms of \eqref{e:p3K[f',f'']} follows by auxiliary Lemma  \ref{l:Psi_1[f']-Psi_1[f'']}. Notice that using \eqref{bound_f'-f''} we get
\begin{multline*}
\int_{D_{\ep}}\left(\int_{D_\ep(y)}\left|(\Psi_1[\ff']-\Psi_1[\ff''])(\bx,\bar{\bx}) \D_{\bar{\bx}}[\p_{\bx}^3\ff''](\bx) \right|^2 |\bar{\bx}|^{2}\mbox{d}\bar{\bx} \right)\mbox{d}\bx\\
\leq \|\ff''\|_{H^{4,3}(D_{\ep})} \int_{D_{\ep}}\left(\sup_{\bar{\bx}\in D_\ep(y)}\left|(\Psi_1[\ff']-\Psi_1[\ff''])(\bx,\bar{\bx})  \right|^2 |\bar{\bx}|^{2} \right)\mbox{d}\bx,
\end{multline*}
\begin{multline*}
\int_{D_{\ep}}\left(\int_{D_\ep(y)}\left|(\p_{\bx}\Psi_1[\ff']-\p_{\bx}\Psi_1[\ff''])(\bx,\bar{\bx}) \D_{\bar{\bx}}[\p_{\bx}^2\ff''](\bx) \right|^2 |\bar{\bx}|^{2}\mbox{d}\bar{\bx} \right)\mbox{d}\bx\\
\leq \|\ff''\|_{H^{4,3}(D_{\ep})} \int_{D_{\ep}}\left(\int_{D_\ep(y)}\left|(\p_{\bx}\Psi_1[\ff']-\p_{\bx}\Psi_1[\ff''])(\bx,\bar{\bx})  \right|^2 |\bar{\bx}|^{2}\mbox{d}\bar{\bx} \right)\mbox{d}\bx,
\end{multline*}
and

\begin{multline*}
\int_{D_{\ep}}\left(\int_{D_\ep(y)}\left|(\p_{\bx}^2\Psi_1[\ff']-\p_{\bx}^2\Psi_1[\ff''])(\bx,\bar{\bx}) \D_{\bar{\bx}}[\p_{\bx}\ff''](\bx) \right|^2 |\bar{\bx}|^{2}\mbox{d}\bar{\bx} \right)\mbox{d}\bx\\
\leq \|\ff''\|_{H^{4,3}(D_{\ep})} \int_{D_{\ep}}\left(\int_{D_\ep(y)}\left|(\p_{\bx}^2\Psi_1[\ff']-\p_{\bx}^2\Psi_1[\ff''])(\bx,\bar{\bx})  \right|^2 |\bar{\bx}|^{4}\mbox{d}\bar{\bx} \right)\mbox{d}\bx.
\end{multline*}
Using auxiliary Lemma \ref{l:Psi_1[f']-Psi_1[f'']} on the last terms of the above expressions we have proved our goal.
\end{proof}

\begin{lemma}\label{l:K/TAU-PSI}
Let  $\ff \in\mathbb{B}_{\d}(H^{4,3}(D_{\ep}))$ with $0<\d(\e)\ll 1$ small enough and $\hh\in H^{4,3}(D_{\ep})$ with $\|\hh\|_{H^{4,3}(D_{\ep})}=1.$ For $0<\tau \ll 1$, the following bound holds
\begin{enumerate}[i)]
	\item  $$\sup_{\bx\in D_{\ep}}\left( \sup_{\bar{\bx}\in D_{\ep}(y)}\left|\frac{\cK[\ff+\tau \hh,\ff](\bx,\bar{\bx})}{\tau}-\Psi_1[\ff](\bx,\bar{\bx})\D_{\bar{\bx}}[\hh](\bx)\right|^2 \right) \leq C(\ep)\tau^2.$$
	
	\item $$\sup_{\bx\in D_{\ep}}\left( \int_{D_{\ep}(y)}\left|\p_{\bx}\left\lbrace\frac{\cK[\ff+\tau \hh,\ff](\bx,\bar{\bx})}{\tau}-\Psi_1[\ff](\bx,\bar{\bx})\D_{\bar{\bx}}[\hh](\bx)\right\rbrace\right|^2\mbox{d}\bar{\bx} \right)\leq C(\ep)\tau^2.$$
	
	\item $$ \int_{D_{\ep}}\left(\int_{D_\ep(y)}\left|\p_{\bx}^2\left\lbrace \frac{\cK[\ff+\tau \hh,\ff](\bx,\bar{\bx})}{\tau}-\Psi_1[\ff](\bx,\bar{\bx})\D_{\bar{\bx}}[\hh](\bx)\right\rbrace\right|^2 |\bar{\bx}|^{2\g}\mbox{d}\bar{\bx} \right)\mbox{d}\bx\leq C(\ep)\tau^2.$$
	
	\item $$ \int_{D_{\ep}}\left(\int_{D_\ep(y)}\left|\p_{\bx}^3\left\lbrace \frac{\cK[\ff+\tau \hh,\ff](\bx,\bar{\bx})}{\tau}-\Psi_1[\ff](\bx,\bar{\bx})\D_{\bar{\bx}}[\hh](\bx)\right\rbrace\right|^2 |\bar{\bx}|^{2}\mbox{d}\bar{\bx} \right)\mbox{d}\bx\leq C(\ep)\tau^2.$$
\end{enumerate}
\end{lemma}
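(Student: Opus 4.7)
The key observation is that by the fundamental theorem of calculus applied to $s\mapsto K[\ff+s\tau\hh](\bx,\bar{\bx})$, one may write the ``Taylor remainder'' as
\[
\frac{\cK[\ff+\tau\hh,\ff](\bx,\bar{\bx})}{\tau}-\Psi_1[\ff](\bx,\bar{\bx})\D_{\bar{\bx}}[\hh](\bx)=\int_0^1 (\Psi_1[\ff+s\tau\hh]-\Psi_1[\ff])(\bx,\bar{\bx})\,\D_{\bar{\bx}}[\hh](\bx)\,\mathrm{d}s,
\]
since $\tfrac{d}{ds}K[\ff+s\tau\hh]=\tau\,\Psi_1[\ff+s\tau\hh]\D_{\bar{\bx}}[\hh]$. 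This reformulation is the engine of the whole proof: it expresses the object of interest as an integral of a difference to which the quantitative estimates of Lemma \ref{l:Psi_1[f']-Psi_1[f'']} apply directly, with $\ff':=\ff+s\tau\hh$ and $\ff'':=\ff$. Observe that $\|\ff'-\ff''\|_{H^{4,3}(D_{\ep})}=s\tau\|\hh\|_{H^{4,3}(D_{\ep})}=s\tau\leq \tau$, and that for $\tau$ small enough one still has $\ff+s\tau\hh\in\mathbb{B}_{\d}(H^{4,3}(D_\ep))$ uniformly in $s\in[0,1]$.

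For item i), I simply take the supremum of the integrand: by Lemma \ref{l:Psi_1[f']-Psi_1[f'']}(i), $|\Psi_1[\ff+s\tau\hh]-\Psi_1[\ff]|(\bx,\bar{\bx})|\bar{\bx}|\leq C(\ep)\,s\tau$, while the Sobolev embedding $H^{4,3}(D_\ep)\subset C^1(D_\ep)$ gives $|\D_{\bar{\bx}}[\hh](\bx)|\leq |\bar{\bx}|\|\hh\|_{H^{4,3}}=|\bar{\bx}|$, so the product is bounded by $C(\ep)\tau$, yielding the quadratic bound $C(\ep)\tau^2$ after squaring. For items ii)--iv), I would commute $\p_{\bx}^j$ (for $j=1,2,3$) inside the integral in $s$ and expand via Leibniz, obtaining sums of terms of the form
\[
\bigl(\p_{\bx}^{j-i}\Psi_1[\ff+s\tau\hh]-\p_{\bx}^{j-i}\Psi_1[\ff]\bigr)(\bx,\bar{\bx})\,\D_{\bar{\bx}}[\p_{\bx}^i\hh](\bx),\qquad 0\leq i\leq j.
\]
Each factor on the right satisfies the Sobolev-type bound $|\D_{\bar{\bx}}[\p_{\bx}^i\hh](\bx)|\leq |\bar{\bx}|^{2-i}$ for $i=1,2$ (as in \eqref{bound_f'-f''}) and $|\D_{\bar{\bx}}[\p_{\bx}^3\hh]|\leq 2\|\pa_x^3\hh\|_{L^\infty}$, while the differences of derivatives of $\Psi_1$ are controlled by Lemma \ref{l:Psi_1[f']-Psi_1[f'']}(i)--(iii) with a factor of $s\tau$ in place of $\|\ff'-\ff''\|_{H^{4,3}}$. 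Pairing the singularities $|\bar{\bx}|^{-(1+k)}$ coming from $\p_{\bx}^k$ acting on $\Psi_1$-differences against the vanishing orders of $\D_{\bar{\bx}}[\p_{\bx}^i\hh]$ and the integrable weights $|\bar{\bx}|^{2\gamma}$ (resp.\ $|\bar{\bx}|^2$) in cases iii) and iv) produces exactly the kind of integrable singularities already encountered in the proofs of Lemmas \ref{l:DK[f]} and \ref{l:DK[f',f'']}.

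The main technical obstacle lies in case iv), where three spatial derivatives fall on the kernel. The most singular summand comes from applying all three derivatives to $\Psi_1[\ff+s\tau\hh]-\Psi_1[\ff]$: this contributes $|\p_{\bx}^3\Psi_1[\ff+s\tau\hh]-\p_{\bx}^3\Psi_1[\ff]|\sim C(\ep)s\tau/|\bar{\bx}|^{2}$ (which is a third-derivative analogue not explicitly stated, but obtainable by the same expansion technique used to derive iii) of Lemma \ref{l:Psi_1[f']-Psi_1[f'']}, via differentiating once more the expressions \eqref{diferenceDX1_PSI1}--\eqref{diferenceDX1_PSI2} and applying parts i)--iii) term-by-term). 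Paired with $|\D_{\bar{\bx}}[\hh](\bx)|\leq |\bar{\bx}|\|\hh\|_{H^{4,3}}$ and the outer weight $|\bar{\bx}|^2$, this yields an integrand of size $\tau^2|\bar{\bx}|^{-2}$ which is integrable over the two-dimensional inner region $\{|\bar{\bx}|\ll 1\}$. All other summands are less singular. Collecting gives the desired bound $C(\ep)\tau^2$; integrating over $\bx\in D_\ep$ introduces only the factor $|D_\ep|$, which is absorbed into $C(\ep)$.
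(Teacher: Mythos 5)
Your reformulation via the fundamental theorem of calculus in $s$ is correct and is a genuinely different organization from the paper's: the paper never writes the Taylor remainder as $\int_0^1(\Psi_1[\ff+s\tau\hh]-\Psi_1[\ff])(\bx,\bar{\bx})\D_{\bar{\bx}}[\hh](\bx)\,\mbox{d}s$; instead it proves i) from the elementary logarithm inequality and explicit hyperbolic-trigonometric identities, and for ii)--iv) it differentiates the remainder directly and decomposes it by hand into families of terms ($\mathsf{A}$, $\mathsf{B}$, then $\mathsf{A}_i'$, $\mathsf{B}_i'$, then $\mathsf{A}_{ij}''$, $\mathsf{B}_{ij}''$), estimating each with Lemma \ref{l:Psi_1[f']-Psi_1[f'']}, Corollary \ref{boundsPSI} and \eqref{bound_f'-f''}. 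Your route buys a much shorter derivation of i)--iii): after Leibniz and Minkowski in $s$, parts i)--iii) of Lemma \ref{l:Psi_1[f']-Psi_1[f'']} applied to $\ff'=\ff+s\tau\hh$, $\ff''=\ff$ (with $\|\ff'-\ff''\|_{H^{4,3}(D_\ep)}=s\tau\leq\tau$) supply exactly the powers of $|\bar{\bx}|$ needed, and your bookkeeping for those three items is right.

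The gap is in iv). The summand where all three derivatives fall on the $\Psi_1$-difference requires a third-derivative analogue of Lemma \ref{l:Psi_1[f']-Psi_1[f'']}, and the pointwise bound you assert, $|\p_{\bx}^3\Psi_1[\ff+s\tau\hh]-\p_{\bx}^3\Psi_1[\ff]|\lesssim\tau|\bar{\bx}|^{-2}$, cannot hold: $\p_{\bx}^3\Psi_1[g]$ contains the piece $(\Psi_2[g]-\Psi_1^2[g])\D_{\bar{\bx}}[\p_{\bx}^3g]$, and for $g\in H^{4,3}(D_\ep)$ the third derivatives are only in $L^2$, not $L^\infty$, so no pointwise control is possible --- this is precisely why Lemmas \ref{l:DK[f]} and \ref{l:DK[f',f'']} state their third-order estimates in integrated form over $D_\ep\times D_\ep(y)$ rather than as suprema. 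Separately, your final accounting contains a slip: $(\tau|\bar{\bx}|^{-2}\cdot|\bar{\bx}|)^2\cdot|\bar{\bx}|^2=\tau^2$, not $\tau^2|\bar{\bx}|^{-2}$, and $|\bar{\bx}|^{-2}$ is in any case not integrable near the origin in two dimensions, so the sentence as written would not close the argument. The repair is standard but must be carried out: prove the estimate in the integrated form $\int_{D_\ep}\int_{D_\ep(y)}|\p_{\bx}^3\Psi_1[\ff']-\p_{\bx}^3\Psi_1[\ff'']|^2|\bar{\bx}|^4\,\mbox{d}\bar{\bx}\,\mbox{d}\bx\leq C(\ep)\|\ff'-\ff''\|^2_{H^{4,3}(D_\ep)}$ by isolating the sub-terms carrying $\D_{\bar{\bx}}[\p_{\bx}^3(\ff'-\ff'')]$ and $\D_{\bar{\bx}}[\p_{\bx}^3\ff'']$, bounding their prefactors in $L^\infty$ by Corollary \ref{boundsPSI} and putting the finite differences in $L^2$ via \eqref{L2difference}, while the remaining sub-terms admit the pointwise bounds from parts i)--iii) of Lemma \ref{l:Psi_1[f']-Psi_1[f'']}. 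With that supplementary lemma in hand, your argument for iv) closes.
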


\begin{proof}[Proof of \emph{i)}]
Just applying the standard logarithmic inequality $1-x^{-1}\leq \log x\leq x-1$ for all $x>0$, we get the lower and upper bounds
\begin{align*}
\cK[\ff+\tau \hh,\ff](\bx,\bar{\bx})&\leq \frac{\cosh\left(\bar{y}+\D_{\bar{\bx}}[\ff+\tau \hh](\bx)\right)-\cosh\left(\bar{y}+\D_{\bar{\bx}}[\ff](\bx)\right)}{\cosh\left(\bar{y}+\D_{\bar{\bx}}[\ff](\bx)\right)-\cos(\bar{x})},\\
\cK[\ff+\tau \hh,\ff](\bx,\bar{\bx})&\geq \frac{\cosh\left(\bar{y}+\D_{\bar{\bx}}[\ff+\tau \hh](\bx)\right)-\cosh\left(\bar{y}+\D_{\bar{\bx}}[\ff](\bx)\right)}{\cosh\left(\bar{y}+\D_{\bar{\bx}}[\ff+\tau \hh](\bx)\right)-\cos(\bar{x})}.
\end{align*}
Therefore, we have
\begin{multline*}
\left|\frac{\cK[\ff+\tau \hh,\ff](\bx,\bar{\bx})}{\tau}-\Psi_1[\ff](\bx,\bar{\bx})\D_{\bar{\bx}}[\hh](\bx)\right|\\
\leq \max_{0\leq\xi\leq \tau}\left|\frac{1}{\tau}\left(\frac{\cosh\left(\bar{y}+\D_{\bar{\bx}}[\ff+\tau \hh](\bx)\right)-\cosh\left(\bar{y}+\D_{\bar{\bx}}[\ff](\bx)\right)}{\cosh\left(\bar{y}+\D_{\bar{\bx}}[\ff+\xi \hh](\bx)\right)-\cos(\bar{x})}\right)-\Psi_1[\ff](\bx,\bar{\bx})\D_{\bar{\bx}}[\hh](\bx)\right|.
\end{multline*}
Now, using the trigonometric identity $\cosh(a+b)=\cosh(a)\cosh(b)+\sinh(a)\sinh(b)$ on the  above expression, we get
\begin{multline*}
\frac{\cosh\left(\bar{y}+\D_{\bar{\bx}}[\ff+\tau \hh](\bx)\right)-\cosh\left(\bar{y}+\D_{\bar{\bx}}[\ff](\bx)\right)}{\cosh\left(\bar{y}+\D_{\bar{\bx}}[\ff+\xi \hh](\bx)\right)-\cos(\bar{x})}\\
=\cosh(\bar{y}+\D_{\bar{\bx}}[\ff](x))\frac{\cosh\left(\D_{\bar{\bx}}[\tau\hh](\bx)\right)-1}{\cosh\left(\bar{y}+\D_{\bar{\bx}}[\ff+\xi\hh](\bx)\right)-\cos(\bar{x})}\\
\quad +\sinh(\bar{y}+\D_{\bar{\bx}}[\ff](x))\frac{\sinh\left(\D_{\bar{\bx}}[\tau\hh](\bx)\right)}{\cosh\left(\bar{y}+\D_{\bar{\bx}}[\ff+\xi\hh](\bx)\right)-\cos(\bar{x})},
\end{multline*}
and consequently
\begin{multline*}
\left|\frac{\cK[\ff+\tau \hh,\ff](\bx,\bar{\bx})}{\tau}-\Psi_1[\ff](\bx,\bar{\bx})\D_{\bar{\bx}}[\hh](\bx)\right|\\
\leq \left|\frac{\cosh\left(\D_{\bar{\bx}}[\tau\hh](\bx)\right)-1}{\tau}\right| \max_{0\leq\xi\leq \tau}\left|\frac{\cosh(\bar{y}+\D_{\bar{\bx}}[\ff](x))}{\cosh\left(\bar{y}+\D_{\bar{\bx}}[\ff+\xi\hh](\bx)\right)-\cos(\bar{x})}\right|\\
+\left|\frac{\sinh\left(\D_{\bar{\bx}}[\tau\hh](\bx)\right)}{\tau}-\D_{\bar{\bx}}[\hh](\bx)\right| \max_{0\leq\xi\leq \tau}\left|\frac{\sinh(\bar{y}+\D_{\bar{\bx}}[\ff](x))}{\cosh\left(\bar{y}+\D_{\bar{\bx}}[\ff+\xi\hh](\bx)\right)-\cos(\bar{x})}\right|.
\end{multline*}
Applying the Taylor expansion of the trigonometric functions $\sinh(\cdot), \cosh(\cdot)$ together with the fact that $\|\hh\|_{H^3(D_{\ep})}=1$ we get the bounds
\begin{align*}
\left|\frac{\cosh\left(\D_{\bar{\bx}}[\tau\hh](\bx)\right)-1}{\tau}\right|\leq C(\e)\tau |\bar{\bx}|^2,\\
\left|\frac{\sinh\left(\D_{\bar{\bx}}[\tau\hh](\bx)\right)}{\tau}-\D_{\bar{\bx}}[\hh](\bx)\right| \leq C(\e)\tau^2 |\bar{\bx}|^3.
\end{align*}
In addition, we have
\begin{multline*}
\max\left\lbrace\left|\frac{\cosh(\bar{y}+\D_{\bar{\bx}}[\ff](x))}{\cosh\left(\bar{y}+\D_{\bar{\bx}}[\ff+\xi\hh](\bx)\right)-\cos(\bar{x})}\right|,\left|\frac{\sinh(\bar{y}+\D_{\bar{\bx}}[\ff](x))}{\cosh\left(\bar{y}+\D_{\bar{\bx}}[\ff+\xi\hh](\bx)\right)-\cos(\bar{x})}\right|\right\rbrace\\
\leq \frac{C(\|\ff\|_{H^{3}(D_{\ep})})}{\left(\tfrac{1}{2}-\|\ff+\xi\hh\|_{H^{3}(D_{\ep})}\right) |\bar{\bx}|^2-C(\|\ff+\xi\hh\|_{H^{3}(D_{\ep})}) |\bar{\bx}|^4},
\end{multline*}
which give us
\[
\left|\frac{\cK[\ff+\tau \hh,\ff](\bx,\bar{\bx})}{\tau}-\Psi_1[\ff](\bx,\bar{\bx})\D_{\bar{\bx}}[\hh](\bx)\right|\leq \max_{0\leq \xi\leq\tau}\frac{C(\e,\|\ff\|_{H^{3}(D_{\ep})}) \tau}{\left(\tfrac{1}{2}-\|\ff+\xi\hh\|_{H^{3}(D_{\ep})}\right) -C(\|\ff+\xi\hh\|_{H^{3}(D_{\ep})}) |\bar{\bx}|^2}.
\]
Coming back to the starting point, as $\ff\in \mathbb{B}_{\d}(X(D_{\ep}))$ and $\|\hh\|_{H^{4,3}(D_{\ep})}=1,$  taking $0<\d(\e),\tau\ll 1$ small enough we have proved our desired inequality.
\end{proof}

\begin{proof}[Proof of \emph{ii)}]
As $\cK(\ff',\ff'')(\bx,\bar{\bx})=\left( K[\ff']-K[\ff'']\right)(\bx,\bar{\bx})$ and $\p_{\bx}K[g](\bx,\bar{\bx})=\Psi_1[g](\bx,\bar{\bx})\D_{\bar{\bx}}[\p_{\bx}g](\bx)$, adding and subtracting some appropriate term we obtain
\begin{equation*}
\p_{\bx}\cK[\ff',\ff''](\bx,\bar{\bx})=\Psi_1[\ff'](\bx,\bar{\bx})\D_{\bar{\bx}}[\p_{\bx}(\ff'-\ff'')](\bx)+\left(\Psi_1[\ff']-\Psi_1[\ff'']\right)(\bx,\bar{\bx}) \D_{\bar{\bx}}[\p_{\bx}\ff''](\bx).
\end{equation*}
So, it is just a matter of algebra to get
\begin{equation}\label{A+B}
\p_{\bx}\left\lbrace\frac{\cK[\ff+\tau \hh,\ff](\bx,\bar{\bx})}{\tau}-\Psi_1[\ff](\bx,\bar{\bx})\D_{\bar{\bx}}[\hh](\bx)\right\rbrace= \mathsf{A}[\ff,\hh](\bx,\bar{\bx})+\mathsf{B}[\ff,\hh](\bx,\bar{\bx}),
\end{equation}
where both new functionals are given respectively by
\begin{align}\label{defA}
\mathsf{A}[\ff,\hh](\bx,\bar{\bx})&:=\left[\frac{(\Psi_1[\ff+\tau\hh]-\Psi_1[\ff])(\bx,\bar{\bx})}{\tau}-(\Psi_2[\ff]-\Psi_1^2[\ff])(\bx,\bar{\bx})\D_{\bar{\bx}}[\hh](\bx)\right]\D_{\bar{\bx}}[\p_{\bx}\ff](\bx),\\
\mathsf{B}[\ff,\hh](\bx,\bar{\bx})&:=(\Psi_1[\ff+\tau\hh]-\Psi_1[\ff])(\bx,\bar{\bx})\D_{\bar{\bx}}[\p_{\bx}\hh](\bx). \nonumber
\end{align}
Directly from auxiliary Lemma \ref{l:Psi_1[f']-Psi_1[f'']}, as $\hh\in H^{4,3}(D_{\ep})$ implies $\p_{\bx}\hh\in H^{3,2}(D_{\ep})\subset C^{1}(D_{\ep})$ and by hypothesis $\|\hh\|_{H^{4,3}(D_{\ep})}=1,$ we get
\[
\sup_{\bx\in D_{\ep}} \int_{D_{\ep}(y)}\left|\mathsf{B}[\ff,\hh]\right|^2\mbox{d}\bar{\bx}\leq   C(\e)\tau^2.
\]
To obtain something similar for $\mathsf{A}[\ff,\hh]$ we need to work a little bit more. In first place, thanks to the definition of $\Psi_1[\cdot](\bx,\bar{\bx})$, adding and subtracting some appropriate term we have
\begin{multline*}
\left(\Psi_1[\ff+\tau\hh]-\Psi_1[\ff]\right)(\bx,\bar{\bx})=\frac{\sinh(\bar{y}+\D_{\bar{\bx}}[\ff+\tau\hh](\bx))-\sinh(\bar{y}+\D_{\bar{\bx}}[\ff](\bx))}{\cosh(\bar{y}+\D_{\bar{\bx}}[\ff](\bx))-\cos(\bar{x})}\\
-\sinh(\bar{y}+\D_{\bar{\bx}}[\ff+\tau\hh](\bx))\frac{\cosh(\bar{y}+\D_{\bar{\bx}}[\ff+\tau\hh](\bx))-\cosh(\bar{y}+\D_{\bar{\bx}}[\ff](\bx))}{\left[\cosh(\bar{y}+\D_{\bar{\bx}}[\ff+\tau\hh](\bx))-\cos(\bar{x})\right]\left[\cosh(\bar{y}+\D_{\bar{\bx}}[\ff](\bx))-\cos(\bar{x})\right]},
\end{multline*}
and using on it some trigonometric identities, we obtain
\begin{multline*}
\left(\Psi_1[\ff+\tau\hh]-\Psi_1[\ff]\right)(\bx,\bar{\bx})=\left[\Psi_2[\ff](\bx,\bar{\bx})-\Psi_1^2[\ff](\bx,\bar{\bx})\cosh\left(\tau\D_{\bar{\bx}}[\hh](\bx)\right)\right]\sinh\left(\tau\D_{\bar{\bx}}[\hh](\bx)\right)\\
+\Psi_1^2[\ff](\bx,\bar{\bx}) \frac{\cosh(\bar{y}+\D_{\bar{\bx}}[\ff+\tau\hh](\bx)-\cosh(\bar{y}+\D_{\bar{\bx}}[\ff](\bx)}{\cosh(\bar{y}+\D_{\bar{\bx}}[\ff+\tau\hh](\bx)-\cos(\bar{x})}\sinh\left(\tau\D_{\bar{\bx}}[\hh](\bx)\right)\cosh\left(\tau\D_{\bar{\bx}}[\hh](\bx)\right).
\end{multline*}
Now, dividing the above expression by $\tau$ and subtracting the term $(\Psi_2[\ff]-\Psi_1^2[\ff])(\bx,\bar{\bx})\D_{\bar{\bx}}[\hh](\bx)$ we obtain that
\begin{multline*}
\frac{(\Psi_1[\ff+\tau\hh]-\Psi_1[\ff])(\bx,\bar{\bx})}{\tau}-(\Psi_2[\ff]-\Psi_1^2[\ff])(\bx,\bar{\bx})\D_{\bar{\bx}}[\hh](\bx)\\
=\Psi_2[\ff](\bx,\bar{\bx})\left(\frac{\sinh(\tau\D_{\bar{\bx}}[\hh](\bx))}{\tau} - \D_{\bar{\bx}}[\hh](\bx)\right)\\
-\Psi_1^2[\ff](\bx,\bar{\bx})\left(\cosh(\tau\D_{\bar{\bx}}[\hh](\bx))\frac{\sinh(\tau\D_{\bar{\bx}}[\hh](\bx))}{\tau} - \D_{\bar{\bx}}[\hh](\bx)\right)\\
+\Psi_1^2[\ff](\bx,\bar{\bx}) \frac{\cosh(\bar{y}+\D_{\bar{\bx}}[\ff+\tau\hh](\bx)-\cosh(\bar{y}+\D_{\bar{\bx}}[\ff](\bx)}{\cosh(\bar{y}+\D_{\bar{\bx}}[\ff+\tau\hh](\bx)-\cos(\bar{x})}\sinh\left(\tau\D_{\bar{\bx}}[\hh](\bx)\right)\cosh\left(\tau\D_{\bar{\bx}}[\hh](\bx)\right).
\end{multline*}
Finally, using the Taylor expansion of the trigonometric functions $\sinh(\cdot), \cosh(\cdot)$ and  \eqref{bound_f'-f''} give us
the desired inequality
\begin{equation}\label{integralA}
\sup_{\bx\in D_{\ep}} \int_{D_{\ep}(y)}\left|\mathsf{A}[\ff,\hh]\right|^2\mbox{d}\bar{\bx}\leq   C(\e)\tau^2.
\end{equation}
\end{proof}

\begin{proof}[Proof of iii)]
Recalling \eqref{A+B}, we have
\begin{equation}\label{dxA+dxB}
\p_{\bx}^2\left\lbrace\frac{\cK[\ff+\tau \hh,\ff](\bx,\bar{\bx})}{\tau}-\Psi_1[\ff](\bx,\bar{\bx})\D_{\bar{\bx}}[\hh](\bx)\right\rbrace= \p_{\bx}\mathsf{A}[\ff,\hh](\bx,\bar{\bx})+\p_{\bx}\mathsf{B}[\ff,\hh](\bx,\bar{\bx}),
\end{equation}
where, using  relation $\p_{\bx}\Psi_1[g](\bx,\bar{\bx})=\tilde{\II}_1[g](\bx,\bar{\bx})\D_{\bar{\bx}}[\p_{\bx}g](\bx)$ whit $\tilde{\II}_1[g]=\left(\Psi_2[g]-(\Psi_1[g])^2\right)$, we get
\begin{multline*}
\p_{\bx}\mathsf{A}[\ff,\hh](\bx,\bar{\bx})= (\tilde{\II}_1[\ff+\tau\hh]-\tilde{\II}_1[\ff])(\bx,\bar{\bx})\D_{\bar{\bx}}[\p_{\bx}\ff](\bx)\D_{\bar{\bx}}[\p_{\bx}\hh](\bx)\\
 +\frac{(\Psi_1[\ff+\tau\hh]-\Psi_1[\ff])(\bx,\bar{\bx})}{\tau}\D_{\bar{\bx}}[\p_{\bx}^2\ff](\bx)-\tilde{\II}_1[\ff](\bx,\bar{\bx})\D_{\bar{\bx}}[\p_{\bx}^2\ff]\D_{\bar{\bx}}[\hh](\bx)\\
+\frac{(\tilde{\II}_1[\ff+\tau\hh]-\tilde{\II}_1[\ff])(\bx,\bar{\bx})}{\tau}\left(\D_{\bar{\bx}}[\p_{\bx}\ff](\bx)\right)^2 -\left(\p_{\bx}\Psi_2[\ff]-2\Psi_1[\ff]\p_{\bx}\Psi_1[\ff]\right)(\bx,\bar{\bx})\D_{\bar{\bx}}[\p_{\bx}\ff](\bx)\D_{\bar{\bx}}[\hh](\bx),
\end{multline*}
and
\begin{multline*}
\p_{\bx}\mathsf{B}[\ff,\hh](\bx,\bar{\bx})=(\tilde{\II}_1[\ff+\tau\hh]-\tilde{\II}_1[\ff])(\bx,\bar{\bx}) \D_{\bar{\bx}}[\p_{\bx}\ff](\bx)\D_{\bar{\bx}}[\p_{\bx}\hh](\bx)\\
+(\Psi_1[\ff+\tau\hh]-\Psi_1[\ff])(\bx,\bar{\bx})\D_{\bar{\bx}}[\p_{\bx}^2 \hh](\bx)\\
+ \tau \tilde{\II}_1[\ff+\tau\hh](\bx,\bar{\bx})\left(\D_{\bar{\bx}}[\p_{\bx} \hh](\bx)\right)^2.
\end{multline*}
By definition of $\tilde{\II}_1[\cdot]$ and relations \eqref{DPsi1} and \eqref{DPsi2}, the previous expressions can be rewritten in a more manageable way as
\begin{align}
\p_{\bx}\mathsf{A}[\ff,\hh](\bx,\bar{\bx})&=(\mathsf{A}_1'+\mathsf{A}_2'+\mathsf{A}_3'+\mathsf{A}_4'+\mathsf{A}_5')[\ff,\hh](\bx,\bar{\bx}), \label{dxA}\\
\p_{\bx}\mathsf{B}[\ff,\hh](\bx,\bar{\bx})&=(\mathsf{B}_1'+\mathsf{B}_2'+\mathsf{B}_3'+\mathsf{B}_4')[\ff,\hh](\bx,\bar{\bx}),\label{dxB}
\end{align}
with
\begin{align*}
\mathsf{A}_1'[\ff,\hh](\bx,\bar{\bx})&:=\left[\frac{(\Psi_1[\ff+\tau\hh]-\Psi_1[\ff])(\bx,\bar{\bx})}{\tau}-(\Psi_2[\ff]-\Psi_1^2[\ff])(\bx,\bar{\bx})\D_{\bar{\bx}}[\hh](\bx)\right]\D_{\bar{\bx}}[\p_{\bx}^2\ff](\bx),\\
\mathsf{A}_2'[\ff,\hh](\bx,\bar{\bx})&:=(\Psi_2[\ff+\tau\hh]-\Psi_2[\ff])(\bx,\bar{\bx})\D_{\bar{\bx}}[\p_{\bx}\ff](\bx)\D_{\bar{\bx}}[\p_{\bx}\hh](\bx),\\
\mathsf{A}_3'[\ff,\hh](\bx,\bar{\bx})&:=-(\Psi_1[\ff+\tau\hh]-\Psi_1[\ff])(\Psi_1[\ff+\tau\hh]+\Psi_1[\ff])(\bx,\bar{\bx})\D_{\bar{\bx}}[\p_{\bx}\ff](\bx)\D_{\bar{\bx}}[\p_{\bx}\hh](\bx),\\
\mathsf{A}_4'[\ff,\hh](\bx,\bar{\bx})&:=\left[\frac{(\Psi_2[\ff+\tau\hh]-\Psi_2[\ff])(\bx,\bar{\bx})}{\tau} -\left(\Psi_1[\ff]-\Psi_1[\ff]\Psi_2[\ff]\right)(\bx,\bar{\bx})\D_{\bar{\bx}}[\hh](\bx)\right]\left(\D_{\bar{\bx}}[\p_{\bx}\ff](\bx)\right)^2,
\end{align*}
\begin{multline*}
\mathsf{A}_5'[\ff,\hh](\bx,\bar{\bx}):=\bigg[-(\Psi_1[\ff+\tau\hh]+\Psi_1[\ff])(\bx,\bar{\bx})\frac{(\Psi_1[\ff+\tau\hh]-\Psi_1[\ff])(\bx,\bar{\bx})}{\tau}\\
+2\Psi_1[\ff](\bx,\bar{\bx}) (\Psi_2[\ff]-\Psi_1^2[\ff])(\bx,\bar{\bx})\D_{\bar{\bx}}[\hh](\bx)\bigg]\left(\D_{\bar{\bx}}[\p_{\bx}\ff](\bx)\right)^2,
\end{multline*}
and
\begin{align*}
\mathsf{B}_1'[\ff,\hh](\bx,\bar{\bx})&:=(\Psi_2[\ff+\tau\hh]-\Psi_2[\ff])(\bx,\bar{\bx})\D_{\bar{\bx}}[\p_{\bx}\ff](\bx)\D_{\bar{\bx}}[\p_{\bx}\hh](\bx),\\
\mathsf{B}_2'[\ff,\hh](\bx,\bar{\bx})&:=-(\Psi_1[\ff+\tau\hh]-\Psi_1[\ff])(\Psi_1[\ff+\tau\hh]+\Psi_1[\ff])(\bx,\bar{\bx})\D_{\bar{\bx}}[\p_{\bx}\ff](\bx)\D_{\bar{\bx}}[\p_{\bx}\hh](\bx),\\
\mathsf{B}_3'[\ff,\hh](\bx,\bar{\bx})&:= \tau (\Psi_2[\ff+\tau\hh]-\Psi_1^2[\ff+\tau\hh])(\bx,\bar{\bx})\left(\D_{\bar{\bx}}[\p_{\bx} \hh](\bx)\right)^2,\\
\mathsf{B}_4'[\ff,\hh](\bx,\bar{\bx})&:=(\Psi_1[\ff+\tau\hh]-\Psi_1[\ff])(\bx,\bar{\bx})\D_{\bar{\bx}}[\p_{\bx}^2 \hh](\bx).
\end{align*}
Note that the only new type of term that appear is $\mathsf{A}_4'[\ff,\hh].$ The rest of terms can be easily handle using auxiliary Lemma \ref{l:Psi_1[f']-Psi_1[f'']}, Corollary \ref{boundsPSI} and \eqref{bound_f'-f''}.
To sum up, our proof reduces to check that
\begin{equation}\label{A4'}
\int_{D_{\ep}}\left(\int_{D_\ep(y)}\left|\mathsf{A}_4'[\ff,\hh](\bx,\bar{\bx})\right|^2 |\bar{\bx}|^{2\g}\mbox{d}\bar{\bx} \right)\mbox{d}\bx\leq C(\ep,\|\ff\|_{H^{3}(D_{\ep})})\tau^2.
\end{equation}
By \eqref{bound_f'-f''}, as $\ff\in H^{4,3}(D_{\ep})$ we have that $\p_{\bx}\ff\in H^{3,2}(D_{\ep})\subset C^{1}(D_{\ep})$ and we just need  to prove the same type of estimate for the term
$$ \int_{D_{\ep}}\left(\int_{D_\ep(y)}\left|\frac{(\Psi_2[\ff+\tau\hh]-\Psi_2[\ff])(\bx,\bar{\bx})}{\tau} -\left(\Psi_1[\ff]-\Psi_1[\ff]\Psi_2[\ff]\right)(\bx,\bar{\bx})\D_{\bar{\bx}}[\hh](\bx)\right|^2 |\bar{\bx}|^{4+2\g}\mbox{d}\bar{\bx} \right)\mbox{d}\bx. $$
Thanks to the definition of $\Psi_2[\cdot](\bx,\bar{\bx})$, adding and subtracting some appropriate term we have
\begin{multline*}
\left(\Psi_2[\ff+\tau\hh]-\Psi_2[\ff]\right)(\bx,\bar{\bx})=\frac{\cosh(\bar{y}+\D_{\bar{\bx}}[\ff+\tau\hh](\bx))-\cosh(\bar{y}+\D_{\bar{\bx}}[\ff](\bx))}{\cosh(\bar{y}+\D_{\bar{\bx}}[\ff](\bx))-\cos(\bar{x})}\\
-\cosh(\bar{y}+\D_{\bar{\bx}}[\ff+\tau\hh](\bx))\frac{\cosh(\bar{y}+\D_{\bar{\bx}}[\ff+\tau\hh](\bx))-\cosh(\bar{y}+\D_{\bar{\bx}}[\ff](\bx))}{\left[\cosh(\bar{y}+\D_{\bar{\bx}}[\ff+\tau\hh](\bx))-\cos(\bar{x})\right]\left[\cosh(\bar{y}+\D_{\bar{\bx}}[\ff](\bx))-\cos(\bar{x})\right]},
\end{multline*}
and using on it some trigonometric identities, we obtain
\begin{multline*}
\left(\Psi_2[\ff+\tau\hh]-\Psi_2[\ff]\right)(\bx,\bar{\bx})=\left[\Psi_1[\ff](\bx,\bar{\bx})-\Psi_1[\ff]\Psi_2[\ff](\bx,\bar{\bx})\cosh\left(\tau\D_{\bar{\bx}}[\hh](\bx)\right)\right]\sinh\left(\tau\D_{\bar{\bx}}[\hh](\bx)\right)\\
-\Psi_1^2[\ff](\bx,\bar{\bx})\sinh^2(\tau\D_{\bar{\bx}}[\hh](\bx)) +2\Psi_1[\ff]\Psi_2[\ff](\bx,\bar{\bx})\sinh(\tau\D_{\bar{\bx}}[\hh](\bx)) \sinh^2\left(\tfrac{\tau}{2}\D_{\bar{\bx}}[\hh](\bx)\right)\\
+2\Psi_2[\ff](\bx,\bar{\bx})\left[1-\Psi_2[\ff](\bx,\bar{\bx})\cosh(\tau\D_{\bar{\bx}}[\hh](\bx))\right] \sinh^2\left(\tfrac{\tau}{2}\D_{\bar{\bx}}[\hh](\bx)\right)\\
+\Psi_1[\ff+\tau\hh](\bx,\bar{\bx})\left[\frac{\cosh(\bar{y}+\D_{\bar{\bx}}[\ff+\tau\hh](\bx))-\cosh(\bar{y}+\D_{\bar{\bx}}[\ff](\bx))}{\cosh(\bar{y}+\D_{\bar{\bx}}[\ff](\bx))-\cos(\bar{x})}\right]^2.
\end{multline*}
Now, dividing the above expression by $\tau$ and subtracting the term $(\Psi_1[\ff]-\Psi_1[\ff]\Psi_2[\ff])(\bx,\bar{\bx})\D_{\bar{\bx}}[\hh](\bx)$ we obtain that
\begin{multline*}
\frac{(\Psi_2[\ff+\tau\hh]-\Psi_2[\ff])(\bx,\bar{\bx})}{\tau} -\left(\Psi_1[\ff]-\Psi_1[\ff]\Psi_2[\ff]\right)(\bx,\bar{\bx})\D_{\bar{\bx}}[\hh](\bx)\\
=\Psi_1[\ff](\bx,\bar{\bx})\left(\frac{\sinh\left(\tau\D_{\bar{\bx}}[\hh](\bx)\right)}{\tau}-\D_{\bar{\bx}}[\hh](\bx) \right)\\
-\Psi_1[\ff]\Psi_2[\ff](\bx,\bar{\bx})\left(\cosh\left(\tau\D_{\bar{\bx}}[\hh](\bx)\right)\frac{\sinh\left(\tau\D_{\bar{\bx}}[\hh](\bx)\right)}{\tau}-\D_{\bar{\bx}}[\hh](\bx) \right)\\
-\Psi_1^2[\ff](\bx,\bar{\bx})\sinh^2(\tau\D_{\bar{\bx}}[\hh](\bx)) +2\Psi_1[\ff]\Psi_2[\ff](\bx,\bar{\bx})\sinh(\tau\D_{\bar{\bx}}[\hh](\bx)) \sinh^2\left(\tfrac{\tau}{2}\D_{\bar{\bx}}[\hh](\bx)\right)\\
+2\Psi_2[\ff](\bx,\bar{\bx})\left[1-\Psi_2[\ff](\bx,\bar{\bx})\cosh(\tau\D_{\bar{\bx}}[\hh](\bx))\right] \sinh^2\left(\tfrac{\tau}{2}\D_{\bar{\bx}}[\hh](\bx)\right)\\
+\Psi_1[\ff+\tau\hh](\bx,\bar{\bx})\left[\frac{\cosh(\bar{y}+\D_{\bar{\bx}}[\ff+\tau\hh](\bx))-\cosh(\bar{y}+\D_{\bar{\bx}}[\ff](\bx))}{\cosh(\bar{y}+\D_{\bar{\bx}}[\ff](\bx))-\cos(\bar{x})}\right]^2.
\end{multline*}
Finally, using the Taylor expansion of the trigonometric functions $\sinh(\cdot), \cosh(\cdot)$ and  \eqref{bound_f'-f''} give us
the desired inequality \eqref{A4'}.
\end{proof}

\begin{proof}[Proof of iv)]
Recalling \eqref{dxA+dxB} together with \eqref{dxA} and \eqref{dxB}, we have
\begin{equation}\label{dxA'+dxB'}
\p_{\bx}^3\left\lbrace\frac{\cK[\ff+\tau \hh,\ff](\bx,\bar{\bx})}{\tau}-\Psi_1[\ff](\bx,\bar{\bx})\D_{\bar{\bx}}[\hh](\bx)\right\rbrace= \sum_{i=1}^{5}\p_{\bx}\mathsf{A}_i'[\ff,\hh](\bx,\bar{\bx})+\sum_{i=1}^{4}\p_{\bx}\mathsf{B}_i'[\ff,\hh](\bx,\bar{\bx}),
\end{equation}
where,
\begin{multline*}
\p_{\bx}\mathsf{A}_1'[\ff,\hh](\bx,\bar{\bx})=\p_{\bx}\left\lbrace\frac{(\Psi_1[\ff+\tau\hh]-\Psi_1[\ff])(\bx,\bar{\bx})}{\tau}-(\Psi_2[\ff]-\Psi_1^2[\ff])(\bx,\bar{\bx})\D_{\bar{\bx}}[\hh](\bx)\right\rbrace\D_{\bar{\bx}}[\p_{\bx}^2\ff](\bx)\\
+\left[\frac{(\Psi_1[\ff+\tau\hh]-\Psi_1[\ff])(\bx,\bar{\bx})}{\tau}-(\Psi_2[\ff]-\Psi_1^2[\ff])(\bx,\bar{\bx})\D_{\bar{\bx}}[\hh](\bx)\right]\D_{\bar{\bx}}[\p_{\bx}^3\ff](\bx)\\
:=\mathsf{A}_{11}''[\ff,\hh](\bx,\bar{\bx})+\mathsf{A}_{12}''[\ff,\hh](\bx,\bar{\bx}),
\end{multline*}
\begin{multline*}
\p_{\bx}\mathsf{A}_2'[\ff,\hh](\bx,\bar{\bx})=(\p_{\bx}\Psi_2[\ff+\tau\hh]-\p_{\bx}\Psi_2[\ff])(\bx,\bar{\bx})\D_{\bar{\bx}}[\p_{\bx}\ff](\bx)\D_{\bar{\bx}}[\p_{\bx}\hh](\bx)\\
+(\Psi_2[\ff+\tau\hh]-\Psi_2[\ff])(\bx,\bar{\bx})\D_{\bar{\bx}}[\p_{\bx}^2\ff](\bx)\D_{\bar{\bx}}[\p_{\bx}\hh](\bx)\\
+(\Psi_2[\ff+\tau\hh]-\Psi_2[\ff])(\bx,\bar{\bx})\D_{\bar{\bx}}[\p_{\bx}\ff](\bx)\D_{\bar{\bx}}[\p_{\bx}^2\hh](\bx)\\
:=\mathsf{A}_{21}''[\ff,\hh](\bx,\bar{\bx})+\mathsf{A}_{22}''[\ff,\hh](\bx,\bar{\bx})+\mathsf{A}_{23}''[\ff,\hh](\bx,\bar{\bx}),
\end{multline*}
\begin{multline*}
\p_{\bx}\mathsf{A}_3'[\ff,\hh](\bx,\bar{\bx})=-(\p_{\bx}\Psi_1[\ff+\tau\hh]-\p_{\bx}\Psi_1[\ff])(\Psi_1[\ff+\tau\hh]+\Psi_1[\ff])(\bx,\bar{\bx})\D_{\bar{\bx}}[\p_{\bx}\ff](\bx)\D_{\bar{\bx}}[\p_{\bx}\hh](\bx)\\
 -(\Psi_1[\ff+\tau\hh]-\Psi_1[\ff])(\p_{\bx}\Psi_1[\ff+\tau\hh]+\p_{\bx}\Psi_1[\ff])(\bx,\bar{\bx})\D_{\bar{\bx}}[\p_{\bx}\ff](\bx)\D_{\bar{\bx}}[\p_{\bx}\hh](\bx)\\
-(\Psi_1[\ff+\tau\hh]-\Psi_1[\ff])(\Psi_1[\ff+\tau\hh]+\Psi_1[\ff])(\bx,\bar{\bx})\D_{\bar{\bx}}[\p_{\bx}^2\ff](\bx)\D_{\bar{\bx}}[\p_{\bx}\hh](\bx)\\
 -(\Psi_1[\ff+\tau\hh]-\Psi_1[\ff])(\Psi_1[\ff+\tau\hh]+\Psi_1[\ff])(\bx,\bar{\bx})\D_{\bar{\bx}}[\p_{\bx}\ff](\bx)\D_{\bar{\bx}}[\p_{\bx}^2\hh](\bx)\\
:=\mathsf{A}_{31}''[\ff,\hh](\bx,\bar{\bx})+\mathsf{A}_{32}''[\ff,\hh](\bx,\bar{\bx})+\mathsf{A}_{33}''[\ff,\hh](\bx,\bar{\bx})+\mathsf{A}_{34}''[\ff,\hh](\bx,\bar{\bx}),
\end{multline*}
\begin{multline*}
\p_{\bx}\mathsf{A}_4'[\ff,\hh](\bx,\bar{\bx})\\=\p_{\bx}\left\lbrace\frac{(\Psi_2[\ff+\tau\hh]-\Psi_2[\ff])(\bx,\bar{\bx})}{\tau}-(\Psi_1[\ff]-\Psi_1[\ff]\Psi_2[\ff])(\bx,\bar{\bx})\D_{\bar{\bx}}[\hh](\bx)\right\rbrace \left(\D_{\bar{\bx}}[\p_{\bx}\ff](\bx)\right)^2\\
\quad +\left[\frac{(\Psi_2[\ff+\tau\hh]-\Psi_2[\ff])(\bx,\bar{\bx})}{\tau}-(\Psi_1[\ff]-\Psi_1[\ff]\Psi_2[\ff])(\bx,\bar{\bx})\D_{\bar{\bx}}[\hh](\bx)\right]2 \D_{\bar{\bx}}[\p_{\bx}\ff](\bx) \D_{\bar{\bx}}[\p_{\bx}^2\ff](\bx)\\
:=\mathsf{A}_{41}''[\ff,\hh](\bx,\bar{\bx})+\mathsf{A}_{42}''[\ff,\hh](\bx,\bar{\bx}),
\end{multline*}
\begin{multline*}
\p_{\bx}\mathsf{A}_5'[\ff,\hh](\bx,\bar{\bx})=\p_{\bx}\bigg \lbrace -(\Psi_1[\ff+\tau\hh]+\Psi_1[\ff])(\bx,\bar{\bx})\frac{(\Psi_1[\ff+\tau\hh]-\Psi_1[\ff])(\bx,\bar{\bx})}{\tau}\\
 +2\Psi_1[\ff] (\Psi_2[\ff]-\Psi_1^2[\ff])(\bx,\bar{\bx})\D_{\bar{\bx}}[\hh](\bx) \bigg \rbrace\left(\D_{\bar{\bx}}[\p_{\bx}\ff](\bx)\right)^2\\
+\bigg[-(\Psi_1[\ff+\tau\hh]+\Psi_1[\ff])(\bx,\bar{\bx})\frac{(\Psi_1[\ff+\tau\hh]-\Psi_1[\ff])(\bx,\bar{\bx})}{\tau}\\
+2\Psi_1[\ff] (\Psi_2[\ff]-\Psi_1^2[\ff])(\bx,\bar{\bx})\D_{\bar{\bx}}[\hh](\bx)\bigg]2 \D_{\bar{\bx}}[\p_{\bx}\ff](\bx) \D_{\bar{\bx}}[\p_{\bx}^2\ff](\bx)\\
:=\mathsf{A}_{51}''[\ff,\hh](\bx,\bar{\bx})+\mathsf{A}_{52}''[\ff,\hh](\bx,\bar{\bx}),
\end{multline*}

and
\begin{multline*}
\p_{\bx}\mathsf{B}_1'[\ff,\hh](\bx,\bar{\bx})=(\p_{\bx}\Psi_2[\ff+\tau\hh]-\p_{\bx}\Psi_2[\ff])(\bx,\bar{\bx})\D_{\bar{\bx}}[\p_{\bx}\ff](\bx)\D_{\bar{\bx}}[\p_{\bx}\hh](\bx)\\
 +(\Psi_2[\ff+\tau\hh]-\Psi_2[\ff])(\bx,\bar{\bx})\left[\D_{\bar{\bx}}[\p_{\bx}^2\ff](\bx)\D_{\bar{\bx}}[\p_{\bx}\hh](\bx)+\D_{\bar{\bx}}[\p_{\bx}\ff](\bx)\D_{\bar{\bx}}[\p_{\bx}^2\hh](\bx)\right]\\
=:\mathsf{B}_{11}''[\ff,\hh](\bx,\bar{\bx})+\mathsf{B}_{12}''[\ff,\hh](\bx,\bar{\bx}),
\end{multline*}
\begin{multline*}
\p_{\bx}\mathsf{B}_2'[\ff,\hh](\bx,\bar{\bx})=-(\p_{\bx}\Psi_1[\ff+\tau\hh]-\p_{\bx}\Psi_1[\ff])(\Psi_1[\ff+\tau\hh]+\Psi_1[\ff])(\bx,\bar{\bx})\D_{\bar{\bx}}[\p_{\bx}\ff](\bx)\D_{\bar{\bx}}[\p_{\bx}\hh](\bx)\\
-(\Psi_1[\ff+\tau\hh]-\Psi_1[\ff])(\p_{\bx}\Psi_1[\ff+\tau\hh]+\p_{\bx}\Psi_1[\ff])(\bx,\bar{\bx})\D_{\bar{\bx}}[\p_{\bx}\ff](\bx)\D_{\bar{\bx}}[\p_{\bx}\hh](\bx)\\
-(\Psi_1[\ff+\tau\hh]-\Psi_1[\ff])(\Psi_1[\ff+\tau\hh]+\Psi_1[\ff])(\bx,\bar{\bx})\D_{\bar{\bx}}[\p_{\bx}^2\ff](\bx)\D_{\bar{\bx}}[\p_{\bx}\hh](\bx)\\
-(\Psi_1[\ff+\tau\hh]-\Psi_1[\ff])(\Psi_1[\ff+\tau\hh]+\Psi_1[\ff])(\bx,\bar{\bx})\D_{\bar{\bx}}[\p_{\bx}\ff](\bx)\D_{\bar{\bx}}[\p_{\bx}^2\hh](\bx)\\
=:\mathsf{B}_{21}''[\ff,\hh](\bx,\bar{\bx})+\mathsf{B}_{22}''[\ff,\hh](\bx,\bar{\bx})+\mathsf{B}_{23}''[\ff,\hh](\bx,\bar{\bx})+\mathsf{B}_{24}''[\ff,\hh](\bx,\bar{\bx}),
\end{multline*}
\begin{multline*}
\p_{\bx}\mathsf{B}_3'[\ff,\hh](\bx,\bar{\bx})= \tau (\p_{\bx}\Psi_2[\ff+\tau\hh]-2\Psi_1[\ff+\tau\hh]\p_{\bx}\Psi_1[\ff+\tau\hh])(\bx,\bar{\bx})\left(\D_{\bar{\bx}}[\p_{\bx} \hh](\bx)\right)^2\\
 +\tau (\Psi_2[\ff+\tau\hh]-\Psi_1^2[\ff+\tau\hh])(\bx,\bar{\bx})2 \D_{\bar{\bx}}[\p_{\bx} \hh](\bx)\D_{\bar{\bx}}[\p_{\bx}^2 \hh](\bx)\\
=:\mathsf{B}_{31}''[\ff,\hh](\bx,\bar{\bx})+\mathsf{B}_{32}''[\ff,\hh](\bx,\bar{\bx}),
\end{multline*}
\begin{multline*}
\p_{\bx}\mathsf{B}_4'[\ff,\hh](\bx,\bar{\bx})= (\p_{\bx}\Psi_1[\ff+\tau\hh]-\p_{\bx}\Psi_1[\ff])(\bx,\bar{\bx})\D_{\bar{\bx}}[\p_{\bx}^2 \hh](\bx)\\
+(\Psi_1[\ff+\tau\hh]-\Psi_1[\ff])(\bx,\bar{\bx})\D_{\bar{\bx}}[\p_{\bx}^3 \hh](\bx)\\
=:\mathsf{B}_{41}''[\ff,\hh](\bx,\bar{\bx})+\mathsf{B}_{42}''[\ff,\hh](\bx,\bar{\bx}).
\end{multline*}
Taking into account all the above we have proved that
\begin{multline*}
\int_{D_{\ep}}\left(\int_{D_\ep(y)}\left|\p_{\bx}^3\left\lbrace \frac{\cK[\ff+\tau \hh,\ff](\bx,\bar{\bx})}{\tau}-\Psi_1[\ff](\bx,\bar{\bx})\D_{\bar{\bx}}[\hh](\bx)\right\rbrace\right|^2 |\bar{\bx}|^{2}\mbox{d}\bar{\bx} \right)\mbox{d}\bx \\
\lesssim \sum_{i=1}^{5} \int_{D_{\ep}}\left(\int_{D_\ep(y)}\left|\p_{\bx}\mathsf{A}_i'[\ff,\hh](\bx,\bar{\bx})\right|^2 |\bar{\bx}|^{2}\mbox{d}\bar{\bx} \right)\mbox{d}\bx + \sum_{i=1}^{4} \int_{D_{\ep}}\left(\int_{D_\ep(y)}\left|\p_{\bx}\mathsf{B}_i'[\ff,\hh](\bx,\bar{\bx})\right|^2 |\bar{\bx}|^{2}\mbox{d}\bar{\bx} \right)\mbox{d}\bx,
\end{multline*}
where each of the terms that appear can be decomposed as follows:
\begin{align*}
\p_{\bx}\mathsf{A}_1'[\ff,\hh](\bx,\bar{\bx})&=\mathsf{A}_{11}''[\ff,\hh](\bx,\bar{\bx})+\mathsf{A}_{12}''[\ff,\hh](\bx,\bar{\bx}),\\
\p_{\bx}\mathsf{A}_2'[\ff,\hh](\bx,\bar{\bx})&=\mathsf{A}_{21}''[\ff,\hh](\bx,\bar{\bx})+\mathsf{A}_{22}''[\ff,\hh](\bx,\bar{\bx})+\mathsf{A}_{23}''[\ff,\hh](\bx,\bar{\bx}),\\
\p_{\bx}\mathsf{A}_3'[\ff,\hh](\bx,\bar{\bx})&=\mathsf{A}_{31}''[\ff,\hh](\bx,\bar{\bx})+\mathsf{A}_{32}''[\ff,\hh](\bx,\bar{\bx})+\mathsf{A}_{33}''[\ff,\hh](\bx,\bar{\bx})+\mathsf{A}_{34}''[\ff,\hh](\bx,\bar{\bx}),\\
\p_{\bx}\mathsf{A}_4'[\ff,\hh](\bx,\bar{\bx})&=\mathsf{A}_{41}''[\ff,\hh](\bx,\bar{\bx})+\mathsf{A}_{42}''[\ff,\hh](\bx,\bar{\bx}),\\
\p_{\bx}\mathsf{A}_5'[\ff,\hh](\bx,\bar{\bx})&=\mathsf{A}_{51}''[\ff,\hh](\bx,\bar{\bx})+\mathsf{A}_{52}''[\ff,\hh](\bx,\bar{\bx}),
\end{align*}
and
\begin{align*}
\p_{\bx}\mathsf{B}_1'[\ff,\hh](\bx,\bar{\bx})&=\mathsf{B}_{11}''[\ff,\hh](\bx,\bar{\bx})+\mathsf{B}_{12}''[\ff,\hh](\bx,\bar{\bx}),\\
\p_{\bx}\mathsf{B}_2'[\ff,\hh](\bx,\bar{\bx})&=\mathsf{B}_{21}''[\ff,\hh](\bx,\bar{\bx})+\mathsf{B}_{22}''[\ff,\hh](\bx,\bar{\bx})+\mathsf{B}_{23}''[\ff,\hh](\bx,\bar{\bx})+\mathsf{B}_{24}''[\ff,\hh](\bx,\bar{\bx}),\\
\p_{\bx}\mathsf{B}_3'[\ff,\hh](\bx,\bar{\bx})&=\mathsf{B}_{31}''[\ff,\hh](\bx,\bar{\bx})+\mathsf{B}_{32}''[\ff,\hh](\bx,\bar{\bx}),\\
\p_{\bx}\mathsf{B}_4'[\ff,\hh](\bx,\bar{\bx})&=\mathsf{B}_{41}''[\ff,\hh](\bx,\bar{\bx})+\mathsf{B}_{42}''[\ff,\hh](\bx,\bar{\bx}).
\end{align*}
For the sake of brevity,  we shall present here the complete details for the term $\mathsf{A}_{51}''[\ff,\hh](\bx,\bar{\bx})$ and the other terms can be dealt using the previous lemmas via straightforward variations. To sum up, our goal reduces to check that
\begin{equation}\label{A51''}
\int_{D_{\ep}}\left(\int_{D_\ep(y)}\left|\mathsf{A}_{51}''[\ff,\hh](\bx,\bar{\bx})\right|^2 |\bar{\bx}|^{2}\mbox{d}\bar{\bx} \right)\mbox{d}\bx\leq C(\ep)\tau^2.
\end{equation}
Notice that $\mathsf{A}_{51}''[\ff,\hh]$ can be written  in a more manageable way  (adding and subtracting terms) as
\begin{multline*}
\mathsf{A}_{51}''[\ff,\hh](\bx,\bar{\bx})
=2(\Psi_1[\ff]-\Psi_1[\ff+\tau\hh])(\bx,\bar{\bx}) \p_{\bx}\left\lbrace \frac{(\Psi_1[\ff+\tau\hh]-\Psi_1[\ff])(\bx,\bar{\bx})}{\tau} \right\rbrace \left(\D_{\bar{\bx}}[\p_{\bx}\ff](\bx)\right)^2\\
-2 \p_{\bx}\Psi_1[\ff](\bx,\bar{\bx}) \left\lbrace \frac{(\Psi_1[\ff+\tau\hh]-\Psi_1[\ff])(\bx,\bar{\bx})}{\tau} - (\Psi_2[\ff]-\Psi_1^2[\ff])(\bx,\bar{\bx})\D_{\bar{\bx}}[\hh](\bx)\right\rbrace  \left(\D_{\bar{\bx}}[\p_{\bx}\ff](\bx)\right)^2\\
-2 \Psi_1[\ff](\bx,\bar{\bx})\p_{\bx}\left\lbrace \frac{(\Psi_1[\ff+\tau\hh]-\Psi_1[\ff])(\bx,\bar{\bx})}{\tau}-(\Psi_2[\ff]-\Psi_1^2[\ff])(\bx,\bar{\bx})\D_{\bar{\bx}}[\hh](\bx)\right\rbrace \left(\D_{\bar{\bx}}[\p_{\bx}\ff](\bx)\right)^2.
\end{multline*}
Now, each one of the above terms can be easily handled. Using \eqref{bound_f'-f''} and applying repeatedly Lemma \ref{l:Psi_1[f']-Psi_1[f'']} we get that the first term is bounded as required. For the second term, we only need to note that it can be written, remembering \eqref{defA}, as
\begin{multline*}
2 \p_{\bx}\Psi_1[\ff](\bx,\bar{\bx}) \left\lbrace \frac{(\Psi_1[\ff+\tau\hh]-\Psi_1[\ff])(\bx,\bar{\bx})}{\tau} - (\Psi_2[\ff]-\Psi_1^2[\ff])(\bx,\bar{\bx})\D_{\bar{\bx}}[\hh](\bx)\right\rbrace  \left(\D_{\bar{\bx}}[\p_{\bx}\ff](\bx)\right)^2\\
=2 \p_{\bx}\Psi_1[\ff](\bx,\bar{\bx}) \mathsf{A}[\ff,\hh](\bx,\bar{\bx}) \D_{\bar{\bx}}[\p_{\bx}\ff](\bx).
\end{multline*}
Combining \eqref{DPsi1} with Corollary \ref{boundstilde} and \eqref{bound_f'-f''} we obtain the bound $2 |\p_{\bx}\Psi_1[\ff](\bx,\bar{\bx})  \D_{\bar{\bx}}[\p_{\bx}\ff](\bx)|\leq C(\e).$ After that, as an immediate consequence of \eqref{integralA} we get the required bound for the second term. For the latter, we proceed in the same spirit as before, trying to split that term into previously studied terms. Notice that
\begin{multline*}
2 \Psi_1[\ff](\bx,\bar{\bx})\p_{\bx}\left\lbrace \frac{(\Psi_1[\ff+\tau\hh]-\Psi_1[\ff])(\bx,\bar{\bx})}{\tau}-(\Psi_2[\ff]-\Psi_1^2[\ff])(\bx,\bar{\bx})\D_{\bar{\bx}}[\hh](\bx)\right\rbrace \left(\D_{\bar{\bx}}[\p_{\bx}\ff](\bx)\right)^2\\
=2 \Psi_1[\ff](\bx,\bar{\bx})\p_{\bx}\mathsf{A}[\ff,\hh](\bx,\bar{\bx}) \D_{\bar{\bx}}[\p_{\bx}\ff](\bx)-2 \Psi_1[\ff](\bx,\bar{\bx})\mathsf{A}[\ff,\hh](\bx,\bar{\bx}) \D_{\bar{\bx}}[\p_{\bx}^2\ff](\bx)\\
=2 \Psi_1[\ff](\bx,\bar{\bx}) \left\lbrace \mathsf{A}_1'[\ff,\hh](\bx,\bar{\bx})+\mathsf{A}_4'[\ff,\hh](\bx,\bar{\bx})+\mathsf{A}_5'[\ff,\hh](\bx,\bar{\bx})\right\rbrace \D_{\bar{\bx}}[\p_{\bx}\ff](\bx).
\end{multline*}
Using Corollary \ref{boundsPSI} and \eqref{bound_f'-f''} we obtain the bound $2 |\Psi_1[\ff](\bx,\bar{\bx})\D_{\bar{\bx}}[\p_{\bx}\ff](\bx)|\leq C(\e)$. Finally, working as we did to get inequality \eqref{A4'}  we get the required bound for each of the above terms. Therefore, combining all we have proved our goal.
\end{proof}

\textbf{Acknowledgments:}  AC is supported by the Spanish Ministry of Science and Innovation, through
the Severo Ochoa Programme for Centers of Excellence in R\&D (CEX2019-000904-S),
and by grants  Europa Excelencia program ERC2018-092824 and  RED2018-102650-T funded by MCIN/AEI/10.13039/501100011033. AC and DL are partially supported by grants MTM2017-89976-P and  PID2020-114703GB-I00 funded by MCIN.


\begin{thebibliography}{10}

	\bibitem{ADM}
	P. Antonelli, M. Dolce and P. Marcati.
	\newblock{Linear stability analysis of the homogeneous {C}ouette flow in a 2D isentropic compressible fluid.}
	\newblock{\em arXiv:2101.01696}
	
	\bibitem{delpino}
	W. Ao, J. Davila, M. del Pino, M. Musso and J. Wei.
	\newblock{Travelling and rotating solutions to the generalized inviscid surface quasi-geostrophic equation}
	\newblock{\em To appear in  Trans. Amer. Math.}	arXiv:2008.12911
	

	

	
	
	\bibitem{BGM}
	J. Bedrossian, P. Germain and N. Masmoudi.
	\newblock{On the stability threshold for the 3D Couette flow in Sobolev regularity.}
	\newblock{\em Ann. of Math.} 185 (2017), 541--608
	
	\bibitem{BMV}
	J. Bedrossian, N. Masmoudi and V. Vicol.
	\newblock{Enhanced dissipation and inviscid damping in the inviscid limit of the Navier-Stokes equations near the 2D Couette flow.}
	\newblock{\em Arch. Rat. Mech. Anal.} 219 (2016), 1087--1159.	
	
	\bibitem{BZ}
	J. Bedrossian and M. C. Zelati.
	\newblock Enhanced dissipation, hypoellipticity, and anomalous small noise inviscid limits in shear flows.
	\newblock{\em Archive for Rational Mechanics and Analysis}, 224 (2017), pp. pp. 1161–1204
	
	\bibitem{BZV}
	J. Bedrossian, M. Coti Zelati and V. Vicol.
	\newblock Vortex Axisymmetrization, Inviscid Damping, and Vorticity Depletion in the Linearized 2D {E}uler Equations.
	\newblock{\em Ann. PDE} 5, 4 (2019).	
	
	\bibitem{BM}
	J. Bedrossian and N. Masmoudi.
	\newblock Inviscid damping and the asymptotic stability of planar shear flows in the 2{D} {E}uler equations.
	\newblock {\em Publ. Math. Inst. Hautes \'{E}tudes Sci.}, 122:195--300, 2015.
	
	\bibitem{BGMsurvey}
	J. Bedrossian, P. Germain and N. Masmoudi.
	\newblock Stability of the {C}ouette flow at high {R}eynolds numers in 2{D} and 3{D}.
	\newblock {\em Bulletin of the American Mathematical Society 56.3 (2019): 373-414.}
	
	\bibitem{bia1}
	J. Bedrossian, R. Bianchini, M. Coti-Zelati, M. Dolce.
	\newblock Nonlinear inviscid damping and shear-buoyancy instability in the two-dimensional Boussinesq equations.
	\newblock {\em 	arXiv:2103.13713}	
	
	\bibitem{bia2}
	R. Bianchini, M. Coti-Zelati, M. Dolce.
	\newblock Linear inviscid damping for shear flows near Couette in the 2D stably stratified regime.
	\newblock {\em To appear in Indiana University Mathematics Journal}, 	arXiv:2005.09058
	
	\bibitem{BMlinear}	
	F. Bouchet and H. Morita.
	\newblock{Large time behavior and asymptotic stability of the 2D Euler and linearized Euler equations.}
	\newblock{\em Physica D: Nonlinear Phenomena} 239 (2010), no. 12, 948–966.

	
	\bibitem{B}
	J. Burbea.
	\newblock{Motions of vortex patches.}
	\newblock{\em Lett. Math. Phys.} 6 (1982), 1–16.
	
	\bibitem{CCG}
	A. Castro, D. C\'{o}rdoba, J. G\'{o}mez-Serrano.
	\newblock{Uniformly rotating analytic global patch solutions for active scalars.}
	\newblock{\em Ann. PDE} 2(1) (2016), Art. 1, 34.	
	
	\bibitem{CCG1}
	A. Castro, D. C\'{o}rdoba, J. G\'{o}mez-Serrano.
	\newblock{ Existence and regularity of rotating global solutions for the generalized surface quasi-geostrophic equations.}
	\newblock{\em Duke Math. J.} 165(5) (2016), 935–984	
	
	\bibitem{CCG2}
	A. Castro, D. C\'{o}rdoba, J. G\'{o}mez-Serrano.
	\newblock{Global smooth solutions for the inviscid {SQG} equation.}
	\newblock{\em Memoirs of the AMS}, 266, 1292, 89 pp (2020).
	
	\bibitem{CCG3}
	A. Castro, D. C\'{o}rdoba, J. G\'{o}mez-Serrano.
	\newblock{Uniformly rotating smooth solutions for the incompressible 2D Euler equations.}
	\newblock{\em Arch. Ration. Mech. Anal.}, 231, no. 2, 719-785 (2019).
		
	
	

	
	\bibitem{CGO}
	D. C\'{o}rdoba, R. Granero-Belinch\'{on} and R. Orive.
	\newblock The confined {M}uskat problem: differences with the deep water regime.
	\newblock{\em Commun. Math. Sci.}, 12:423--455, 2014.
	
	\bibitem{ZEW}
	M. Coti-Zelati, T.M. Elgindi and K. Widmayer.
	\newblock Enhanced Dissipation in the {N}avier–{S}tokes Equations Near the {P}oiseuille Flow.
	\newblock {\em  Commun. Math. Phys.} 378, 987–1010 (2020).
	
	\bibitem{ZEW2}	
	M. Coti Zelati, T.M. Elgindi and K. Widmayer.
	\newblock Stationary Structures near the {K}olmogorov and {P}oiseuille Flows in the 2{D} {E}uler Equations.
	\newblock {\em https://arxiv.org/abs/2007.11547}	
	
	\bibitem{CZ}
	M. Coti Zelati and C. Zillinger.
	\newblock On degenerate circular and shear flows: the point vortex and power law circular flows.
	\newblock {\em  Communications in Partial Differential Equations}, (2019), pp. 1–46.
	
	\bibitem{CR}
	M. G. Crandall and P.H. Rabinowitz.
	\newblock Bifurcation from simple eigenvalues.
	\newblock {\em J. Funct. Anal.}, 8:321--340, 1971.

	\bibitem{DM}
	Y. Deng and N. Masmoudi.
    \newblock Long time instability of the Couette flow in low Gevrey spaces.
    \newblock {\em To appear in Comm. Pure Appl. Math. arXiv:1803.01246}

	\bibitem{DZ1}
	Y. Deng and C. Zillinger.
	\newblock On the Smallness Condition in Linear Inviscid Damping:Monotonicity and Resonance Chains.
	\newblock {\em Nonlinearity} 33 6176, (2020).
	
	\bibitem{DZ2}
	Y. Deng and C. Zillinger.
	\newblock Echo chains as a linear mechanism: Norm inflation, modified exponents and asymptotics.
	\newblock {\em Arch. Ration. Mech. Anal.}  242(1), 643-700, (2021).

	\bibitem{deng}
	W. Deng, J. Wu, and P. Zhang.
	\newblock{ Stability of couette flow for 2D Boussinesq system with vertical dissipation.}
	\newblock{\em arXiv:2004.09292}
	

	
	\bibitem{DHH}
	F. De la Hoz, Z. Hassainia and T. Hmidi.
	\newblock{Doubly Connected {V}-States for the Generalized {S}urface {Q}uasi-{G}eostrophic Equations.}
	\newblock{\em Arch. Ration. Mech. Anal} 220 (2016), 1209–1281.
	
	\bibitem{DHMV}
	F. De la Hoz, T. Hmidi, J. Mateu, J. Verdera.
	\newblock{ Doubly connected {V}-states for the planar {E}uler equations.}
	\newblock{\em SIAM J. Math. Anal.} 48 (2016), 1892–1928	

	 \bibitem{Gateaux-Frechet}
    J. Dieudonn\'{e}.
    \newblock{Foundations of Modern Analysis.}
    \newblock {\em Academic Press}, New York, (1960).

	\bibitem{cor1}
	D. G. Dritschel, T. Hmidi and C. Renault
	\newblock{Imperfect Bifurcation for the Quasi-Geostrophic Shallow-Water Equations.}
	\newblock{\em Arch. Rational Mech. Anal.} 231, 1853–1915 (2019).

	
	


   \bibitem{G}
   T. Gallay.
   \newblock  Stability of vortices in ideal fluids: the legacy of {K}elvin and {R}ayleigh.
   \newblock {\em  Hyperbolic Problems: Theory, Numerics, Applications, Proceedings of HYP2018. 10 (2018), 42–59.}



    \bibitem{GW}
	T. Gallay and C. E. Wayne.
	\newblock Global stability of vortex solutions of the two-dimensional {N}avier-{S}tokes equation.
	\newblock {\em  Communications in mathematical physics}, 255 (2005), pp. 97–129.
	
	\bibitem{GHS}
	C. Garc\'{i}a, T. Hmidi, J. Soler.
	\newblock{Non uniform rotating vortices and periodic orbits for the two–dimensional Euler equations.}
	\newblock{\em Arch. Ration. Mech. Anal} 238 (2020), 929–1086.

	\bibitem{javi1}
	J. G\'{o}mez-Serrano
	\newblock{On the existence of stationary patches.}
	\newblock {\em Advances in Mathematics}, 343, 110-140 (2019).

	\bibitem{javi2}
	J. G\'{o}mez-Serrano, J. Park, J. Shi, Y. Yao.
	\newblock Symmetry in stationary and uniformly rotating solutions of active scalar equations.
	\newblock {\em Duke Math. J.} 170 (13) 2957 - 3038 (2021).
	
	\bibitem{javi3}
	J. G\'{o}mez-Serrano, J. Park, J. Shi, Y. Yao.
	\newblock{Remarks on stationary and uniformly-rotating vortex sheets: Rigidity results.}
	\newblock {\em Communications in Mathematical Physics}, 386, 1845–1879 (2021).
	
	\bibitem{javi4}
	J. G\'{o}mez-Serrano, J. Park, J. Shi, Y. Yao.
	\newblock{Remarks on stationary and uniformly-rotating vortex sheets: Flexibility results.}
	\newblock {\em To appear in Phil. Trans. R. Soc. A.} 	arXiv:2012.08709
	

	\bibitem{gravejat}
	P. Gravejat and D. Smets.
	\newblock{Smooth traveling-wave solutions to the inviscid surface quasi-geostrophic equation.}
	\newblock{\em International Mathematics Research Notices}, page rnx177, (2017).

	\bibitem{GNRS}
	E. Grenier, T. Nguyen, F. Rousset, A. Soffer.
	\newblock Linear inviscid damping and enhanced viscous dissipation of shear flows by using the conjugate operator method.
	\newblock {\em Journal of Functional Analysis}, Volume 278, Issue 3, 2020.
	




	\bibitem{has}
	Z. Hassainia and T. Hmidi.
	\newblock{Existence of corotating asymmetric vortex pairs for Euler equations.}
	\newblock{\em 	arXiv:2004.02306}

	 \bibitem{HH}
     Z. Hassainia and T. Hmidi.
     \newblock{On the {V}-states for the generalized quasi-geostrophic equations.}
     \newblock {\em Comm. Math. Phys.} 337(1) (2015), 321–377.

	\bibitem{cor2}
	T. Hmidi and C. Renault
	\newblock{Existence of small loops in a bifurcation diagram near degenerate eigenvalues.}
	\newblock {\em Nonlinearity}, Volume 30, Issue 10, pp. 3821 (2017).

     \bibitem{HM}
     T. Hmidi and J. Mateu.
     \newblock{Existence of corotating and counter-rotating vortex pairs for active scalar equations.}
     \newblock{\em Comm. Math. Phys.} 350(2) (2017), 699–747.

     \bibitem{HMV}
     T. Hmidi, J. Mateu and J. Verdera.
     \newblock{Boundary regularity of rotating vortex patches.}
     \newblock {\em Arch. Ration. Mech. Anal.} 209, (2013), 171–208

	 \bibitem{Ionescu-Jia}
    A. D. Ionescu and H. Jia.
    \newblock{Inviscid damping near the couette flow in a channel.}
	\newblock{ \em Communications in Mathematical Physics} (2019), 1–82.
	
	\bibitem{Ionescu-Jia-monotonic}
	A. D. Ionescu and H. Jia.
	\newblock{Nonlinear inviscid damping near monotonic shear flows.}
	\newblock{\em To appear in Acta Mathematica} arXiv:2001.03087.
	
	\bibitem{Ionescu-Jia-vortex1}
	A. D. Ionescu and H. Jia.
	\newblock{Axi-symmetrization near Point Vortex Solutions for the 2D Euler Equation.}
	\newblock{\em Comm. Pure Appl. Math.}. https://doi.org/10.1002/cpa.21974 (2021).
	
	\bibitem{Ionescu-Jia-vortex2}
	A. D. Ionescu and H. Jia.
	\newblock{Linear vortex symmetrization: the spectral density function.}
	\newblock{\em 	arXiv:2109.12815}

	\bibitem{Jia1}
	H. Jia
	\newblock{ Linear Inviscid damping near monotone shear flows.}
	\newblock{ \em Siam J. Math Analysis,}  52 (1), 623-652, 2020.

	\bibitem{Jia2}
	H. Jia
	\newblock{ Linear inviscid damping in Gevrey spaces.}
	\newblock {\em Arch. Ration. Mech. Anal.}. 235, (2020),  1327–1355
	
	\bibitem{LL}
	Y. C. Li and Z. Lin.
	\newblock A resolution of the {S}ommerfeld paradox.
    \newblock {\em SIAM J. Math. Anal.}, 43:1923--1954, 2011

    \bibitem{LZ}
    Z. Lin and C. Zeng.
	\newblock Inviscid dynamical structures near {C}ouette flow.
	\newblock {\em Arch. Ration. Mech. Anal.}, 200:1075--1097, 2011

    \bibitem{LYZ}
	Z. Lin, J. Yang and H. Zhu.
	\newblock Barotropic instability of shear flows.
	\newblock {\em Stud. Appl. Math.}, 144:289--326, 2020
	
	
	

	\bibitem{L}
    Z. Lin.
    \newblock Nonlinear instability of ideal plane flows.
    \newblock {\em International Mathematics Research Notices}, 2004 (2004), pp. 2147–2178.
	
	\bibitem{LWZZ}
	Z. Lin, D. Wei, Z. Zhang and  H. Zhu.
    \newblock The number of traveling wave families in a running water with Coriolis force.
    \newblock {\em  arXiv:2009.05733}

    \bibitem{masmoudiboussinesq}
	N. Masmoudi, B. Said-Houari and W. Zhao.
	\newblock{Stability of Couette flow for 2D Boussinesq system without thermal diffusivity.}
	\newblock{\em 	arXiv:2010.01612}

	\bibitem{MZ}
	N. Masmoudi and W. Zhao
	\newblock{ Nonlinear inviscid damping for a class of monotone shear flows in finite channel.}
	\newblock{ \em 	arXiv:2001.08564}

    \bibitem{Mouhot-Villani}
    C. Mouhot,  and C. Villani.
    \newblock{On Landau damping.}
    \newblock {\em Acta mathematica} 207, 1 (2011), 29–201.


	\bibitem{cor3}
	C. Renault.
	\newblock{Relative equilibria with holes for the surface quasi-geostrophic equations.}
	\newblock{\em Journal of Differential Equations}Volume 263, Issue 1, 5 July 2017, Pages 567-614.
	
	\bibitem{Trefethen}
	Trefethen, L. N.
	\newblock{Pseudospectra of linear operators.}
	\newblock {\em SIAM review} 39, 3 (1997), 383–406.	
	
	\bibitem{Trefethen-Trefethen-Reddy-Driscoll}
	Trefethen, L. N., Trefethen, A. E., Reddy, S. C., and Driscoll, T. A.
	\newblock{Hydrodynamic stability without eigenvalues.}
	\newblock {\em Science} 261, 5121 (1993), 578–584.
	
	\bibitem{Wei-Zhang-Zhao}
	D. Wei, Z. Zhang, W. Zhao
	\newblock{Linear Inviscid Damping and Vorticity Depletion for Shear Flows.}
	\newblock{\em Ann. PDE} 5, 3 (2019).
	
	\bibitem{Wei-Zhang-Zhao_1}
	D. Wei, Z. Zhang, W. Zhao
	\newblock{Linear inviscid damping for a class of monotone shear flow in Sobolev spaces.}
	\newblock{\em Comm. Pure Appl. Math.}, 71: 617-687, (2018).
	
	\bibitem{Wei-Zhang-Zhao_2}
	D. Wei, Z. Zhang, W. Zhao
	\newblock{Linear inviscid damping and enhanced dissipation for the Kolmogorov flow.}
	\newblock{ \em Advances in Mathematics}, 362 (2020), 106963, 103 pp.	
	
	\bibitem{Zillinger1}
	C. Zillinger.
	\newblock{Linear inviscid damping for monotone shear flows in a finite periodic channel, boundary effects, blow-up and critical sobolev regularity.}
	\newblock {\em  Archive for Rational Mechanics and Analysis}, 221 (2016), pp. 1449–1509.
	
	\bibitem{Zillinger2}
	C. Zillinger.
	\newblock{Linear inviscid damping for monotone shear flows.}
	\newblock {\em Transactions of the American Mathematical Society}, 369 (2017), pp. 8799–8855.
	
	\bibitem{Zillinger3}
	C. Zillinger.
	\newblock{On enhanced dissipation for the Boussinesq equations.}
	\newblock {\em J. Differential Equations.} 282, 407–445, (2021).
	
	\bibitem{Zillinger4}
	C. Zillinger.
	\newblock{On echo chains in the linearized Boussinesq equations around traveling waves},
	\newblock {\em arXiv 2103.15441}

\end{thebibliography}
\end{document}